\newtheorem{lemma}{Lemma}[section]
\newtheorem{teo}[lemma]{Theorem}
\newtheorem{prop}[lemma]{Proposition}
\newtheorem{cor}[lemma]{Corollary} 
\newtheorem{ex}[lemma]{Exercise} 
\theoremstyle{definition}
\newtheorem{defn}[lemma]{Definition}
\newtheorem{example}[lemma]{Example}
\theoremstyle{remark}
\newtheorem{rem}[lemma]{Remark}
\newcommand{\matQ} {\ensuremath {\mathbb{Q}}}
\newcommand{\matZ} {\ensuremath {\mathbb{Z}}}
\newcommand{\matRP} {\ensuremath {\mathbb{RP}}}
\newcommand{\matCP} {\ensuremath {\mathbb{CP}}}
\newcommand{\calS} {\ensuremath {\mathcal{S}}}
\newcommand{\calC} {\ensuremath {\mathcal{C}}}
\newcommand{\calB} {\ensuremath {\mathcal{B}}}
\newcommand{\calD} {\ensuremath {\mathcal{D}}}
\newcommand{\calP} {\ensuremath {\mathcal{P}}}
\newcommand{\nota} [1] {\caption{\footnotesize{#1}}}
\newcommand{\matr} [4] {\left(\begin{array}{@{}c@{\ }c@{}} #1 & #2 \\ #3 & #4 \\ \end{array} \right)}
\newcommand{\interior}[1]{{\rm int}(#1)}
\newfont{\Got}{eufm10 scaled 1200}
\newcommand{\PSL}{{\rm PSL}}
\newcommand{\cref}{c^{\rm ref}}
\newcommand{\isom}{\cong}
\newcommand{\gl}{{\rm gl}}
\newcommand{\rot}{{\rm rot}}
\newcommand{\timtil}{\begin{picture}(12,12)
\put(2,0){$\times$}\put(2,4.5){$\sim$}\end{picture}}
\author{Yuya Koda}
\address{Department of Mathematics
Hiroshima University, 1-3-1 Kagamiyama, Higashi - Hiroshima, 739-8526, Japan}
\email{ykoda at hiroshima-u dot ac dot jp}
\author{Bruno Martelli}
\address{Dipartimento di Matematica, Universit\`a di Pisa, Largo Pontecorvo 5, 56127 Pisa, Italy}
\email{martelli at dm dot unipi dot it}
\author{Hironobu Naoe}
\address{Mathematical Institute
Tohoku University, 6-3 Aramakiaza-Aoba, Aobaku, Sendai, 980-8578, Japan}
\email{hironobu dot naoe dot b2 at tohoku dot ac dot jp}
\thanks{The first author is supported in part by JSPS KAKENHI Grant Numbers
15H03620, 17K05254, 17H06463, and JST CREST Grant Number JPMJCR17J4. The third author is supported by JSPS KAKENHI Grant Number 17J02915.}
\title{Four-manifolds with shadow-complexity one}
\begin{document}

\begin{abstract}
We study the set of all closed oriented smooth 4-manifolds experimentally, according to a suitable complexity defined using Turaev's shadows. This complexity roughly measures how complicated the 2-skeleton of the 4-manifold is. 

We characterise here all the closed oriented 4-manifolds that have complexity at most one. They are generated by a certain set of 20 \emph{blocks}, that are some basic 4-manifolds with boundary consisting of copies of $S^2 \times S^1$, plus connected sums with some copies of $\matCP^2$ with either orientation. 

All the manifolds generated by these blocks are doubles. Many of these are doubles of 2-handlebodies and are hence efficiently encoded using finite presentations of groups. In contrast to the complexity zero case, in complexity one there are also plenty of doubles that are \emph{not} doubles of 2-handlebodies, like for instance $\matRP^3 \times S^1$.
\end{abstract}

\maketitle

\section{Introduction}
As pointed out by Donaldson in 2008:
\begin{quote}
\emph{For 4-dimensional manifolds a great deal is now known in the way of ``examples of phenomena that can occur'', but there is at present no kind of systematic picture, even at the most conjectural level}. \cite{Do}
\end{quote}
Ten years later, a systematic picture is still missing. In the present situation, it might be interesting to study the 4-dimensional smooth manifolds from an experimental viewpoint. An experimental approach usually consists in choosing a reasonable combinatorial description of the objects that we want to study (here, all closed smooth 4-manifolds) and then trying to classify them according to an increasing \emph{complexity}, that is some natural number that measures how complicated the combinatorial description is.

Historically, the experimental approach has been fundamental in the evolution of low-dimensional topology. Knots in the 3-sphere have been listed according to their crossing number since the very beginning of their study \cite{HoThWe}, and analogously 3-manifolds have been tabulated according to the number of tetrahedra in an (ideal) triangulation \cite{CaHiWe, Mat}. These tables have been used extensively to test conjectures and more generally to get an experimental grasp on the subject, often via beautiful and sophisticated computer programs like \emph{SnapPy} \cite{Sna}  or \emph{Regina} \cite{Reg}. Despite its success in dimension 3, the experimental approach is almost absent from the literature in dimension 4. The reason for that is, of course, that smooth 4-manifolds are combinatorially much more complicated than 3-manifolds and knots. Some recent interesting studies use crystallisations \cite{CC}, triangulations \cite{BS}, and trisections \cite{Me, MZ, ST}.

In this paper we pursue the experimentation started in \cite{Co, Ma:zero} via Turaev's \emph{shadows} \cite{Tu}. We stress the fact that we work in the smooth (equivalently, piecewise linear) category. Every 4-manifold is tacitly assumed to be smooth and oriented. Our main result is Theorem \ref{1:teo}, that characterises all the closed 4-manifolds with complexity at most 1.

\subsection{Outline}
We describe informally the main results of this paper. Let $M$ denote a closed smooth oriented 4-manifold. The complexity that we choose here is a natural number $c^*(M)$, called the \emph{connected shadow complexity} of $M$, that measures how complicated the 2-skeleton of $M$ is. The notion of 2-skeleton that we use is that of Turaev's \emph{shadow} \cite{Tu} and the theory is heavily inspired from Matveev's complexity of 3-manifolds \cite{Mat}. We postpone the rigorous definition to Section \ref{results:section}.

The complexity $c^*(M)$ has two nice features that are of fundamental importance. The first is that there are plenty of closed smooth four-manifolds $M$ with low complexity, in particular with $c^*(M)=0$ or 1. This holds because there are many 2-skeleta with low complexity: for instance, any plumbing of surfaces has complexity 0 after a little modification. We do not have to wait long to find interesting manifolds: they are already there from the very beginning, at complexity $c^*=0$ or $1$.

The second nice feature is that there are many manifolds with $c^*(M)=0$ or $1$, but \emph{not too many}: we discover a posteriori that a finite number of \emph{blocks} with boundary diffeomorphic to copies of $S^2 \times S^1$ is enough to generate precisely all of them. This allows us to study and to classify these manifolds at least in some cases, for instance those with finite fundamental group.



We now describe the 4-manifolds that we have found. A summary of our discoveries is drawn very schematically in Figure \ref{summary:fig}. We start by recalling a simple and very productive technique to build many closed four-manifolds.

\begin{figure}
\begin{center}
\includegraphics[width = 16 cm]{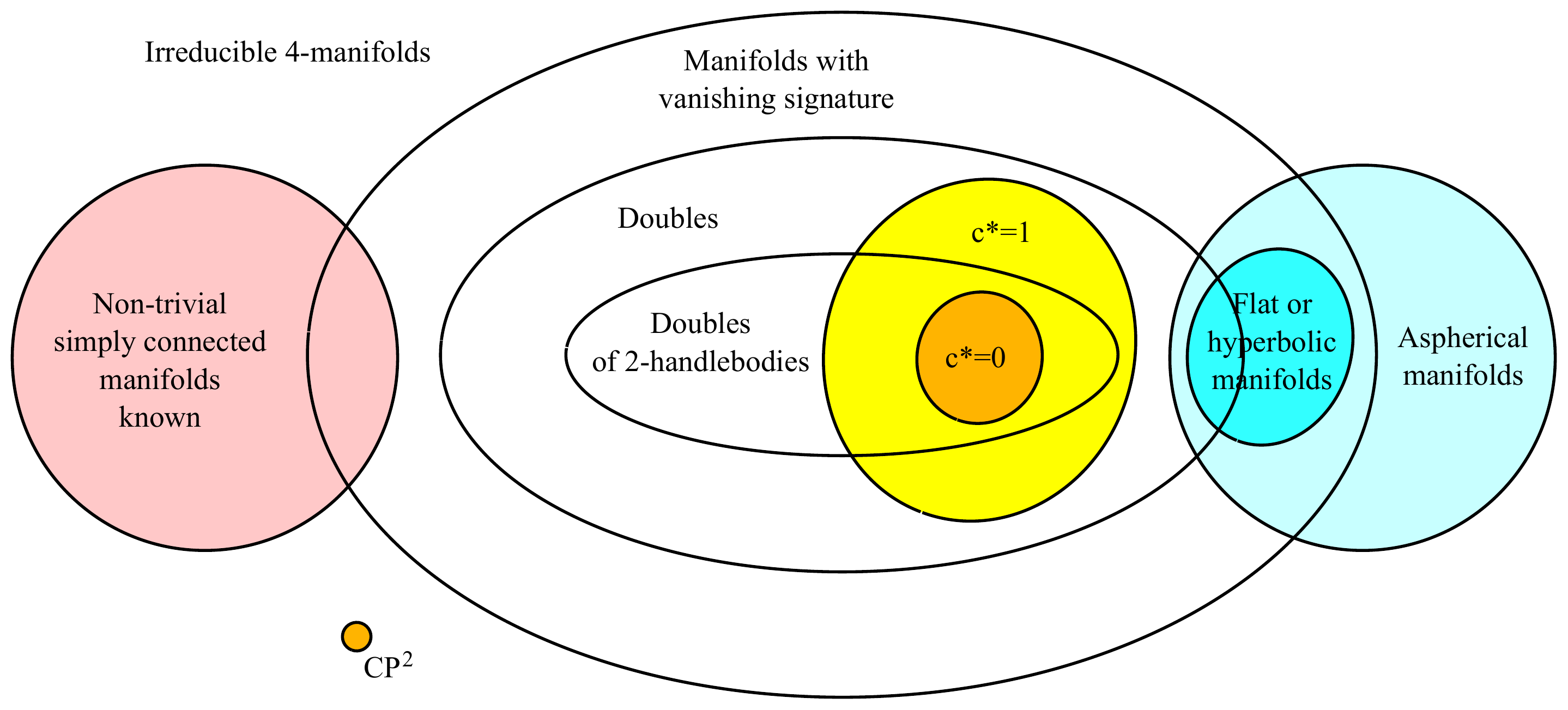}
\nota{A schematic picture that describes the irreducible orientable 4-manifolds $M$ with $c^*(M)=0$ and $1$ among all irreducible orientable 4-manifolds. Here the ``trivial'' simply connected manifolds are $S^4$, $S^2 \times S^2$, and $\matCP^2$, and they all have $c^*=0$. The complex projective plane $\matCP^2$ is the only irreducible manifold with $c^*\leq 1$ that is not a double. A \emph{2-handlebody} is a 4-manifold that decomposes into 0-, 1-, and 2-handles.}
\label{summary:fig}
\end{center}
\end{figure}

\subsection{From presentations to 4-manifolds} \label{presentations:subsection}
One of the simplest way to construct a closed four-manifold $M$ is the following: pick a finite presentation $\calP$ of some group $G$, construct a CW-complex $X$ from it in the standard way, thicken it to a smooth 5-manifold $W$, and take $M=\partial W$. We denote by $\calS(\calP)$ the set of all the four-manifolds $M$ constructed from $\calP$ in this way, considered up to diffeomorphism.

It is easy to see that $G=\pi_1(W) = \pi_1(M)$ and that there are finitely many ways to thicken $X$ to $W$, naturally parametrised by the set $H^2(X,\matZ/_{2\matZ})$, see \cite{Ma:zero, HaKrTe}. We get a spin manifold $M$ precisely in correspondence with the trivial element. In particular the set $\calS(\calP)$ is finite and it contains exactly one spin manifold. All the 4-manifolds in $\calS(\calP)$ share the same 3-skeleton, so their $\pi_1$, $\pi_2$, and all homology groups depend only on $\calP$. Moreover, if $\calP$ and $\calP'$ are related by Andrews-Curtis moves, then $\calS(\calP) = \calS(\calP')$. See \cite[Proposition 1.5]{Ma:zero} for a proof.

The manifolds in $\calS(\calP)$ are also precisely the doubles of the 4-dimensional thickenings of $X$. The 4-dimensional thickenings of $X$ are infinite in number and much more complicated to classify, so it is easier to adopt a 5-dimensional perspective here. See \cite[Lemma 2.7]{Ma:zero}. All these manifolds are mirrorable, so an orientation for them may be fixed arbitrarily.

A \emph{stabilisation} of $\calP$ is a move that consists of adding a new generator $g$ and two new relators $g,g$. We get a new presentation $\calP'$ that is not Andrew-Curtis equivalent to $\calP$ since they have different deficiency. The new CW-complex is of course $X' = X\vee S^2$. The 4-manifolds in the new set $\calS(\calP')$ are those in $\calS(\calP)$ plus  a connected sum with either $S^2\times S^2$ or $S^2 \timtil S^2$ to each.

We can assign to any presentation $\calP$ a \emph{connected complexity} $c^*(\calP)$ much in the same way as we do for the 4-manifolds, by estimating the minimum complexity of a 2-dimensional CW complex that represents $\calP$.

\subsection{Complexity zero.} \label{zero:subsection}
The closed smooth 4-manifolds $M$ with $c^*(M)=0$ were characterised in \cite{Ma:zero}. These are precisely those of the form
$$M = M'\#_h\matCP^n$$
where:
\begin{itemize}
\item $M'$ is any manifold constructed from some presentation $\calP$ with $c^*(\calP)=0$;
\item $h\in \matZ$ is any integer. 
\end{itemize}
When $h$ is negative, the symbol $M'\#_h\matCP^2$ indicates a connected sum with $|h|$ copies of $\overline{\matCP}^2$. In other words $M$ is obtained from $M'$ via some topological blow-ups. 

To characterise $M'$ we need to understand the presentations $\calP$ with $c^*(\calP)=0$. Recall that these are actually important only up to Andrews-Curtis moves. Consider the standard presentations of the cyclic and dihedral groups:
$$\calC_n = \langle a\ |\ a^n \rangle, \quad \calD_{2n} = \langle a, b\ |\ a^2, b^2, (ab)^n \rangle.$$
The unique spin 4-manifold in $\calS(\calC_n)$ is sometimes called the \emph{spun lens space} and is the boundary of the 5-manifold $(L(n,1)  \setminus B^3) \times D^2$. The sets $\calS(\calC_n)$ and $\calS(\calD_{2n})$ contain between 1 and 6 distinct manifolds depending on $n$, see \cite[Proposition 1.6]{Ma:zero}.
 
It is shown in \cite{Ma:zero} that the presentations $\calP$ of a finite group $G$ with $c^*(\calP)=0$ are precisely those that belong to the following three families:
$$\calC_{2^k}, \quad \calC_{3\cdot 2^k}, \quad \calD_{2^k}$$
and those obtained from them by stabilisation. Here $k\geq 0$ is any integer. From this we deduce that the four-manifolds $M$ with $c^*(M)=0$ and finite $\pi_1(M)$ are precisely those of the following type:
$$M=M'\#_k(S^2 \times S^2) \#_h(\matCP^2)\#_k(\overline{\matCP}^2)$$ 
where $M'$ is obtained by one of the three types of presentations just listed. In particular, the simply connected ones are just those that we would expect, namely:
$$S^4, \quad \#_h(S^2 \times S^2), \quad \#_h \matCP^2 \#_k \overline{\matCP}^2.$$
There are also many presentations $\calP$ of infinite groups $G$ with $c^*(\calP)=0$. For instance, we find all the standard presentations of the free groups $F_n$, of the surface groups $\pi_1(S_g)$, and of the products $F_n \times \matZ$ and $\matZ \times \matZ/_{2^n\matZ}$. The first three presentations give rise in particular to the following spin 4-manifolds:
$$\#_n(S^3 \times S^1), \qquad S_g \times S^2, \qquad \big(\#_n (S^2 \times S^1)\big) \times S^1.$$ 
All these 4-manifolds $M$ have $c^*(M)=0$. See Section \ref{examples:subsection}.

\subsection{Complexity one.}
The main result of this paper is a characterisation of all the 4-manifolds $M$ with $c^*(M)=1$. This is stated below as Theorem \ref{1:teo}. 

We describe this result here informally: roughly speaking, the set of all 4-manifolds $M$ with $c^*(M)=1$ may be subdivided into \emph{expected} and \emph{unexpected} manifolds. The expected manifolds are constructed with the presentation technique like in the $c^*=0$ case, from presentations $\calP$ with $c^*(\calP)=1$. These manifolds are all doubles of 2-handlebodies (plus possibly some topological blow-ups). The unexpected manifolds are of some new type: they are still doubles of some 4-manifolds with boundary (plus possibly some topological blow-ups), but they are sometimes \emph{not} doubles of 2-handlebodies. 
Recall that a 2-handlebody is a 4-manifold that decomposes into 0-, 1-, and 2-handles.
See the sketch in Figure \ref{summary:fig}.

We now describe these two sets of manifolds with more details.

\subsection{The expected manifolds.} \label{expected:subsection}
With the same techniques adopted in complexity zero, we can easily prove that 
among the manifolds $M$ with $c^*(M)=1$ we find all those that may be written as  
$$M = M'\#_h\matCP^n$$
where $M'\in \calS(\calP)$ for some presentation $\calP$ with $c^*(\calP)=1$. 
We show some examples. Consider the Von Dyck groups:
$$D(l,m,n) = \langle a,b\ |\ a^l, b^m, (ab)^n \rangle$$
and also the following groups defined by Coxeter in \cite{Co}:
$$ (l,m\ |\ n,k) = \langle a,b\ |\ a^l, b^m, (ab)^n, (ab^{-1})^k \rangle.$$
We easily prove in Section \ref{examples:subsection} that the following presentations $\calP$ have $c^*(\calP) \leq 1$:
$$\calC_n, \quad \calC_{5n}, \quad \calD_{2n}, \quad D(l,m,n), \quad (l,m\ |\ n,k) $$
as soon as the numbers $l,m,n,k$ are all of the type $2^a3^b$, that is if they are divisible only by 2 or 3. All the manifolds $M\in \calS(\calP)$ for these presentations $\calP$ have $c^*(M) \leq 1$.

In particular, the following presentations describe some finite groups:
$$D(2,3,3), \quad D(2,3,4), \quad (3, 3\ |\ 4,4), \quad (4, 9\ |\ 2, 3).$$
Coxeter showed in \cite{Co} that the last two groups are isomorphic to $\PSL(2,7)$ and $\PSL(2,17)$, two simple groups of order 168 and 2448 respectively. 

Among the 4-manifolds $M$ with $c^*(M)=1$ there are some having these two simple finite groups as fundamental groups. Their presence suggests that a complete classification of all manifolds $M$ with finite $\pi_1(M)$ and $c^*(M)=1$ is a more difficult goal to achieve than in complexity $c^*=0$. We do not attempt to attack this problem here and leave this for future work. 

\subsection{The unexpected manifolds}
Quite unexpectedly, there is much more than that. Among the manifolds $M$ with $c^*(M)=1$, we also find many boundaries of 5-dimensional thickenings of 3-dimensional CW complexes, that are allowed to have some 3-dimensional strata of a very controlled nature. 

Said with other words, we find some 4-manifolds $M$ that are doubles of compact 4-manifolds with boundary, but that are \emph{not} doubles of any 2-handlebody. The simplest example is the manifold
$$M = \matRP^3 \times S^1$$
that has $c^*(M)=1$. It is clearly the double of $\matRP^3 \times [0,1]$, but it cannot be the double of a 2-handlebody, see Proposition \ref{not:2:prop}. 

Note the interesting fact that we already encountered some manifolds with the same fundamental group as $\matRP^3 \times S^1$ in complexity zero, since we mentioned above that the presentation 
$$\calP = \langle a,b\ |\ a^2, [a,b] \rangle$$
has $c^*(\calP)=0$. Since $\calP$ is balanced, a 5-dimensional thickening $W$ of $\calP$ has $\chi(W) = 1$ and hence its boundary $M=\partial W$ has $\chi(M)=2$. By stabilising $k$ times we can construct manifolds $M$ with $\pi_1(M) = \matZ \times \matZ/_{2\matZ}$, $c^*(M)=0$, $\sigma (M)=0$, and $\chi(M) = 2+2k$ for every $k\geq 0$. However the manifold $\matRP^3 \times S^1$ has $\chi=0$ and $c^*=1$ and clearly cannot be constructed in this way. 

\subsection{Finitely many blocks}
There are plenty of manifolds in complexity one: there are more than we expected. Luckily, however, these manifolds still satisfy the same type of finiteness property that holds in complexity zero: all the manifolds $M$ with $c^*(M)\leq 1$ are obtained by gluing altogether some copies of finitely many \emph{blocks} along their boundaries, each diffeomorphic to $S^2 \times S^1$. There are 8 blocks with $c^*=0$ and 12 with $c^*=1$. Among the latter, eleven were expected and one was not: this last one is responsible for all the unexpected manifolds mentioned above. See Theorem \ref{1:teo}. 

\begin{figure}
\begin{center}
\includegraphics[width = 12 cm]{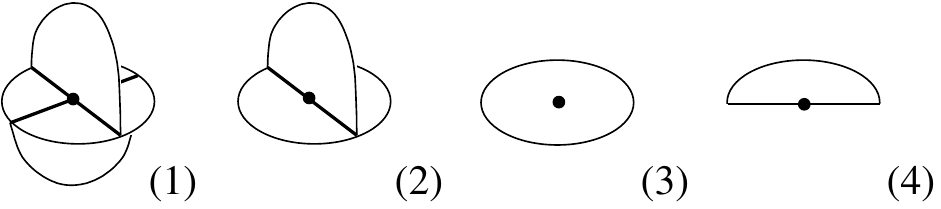}
\nota{Neighborhoods of points in a simple polyhedron with boundary.}
\label{shadow:fig}
\end{center}
\end{figure}

\section{Main results} \label{results:section}
We now expose more formally all the results proved in this paper. We start with the definitions of simple polyhedron and shadow complexity. We work in the piecewise-linear category.

\subsection{Simple polyhedron} 
A 2-dimensional compact polyhedron $X$ is \emph{simple} if every point has a star neighbourhood of one of the types shown in Figure \ref{shadow:fig}. 
The \emph{boundary} $\partial X$ is the union of all points of type (4), and is a union of circles. In this section we consider implicitly only simple polyhedra without boundary, except when mentioned otherwise. 

The points of type (1) are called \emph{vertices}. The points of type (2) and (3) form respectively some manifolds of dimension 1 and 2: their connected components are called respectively \emph{edges} and \emph{regions}. An edge is either an open interval or a circle, and a region is the interior of a compact surface with boundary. The \emph{singular part} $SX$ of $X$ is the union of all points of type (1) and (2). It is a 4-valent graph (possibly disconnected and/or with circular components).

\subsection{Shadow complexity}
A \emph{shadow} for a smooth closed orientable 4-manifold $M$ is a locally flat simple 2-dimensional polyhedron $X\subset M$ such that $M$ is obtained from a regular neighbourhood of $X$ by adding 3- and 4-handles. The polyhedron $X$ should be thought of as a 2-skeleton for $M$. This notion was introduced by Turaev in \cite{Tu} and is exposed with more details in Section \ref{definitions:section}.

As defined by Costantino \cite{Co}, the \emph{complexity} $c(X)$ of a shadow $X$ is its number of vertices, and the \emph{shadow complexity} $c(M)$ of a closed oriented smooth 4-manifold $M$ is the minimum complexity of a shadow $X$ for $M$. This definition is inspired by Matveev's complexity of 3-manifolds \cite{Mat88}.

The closed oriented 4-manifolds with shadow complexity zero were studied by the second author in \cite{Ma:zero}.
The original goal of the present research was to study those with complexity one. During the investigation we realised that it is quite natural in this setting to study a larger set of 4-manifolds, related to a more relaxed notion of complexity that we now introduce.

\begin{defn}
The \emph{connected complexity} $c^*(X)$ of a simple polyhedron $X$ is the maximum number of vertices that are contained in some connected component of $SX$. The \emph{connected shadow complexity} $c^*(M)$ of $M$ is the minimum connected complexity of a shadow $X$ for $M$.
\end{defn}

In particular, for a simple polyhedron $X$ we have:
\begin{itemize}
\item $c^*(X)=0$ $\Longleftrightarrow$ $SX$ consists of disjoint circles;
\item $c^*(X)=1$ $\Longleftrightarrow$ $SX$ consists of disjoint circles and 8-shaped graphs.
\end{itemize}

We clearly have $c^*(X) \leq c(X)$ and $c^*(X) = 0 \Longleftrightarrow c(X)=0$, which implies $c^*(M) \leq c(M)$ and $c^*(M) = 0 \Longleftrightarrow c(M)=0$ for every smooth closed oriented 4-manifold $M$.
In the rest of this introduction we work only with $c^*$ and disregard $c$. In this paper we consider only oriented 4-manifolds, although $c^*(M)$ clearly does not depend on the orientation for $M$.

Roughly speaking, the complexity $c^*(M)$ is a measure of how complicated the 2-skeleton of $M$ is. 

It is easy to prove that the set of manifolds $M$ with $c^*(M) \leq n$ is closed under connected sum, for any natural number $n$, see Proposition \ref{closed:prop}. This is probably not the case for the manifolds with $c(M) \leq n$, and it is one reason for preferring $c^*$ to $c$.

The main contribution of this paper is a characterisation of all the closed orientable smooth 4-manifolds $M$ with $c^*(M) = 1$. To introduce this result we first recall with some detail what is known about 4-manifolds with $c^*(M) = 0$, previously studied in \cite{Ma:zero}.

\subsection{Complexity zero} \label{zero:subsection}
Let a \emph{block} be a compact oriented smooth 4-manifold, possibly with boundary consisting of copies of $S^2 \times S^1$. Let $\calS$ be a finite set of blocks. A \emph{graph manifold generated by $\calS$} is any closed oriented 4-manifold obtained by taking some copies of elements in $\calS$ and glueing their boundaries in pairs, via any pairing and any orientation-reversing diffeomorphisms.
The name is of course inspired from Waldhausen's three-dimensional graph manifolds that are defined similarly as the set of all 3-manifolds generated by $D^2\times S^1$ and $P\times S^1$, where $P$ is a pair-of-pants.

The following theorem was proved in \cite{Ma:zero}.

\begin{figure}
\begin{center}
\includegraphics[width = 15 cm]{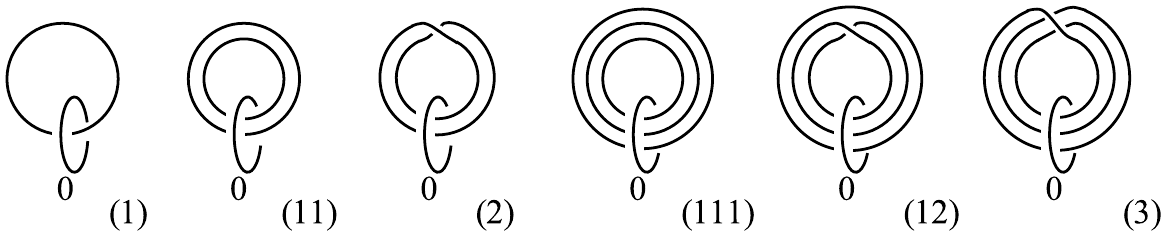}
\nota{Some links in $S^2 \times S^1 \subset S^3 \times S^1$, drawn as Kirby diagrams.}
\label{blocchi:fig}
\end{center}
\end{figure}

\begin{teo} \label{0:teo}
A closed oriented 4-manifold $M$ has $c^*(M)=0$ if and only if 
$$M=M' \#_h \matCP^2$$ 
where $h\in \matZ$ and $M'$ is is a graph manifold generated by the set 
$$\calS_0 = \big\{M_1, M_{11}, M_2, M_{111}, M_{12}, M_3, N_1, N_2, N_3\big\}.$$
If $M'\neq \#_k(S^3 \times S^1)$, this holds if and only if $M'$ is:
\begin{enumerate}
\item the double of a 4-dimensional thickening of a $X$ with $c^*(X)=0$, or equivalenty
\item the boundary of a 5-dimensional thickening of a $X$ with $c^*(X)=0$.
\end{enumerate} 
The symbol $X$ indicates a 2-dimensional simple polyhedron without boundary.
\end{teo}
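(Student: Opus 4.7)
The plan is to reverse-engineer the 4-manifold from its shadow. Start with a shadow $X$ for $M$ having $c^*(X)=0$, so that $SX$ is a disjoint union of circles. Around each singular circle, three surface sheets of $X$ meet in a local model of the form ``$Y\times S^1$'' (where $Y$ is the tripod); the combinatorial twisting data of this model around a circle admits a short list of cases, and cutting $X$ along $SX$ decomposes it into a finite collection of compact surfaces attached back along these circles in prescribed ways.

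Next I would apply Turaev's/Costantino's thickening construction to build the canonical regular neighborhood $N(X) \subset M$, together with a natural block decomposition: to each region $R$ of $X$ one associates a $D^2$-bundle over $R$ whose Euler number is dictated by the gleam, and to each singular circle one associates the 4-dimensional thickening of its local model. These pieces are glued to one another along $S^2\times S^1$ components of their boundaries, because a neighborhood of a singular circle in $X$ bounds an $S^2$-bundle over it. The ambient manifold $M$ is then recovered from $N(X)$ by filling in the remaining boundary components with 3- and 4-handles. After normalizing (collapsing free disk regions, absorbing trivial bundles over $D^2$, and enumerating the possible twisting patterns at singular circles), I would check that each block one can obtain has boundary a disjoint union of copies of $S^2\times S^1$ and coincides with one of the nine blocks in $\calS_0$.

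The gleam on each region is a half-integer; two shadows differing only by an integer change of gleam on some region yield 4-manifolds differing by a topological blow-up. Since the $D^2$-bundles over surfaces with different Euler numbers are related by $\matCP^2$ connect sums, this step supplies the $\#_h\matCP^2$ factor in the statement. The converse direction is easier: given $M$ built as a graph manifold from blocks in $\calS_0$ (possibly with blow-ups), one assembles an explicit shadow by taking each block's dual 2-skeleton and gluing along the prescribed $S^2\times S^1$ boundary identifications; one verifies directly that this shadow has $c^*=0$.

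Finally, for the ``double'' reformulation, note that if $X$ is a 2-dimensional simple polyhedron without boundary, then a 5-dimensional thickening $W$ of $X$ admits a symmetric splitting $W = V \cup_{\partial V} V$ where $V$ is a 4-dimensional thickening of $X$, giving $\partial W$ as a double; this is the content of \cite[Lemma 2.7]{Ma:zero}. Conversely a shadow with $c^*=0$ whose regions are all 2-dimensional furnishes such an $X$. The excluded case $M=\#_k(S^3\times S^1)$ corresponds to a degenerate situation where the natural shadow is 1-dimensional (a wedge of circles) and thus violates the 2-dimensionality convention on $X$. The main obstacle in carrying out this plan is the bookkeeping in the enumeration step: one must simultaneously parametrize the local twisting at each singular circle and the homeomorphism types of the attaching surfaces, and verify that, after all normalizations and the gleam reduction, exactly the nine blocks of $\calS_0$ remain.
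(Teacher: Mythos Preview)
Your outline captures the constructive direction and the equivalence with doubles reasonably well, but there is a genuine gap in the hard direction (showing that every $M$ with $c^*(M)=0$ arises as stated).

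The central error is the sentence ``two shadows differing only by an integer change of gleam on some region yield 4-manifolds differing by a topological blow-up.'' This is false. Changing a single gleam alters $\partial N(X)$: for instance $S^2$ with gleam $0$ is a shadow of $S^4$, with gleam $\pm 1$ it is a shadow of $\pm\matCP^2$, but with gleam $2$ one gets $\partial N(X)=\matRP^3$, which is not $\#_h(S^2\times S^1)$ at all, so $X$ ceases to be a shadow of any closed manifold. The relation between gleams and $\matCP^2$-summands is mediated by specific local moves (bubbles), not by arbitrary gleam shifts. Consequently your ``gleam reduction'' step, and the companion claim that disc-bundles over surfaces with different Euler numbers differ by $\matCP^2$-connect-sums, do not go through.

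What your plan is missing is the essential role of the constraint $\partial N(X)\cong\#_h(S^2\times S^1)$. In the actual argument (this theorem is quoted from \cite{Ma:zero}; the present paper reproduces and extends the method for $c^*\le 1$), the decomposition of $X$ into pieces $D,P,Y_2,Y_{111},Y_{12},Y_3$ induces a decomposition of $\partial N(X)$ into Seifert pieces glued along tori. Every torus in $\#_h(S^2\times S^1)$ has a compressing disc; one determines whether this disc is horizontal or vertical and uses it to perform a move on the shadow (Figures~\ref{add_disc:fig} and \ref{hv:fig}) that either splits $X$ along that edge or eliminates a piece. Iterating, one reduces to a controlled combinatorial situation (decorated trees with levels, together with a plumbing-line lemma of Neumann--Weintraub type, Lemma~\ref{plumbing:lemma}) in which the $\matCP^2$-summands and the $\calS_0$-block structure can finally be read off. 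None of this machinery---compressing discs, shadow moves, the tree-with-levels induction---appears in your sketch, and it cannot be bypassed by a purely local normalization of gleams on $X$.
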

As we said above, when $h$ is negative the symbol $M'\#_h\matCP^2$ indicates a connected sum with $|h|$ copies of $\overline{\matCP}^2$. The 9 manifolds in $\calS_0$ are in some vague sense among the simplest blocks one could reasonably construct: the manifolds $M_1, M_{11}, M_2, M_{111}, M_{12}, M_3$ are obtained from $S^3 \times S^1$ by drilling the corresponding links in Figure \ref{blocchi:fig}, that are contained in $S^2 \times S^1 \subset S^3 \times S^1$. In particular we get $M_1 = D^3 \times S^1$ and $M_{11} = S^2 \times S^1 \times [-1,1]$. The manifolds $N_1, N_2, N_3$ are:
$$N_1 = S^2 \times D^2, \quad N_2 = S^2 \times A, \quad N_3 = S^2 \times P.$$

Here $A$ and $P$ are an annulus and a pair-of-pants. Actually $M_{11}=N_2$, so there are in fact 8 blocks in $\calS_0$. Moreover $N_2 = N_1 \cup N_3$ so we could actually remove $N_2$ from the list and 7 blocks would suffice: we keep $N_2$ in $\calS_0$ only for aesthetic reasons.

All the blocks in $\calS_0$ are mirrorable, so we can fix their orientations arbitrarily. In particular all the blocks in $\calS_0$ have vanishing signature $\sigma = 0$ and therefore $\sigma(M')=0$, which gives $\sigma(M) = h$ in Theorem \ref{0:teo}.

The most relevant information contained in Theorem \ref{0:teo} is that finitely many manifolds are enough to generate precisely all the manifolds $M$ with $c^*(M) = 0$. 
Given how wild 4-manifolds can be, this is a very satisfactory picture: there are infinitely many manifolds with $c^*(M)=0$, but they are generated by finitely many ones.

Another important information is that every such $M$ is the boundary of a five-dimensional thickening of some $X$ with $c^*(X)=0$. The five-dimensional thickenings are better treated via presentations, as described in Section \ref{presentations:subsection}. We can switch from polyhedra to presentations and back, thanks to the nice 1-1 correspondence:
$$\left\{\!
\begin{array}{c}{\rm compact\ 2-dimensional\ polyhedra} \\ {\rm up\ to\ 3-deformation} \end{array}\!\right\} \longleftrightarrow
\left\{\!
\begin{array}{c}{\rm finite\ presentations} \\ {\rm up\ to\ Andrews-Curtis\ moves} \end{array}\!\right\}.$$
A \emph{3-deformation} on a 2-dimensional polyhedron $X$ is a simple homotopy that is a composition of expansions and collapses that involve only simplexes of dimension at most 3. See \cite{HoMe} for an introduction to this fascinating subject.

Following \cite{Ma:zero}, we define the \emph{connected complexity} of a presentation $\calP$ as the minimum connected complexity of a simple polyhedron $X$, possibly with boundary, that represents $\calP$. All the presentations $\calP$ of some finite group $G$ with $c^*(\calP)=0$ were classified in \cite{Ma:zero} and the results were described in Section \ref{zero:subsection}.

We make some other observations concerning the signature and the Euler characteristic of the manifolds with $c^*=0$. We note that the connected sum with $\matCP^2$ with either orientation is the only available tool for producing 4-manifolds with non-zero signature when $c^*=0$.
The Euler characteristic of the blocks in $\calS_0$ is always zero, except
$$\chi(N_1) = 2, \qquad \chi(N_3) = -2.$$

In Theorem \ref{0:teo}, obviously $\chi(M')$ is the sum of the Euler characteristics of the blocks involved to construct $M'$, and $\chi(M) = \chi(M')+|h|$. The block $N_3$ contributes negatively to the Euler characteristic, while $N_1$ and $\matCP^2$ contribute positively.

The set of all closed 4-manifolds with $c^*(M)=0$ is closed under connected sum and finite covering. A manifold with $c^*(M)=0$ is never aspherical; see \cite{Ma:zero}.

\subsection{Complexity one}
We are now ready to expose the main result of the paper, that is a theorem analogous to Theorem \ref{0:teo} for the case $c^*(M) \leq 1$. 

\begin{teo} \label{1:teo}
A closed oriented 4-manifold $M$ has $c^*(M)\leq 1$ if and only if 
$$M=M' \#_h \matCP^2$$ 
where $h\in \matZ$ and $M'$ is equivalently of one of these types:
\begin{enumerate}
\item any graph manifold generated by the set 
$$\calS_1 = \calS_0 \cup \big\{M_1^1, \ \ldots,\ M_{12}^1 \big\};$$
\item the double of a 4-dimensional thickening of a $\bar X$ with $c^*(\bar X) \leq 1$.
\end{enumerate} 
The symbol $\bar X$ denotes a polyhedron of dimension $2$ or $3$, that consists of a simple 2-dimensional one $X$ with $c^* \leq 1$ that may have non-empty boundary, plus some copies of $\matRP^3$, each attached to a component of $\partial X$ along any projective line $l\subset \matRP^3$.
\end{teo}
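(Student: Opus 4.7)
The plan is to extend the strategy used for Theorem~\ref{0:teo} in \cite{Ma:zero} to the richer singular structure allowed when $c^* \leq 1$. First I would reduce the statement to $M'$ by showing that a connected sum with $\matCP^2$ (with either orientation) adds no connected complexity: a standard local shadow move that replaces a gleam $g$ on a disk region by $g \pm 1$ corresponds, on the 4-manifold side, to adding (or removing) a $\matCP^2$-summand. It therefore suffices to classify the closed $M'$ admitting a shadow $X$ with $c^*(X) \leq 1$ in the two equivalent forms (1) and (2).

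For the forward direction, take such an $X$ and exploit the fact that $SX$ is a disjoint union of circles and $8$-shaped graphs. Cutting $X$ along a small collar of each connected component of $SX$ decomposes it into three kinds of local pieces: pure surface pieces, collars of circular singular components, and neighbourhoods of $8$-shaped components. Upstairs, a regular neighbourhood $N(X) \subset M'$ decomposes correspondingly into 4-dimensional pieces glued along copies of $S^2 \times S^1$, and $M'$ is recovered by filling the remaining $S^2 \times S^1$ boundary components with 3- and 4-handles (or, in the exceptional case below, with $\matRP^3$ times an interval). The surface and circle pieces already appear in $\calS_0$ thanks to Theorem~\ref{0:teo}; the genuine novelty is the enumeration of 4-dimensional thickenings of a small shadow piece whose singular set is a single $8$-shape.

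The main obstacle is precisely this local classification. It requires a case analysis on: (a) the combinatorics at the 4-valent vertex with its two loops; (b) the cyclic arrangement of incident regions on the two sides of $X$; and (c) the gleams on these regions, taken modulo the integer shifts absorbed into the $\#_h\matCP^2$-factor. Once completions by 3- and 4-handles along the remaining $S^2 \times S^1$ boundaries are included, the bookkeeping yields eleven ``expected'' blocks $M_1^1,\ldots,M_{11}^1$, each of which is the double of a 2-handlebody. A twelfth ``unexpected'' block $M_{12}^1$ appears when some $S^2 \times S^1$ boundary can only be capped by removing an open ball from $\matRP^3$ and taking the product with an interval, yielding a 3-dimensional stratum rather than a 2-dimensional one. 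This single exceptional block accounts for examples such as $\matRP^3 \times S^1$ and is responsible for the $\matRP^3$-strata permitted in $\bar X$ in formulation (2).

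The equivalence (1)$\Leftrightarrow$(2) then follows structurally: each block in $\calS_1$ is itself the double of a 4-thickening of a standard small polyhedron $\bar X_i$ of the type described in (2), and gluings along $S^2 \times S^1$ correspond bijectively to identifications of boundary circles of the $\bar X_i$'s that assemble into a global $\bar X$. Conversely, from (2) one recovers (1) by decomposing $\bar X$ along collars of its singular circles. The remaining converse inclusion, that any graph manifold generated by $\calS_1$ has $c^* \leq 1$, is proved by exhibiting an explicit shadow of $c^*\leq 1$ for each block in $\calS_1$ and verifying that $c^*$ is additive under the relevant boundary connected sum along $S^2 \times S^1$, so that the generated graph manifolds all admit a shadow with $c^*\leq 1$.
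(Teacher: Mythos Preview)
Your proposal misidentifies where the difficulty lies. The enumeration of the eleven local polyhedra $X_1,\ldots,X_{11}$ as regular neighbourhoods of an $8$-shaped singular graph is routine combinatorics (Figure~\ref{Xall:fig}), and the corresponding blocks $M_1^1,\ldots,M_{11}^1$ arise from them by a standard doubling construction; this, together with the subadditivity of $c^*$ under assembling, is the easy constructive half. The hard half is the converse: given a shadow $X$ with $c^*(X)\leq 1$ and \emph{arbitrary gleams}, show that the resulting closed $4$-manifold is generated by $\calS_1$ up to $\matCP^2$-summands. Your proposal contains no mechanism for this.

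In particular, your claim that gleams may be taken ``modulo the integer shifts absorbed into the $\#_h\matCP^2$-factor'' is incorrect: the move that adds a $\matCP^2$-summand alters the gleam only on a specific local bubble (Figure~\ref{bubble:fig}), not on an arbitrary region, and a fixed polyhedron with different gleams can yield $4$-manifolds not related by any number of $\matCP^2$-summands. The genuine constraint is the condition $\partial N(X)\cong\#_h(S^2\times S^1)$. This forces every torus in the induced toral decomposition of $\partial N(X)$ to admit a compressing disc (Lemma~\ref{peculiar:lemma}), and the $3$-manifolds lying above the pieces $X_1,\ldots,X_{11}$ are eleven cusped \emph{hyperbolic} manifolds $W_1,\ldots,W_{11}$. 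One must then classify, for each $W_i$, exactly which Dehn fillings produce some $\#_{h'}(S^2\times S^1)$; this is done in Section~\ref{exceptional:section} using the $6$-Theorem, explicit homology computations, and prior work on the exceptional fillings of chain links. Only with this classification in hand, and with a large catalogue of shadow moves developed in Section~\ref{moves:section}, can one iteratively simplify an arbitrary shadow with $c^*\leq 1$ through roughly a hundred case distinctions until it is visibly assembled from blocks in $\calS_1$. The twelfth block $M_{12}^1$ does not appear because some $S^2\times S^1$ boundary ``can only be capped'' by a punctured $\matRP^3$; it emerges at the very end of this simplification (the case (kh) in Section~\ref{proof:section}) as the one configuration that resists all the available moves.
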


\begin{figure}
\begin{center}
\includegraphics[width = 12.5 cm]{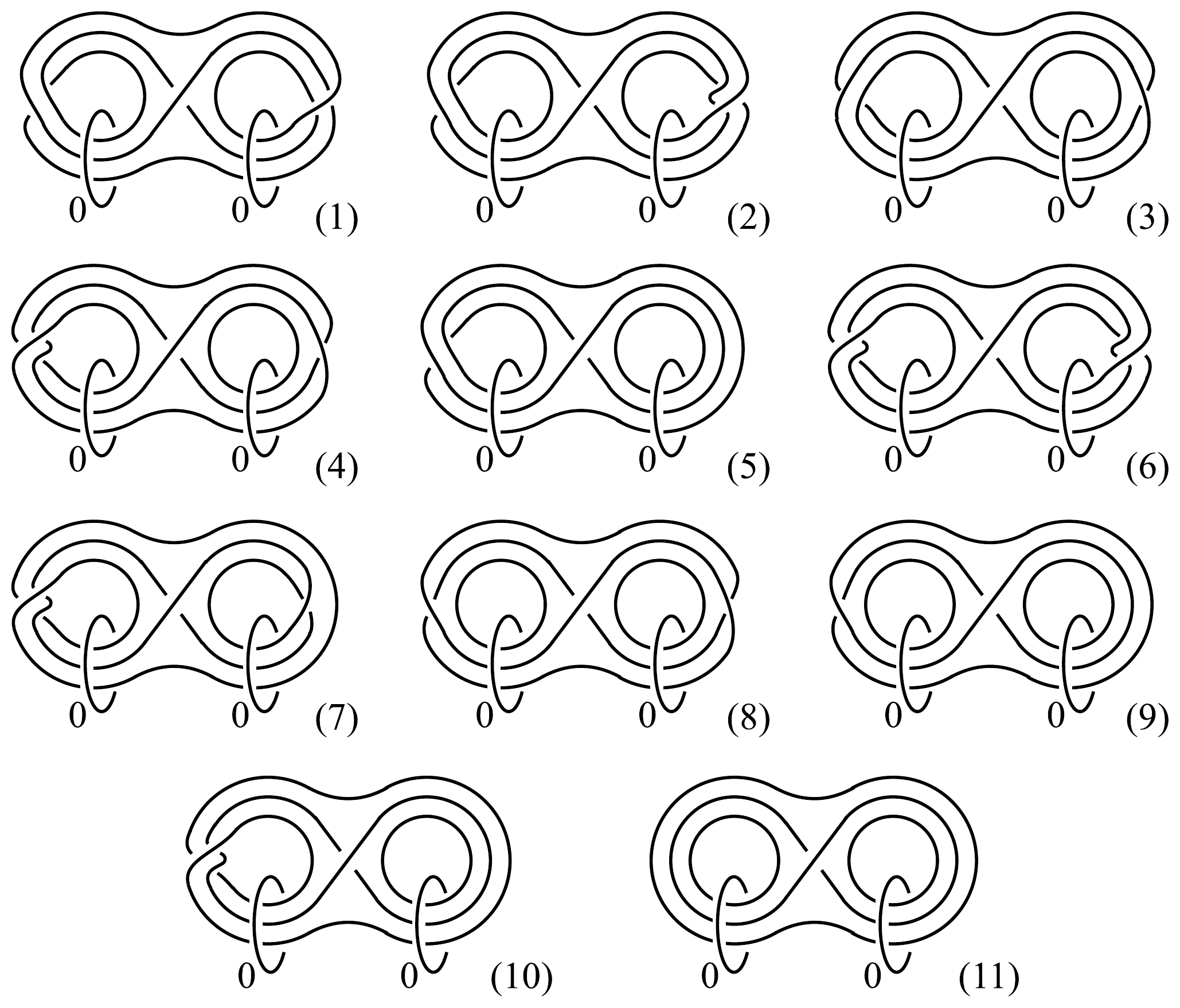}
\nota{Eleven links in $\#_2(S^2 \times S^1) \subset \#_2(S^3 \times S^1)$.}
\label{M:fig}
\end{center}
\end{figure}

We now describe the 12 additional blocks in $\calS_1$. The blocks 
$$M_1^1,\ \ldots,\ M_{11}^1$$ 
are obtained from $\#_2(S^3 \times S^1)$ by drilling the corresponding links in $\#_2(S^2 \times S^1) \subset \#_2(S^3 \times S^1)$ shown in Figure \ref{M:fig}. The last block 
$$M_{12}^1$$ 
is the result of drilling $\matRP^3 \times S^1$ along the curve $l \times \{{\rm pt} \}$ where $l\subset \matRP^3$ is any projective line. The manifolds $M_{i}^1$ are mirrorable for all $i=1,\ldots,12$, so we choose any orientation for them. All the blocks in $\calS_1$ have $\sigma =0$, so again we get $\sigma(M')=0$ and $\sigma(M)=h$.

As in Theorem \ref{0:teo}, the most relevant information contained in Theorem \ref{1:teo}-(1) is that a finite set of blocks is enough to generate precisely all the manifolds $M$ with $c^*(M)\leq 1$. We have thus discovered that this satisfactory picture in complexity zero is also predominant in complexity one.

We remark that the blocks $M_1^1, \ldots, M_{11}^1$ were somehow expected from the beginning of our examination, but the last one $M_{12}^1$ was not, and the presence of this additional block has some important consequences. It produces a more involved statement in Theorem \ref{1:teo}-(2), where $\bar X$ is a polyhedron that may contain both 2- and 3-dimensional strata. The manifold $M'$ obtained as a double of a thickening of $\bar X$ is still the double of a compact 4-manifold with boundary as in complexity zero, but $M'$ is \emph{not} necessarily the double of a 2-handlebody. For instance, we discover that the manifold
$$M = \matRP^3 \times S^1$$
has $c^*(M)=1$, it is the double of $\matRP^3 \times [0,1]$, but it cannot be the double of a 2-handlebody, see Proposition \ref{not:2:prop}. This is the most important novelty in complexity one that we could notice.

The manifolds generated by all the blocks in $\calS_1 \setminus \{ M_1^{12}\}$ are precisely those that are constructed from a presentation $\calP$ with $c^*(\calP) \leq 1$. Among these we find the examples already described in Section \ref{expected:subsection}. If we use also $M_1^{12}$ we find more manifolds, like $\matRP^3 \times S^1$, that cannot be constructed from any presentation.

We now study the Euler characteristic of $M$. We have $\chi(M_i^1)=-2$ for $i=1,\ldots, 11$, and $\chi(M_{12}^1)=0$. As we pass from complexity zero to one we seem to find a greater predominance of manifolds $M$ with $\chi(M)<0$. Note that the 4-manifolds $M$ that have been studied most in the literature are either simply connected, or aspherical, or symplectic, and in all the known cases they have $\chi(M)\geq 0$ (except some ruled surfaces blown up at some points, that may be symplectic with $\chi <0$).

From Theorem \ref{1:teo}-(2) it is easy to deduce the following.

\begin{teo}
There are no aspherical closed 4-manifolds $M$ with $c^*(M) \leq 1$.
\end{teo}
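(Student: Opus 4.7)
The plan is to argue by contradiction, making heavy use of Theorem \ref{1:teo}. Suppose $M$ is closed, oriented, aspherical with $c^*(M) \leq 1$. By that theorem we write $M = M' \#_h \matCP^2$ with $M' = DW$ the double of a 4-dimensional thickening $W$ of a polyhedron $\bar X$ of dimension at most $3$.

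The first step is to rule out the $\matCP^2$ summands. If $h \neq 0$, the universal cover $\tilde M$ contains $|h| \cdot |\pi_1(M)|$ copies of $\matCP^2 \setminus B^4$ (or $\overline{\matCP}^2 \setminus B^4$), each contributing a non-zero generator to $\pi_2(\tilde M) = \pi_2(M)$, contradicting asphericity. Hence $h = 0$ and $M = DW$. I would then exploit the symmetry of the double: the quotient by the swapping involution $\tau\colon M \to M$ gives a retraction $M \to M / \tau = W$ of the inclusion $W \hookrightarrow M$. Since retracts of aspherical spaces are aspherical (higher homotopy groups inject under the split monomorphism $i_* = r_*^{-1}$), $W$ is itself aspherical; and because $W \simeq \bar X$ with $\dim \bar X \leq 3$, the corresponding $K(\pi_1 W, 1)$ has homotopy dimension at most $3$.

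Next I would split into two cases according to whether $\bar X$ contains any copy of $\matRP^3$. In Case A, when $\bar X$ is purely 2-dimensional, the previous step yields $\mathrm{cd}(\pi_1 W) \leq 2$. Writing $\pi_1(M) = \pi_1(W) *_H \pi_1(W)$ with $H$ the image of $\pi_1(\partial W)$, and using that $\pi_1(M)$ is a $\mathrm{PD}_4$ group (being the fundamental group of a closed aspherical 4-manifold), I would invoke Bieri's theorem on splittings of Poincar\'e duality groups. Either the amalgam is degenerate ($H = \pi_1(W)$, giving $\pi_1(M) = \pi_1(W)$ of cohomological dimension at most $2 \neq 4$), or $H$ must be a $\mathrm{PD}_3$ group. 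Both options contradict $\mathrm{cd}(H) \leq \mathrm{cd}(\pi_1 W) \leq 2$.

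Case B, when $\bar X$ contains a copy of $\matRP^3$, is where I expect the main obstacle, and it requires a separate analysis of the universal cover. Consider the induced map $\matZ/2\matZ = \pi_1(\matRP^3) \to \pi_1(\bar X) = \pi_1(W)$. If this map is non-trivial, then $\pi_1(W)$ acquires 2-torsion, contradicting that the fundamental group of a finite-dimensional aspherical CW complex must be torsion-free (since subgroups of groups of finite cohomological dimension are torsion-free). If the map is trivial, then each $\matRP^3$ lifts to a disjoint union of copies of $S^3$ in the universal cover $\widetilde{\bar X}$, attached to a 2-dimensional cover of $X$ along circles. A Mayer--Vietoris computation then shows $H_3(\widetilde{\bar X}) \supseteq \bigoplus \matZ \neq 0$, contradicting the contractibility of $\widetilde{\bar X}$. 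Hence $W \simeq \bar X$ cannot be aspherical in this case either, completing the proof.
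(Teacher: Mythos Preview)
Your overall strategy is sound and the proof goes through, but with one factual slip in Case~B and some unnecessary machinery in Case~A. The approach also differs substantially from the paper's.

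In Case~B, your covering-space analysis is backwards: when $\pi_1(\matRP^3) \to \pi_1(\bar X)$ is \emph{trivial}, the preimage of each $\matRP^3$ in $\widetilde{\bar X}$ consists of copies of $\matRP^3$, not $S^3$ (the induced cover corresponds to $\ker \iota_* = \matZ/_{2\matZ}$, hence is the identity cover). It is when $\iota_*$ is \emph{injective} that the lifts are $S^3$'s. Fortunately this does not break your Mayer--Vietoris argument, since $H_3(\matRP^3;\matZ) = \matZ$ as well, so $H_3(\widetilde{\bar X}) \neq 0$ follows either way. In Case~A, the appeal to Bieri's theorem is unnecessary: since $W = N(X)$ is a 2-handlebody, the map $\pi_1(\partial W) \to \pi_1(W)$ is surjective, so the amalgam is always degenerate and $\pi_1(M) \cong \pi_1(W)$ directly, giving $\mathrm{cd}(\pi_1 M) \leq 2 < 4$.

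The paper's proof is considerably shorter and avoids both the group-theoretic machinery and the case split in Case~B. For Case~A it observes that the composition $M \to W \hookrightarrow M$ is a $\pi_1$-isomorphism (for the same surjectivity reason just mentioned), hence homotopic to the identity since $M$ is a $K(\pi,1)$; but this map has degree zero, a contradiction. For Case~B it argues directly that $\pi_3(\bar X) \neq 0$: the Hurewicz map $\pi_3(\matRP^3) \to H_3(\matRP^3) = \matZ$ has nontrivial image, and the inclusion $\matRP^3 \hookrightarrow \bar X$ induces an injection $H_3(\matRP^3) \hookrightarrow H_3(\bar X) = \matZ^k$, so $\pi_3(\bar X) \to H_3(\bar X)$ has nontrivial image. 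The retraction $M \to W \simeq \bar X$ then forces $\pi_3(M) \neq 0$. Your route via cohomological dimension and torsion is correct but heavier; the paper's degree and $\pi_3$ arguments are more direct.
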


\subsection{Conclusions}
The sketch in Figure \ref{summary:fig} summarises our present knowledge.
The ecosystem formed by all the 4-manifolds with $c^* \leq 1$ is still dominated by doubles, and the connected sum with some copies of $\matCP^2$ with either orientation is still the only available tool to construct manifolds with $\sigma \neq 0$. However, as opposite to the $c^*=0$ case, there are many doubles in $c^*=1$ that are not doubles of 2-handlebodies. The ecosystem has thus enlarged considerably, although many important species are still missing. There are certainly no manifolds of the following types among those with $c^* \leq 1$:
\begin{itemize}
\item manifolds with even intersection form and non-zero signature, in other words with intersection form $mE_8 \oplus n H$ with $m\neq 0$;
\item manifolds with $\sigma \neq 0$ that are not of the form $M'\#_h \matCP^2$ with $\sigma(M') =0$;
\item manifolds with $\sigma = 0$ that are not doubles;
\item aspherical manifolds.
\end{itemize}
The techniques introduced here are quite general and could be used in principle to attack the $c^*=2$ case; however it is impossible to predict how the computational complexity will grow as we pass from $c^*=1$ to $c^*=2$. As shown in this paper, to prove Theorem \ref{1:teo} we need to (1) understand the exceptional Dehn fillings of 11 hyperbolic 3-manifolds, and (2) produce by hand a considerable amount of combinatorial moves between shadows, that are strictly necessary to simplify about 100 possible local configurations at the very end of the paper. Unfortunately the second step was done entirely by hand and not by computer. A more computer-assisted strategy would be very much desirable, but we do not know how to implement it for the moment.

We also stress that we do not know if for every value of $n$ there are only finitely many blocks that generate all the closed manifolds $M$ with $c^*(M) \leq n$. For the moment we only know the following, proved in Section \ref{finiteness:subsection} using Freedman's recent notion of \emph{group width} \cite{Fr}.

\begin{teo}
There are closed 4-manifolds $M$ with arbitrarily large $c^*(M)$.
\end{teo}

Given the length of this paper and the high level of technicalities already present, we concentrate ourselves here in proving Theorem \ref{1:teo} and postpone the analysis of its consequences for future work. For instance, it would be interesting to classify all manifolds with $c^*=1$ having trivial, or maybe finite, fundamental group as it was done in \cite{Ma:zero}. This problem translates into the classification of all the presentations $\calP$ of some finite group $G$ with $c^*(\calP)=1$,
up to Andrews-Curtis moves. We do not do this here. In the simply connected case we do not expect any new manifold, since all the presentations involved should reasonably be Andrews-Curtis equivalent to a bouquet of spheres (and of course we would have no clue on how to prove the contrary).

\subsection{Structure of the paper}
We introduce shadows with more details in Section \ref{definitions:section}.
In Section \ref{one:section} we describe a combinatorial way to encode any shadow $X$ with $c^*(X) \leq 1$ via some decorated graph. Theorem \ref{1:teo} is then restated again in Section \ref{main:section}. The constructive part of the theorem consists of showing that every graph manifold $M$ generated by $\calS_1$ has $c^*(M) \leq 1$. This is the easy part of the proof and is proved in Section \ref{constructive:section}.

The hard part in the proof of Theorem \ref{1:teo} is to show that, conversely, every manifold $M$ with $c^*(M)\leq 1$ is generated by $\calS_1$ plus some copies of $\matCP^2$. To show this, we first study in Section \ref{tori:section} how this relates to the 3-dimensional problem of studying some decompositions of $\#_h(S^2 \times S^1)$ along tori. It is then crucial to examine the exceptional Dehn fillings of 11 hyperbolic manifolds in Section \ref{exceptional:section}. Finally, we introduce many moves on shadows in Section \ref{moves:section} and then use them to finally conclude the proof of Theorem \ref{1:teo} via a long case-by-case analysis in Section \ref{proof:section}.

\section{Shadows} \label{definitions:section}

We introduce here Turaev's shadows, following \cite{Tu}.

\subsection{Simple polyhedra}
As stated in Section \ref{results:section}, we let a \emph{simple polyhedron} be a compact polyhedron $X$ where every point has a star neighborhood of one of the types shown in Figure \ref{shadow:fig}. We will henceforth allow the presence of boundary $\partial X$ except when it is forbidden explicitly. The terms \emph{vertex}, \emph{edge}, and \emph{region} were defined in Section \ref{results:section}.

\subsection{Odd and even regions}
Let $f$ be a region of a simple polyhedron $X$. We denote by $\partial f$ the boundary of the abstract closure of $f$. The polyhedron $X$ induces an interval bundle on every component of $\partial f$ that is not contained in $\partial X$. The interval bundle may be either untwisted (an annulus) or twisted (a M\"obius strip). The region $f$ is \emph{even} or \emph{odd} depending on the parity of the number of twisted bundles on $\partial f$.

\begin{rem}
If $X$ is a spine of a 3-manifold, then all these bundles are necessarily untwisted, and hence in particular all the regions are even. The existence of a twisted bundle on some region is in fact a complete obstruction for $X$ to have a 3-dimensional thickening. \end{rem}

\subsection{Shadows} \label{shadows:subsection}
Following Turaev \cite{Tu}, a \emph{shadow} is a simple polyhedron $X$ without boundary decorated with \emph{gleams}. A gleam is a half-integer $\gl(f)$ attached to each region $f$ of $X$, with the requirement that $\gl(f)$ is an integer if and only if $f$ is even. 

As proved by Turaev, a shadow $X$ determines a 4-dimensional thickening $N(X)$, that is an oriented smooth 4-manifold $N(X)$ with boundary, that contains $X$ in its interior and that collapses onto $X$. Moreover, the gleams of $X$ are intrinsically determined by the embedding of $X$ in $N(X)$.

If $X$ is a surface, then $N(X)$ is a disc bundle over $X$ and the gleam of $X$ is just the self-intersection number of $X$, that equals the Euler number of the bundle. In general, the gleam $\gl(f)$ is the self-intersection of $f$ in some precise sense.

Let a \emph{k-handlebody} be an oriented 4-manifold made of handles of index $\leq k$. It turns out that $N(X)$ is always a 2-handlebody and conversely every 2-handlebody is obtained from a (non-unique) shadow $X$ in this way.

\subsection{Homology}
Homology computations with shadows are particularly simple. Let $X$ be a shadow and $N(X)$ its thickening. The inclusion $i\colon X \hookrightarrow N(X)$ is a homotopy equivalence and hence induces isomorphisms in homology. 

As in every simple polyehdron, each class $\alpha \in H_2\big(X, \matZ/_{2\matZ}\big)$ is naturally represented as a closed subsurface of $X$, and vice-versa.
Moreover, the intersection form $\langle \alpha, \beta \rangle$ is the parity of the sum of the gleams of the regions contained in the intersection of the two surfaces (this sum is always an integer). The second Stiefel-Whitney class $w_2(N(X))\in H^2\big(N(X), \matZ/_{2\matZ}\big)$ corresponds to the class $i^*(w_2(N(X)))\in H^2\big(X, \matZ/_{2\matZ}\big)$ that assigns to any closed subsurface the parity of the sum of its gleams. The 4-manifold $N(X)$ is spin if and only if this sum is always even.

The homology with integer coefficients is read from $X$ similarly. Every class in $H_2(X,\matZ)$ is uniquely determined as a sum $\sum a_if_i$ of oriented regions $f_i$ with integer weights $a_i$ which sum to zero at every edge $e$ (if we change the orientation of $f_i$ then $a_i$ changes its sign, and we sum the 3 regions adjacent to $e$ with matching orientations). The intersection form is easily calculated using the formula
$$\left\langle \sum a_if_i, \sum b_jf_j \right\rangle = \sum a_kb_k\gl(f_k)$$
where $\gl(f)$ is the gleam of $f$.

\subsection{Shadows of closed 4-manifolds}
We denote by $\#_h(S^2\times S^1)$ the connected sum of $h\geq 0$ copies of $S^2\times S^1$. When $h=0$ we mean $S^3$. 

Let $X$ be a shadow and $N(X)$ be its thickening. If $\partial N(X) \isom \#_h(S^2\times S^1)$ for some $h\geq 0$, we can add some 3- and 4-handles to $N(X)$ and obtain an oriented closed smooth 4-manifold $M$. By a famous theorem of Laudenbach and Poenaru \cite{LaPo} there is an essentially unique way to attach 3- and 4-handles, so the closed oriented manifold $M$ is fully determined by $N(X)$, and hence by the shadow $X$ alone. We say that \emph{$X$ is a shadow of $M$}.

For instance, the 2-sphere $X=S^2$ with gleam 0, 1, or $-1$ is a shadow of $S^4$, $\matCP^2$, or $\overline{\matCP}^2$ respectively. In the first case $\partial N(X) = S^2\times S^1$ and we attach a 3- and a 4-handle, in the two other cases $\partial N(X) = S^3$ and we only attach a 4-handle.

Every closed oriented 4-manifold $M$ has a shadow $X$, which is however not unique.

\begin{rem}
A shadow complexity can also be defined on 4-manifolds with boundary. This notion was investigated in \cite{Na0, Na, Na2}. We will not study this version here.
\end{rem}

\section{Simple polyhedra with connected complexity one} \label{one:section}
We describe a combinatorial notation for treating simple polyhedra with connected complexity one. This discussion is also useful to understand the main theorem of this paper and in particular where do the blocks $M_1^1, \ldots, M_{11}^1$ come from.

\subsection{Decomposition of a simple polyhedron into pieces}
Let $X$ be a simple polyhedron, possibly with non-empty boundary, with connected complexity one. Each component $C$ of $SX$ is either a circle or a 8-shaped graph. In the first case, the regular neighbourhood of $C$ in $X$ is a simple polyhedron with boundary, homeomorphic to one of the pieces
$$Y_{111}, \quad Y_{12}, \quad Y_3$$
shown in Figure \ref{Y:fig}. In particular $Y_{111} \isom Y\times S^1$, where $Y$ is the cone over 3 points, the polyhedron $Y_{12}$ is a M\"obius strip with an annulus attached to its core, and $Y_3$ is an annulus that winds 3 times around a circle. The boundary of the piece consists of 3, 2, or 1 circle, respectively.

\begin{figure}
\begin{center}
\includegraphics[width = 7 cm]{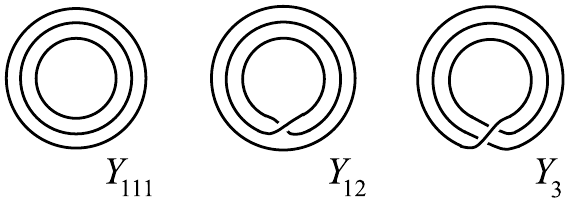}
\nota{The 3 possible regular neighbourhoods of circular components of $SX$.}
\label{Y:fig}
\end{center}
\end{figure}

If $C$ is a 8-shaped graph, that is a bouquet of two circles, a simple analysis shows that its regular neighbourhood is one of the types
$$X_1, \quad X_2, \quad \ldots, \quad X_{11}$$
drawn in Figure \ref{Xall:fig}. It is a simple polyhedron with boundary, and the boundary consists of a number of circles that ranges from 1 to 4.

\begin{figure}
\begin{center}
\includegraphics[width = 12.5 cm]{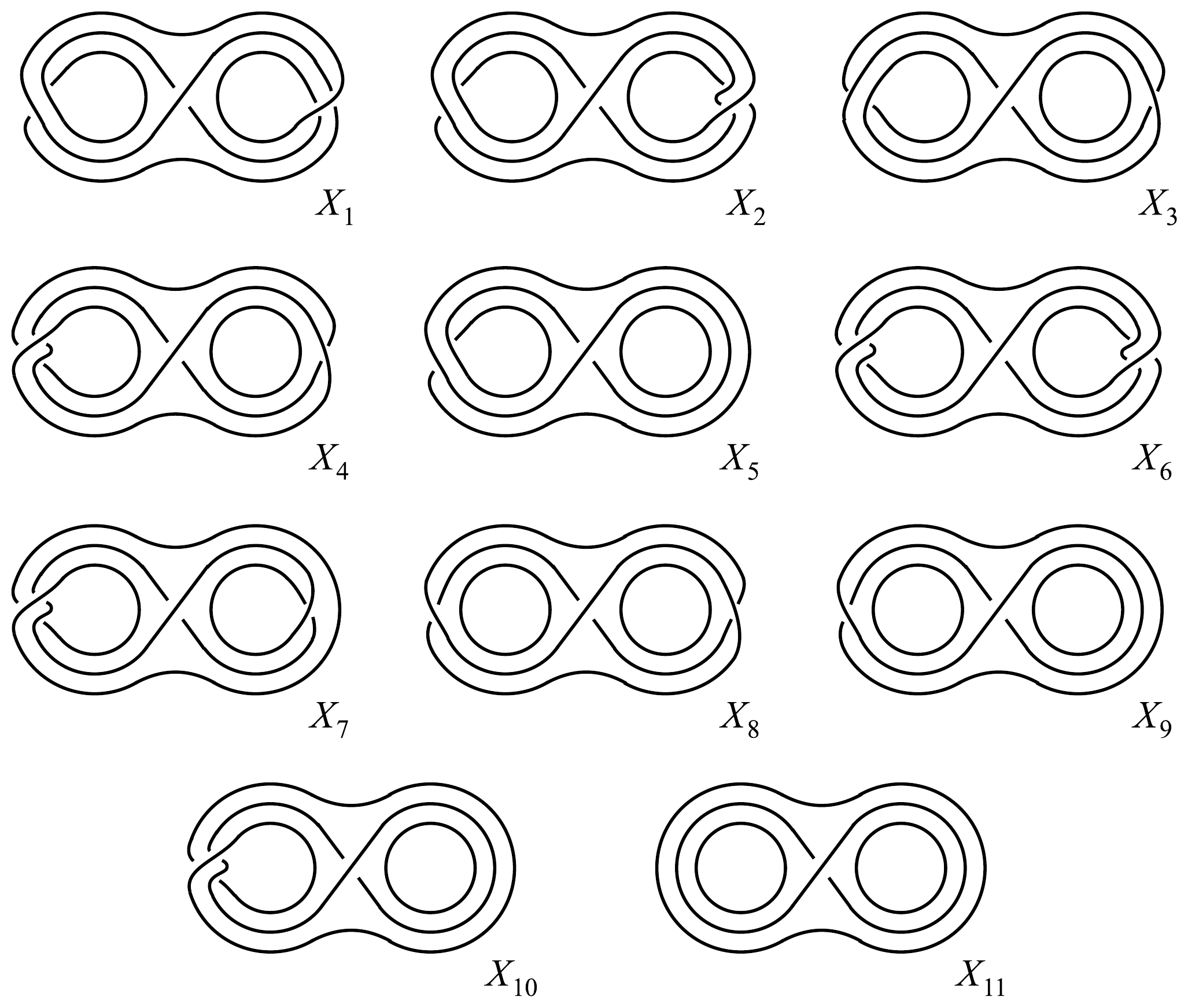}
\nota{The 11 possible regular neighbourhoods of a 8-shaped component of $SX$.}
\label{Xall:fig}
\end{center}
\end{figure}

Every region in $X$ is a surface and hence decomposes into discs, pairs-of-pants, and M\"obius strips. Summing up, we have discovered the following.

\begin{prop}
Every simple polyhedron $X$ with connected complexity one decomposes (along circles contained in some regions) into pieces homeomorphic to:
$$D,\ P,\ Y_2,\ Y_{111},\ Y_{12},\ Y_3,\ X_1, \ \ldots,\  {\rm or}\ X_{11}.$$ 
Here $D$ is a disc, $P$ is a pair of pants, and $Y_2$ a M\"obius strip. 
\end{prop}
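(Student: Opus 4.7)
The plan is to combine the classification of regular neighborhoods of components of $SX$ (already done in the preceding paragraphs) with a standard pants-type decomposition of the surfaces that make up the regions of $X$.

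First I would fix a small regular neighborhood $U$ of $SX$ in $X$. Since $c^*(X) \leq 1$, each component $C$ of $SX$ is either a circle or an $8$-shaped graph, so each connected component of $U$ is homeomorphic to one of the pieces $Y_{111}, Y_{12}, Y_3, X_1, \ldots, X_{11}$, as established just above the statement of the proposition. The boundary $\partial U \setminus \partial X$ is then a disjoint union of circles, each lying in the interior of some region of $X$. These circles are exactly the cutting curves required by the statement: cutting $X$ along them separates $U$ from its complement $X \setminus \mathrm{int}(U)$, which is a disjoint union of compact surfaces (the closures of the regions minus their intersections with $U$), possibly with some of their boundary circles lying on $\partial X$.

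Next I would invoke the classical fact that every compact surface $S$ (orientable or not, with or without boundary) admits a decomposition along finitely many disjoint simple closed curves into pieces each homeomorphic to a disk $D$, a pair-of-pants $P$, or a M\"obius strip $Y_2$. For an orientable $S$ this is the standard pants decomposition after first filling in all but the necessary boundary circles; for a non-orientable $S$ one first cuts off a M\"obius band $Y_2$ for each non-orientable handle to reduce to the orientable case. Applying this decomposition region by region to the surface pieces $X\setminus \mathrm{int}(U)$, and adding these new cutting circles to the ones coming from $\partial U$, gives the desired decomposition.

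The main point to check is essentially bookkeeping: the cutting curves that appear must indeed lie in the interior of regions (which is clear because both $\partial U\setminus \partial X$ and the curves from the pants decomposition are contained in regions of $X$), the pieces must match the listed list (which follows by construction), and no region is left out (which is automatic since the regions partition $X\setminus SX$ and we have decomposed each of them). I do not anticipate a serious obstacle here; the content of the proposition is really the finiteness of the local models $Y_\bullet$ and $X_\bullet$ already tabulated in Figures \ref{Y:fig} and \ref{Xall:fig}, together with the well-known pants/M\"obius decomposition of compact surfaces.
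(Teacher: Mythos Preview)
Your proposal is correct and follows exactly the same approach as the paper: take regular neighbourhoods of the components of $SX$ (classified in the preceding paragraphs as $Y_{111}, Y_{12}, Y_3, X_1,\ldots,X_{11}$), then decompose the remaining compact surfaces into discs, pairs-of-pants, and M\"obius strips. The paper's own proof is in fact just the one-line remark ``Every region in $X$ is a surface and hence decomposes into discs, pairs-of-pants, and M\"obius strips,'' so your write-up is simply a more detailed version of the same argument.
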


Each piece is a simple polyhedron with boundary. We use the notation $Y_2$ for the M\"obius strip because it is somehow coherent with the symbols $Y_{111}, Y_{12}, Y_3$. Note that $D,P,Y_2$ are surfaces, while all the regions in the other pieces $Y_{111}, Y_{12}, Y_3, X_1, \ldots, X_{11}$ are annuli.

\subsection{Encoding graph} \label{encoding:graph:subsection}

\begin{figure}
\begin{center}
\includegraphics[width = 13 cm]{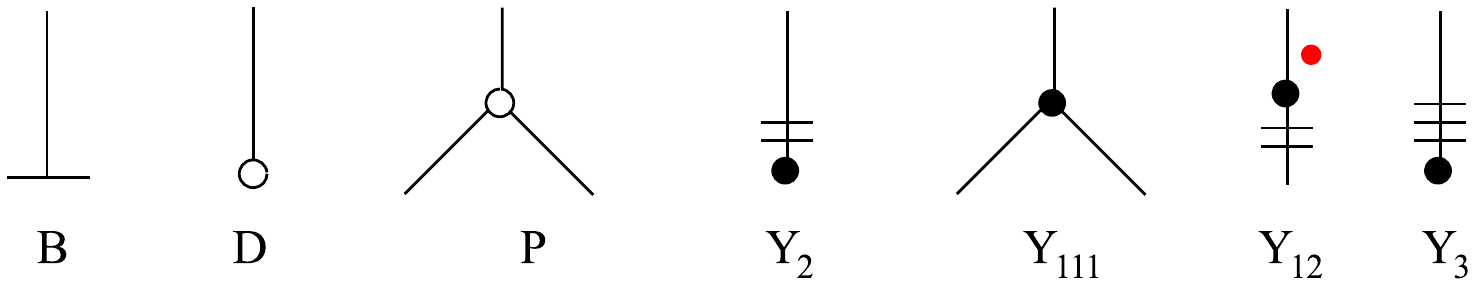}
\nota{These vertices encode respectively the boundary components of $X$, and the pieces $D, P, Y_2, Y_{111}, Y_{12}$, and $Y_3$.}
\label{vertices:fig}
\end{center}
\end{figure}

A decomposition of $X$ into pieces homeomorphic to
$$D,\ P,\ Y_2,\ Y_{111},\ Y_{12},\ Y_3,\ X_1, \ \ldots,\  X_{11}$$ 
induces a graph $G$ that has one vertex for every piece or boundary component of $X$ and one edge for every adjacency between pieces along common boundary circles, or between a piece and a boundary component. The notation chosen for all the vertices involved is illustrated in Figures \ref{vertices:fig} and \ref{Xi:fig}, where B stands for a boundary component. The notation in Figure \ref{vertices:fig} was introduced in \cite{Ma:zero} and then used also in \cite{Na} to show that every acyclic polyhedron $X$ with $c^*(X)=0$ collapses onto a disc. That in Figure \ref{Xi:fig} is new.

We now explain the various symbols present in the pictures. Let a vertex $v$ in $G$ represent some piece $W$ as in Figures \ref{vertices:fig} and \ref{Xi:fig}. The vertex $v$ has one incident edge $e$ for each boundary component $\gamma$ of $W$. Suppose that $W \neq D,P, Y_2$, so that all regions in $W$ are annuli. Let $A$ be the annular region of $W$ adjacent to $\gamma$. The edge $e$ is decorated near $v$ with two symbols:
\begin{itemize}
\item Some $k\geq 1$ dashes, where $k$ is the number of times that $A$ run on some edge of $SW$. The number $k$ is the \emph{length} of $\gamma$. When $k=1$ the dash is omitted. 
\item A red dot if $A$ is an odd region.
\end{itemize}

The notation for the M\"obius strip $Y_2$ does not follow strictly these rules since $SY_2 = \emptyset$, but is chosen to be somehow more coherent with the other pieces.

\begin{figure}
\begin{center}
\includegraphics[width = 12.5 cm]{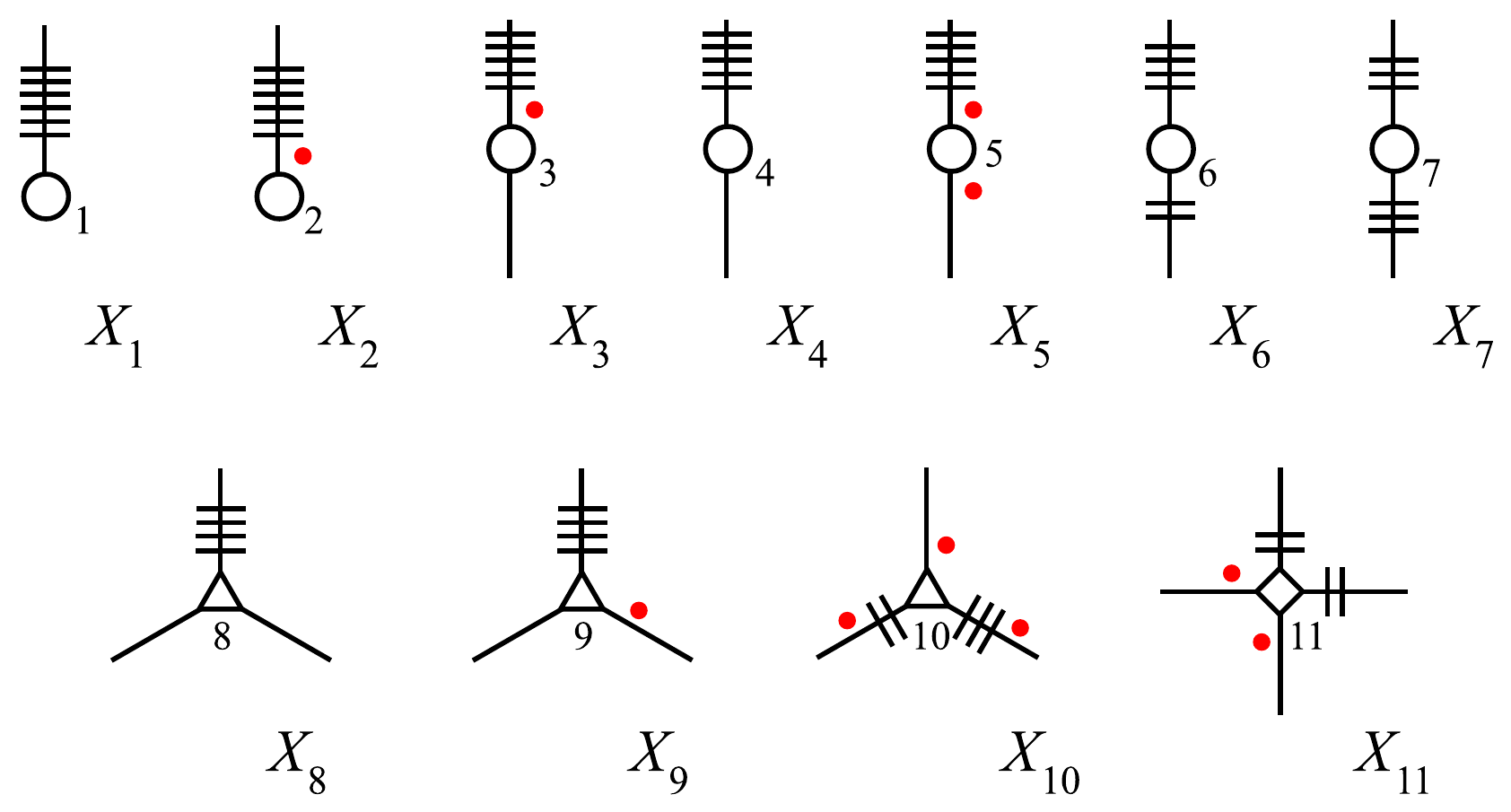}
\nota{Eleven vertices that encode the pieces $X_1, \ldots, X_{11}$.}
\label{Xi:fig}
\end{center}
\end{figure}

The decoration on the edges is useful because it carries some important information and is also enough to determine $\gamma$ unambiguously from $e$ in all cases, up to symmetries. We mean the following: two distinct edges $e,e'$ may have identical decorations only in the pieces $P, Y_{111}, X_8,$ and $X_{11}$, but in each of these cases there is a self-homeomorphism of $W$ that interchanges the two corresponding boundary components $\gamma,\gamma'$ while leaving all the other boundary components of $W$ fixed.

Does the graph $G$ so constructed determine $X$ unambiguously? Not quite, because at every edge there are two possible gluings (up to isotopy) between the adjacent pieces and we should indicate which one we use. We neglect this annoying issue because in all the cases we will be interested either there will be no ambiguity thanks to the symmetries of the pieces involved, or the choice will be clear from the context.

For simplicity, we sometimes drop the numbers $1,\ldots, 11$ and the red dots from the notation when they are not necessary (but we always keep the dashes).

\subsection{Examples} \label{examples:subsection}
Here are some examples.

\begin{ex}
The simple polyhedra $A$ and $B$ from Figure \ref{examples:fig} are homeomorphic to:
\begin{itemize}
\item[(A)] A torus with two discs added to a meridian and a longitude.
\item[(B)] A projective plane $\matRP^2$ with an annulus attached to two distinct lines $l_1,l_2 \subset \matRP^2$.
\end{itemize}
\end{ex}

\begin{figure}
\begin{center}
\includegraphics[width = 6 cm]{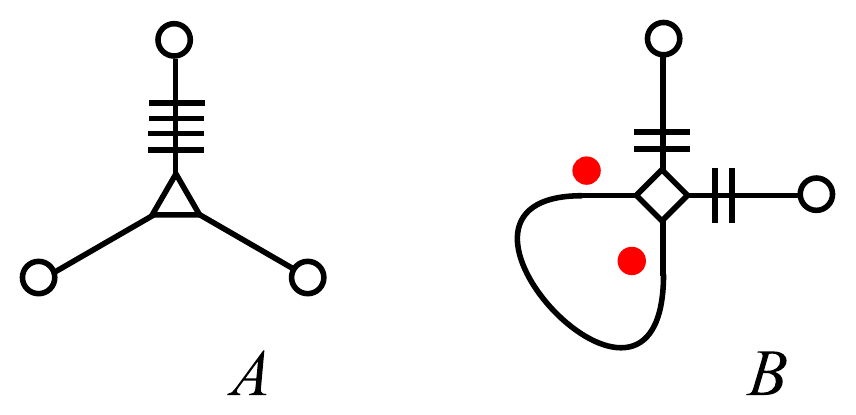}
\nota{Two simple polyhedra with complexity one.}
\label{examples:fig}
\end{center}
\end{figure}

The polyhedron (B) plays an important role in this paper. As we mentioned in Section \ref{zero:subsection}, there is a 1-1 correspondence between simple polyhedra up to 3-deformation and presentations up to Andrews-Curtis moves, see \cite{HoMe}. 

\begin{figure}
\begin{center}
\includegraphics[width = 16 cm]{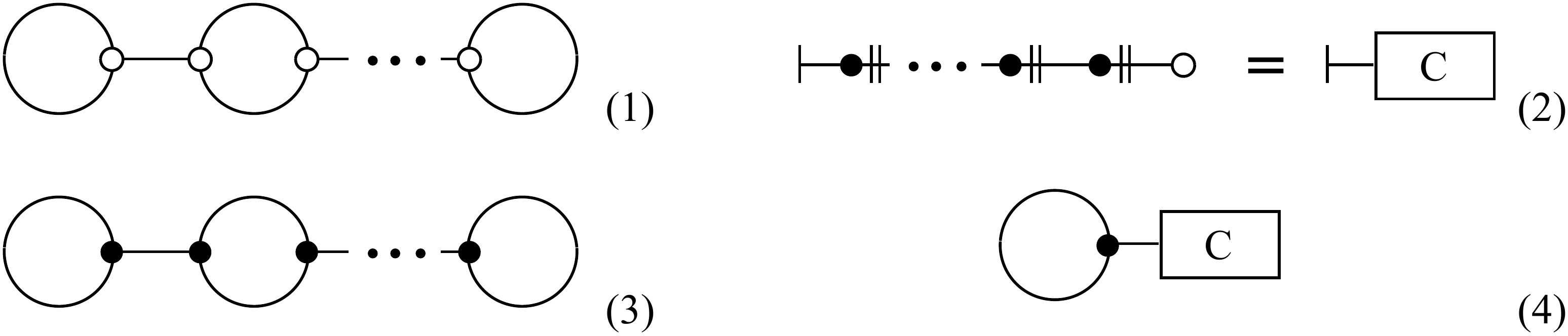}
\nota{Some simple polyhedra with complexity zero.}
\label{intro_examples0:fig}
\end{center}
\end{figure}

\begin{figure}
\begin{center}
\includegraphics[width = 16 cm]{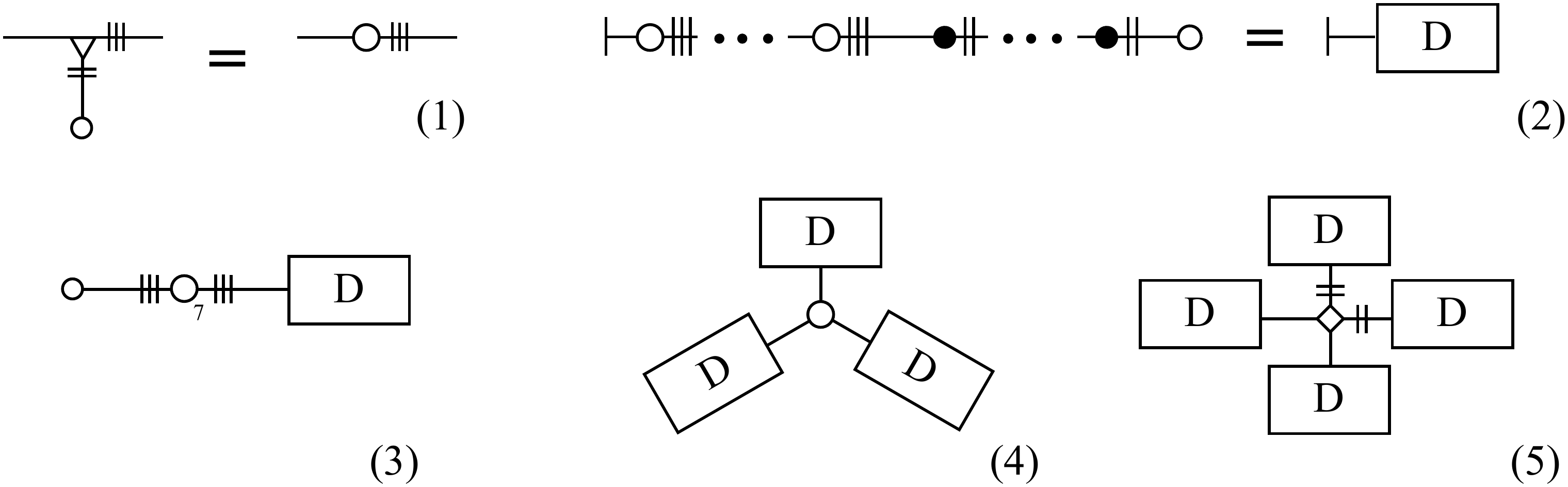}
\nota{Some simple polyhedra with complexity one.}
\label{intro_examples1:fig}
\end{center}
\end{figure}

\begin{example}
The simple polyhedra described in Figure \ref{intro_examples0:fig} have complexity 0 and determine respectively the following presentations:
$$\langle a_1,b_1, \ldots, a_g,b_g\ |\ [a_1,b_1] \cdots [a_g,b_g] \rangle,$$
$$\langle a \ |\ a^{2^n} \rangle, \quad \langle a_1,\ldots, a_n, b \ |\ [a_1,b], \ldots, [a_n,b] \rangle, \quad \langle a,b\ |\ b^{2^n}, [a,b] \rangle.$$
\end{example}

The polyhedron in Figure \ref{intro_examples0:fig}-(1) is simply a closed surface $S_g$ of some genus $g$. The presentations just listed have therefore complexity 0. By picking the spin 5-dimensional thickening $W$ of the polyhedra (1) and (3) we find the  4-manifolds $\partial W = S_g \times S^2$ and $\big(\#_n (S^2 \times S^1)\big) \times S^1$ listed in Section \ref{zero:subsection}.

In Figure \ref{intro_examples1:fig} we show some simple polyhedra that may have vertices, with connected complexity at most one. The polyhedron in Figure \ref{intro_examples1:fig}-(1) has fundamental group $\langle a\ |\ \rangle = \matZ$ and we easily deduce from Figure \ref{Xall:fig}-(10) that its two boundary components represent the elements $a$ (on the left) and $a^3$ (on the right). It is somehow similar to $Y_{12}$, that has $\pi_1(Y_{12}) = \langle a\ |\ \rangle = \matZ$ and whose two boundary components represent $a$ and $a^2$. We denote this useful polyhedron with a simpler notation as indicated in Figure \ref{intro_examples1:fig}-(1). Note that the singular part of the polyhedron D in Figure \ref{intro_examples1:fig}-(2) contains an arbitrary number of circles and 8-shaped graphs. 

\begin{ex}
The simple polyhedra described in Figure \ref{intro_examples1:fig}-(2, 3, 4, 5) determine the following presentations:
$$\calC_n, \quad \calC_{5n}, \quad D(l,m,n), \quad (l,m\ |\ n,k) $$
where $l, m, n, k$ are all of some type $2^a3^b$.
\end{ex}

We have proved that the presentations $\calP$ mentioned in Section \ref{expected:subsection} have $c^*(\calP)\leq 1$. Note that we may obtain the dihedral group as $\calD_{2n} = D(2,2,n)$.

\subsection{Encoding shadows}
Having defined a way to encode every simple polyhedron $X$ with connected complexity $\leq 1$, it is now straightforward to encode any shadow $X$ with connected complexity $\leq 1$. It suffices to add some decoration that determines the gleams of $X$.

We do this as follows. Let $G$ be any graph that describes a simple polyhedron $X$. Every edge $e$ of $G$ is \emph{even} or \emph{odd} depending on the parity of red dots colouring it from its sides (there can be 0, 1, or 2 red dots). The graph $G$ is \emph{decorated} if every edge $e$ is assigned a half-integer, with the requirement that this half-integer should be an integer if and only if $e$ is even.

The decoration on $G$ induces some gleams on $X$ in the obvious way. Every edge $e$ of $G$ determines a simple closed curve in some region $f$, and we assign the half-integer decorating $e$ to $f$. It may happen that distinct edges $e_1,\ldots, e_k$ determine curves that are contained in the same region $f$, and in this case we just add their contributions. The parity convention ensures that the resulting gleam of $f$ is an integer if and only if $f$ is an even region.

The following proposition displays some examples. The last example will be important in this paper.

\begin{figure}
\begin{center}
\includegraphics[width = 13 cm]{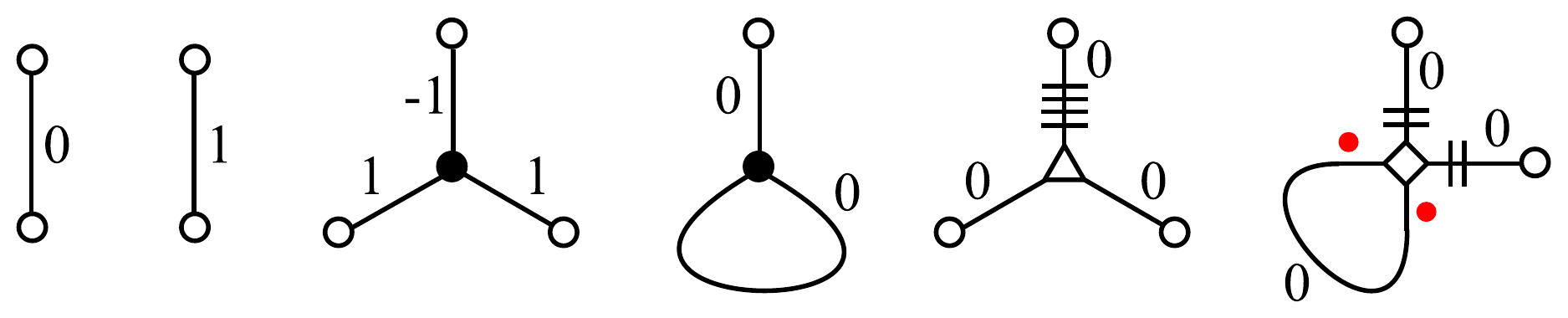}
\nota{These graphs describe some shadows of $S^4$, $\matCP^2$, $S^2\times S^2$, $S^3 \times S^1$, $S^4$, and $\matRP^3 \times S^1$ respectively.}
\label{examples_shadows:fig}
\end{center}
\end{figure}

\begin{prop}
The decorated graphs in Figure \ref{examples_shadows:fig} describe respectively some shadows of the 4-manifolds
$$S^4, \quad \matCP^2, \quad S^2\times S^2, \quad S^3 \times S^1, \quad S^4, \quad \matRP^3 \times S^1.$$
\end{prop}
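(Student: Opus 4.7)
The proof is a direct case-by-case check of the six assertions. For each graph I would: first, decode it into a decorated simple polyhedron $(X,\gl)$ using the dictionary of Figures \ref{vertices:fig} and \ref{Xi:fig}, together with the gleam convention spelled out in Section~\ref{encoding:graph:subsection}; second, describe the 4-dimensional thickening $N(X)$, either as a disc bundle (when $X$ is a closed surface) or as an explicit 2-handlebody presented by a Kirby diagram following the shadow-to-Kirby recipe of Section \ref{shadows:subsection}; third, verify that $\partial N(X)\cong\#_h(S^2\times S^1)$ for some $h\geq 0$; and finally apply Laudenbach--Poenaru to conclude that capping $N(X)$ with the canonical 3- and 4-handles gives the claimed 4-manifold.

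The first three cases are immediate from the standard disc-bundle interpretation. If $X$ is a 2-sphere with gleam $0$ (respectively $\pm 1$), then $N(X)$ is the trivial $D^2$-bundle $S^2\times D^2$ (resp.\ the disc bundle with Euler number $\pm 1$, diffeomorphic to $\matCP^2$ or $\overline{\matCP}^2$ minus an open 4-ball). The boundary is $S^2\times S^1$ (resp.\ $S^3$) and capping yields $S^4$ (resp.\ $\matCP^2$). The third graph encodes two 2-spheres meeting transversally at one point, each with gleam $0$; its thickening is the plumbing of two trivial disc bundles over $S^2$, which has intersection form $\bigl(\begin{smallmatrix}0&1\\1&0\end{smallmatrix}\bigr)$ and boundary $S^3$, and capping with a 4-handle produces $S^2\times S^2$ via the standard $0$-framed Hopf link Kirby diagram.

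The fourth and fifth graphs produce thickenings with a genuine 1-handle part. For $S^3\times S^1$ the polyhedron 3-deforms to a circle, so $N(X)\cong S^1\times D^3$; the Kirby diagram has a single 1-handle and no surviving 2-handles, $\partial N(X)=S^2\times S^1$, and capping gives $S^3\times S^1$. The second shadow of $S^4$ is obtained similarly: its Kirby diagram consists of a cancelling 1-handle/2-handle pair, which collapses by standard handle cancellation to $D^4$, whose boundary $S^3$ caps to $S^4$. In each of these cases the identification is verified by reading off $\pi_1$, $H_*$, and the intersection form from the shadow via the recipe of Section~\ref{shadows:subsection}, ruling out any ambiguity.

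The main obstacle is the sixth case, $\matRP^3\times S^1$. The shadow uses a singular piece of type $X_i$, together with an $\matRP^2$-like region carrying a half-integer gleam reflecting the $2$-torsion of $\pi_1(\matRP^3)$. I would verify this case both intrinsically and concretely. Intrinsically, the shadow computes $\pi_1(N(X))=\matZ\times\matZ/2\matZ$ and the expected (vanishing) intersection form, and one checks $\partial N(X)\cong S^2\times S^1$; together with the Laudenbach--Poenaru uniqueness of the capping this essentially forces the closed manifold to be $\matRP^3\times S^1$. Concretely, $\matRP^3\times S^1$ is manifestly the double of $\matRP^3\times[0,1]$, which is the 5-dimensional thickening of a 3-dimensional polyhedron $\bar X$ of exactly the type appearing in Theorem~\ref{1:teo}(2); the shadow of Figure~\ref{examples_shadows:fig}(6) is designed so as to realise precisely this double, and matching the two descriptions via an explicit Kirby diagram of $\matRP^3\times S^1$ (a $1$-handle for the $S^1$ factor together with a $2$-handle encoding the $L(2,1)$ summand) completes the identification.
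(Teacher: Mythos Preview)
Your treatment of the first three cases is fine and matches the paper. The later cases, however, contain real errors.

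In cases (4) and (5) you conflate the simple-homotopy type of $X$ with the diffeomorphism type of $N(X)$. For (4), the polyhedron is a torus with a meridian disc, with all gleams zero; its thickening is \emph{not} $S^1\times D^3$ but rather (as the paper states) a punctured $D^2\times S^1$ times an interval, whose boundary is $\#_2(S^2\times S^1)$, not $S^2\times S^1$. One needs \emph{two} 3-handles (and one 4-handle) to close up. For (5), the thickening is $S^2\times D^2$, not $D^4$: after cancelling the two 1-handles against the meridian and longitude 2-handles, the commutator 2-handle survives as a $0$-framed unknot. Your conclusions happen to be correct, but the asserted identifications of $N(X)$ and $\partial N(X)$ are wrong.

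The genuine gap is case (6). First, some factual slips: the shadow in question is $\matRP^2$ with an annulus attached along two distinct projective lines, with all three gleams equal to $0$ (not a half-integer), and $\partial N(X)\cong\#_2(S^2\times S^1)$, not $S^2\times S^1$. More importantly, your proposed argument---compute $\pi_1$, $H_*$, the intersection form, and observe these match $\matRP^3\times S^1$---does not identify the 4-manifold: these invariants do not determine closed oriented 4-manifolds, and no amount of Laudenbach--Poenaru changes that. The paper proceeds in the opposite direction: it exhibits the polyhedron explicitly \emph{inside} $\matRP^3\times S^1$, namely as a small perturbation of $X'=\big(\matRP^2\times\{1\}\big)\cup\big(l\times S^1\big)$, and then checks directly that the complement of a regular neighbourhood of $X'$ is a 1-handlebody (one 0-handle, two 1-handles). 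This shows that $X$ is a shadow of $\matRP^3\times S^1$ by the very definition. You should replace the invariant-matching argument with this concrete embedding.
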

\begin{proof}
The first 3 examples were already described in \cite{Ma:zero}. The fourth is a torus with a meridian attached, with gleams zero in both regions. Its 4-dimensional thickening is a punctured $D^2 \times S^1$ times an interval. By adding a 3-handle we get $D^3 \times S^1$. By adding one more 3-handle and one 4-handle we get $S^3 \times S^1$.

The fifth is a torus with a meridian and a longitude attached, everything with gleam zero. Its thickening is $S^2 \times D^2$, and by attaching a 3- and 4-handle we get $S^4$.

The last example $X$ is less obvious and is a bit similar to the fourth. It is a projective plane $\matRP^2$ with an annulus attached to two distinct lines. It has 3 regions, each with gleam zero. Its thickening $N(X)$ is diffeomorphic to the regular neighbourhood $N(X')$ of 
$$X' = \big(\matRP^2 \times \{1\}\big) \cup \big(l \times S^1\big) \subset \matRP^3 \times S^1$$
inside $\matRP^3 \times S^1$. Here $l\subset \matRP^2 \subset \matRP^3$ is any projective line. In fact $X$ is obtained from $X'$ by a small perturbation. 

Now it is easy to check that the complement of $N(X')$ in $\matRP^3 \times S^1$ is a 1-handlebody, with one 0-handle and two 1-handles. Therefore $\matRP^3 \times S^1$ is obtained from $N(X') = N(X)$ by attaching two 3-handles and one 4-handle. 
\end{proof}

\subsection{Manifolds with arbitrarily large complexity} \label{finiteness:subsection}
We end this section by proving the following general fact.

\begin{teo}
There are closed 4-manifolds $M$ with arbitrarily large $c^*(M)$.
\end{teo}
\begin{proof}
Every simple polyhedron $X$ with bounded $c^*(X)\leq n$ is constructed by attaching along their boundaries arbitrarily many simple polyhedra with boundary, that however belong only to finitely many topological types since they each have at most $n$ vertices. Since there are only finitely many topological types, we deduce easily that the \emph{width} of $\pi_1(X)$, as recently defined by Freedman \cite{Fr}, is bounded by a number that depends only by $n$. 

It is shown in \cite{Fr} that there are groups with arbitrarily large width, for instance $\matZ^n$ has width $n-1$. Therefore if we bound $c^*(X)$ we cannot get all possible finitely presented fundamental groups for $X$, hence nor for $M$.
\end{proof}

\section{The main theorem} \label{main:section}
We introduce here the main result proved in this paper, that is Theorem \ref{1:teo}-(1). We prove here its equivalence with Theorem \ref{1:teo}-(2).

\subsection{The complexity zero case} \label{zero:case:subsection}
As we stated in Section \ref{results:section}, the following theorem was proved in \cite{Ma:zero}:

\begin{teo} \label{zero:teo}
A closed oriented 4-manifold $M$ has connected complexity zero if and only if $M=M' \#_h \matCP^2$ for some integer $h$ and some graph manifold $M'$ generated by $\calS_0$.
\end{teo}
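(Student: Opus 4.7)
The plan is to reconstruct the argument of \cite{Ma:zero}, which proved this theorem, using the framework of Section \ref{one:section}. The key observation is that a simple polyhedron $X$ has $c^*(X) = 0$ precisely when its singular set $SX$ is a disjoint union of circles, and in this case the decomposition of Section \ref{one:section} specialises: each singular circle has a regular neighborhood of type $Y_{111}$, $Y_{12}$, or $Y_3$, and the complementary surface regions decompose further into pieces of type $D$, $P$, and $Y_2$.

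For the ``if'' direction, I would verify that each block of $\calS_0$ is (diffeomorphic to) the $4$-dimensional thickening associated to one of the pieces above: the singular-circle pieces $Y_{111}, Y_{12}, Y_3$ match the drilled blocks $M_{111}, M_{12}, M_3$, while surface regions of boundary length $1, 2, 3$ correspond to $N_1, N_2, N_3$ (or to $M_1, M_{11}, M_2$, depending on how they are adjacent to singular pieces). Gluings between blocks along $S^2 \times S^1$ boundary components correspond exactly to gluings between pieces along common boundary circles in a region; consequently, every graph manifold $M'$ generated by $\calS_0$ admits a shadow of connected complexity zero. Connected-summing with $|h|$ copies of $\matCP^2$ (or $\overline{\matCP}^2$) is realised at the shadow level by attaching disjoint $2$-spheres with gleam $\pm 1$, which have $c^* = 0$; closedness of $c^* \leq 0$ under connected sum (Proposition \ref{closed:prop}) then yields $c^*(M) = 0$.

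For the ``only if'' direction, I would start with a shadow $X$ of $M$ with $c^*(X) = 0$, apply the decomposition above, and absorb each non-zero region gleam into connected summands $\pm \matCP^2$ by the standard rule that modifying the gleam of a disc-like region by $\pm 1$ amounts to connect-summing the ambient 4-manifold with $\matCP^2$ or $\overline{\matCP}^2$. What remains is a graph manifold generated by $\calS_0$. Since $\partial N(X) \cong \#_k(S^2 \times S^1)$ for some $k$, the theorem of Laudenbach--Poenaru guarantees that the $3$- and $4$-handles attach canonically, so $M$ is determined by $M'$ together with the accumulated number $h$ of $\matCP^2$ summands. The main technical obstacle is the bookkeeping of gleams, especially on odd regions where the gleams are half-integer: one must verify that, after suitable absorption into $\matCP^2$ factors, the residual gleams on every piece are compatible with the standard zero-gleam blocks of $\calS_0$, and that the parametrisations of the $S^2 \times S^1$ boundary components produced by adjacent pieces match up correctly. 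This is exactly the step that is worked out in detail in \cite{Ma:zero}, and it prefigures the more intricate bookkeeping that will be required for the $c^* = 1$ case in the remainder of this paper.
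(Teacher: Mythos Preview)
Your ``if'' direction is essentially the paper's argument (Proposition \ref{equipped:prop} together with Proposition \ref{leq:prop}), modulo one imprecision: assembling two blocks does \emph{not} correspond to simply gluing the shadows along a boundary circle, but rather to gluing plus inserting a bubble as in Figure \ref{assembling:fig}. This does not spoil the argument, since the bubble has $c^*=0$.

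The ``only if'' direction, however, has a genuine gap. You assert that once $X$ is decomposed into pieces $D,P,Y_2,Y_{111},Y_{12},Y_3$ and the gleams are absorbed into $\matCP^2$ summands, ``what remains is a graph manifold generated by $\calS_0$''. This does not follow. A decomposition of the shadow $X$ induces only a decomposition of $\partial N(X)\cong \#_h(S^2\times S^1)$ along tori (Section \ref{tori:section}); it does \emph{not} induce a decomposition of the closed 4-manifold $M$ into blocks, because the 3- and 4-handles are attached globally to $\partial N(X)$, not locally over each piece. To turn the torus decomposition into an honest block decomposition one must find, at each torus, a compressing disc in $\#_h(S^2\times S^1)$ that is either horizontal or vertical in the sense of Section \ref{discs:subsection}, and then cut along it via Figure \ref{hv:fig}. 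This step is not automatic: the compressing disc guaranteed by Lemma \ref{peculiar:lemma} may have arbitrary slope, so one must first manipulate the shadow until all the relevant slopes become $0$ or $\infty$. In \cite{Ma:zero} this is done by (i) eliminating the pieces $P,Y_2,Y_3$ using the propagation of vertical discs, (ii) reducing the remaining graph to a decorated tree with levels, and (iii) applying the Neumann--Weintraub plumbing-line analysis (Lemma \ref{plumbing:lemma}) to locate a simplifying move. You have identified the wrong ``main technical obstacle'': it is not gleam bookkeeping, but precisely this three-step reduction, which is the prototype for Sections \ref{tori:section}--\ref{proof:section} of the present paper.
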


\begin{rem}
There are 4 possible ways to glue orientation-reversingly two copies of $S^2 \times S^1$. The group of orientation-preserving diffeomorphisms of $S^2\times S^1$ up to isotopy is isomorphic to $\matZ/_{2\matZ} \times \matZ/_{2\matZ}$ and is generated by the map $(x,\theta) \mapsto (-x, \bar \theta)$ and the Gluck twist $(x,\theta) \mapsto (\rot_\theta(x), \theta)$ where $\rot_\theta$ is the rotation of angle $\theta$ around the $z$ axis. See \cite{Gl}.
\end{rem}

\subsection{The complexity one case} \label{one:case:subsection}
Here is the main result of this paper:

\begin{teo} \label{main:teo}
A closed oriented 4-manifold $M$ has connected complexity $\leq 1$ if and only if 
$$M=M' \#_h \matCP^2$$
for some integer $h$ and some graph manifold $M'$ generated by $\calS_1$.
\end{teo}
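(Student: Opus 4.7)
The plan is to prove the two implications separately. For the easy direction, I would exhibit explicit shadows of connected complexity at most one for each of the blocks in $\calS_1$ and then verify that all permitted gluings preserve this bound. The blocks of $\calS_0$ are handled by Theorem \ref{zero:teo}. The eleven new blocks $M_1^1,\ldots,M_{11}^1$ admit shadows built around the figure-eight neighbourhoods $X_1,\ldots,X_{11}$ of Figure \ref{Xall:fig}, with disc and annular regions producing the Kirby diagrams of Figure \ref{M:fig}. The last block $M_{12}^1$ is obtained by drilling the shadow of $\matRP^3 \times S^1$ shown in Figure \ref{examples_shadows:fig}. I would then check that graph-manifold gluings along $S^2 \times S^1$ boundaries and connected sums with $\pm \matCP^2$ both preserve $c^* \leq 1$; the latter is Proposition \ref{closed:prop}.

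For the converse, suppose $X$ is a shadow of $M$ with $c^*(X) \leq 1$. The combinatorial description of Section \ref{one:section} decomposes $X$ along simple closed curves in its regions into pieces drawn from the finite list $\{D, P, Y_2, Y_{111}, Y_{12}, Y_3, X_1,\ldots,X_{11}\}$. Thickening this decomposition breaks $N(X)$ into $4$-manifolds with boundary, and $M$ is recovered from $N(X)$ by the canonical attachment of $3$- and $4$-handles to $\partial N(X) \cong \#_h(S^2 \times S^1)$ (Laudenbach--Poenaru). The target is to match each piece's thickening, together with the appropriate handle attachments, to a block in $\calS_1$, so that the gluings of pieces in $X$ induce gluings of blocks along $S^2 \times S^1$ in $M$ and any residual gleam contributions are absorbed into the connected sum with $\#_h \matCP^2$.

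Carrying out this matching involves three main ingredients. First, translate the shadow decomposition into a torus decomposition of $\#_h(S^2 \times S^1)$ (Section \ref{tori:section}), which reduces the essential classification problem to three-dimensional topology. Second, determine which closed $3$-manifolds can arise as fillings of these pieces by analysing the exceptional Dehn fillings of the eleven cusped hyperbolic manifolds naturally associated to $X_1,\ldots,X_{11}$ (Section \ref{exceptional:section}); this is precisely what pins down $M_1^1,\ldots,M_{11}^1$ as the correct set of blocks, while $M_{12}^1$ will emerge separately from $\matRP^2$-type subpolyhedra. Third, build a toolbox of local shadow moves (Section \ref{moves:section}) that preserve the underlying $4$-manifold and can be used to rearrange any shadow into a canonical form in which the previous identifications apply directly.

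The main obstacle will be the last, combinatorial step: after unpacking the local configurations around each figure-eight component of $SX$ one confronts on the order of a hundred possible pictures, and for each must either read off the block structure directly or simplify it via a sequence of shadow moves. There is no unified invariant that bypasses this analysis, so I would have to verify by hand that the chosen list of moves is rich enough to handle every configuration. This lengthy by-hand argument, carried out in Section \ref{proof:section}, is the technical heart of the proof.
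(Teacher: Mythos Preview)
Your proposal is correct and follows essentially the same approach as the paper: the easy direction via explicit shadows for the blocks plus the subadditivity of $c^*$ under assembling and connected sum, and the hard direction via the torus decomposition of $\partial N(X)\cong\#_h(S^2\times S^1)$, the classification of exceptional fillings of the eleven hyperbolic pieces $W_1,\ldots,W_{11}$, and a large toolkit of shadow moves applied in a lengthy case-by-case analysis. The only refinement worth noting is that the paper does not directly ``match pieces to blocks'' but rather uses the Dehn-filling data to locate compressing discs (vertical, horizontal, or winding twice) whose addition simplifies the shadow; after eliminating most piece types one is reduced to shadows built from $D,\,Y_{111},\,Y_{12},\,X_{10},\,X_{11}$, and the final plumbing-line analysis (Lemma~\ref{plumbing:lemma}) drives the remaining hundred-odd configurations.
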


We start by noting the following.

\begin{prop} \label{closed:sum:prop}
The sets of all the closed 4-manifolds generated by $\calS_0$ or $\calS_1$ are both closed under connected sum.
\end{prop}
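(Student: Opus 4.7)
The claim is to show that $\calG_0$ (resp.\ $\calG_1$) is closed under connected sum. I would focus on $\calG_0$; the argument for $\calG_1$ is identical since $\calS_0 \subset \calS_1$.

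My plan is to realize the connected sum at the level of block decompositions. The starting point is the observation that $S^4 \in \calG_0$ itself, via the genus-one splitting
\[
S^4 \;=\; N_1 \cup_{S^2\times S^1} M_1 \;=\; (S^2\times D^2) \cup_{\mathrm{id}} (D^3\times S^1),
\]
so the empty connected sum already lies in $\calG_0$. For general $M, M' \in \calG_0$ with fixed block decompositions, I would pick blocks $B \subset M$ and $B' \subset M'$, puncture each by removing an interior $4$-ball, and glue along the resulting $S^3$ boundaries to produce $M\#M'$. The main task then becomes to exhibit the ``connect-summed block'' $B \#_{\mathrm{int}} B'$ — a $4$-manifold with boundary $\partial B \sqcup \partial B'$ whose two pieces are joined by a $1$-handle in its interior — as a graph manifold with boundary generated by $\calS_0$.

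To reduce the number of cases, I would exploit the fact that inserting a trivial collar block $N_2 = S^2\times S^1\times I$ (equivalently $N_1 \cup N_3$) at any $S^2\times S^1$ gluing of $M$ does not change $M$ as a manifold. Using this flexibility one can arrange that both $M$ and $M'$ contain a specified block, say $N_1 = S^2\times D^2$, in their decompositions; the degenerate case $M = \#_k(S^3\times S^1) = 2k\cdot M_1$ (where no such insertion is possible without changing the manifold) would be handled separately. The problem then reduces to producing an explicit decomposition of $(S^2\times D^2)\#_{\mathrm{int}}(S^2\times D^2)$ as a graph manifold with boundary $(S^2\times S^1)\sqcup(S^2\times S^1)$, built from $\calS_0$-pieces glued along $S^2\times S^1$'s.

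The \emph{main obstacle} is exactly this last step: a priori the interior connected sum of two $S^2\times D^2$'s does not obviously split into members of $\calS_0$, and one must give a concrete geometric construction, accessible either through Kirby calculus on the natural handlebody picture or through an explicit shadow manipulation of the kind developed later in the paper. Natural candidates for the building pieces are the pants block $N_3$ (to realise the bifurcation along the $1$-handle), the disk-bundle block $N_1$, and the drilled blocks $M_1, M_{11}, \ldots$; the key verification is that the gleams and gluing diffeomorphisms can be chosen so that the resulting $4$-manifold is exactly $(S^2\times D^2)\#_{\mathrm{int}}(S^2\times D^2)$. Once this local decomposition is in hand, the general statement follows by splicing it into the block decompositions of $M$ and $M'$ in place of the chosen $N_1$ blocks, and the analogous argument goes through verbatim for $\calG_1$.
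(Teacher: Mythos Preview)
Your outline is on the right track, but it is incomplete: you correctly isolate the ``main obstacle''---exhibiting the interior connected sum of two chosen blocks as a graph manifold with boundary generated by $\calS_0$---and then stop short of resolving it. Since that step is the entire content of the proposition, the proposal as written is not yet a proof.

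The paper's argument supplies exactly the missing construction, and in a form that also lets you bypass your reduction-to-$N_1$ step. Rather than arranging for a copy of $N_1$ to appear in each decomposition and then decomposing $(S^2\times D^2)\#_{\mathrm{int}}(S^2\times D^2)$, the paper builds a universal ``connected-sum collar'': gluing $M_1=D^3\times S^1$ to one leg of $N_3=S^2\times P$ yields $\#_2(D^3\times S^1)$, and attaching a copy of $M_{111}$ to each of its two remaining boundaries yields $\#_2\big(S^2\times S^1\times[0,1]\big)$. This block has four $S^2\times S^1$ boundaries and can be spliced between \emph{any} existing gluing in $M$ and any existing gluing in $M'$, producing $M\#M'$ directly. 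Note that every closed manifold generated by $\calS_0$ involves at least one gluing (no block in $\calS_0$ is closed), so no ``degenerate case'' arises; your worry about $\#_k(S^3\times S^1)$ is unnecessary. If you prefer to finish along your own lines, you can recover $(S^2\times D^2)\#_{\mathrm{int}}(S^2\times D^2)$ from the paper's collar simply by capping one end of each of its two cylinders with a copy of $N_1$.
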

\begin{proof}
The proof is very similar to the one that shows that Waldhausen's 3-dimensional graph manifolds are closed under connected sum. By attaching $N_3 = S^2 \times P$ and $M_1 = D^3 \times S^1$ we get the manifold $\#_2(D^3 \times S^1)$. By attaching two copies of $M_{111}$ to it we get $\#_2\big(S^2 \times S^1 \times [0,1]\big)$ that can be inserted between any gluing of two blocks to perform connected sums.
\end{proof}

In Theorem \ref{1:teo}-(2) we also stated an alternative version of the theorem that uses the thickenings of some particular polyhedra $\bar X$. We now introduce these polyhedra more formally: we will prove the equivalence of the two versions of the theorem at the end of this section.

\subsection{Simple polyhedra enriched with projective spaces}

Given a simple polyhedron $X$ with some $k \geq 0$ boundary components, we denote by $\bar X$ the polyhedron obtained by attaching a projective space $\matRP^3$ to each boundary component $\gamma\subset \partial X$ via a homeomorphism that identifies $\gamma$ with a projective line $l$ in $\matRP^3$.

We call $\bar X$ an \emph{enriched simple polyhedron}. The polyhedron $\bar X$ has dimension 2 if $k=0$ and 3 if $k>0$. If we assign some gleams to $X$, we get a 4-dimensional thickening $N(\bar X)$ of $\bar X$, where every $\matRP^3$ thickens to a $\matRP^3 \times [-1,1]$ and $X$ thickens as prescribed by the gleams. (To be precise, to interpret the gleams on the regions incident to $\partial X$ we need to fix a line bundle above every boundary component $\gamma=l$ of $X$, and we choose the one induced by any projective plane $\matRP^2\subset \matRP^3$ containing $l$.)  The 4-manifold $N(\bar X)$ is oriented and with boundary. The boundary $\partial N(\bar X)$ has $k+1$ connected components, $k$ of which are copies of $\matRP^3$.

We also admit the degenerate case $X=S^1$ and $\bar X = \matRP^3$. In this case we get $N(\bar X) = \matRP^3 \times [-1,1]$. As another example, if $X$ is an annulus with gleam zero, then $\bar X$ consists of two copies of $\matRP^3$ connected by an annulus, and it thickens to a 4-manifold $N(\bar X)$ with boundary consisting of two $\matRP^3$ and one $S^2\times S^1$. One may verify quite easily that $N(\bar X)$ is diffeomorphic to $\matRP^3 \times [-1,1]$ with one line $l \times \{0\}$ drilled.

If $k=0$ of course we get $\bar X = X$.

\subsection{Alternative version with doubles}
Here is an alternative version of Theorem \ref{main:teo}, already stated as Theorem \ref{1:teo}-(2). Given an orientable manifold with boundary $W$ we denote by $DW$ its double, equipped with any orientation (a double is always mirrorable).

\begin{teo} \label{main2:teo}
A closed oriented 4-manifold $M$ has connected complexity $\leq 1$ if and only if 
$$M=M'\#_h \matCP^2$$ 
for some $h\in \matZ$ and with $M'=D(N(\bar X))$ for some simple polyhedron $X$ with $c^*(X) \leq 1$.
\end{teo}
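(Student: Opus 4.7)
The plan is to extend the equivalence between the graph-manifold and double-of-thickening descriptions already established in Theorem~\ref{zero:teo} for complexity zero. The underlying principle is that each block in $\calS_1$ arises naturally as the doubled thickening of a specific basic piece of an enriched polyhedron, and hence the two descriptions of $M'$ correspond term-by-term once the new ingredients in complexity one are accounted for.

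For the direction $(2) \Rightarrow (1)$, I would proceed as follows. Given $\bar X = X \cup \bigsqcup_i \matRP^3_i$ with $c^*(X) \le 1$, first decompose the 2-dimensional part $X$ into basic pieces as in Section~\ref{one:section}, namely $D, P, Y_2, Y_{111}, Y_{12}, Y_3, X_1, \ldots, X_{11}$, by cutting along circles in the interiors of regions. This decomposition lifts to a decomposition of $N(\bar X)$ into the 4-dimensional thickenings of the pieces, together with a copy of $\matRP^3 \times [-1, 1]$ for each $\matRP^3_i$. After doubling, $D(N(\bar X))$ decomposes into regions joined along $S^2 \times S^1$'s corresponding to the decomposition circles of $X$ and to the projective lines along which the $\matRP^3_i$ are attached. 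Each region is identified with one of the blocks in $\calS_1$: the surface pieces $D, P, Y_2$ contribute the $N$-type blocks, the circular pieces $Y_{111}, Y_{12}, Y_3$ and the 8-shaped pieces $X_1, \ldots, X_{11}$ contribute the corresponding $M$- and $M^1$-type blocks (as in the complexity-zero case), and each $\matRP^3_i$ contributes a copy of $M_{12}^1$.

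For the direction $(1) \Rightarrow (2)$, I reverse the construction. Given a graph-manifold realisation of $M'$ by blocks in $\calS_1$, I associate to each block its corresponding basic piece, and reconstruct $\bar X$ by identifying the boundary circles of pieces exactly as prescribed by the $S^2 \times S^1$ gluings between the blocks. Each $M_{12}^1$ block contributes a copy of $\matRP^3$ attached to a boundary circle of the 2-dimensional part $X$ via a projective line, while the remaining blocks assemble into the 2-dimensional polyhedron $X$. The resulting enriched polyhedron $\bar X$ then satisfies $M' = D(N(\bar X))$ by construction.

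The hard part will be verifying the block identifications rigorously, and in particular the novel correspondence between the $\matRP^3$ piece and the block $M_{12}^1$, which has no analogue in complexity zero. The key computation is the degenerate case $\bar X = \matRP^3$ (so $X = S^1$): here $N(\bar X) = \matRP^3 \times [-1, 1]$ and its double is $\matRP^3 \times S^1$. The contribution of a general $\matRP^3$ piece is obtained from this by removing the neighbourhood of the projective line $l \times \{\mathrm{pt}\}$ along which the piece meets its neighbour in $\bar X$, yielding exactly $M_{12}^1 = \matRP^3 \times S^1 \setminus \nu(l \times \{\mathrm{pt}\})$. This is precisely what justifies enriching $X$ by 3-dimensional strata rather than only working with 2-dimensional polyhedra as in the complexity-zero case.
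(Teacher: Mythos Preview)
Your approach is essentially the same as the paper's: you reduce Theorem~\ref{main2:teo} to Theorem~\ref{main:teo} by exhibiting a block-by-block correspondence between the pieces of an enriched polyhedron $\bar X$ and the blocks in $\calS_1$, with the $\matRP^3$ strata matching $M_{12}^1$ exactly as you describe.

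There is one point you glossed over that the paper singles out explicitly. In the direction $(1)\Rightarrow(2)$ you write ``I associate to each block its corresponding basic piece'', but the block $M_1 = D^3\times S^1$ in $\calS_0\subset\calS_1$ has no $2$-dimensional piece attached to it (its natural shadow is a circle). When $M_1$ appears in the graph-manifold decomposition of $M'$ it acts as a filling, and after collapsing you may produce factors of the form $\#_k(S^3\times S^1)$; the paper handles this by observing that such a manifold is the double of a $1$-handlebody, and a $1$-handlebody always admits a shadow $X$ with $c^*(X)\leq 1$. You should add this caveat to your reconstruction of $\bar X$, otherwise the argument as written does not cover every graph manifold generated by $\calS_1$.

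A second, smaller point: when you ``reconstruct $\bar X$ by identifying the boundary circles of pieces exactly as prescribed by the $S^2\times S^1$ gluings'', recall that there are four orientation-reversing self-diffeomorphisms of $S^2\times S^1$ up to isotopy, while there are only two ways to identify boundary circles of pieces. The extra freedom is absorbed into the choice of gleams on $X$ (equivalently into the class in $H^2(X,\matZ/_{2\matZ})$ governing the thickening, as in Proposition~\ref{alpha:prop}); you should say a word about why every gluing is realised by some choice of gleams.
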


The polyhedron $X$ has some $k \geq 0$ boundary components; the polyhedron $\bar X$ has dimension 2 if $k=0$ and dimension 3 otherwise. This alternative version of our main theorem furnishes immediately a relevant information: the manifold $M'$ is the double of some manifold with boundary. This implies immediately that its signature vanishes, that is $\sigma (M')=0$ and hence $\sigma(M)=h$. 

\begin{example}
In the degenerate case $X = S^1$ we get $\bar X = \matRP^3$ and $D(N(\bar X)) = \matRP^3 \times S^1$.
\end{example}

Note that $\matRP^3 \times S^1$ can also be obtained by glueing the blocks $M_{12}$ and $N_1$.

\subsection{Doubles of 2-handlebodies}
Recall that a 2-handlebody is any $4$-manifold $W$ that decomposes with 0-, 1- and 2-handles only. Being a 2-handlebody is a quite restrictive condition: for instance, the map $\pi_1(\partial W) \to \pi_1(W)$ must be surjective. We now make an important observation.

\begin{prop}
If $X$ is a simple polyhedron without boundary, then $D(N(X))$ is the double of a 2-handlebody. 
\end{prop}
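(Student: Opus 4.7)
The plan is to invoke directly the structural fact about shadow thickenings recalled in Section \ref{shadows:subsection}: the thickening $N(X)$ of any shadow $X$ without boundary is a 2-handlebody. Once this is in hand, $D(N(X))$ is by definition the double of a 2-handlebody, and the proposition is immediate.

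If one wishes to see the handle structure concretely, it can be read off directly from the stratification of $X$. A regular neighbourhood of each vertex of $X$ provides a 0-handle of $N(X)$; the thickening of each edge of $SX$ (away from the vertices) contributes a 1-handle glued to the 0-handles at its endpoints (or closing up to a loop, if the edge is circular); and each region $f$ contributes a 2-handle, attached along a framed curve whose framing is prescribed by the gleam $\gl(f)$. Since $\partial X = \emptyset$, every region is closed and really does correspond to a full 2-handle; because $X$ is 2-dimensional, no handle of index greater than $2$ is required. This exhibits $N(X)$ as a 2-handlebody, and hence its double $D(N(X))$ as the double of a 2-handlebody.

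The hypothesis $\partial X = \emptyset$ is essential, and is precisely what separates this case from the more general situation in Theorem \ref{main2:teo}. If $X$ had boundary, one would have to thicken $\bar X$ instead, and the $\matRP^3 \times [-1,1]$ summands contributed by the attached projective spaces have nonzero third homology and so cannot be 2-handlebodies. This is exactly the obstruction that makes manifolds such as $\matRP^3 \times S^1$ doubles of compact 4-manifolds but \emph{not} doubles of 2-handlebodies (cf.\ Proposition \ref{not:2:prop}); so there is no subtle step to overcome in the present proposition, only the need to record that in the boundaryless case the obstruction does not arise.
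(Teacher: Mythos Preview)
Your proposal is correct and takes essentially the same approach as the paper: both simply invoke the fact from Section~\ref{shadows:subsection} that $N(X)$ is a 2-handlebody, which makes $D(N(X))$ the double of a 2-handlebody by definition. The paper's proof is literally the one sentence ``The thickening $N(X)$ is a 2-handlebody,'' so your first paragraph already matches it exactly; the remaining elaboration on the explicit handle structure and the role of the hypothesis $\partial X=\emptyset$ is additional commentary rather than a different argument.
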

\begin{proof}
The thickening $N(X)$ is a 2-handlebody.
\end{proof}

It is clear that many closed 4-manifolds with vanishing signature are not doubles of 2-handlebodies. The following is a relevant example for us.

\begin{prop} \label{not:2:prop}
The manifold $\matRP^3 \times S^1$ is not the double of a 2-handlebody.
\end{prop}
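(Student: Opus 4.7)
The plan is to argue by contradiction, extracting a presentation of $\pi_1(M) = \matZ \times \matZ/2\matZ$ from any hypothetical handle decomposition of $W$, forcing the deficiency of this presentation to equal $1$ via an Euler characteristic computation, and then contradicting this using the standard $\mathbb{F}_2$-homological upper bound on the deficiency of a finitely presented group.

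First, since $W$ is a $2$-handlebody, dualising its handle decomposition builds $W$ from $\partial W$ by attaching only handles of index $\geq 2$; consequently the inclusion $\partial W \hookrightarrow W$ is surjective on $\pi_1$, and van Kampen applied to $M = W \cup_{\partial W} W$ yields $\pi_1(M) \isom \pi_1(W)*_{\pi_1(\partial W)}\pi_1(W) \isom \pi_1(W)$, so $\pi_1(W) \isom \matZ \times \matZ/2\matZ$. Since $M$ is connected, so is $W$, and after cancelling redundant $0$-handles I may assume $W$ has a unique $0$-handle, $g$ $1$-handles, and $r$ $2$-handles; its spine $K$ is then the presentation $2$-complex of a presentation $\calP$ of $\matZ \times \matZ/2\matZ$ with $g$ generators and $r$ relators. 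Since $\partial W$ is a closed odd-dimensional manifold, $\chi(\partial W)=0$, so $\chi(M) = 2\chi(W) = 2\chi(K) = 2(1-g+r)$; but $\chi(\matRP^3 \times S^1) = 0$, so $g - r = 1$.

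It remains to contradict this: no presentation of $\matZ \times \matZ/2\matZ$ has deficiency $1$. For any group $G$ and any presentation with $g$ generators and $r$ relators, the cellular chain complex of the presentation $2$-complex $K$ over $\mathbb{F}_2$ gives $g - r = \dim H_1(K;\mathbb{F}_2) - \dim H_2(K;\mathbb{F}_2)$; and since a $K(G,1)$ is built from $K$ by attaching cells of dimension $\geq 3$, we have $H_1(K;\mathbb{F}_2) \isom H_1(G;\mathbb{F}_2)$ and a surjection $H_2(K;\mathbb{F}_2) \twoheadrightarrow H_2(G;\mathbb{F}_2)$, hence
$$g - r \leq \dim H_1(G;\mathbb{F}_2) - \dim H_2(G;\mathbb{F}_2).$$
Taking $K(G,1) = S^1 \times \matRP^\infty$ and applying Künneth over $\mathbb{F}_2$, both terms on the right-hand side equal $2$, so $g - r \leq 0$, contradicting $g - r = 1$. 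There is no serious obstacle: the argument reduces to an Euler characteristic computation plus a homological inequality, with the only mild care needed being the surjectivity $\pi_1(\partial W)\twoheadrightarrow \pi_1(W)$ and the $\mathbb{F}_2$-Künneth computation for $S^1 \times \matRP^\infty$.
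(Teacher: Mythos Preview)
Your proof is correct and follows essentially the same approach as the paper: both argue by contradiction, use $\chi(\matRP^3\times S^1)=0$ to force a presentation of $\pi_1(W)\cong\matZ\times\matZ/_{2\matZ}$ with deficiency $g-r=1$, and contradict the fact that this group has deficiency zero. The only difference is that where the paper cites a reference for the deficiency of $\matZ\times\matZ/_{2\matZ}$, you supply a self-contained proof via the standard $\mathbb{F}_2$-homological bound and the K\"unneth computation for $S^1\times\matRP^\infty$; you also make explicit the van Kampen step showing $\pi_1(DW)\cong\pi_1(W)$ that the paper simply asserts.
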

\begin{proof}
Suppose that $\matRP^3 \times S^1 = DW$ for some 2-handlebody $W$. We get $\chi(W)=0$, so $W$ has a handle decomposition with $h+1$ one-handles and $h$ two-handles. This leads to a contradiction because the group $\pi_1(DW) = \pi_1(W) = \matZ \times \matZ/_{2\matZ}$ has deficiency zero, see \cite[Chapter 5]{CZ}.
\end{proof}

The manifold $\matRP^3 \times S^1$ has connected complexity one, it is the double of $\matRP^3 \times [-1,1]$, but it is not the double of a 2-handlebody. 

\subsection{Asphericity}
Here is another important topological information derived from Theorem \ref{main2:teo}.

\begin{teo} \label{aspherical:teo}
No closed oriented 4-manifold $M$ with $c^*(M) \leq 1$ is aspherical.
\end{teo}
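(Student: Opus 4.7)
The plan is to assume $M$ is a closed oriented aspherical $4$-manifold with $c^*(M)\leq 1$ and derive a contradiction from the structural description in Theorem \ref{main2:teo}: $M = M' \#_h \matCP^2$ with $M' = D(N(\bar X))$.

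First I would show $h = 0$. If $h\neq 0$, then $\pi_1(M) = \pi_1(M')$ since $\matCP^2$ is simply connected, and the universal cover $\tilde M$ is obtained from $\tilde{M'}$ by inserting a punctured copy of $\matCP^2$ or $\overline{\matCP}^2$ at each of the $|h|\cdot|\pi_1(M)|$ lifts of the connected-sum balls. Each such insertion contributes its hyperplane class to $H_2(\tilde M;\matZ)$, contradicting the vanishing $H_2(\tilde M) = 0$ implied by the contractibility of $\tilde M$ in the aspherical case.

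Now assume $M = DW$ with $W = N(\bar X)$. Define $r\colon DW\to W$ as the identity on each of the two copies of $W$; this is a well-defined retraction, so $\pi_1(W) = \pi_1(\bar X)$ injects into $\pi_1(M)$. Since closed aspherical manifolds have torsion-free fundamental group, $\pi_1(\bar X)$ must also be torsion-free. When $\bar X$ has an $\matRP^3$ attached to a boundary circle $\gamma$ of $X$, van Kampen gives $\pi_1(\bar X) = \pi_1(X)/\langle\langle [\gamma]^2\rangle\rangle$; torsion-freeness forces the class of $\gamma$ to vanish in $\pi_1(\bar X)$, which in turn makes the $\matRP^3$ attachment $\pi_1$-inessential, so that a small topological simplification of $\bar X$ removes it. Iterating, we are reduced to the case $\bar X = X$ purely $2$-dimensional and $W = N(X)$ a $2$-handlebody.

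This remaining case is the main obstacle. Here $\tilde W\simeq \tilde X$ has $H_k(\tilde W;\matZ) = 0$ for $k\geq 3$, and I would apply Mayer--Vietoris to $\tilde M$ (assembled from copies of $\tilde W$ glued along copies of $\widetilde{\partial W}$) combined with $H_k(\tilde M) = 0$ for $k\geq 1$ (from asphericity) and Poincar\'e duality on the oriented $3$-manifold $\partial W$ to derive homological constraints on $\widetilde{\partial W}$. The argument concludes by exhibiting, via the graph-manifold decomposition of Theorem \ref{1:teo}-(1) and the explicit block structure of $\calS_1$, an embedded sphere $S^2 = S^2\times\{\mathrm{pt}\}$ inside some gluing torus $S^2\times S^1\subset M$ that lifts to a nontrivial $2$-cycle in $\tilde M$, contradicting $\pi_2(M) = H_2(\tilde M) = 0$. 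The case-by-case verification required to produce this essential sphere for every allowed $X$ is where the combinatorial classification of simple polyhedra with $c^*\leq 1$ from Section \ref{one:section} becomes indispensable.
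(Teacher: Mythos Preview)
Your Step 3 contains a genuine gap. Even granting that $[\gamma]$ becomes trivial in $\pi_1(\bar X)$, this is a statement about fundamental groups, not about the diffeomorphism type of $M = D(N(\bar X))$. You cannot ``simplify $\bar X$ by removing the $\matRP^3$'' and expect to obtain the same closed 4-manifold: $M$ is fixed, and there is no mechanism that produces an alternative description $M = D(N(\bar X'))$ with $\bar X'$ purely 2-dimensional. Indeed Proposition \ref{not:2:prop} shows that $\matRP^3\times S^1$ (which arises exactly this way) is \emph{not} the double of any 2-handlebody, so the reduction you propose is impossible in that example. The paper handles this case differently and directly: each $\matRP^3$ contributes a $\matZ$ to $H_3(\bar X;\matZ)$, the Hurewicz map $\pi_3(\bar X)\to H_3(\bar X;\matZ)$ hits it, and the retraction $M\to N(\bar X)\to \bar X$ then forces $\pi_3(M)\neq 0$.

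Your Step 4 is only a sketch that defers the actual work to an unspecified case-by-case search for an essential 2-sphere; as written it is not a proof. The paper's argument for the 2-dimensional case is one line and avoids all combinatorics: since $W=N(X)$ is a 2-handlebody, $\pi_1(\partial W)\to\pi_1(W)$ is surjective, hence the inclusion $W\hookrightarrow DW$ is a $\pi_1$-isomorphism. The folding map $r\colon DW\to W\hookrightarrow DW$ therefore induces the identity on $\pi_1$; if $DW$ were aspherical it would be a $K(\pi,1)$ and $r$ would be homotopic to the identity, contradicting $\deg r = 0$. This replaces your entire Mayer--Vietoris and block-by-block analysis.
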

\begin{proof}
Every such manifold is diffeomorphic to $M=M' \#_h \matCP^2$ with $M' = D(N(\bar X))$ for some enriched simple polyhedron $\bar X$. Suppose that $M$ is aspherical. Since $\pi_2(M)$ vanishes, we get $h=0$ and $M=M'$. If $\bar X$ is 2-dimensional, the retraction $D(N(\bar X)) \to N(\bar X) \subset D(N(\bar X))$ induces an isomorphism on fundamental groups: since $M'$ is aspherical, the retraction is homotopic to the identity, a contradiction since it has degree zero.

If $\bar X$ contains $k>0$ projective spaces, the map $\pi_3(\bar X) \to H_3(\bar X, \matZ) = \matZ^k$ has non-trivial image, so in particular $\pi_3(\bar X) \neq \{e\}$. The retractions $D( N(\bar X) ) \to N(\bar X) \to \bar X$ imply that $\pi_3(D(N(\bar X) )) \neq \{e\}$.
\end{proof}

\subsection{How we can encode doubles}
As noted in \cite{Ma:zero}, the doubles of thickenings of simple 2-dimensional polyhedra are easily encoded by homological data. We extend this observation to enriched simple polyhedra.

Let $\bar X$ be an enriched simple polyhedron. By varying the gleams on $X$ we get many different thickenings $N(\bar X)$. However, the following proposition shows that we get only finitely many doubles $D(N(\bar X))$ up to diffeomorphisms, and these are easily classified by the elements in $H^2\big(X, \matZ/_{2\matZ}\big)$. For every thickening $N(\bar X)$, the natural inclusion $i\colon X \hookrightarrow D(N(\bar X)) = M'$ induces a map $i^*\colon H^2\big(M',\matZ/_{2 \matZ}\big) \to H^2\big(X, \matZ/_{2 \matZ}\big)$.

\begin{prop} \label{alpha:prop}
For every $\alpha \in H^2\big(X, \matZ/_{2 \matZ}\big)$ there is (up to diffeomorphism) precisely one double $M' = D(N(\bar X))$ whose second Stiefel-Whitney class $w_2$ satisfies $i^*(w_2) = \alpha$. The double is spin if and only if $\alpha=0$.
\end{prop}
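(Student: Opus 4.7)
The plan is to adapt the argument of \cite[Lemma 2.7]{Ma:zero} to the enriched setting. There are two main parts: realising every $\alpha$, and proving uniqueness of the resulting double up to diffeomorphism. The spin statement will then follow almost for free.

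\emph{Realising every $\alpha$.} Given $\alpha \in H^2\big(X, \matZ/_{2\matZ}\big)$, represent $\alpha$ by a $\matZ/_{2\matZ}$-valued function $\bar g$ on the regions of $X$ (satisfying the parity compatibility dictated by even vs.\ odd regions). Lift $\bar g$ to genuine gleams $g(f)$ by choosing any integer or half-integer representative as the parity of $f$ requires; this is possible because the only constraint on gleams is precisely the parity condition. By the formula for $w_2$ recorded in the Homology subsection of Section \ref{definitions:section}, the thickening $N(\bar X)$ obtained with these gleams satisfies $i^*(w_2(N(\bar X))) = \alpha$, and since the inclusion $N(\bar X) \hookrightarrow M' = D(N(\bar X))$ is the identity on $X$ and pulls $w_2(M')$ back to $w_2(N(\bar X))$, we also obtain $i^*(w_2(M')) = \alpha$.

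\emph{Uniqueness up to diffeomorphism.} Suppose two gleam assignments $g_1, g_2$ agree mod $2$ on every region. By changing one region at a time it is enough to establish the following local lemma: replacing $\gl(f) \mapsto \gl(f) \pm 2$ on a single region $f$ produces a diffeomorphic double. To prove this, view $N(\bar X)$ as a 2-handlebody, with the $\matRP^3 \times [-1,1]$ pieces attached as collars along boundary components. In a Kirby diagram for $N(\bar X)$, a $\pm 2$ change of $\gl(f)$ is a $\pm 2$ framing change on one of the 2-handles. Passing to the double $D(N(\bar X))$, this 2-handle acquires a dual 2-handle $h^*$ (from the mirror copy of $N(\bar X)$) that is attached along a $0$-framed meridian. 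A handle slide of the original 2-handle over $h^*$ then changes its framing by $\pm 2$ while leaving the underlying smooth 4-manifold unaltered, yielding the required diffeomorphism.

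\emph{Spin criterion.} The double $DW$ of an oriented 4-manifold $W$ with boundary is spin iff $W$ is spin, since spin structures on $W$ and its mirror always match along their common boundary. Hence $M'$ is spin iff $w_2(N(\bar X)) = 0$. Since each $\matRP^3 \times [-1,1]$ piece is parallelisable and carries no $w_2$, the obstruction lives entirely on the 2-dimensional part $X$, and $w_2(N(\bar X)) = 0$ iff its restriction $\alpha = i^*(w_2)$ vanishes.

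The main technical obstacle will be the handle-slide step in the uniqueness argument for a region $f$ whose closure meets a boundary component of $X$ where a projective space has been glued. One must check that the dual 2-handle along the mirrored meridian is still present and accessible for the slide in the double, which requires a careful local analysis of $N(\bar X)$ at the interface between $X$ and each attached $\matRP^3 \times [-1,1]$; away from this interface the argument is the same as in the closed case treated in \cite{Ma:zero}.
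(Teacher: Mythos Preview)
Your uniqueness argument has a genuine gap. You reduce to showing that two gleam assignments $g_1,g_2$ which \emph{agree mod $2$ on every region} yield diffeomorphic doubles, and you handle this correctly via the meridian handle-slide. But this is not the right equivalence relation: two gleam assignments determine the same class $\alpha\in H^2(X,\matZ/_{2\matZ})$ precisely when their mod-$2$ reductions differ by a \emph{coboundary}, not when they are equal as cochains. Concretely, at an interior edge $e$ of $SX$ one may add $+1$ to the gleams of all three adjacent regions without changing $\alpha$, and your argument does not cover this move. Without it you have only shown that the double depends on the mod-$2$ cochain, which is a strictly finer invariant than $\alpha$, so uniqueness is not established.

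The paper's proof handles this via shadow calculus rather than Kirby moves: it first builds an explicit shadow $X_*$ for $M'$ by attaching a bubble to every region of $X$ (and a copy of $X_{12}$ at each boundary circle), and then uses the bubble-sliding move of Figure~\ref{moves4:fig} to realise both the $\pm 2$ change on a single region \emph{and} the $+1$ change on the three regions around an edge. For the enriched part it uses a further move (Figure~\ref{X11moves:fig}-(4) applied inside $X_{12}$) to show that the gleam on a region adjacent to $\partial X$ can be shifted by $\pm 1$; this is exactly the technical point you flagged but did not resolve. Your Kirby approach could in principle be completed by finding handle-slide realisations of these two additional moves in the double, but as written the proof stops short of what is needed.
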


We defer the proof of the proposition to Section \ref{alpha:proof:subsection}.
For the moment we content ourselves with the following simple examples:
\begin{itemize}
\item If $X=S^2$, then $M'$ equals $S^2 \times S^2$ or $S^2 \timtil S^2 = \matCP^2 \# \overline{\matCP}^2$ depending on the parity of the gleam on $X$.
\item If $X= \emptyset$, then $\bar X = \matRP^3$ and $M' = \matRP^3 \times S^1$.
\item If $X$ is an orientable surface with $k>0$ boundary components, then $M'$ is the unique oriented manifold obtained by gluing $X \times S^2$ to $k$ copies of $M_{12}^1$.
\item If $X = Y_{111}$, then $M'$ is the unique oriented manifold obtained by gluing $P^3 \times S^1$ to 3 copies of $M_{12}$. Here $P^3$ is $S^3$ minus 3 open balls.
\end{itemize}
All the manifolds listed are spin except $S^2 \timtil S^2$.

\section{The constructive part} \label{constructive:section}
We prove here the constructive part of Theorem \ref{main:teo}, namely that every manifold $M=M' \#_h\matCP^2$ as stated there has connected complexity $\leq 1$. The other half of the theorem, which says that all the manifolds with connected complexity $\leq 1$ are of this kind, is harder and will be proved in the next sections.

We also show the equivalence between Theorems \ref{main:teo} and \ref{main2:teo}, that is between Theorem \ref{1:teo}-(1) and (2).

\subsection{Shadows with boundary}
In the definition that we gave in Section \ref{shadows:subsection} a shadow is a simple polyhedron without boundary decorated with gleams. We now relax this definition by allowing the presence of some boundary component. We follow \cite{CoThu}.

From this point on, we let a \emph{shadow} be a simple polyhedron $X$, possibly with boundary, decorated with gleams; as usual, these are half-integers attached to regions, that are integers precisely on the even regions. 

A shadow $X$ thickens to a compact oriented 4-manifold $N(X)$ that fibres over $X$ via a map $\pi\colon N(X) \to X$. The fibre over a point in $\partial X$ or in some region of $X$ is a disc. 

The boundary $\partial N(X)$ decomposes into a \emph{vertical part} $\partial_vN(X) = \pi^{-1}(\partial X)$ and a \emph{horizontal part} $\partial_hN(X)$ that is the closure of $(\pi|_{\partial N(X)})^{-1}(X \setminus \partial X)$. The vertical part consists of solid tori $V_1, \ldots, V_h$ above the components $\gamma_1,\ldots, \gamma_h$ of $\partial X$. The core $\gamma_i$ of $V_i$ is equipped with a framing, induced by the gleam of the adjacent region. See \cite[Section 3]{CoThu} for more details.

For instance, a surface with boundary $X$ thickens to a disc bundle over $X$, with its obvious vertical and horizontal boundary.

\subsection{Blocks}
Recall that a \emph{block} is a compact oriented 4-manifold $M$ with (possibly empty) boundary made of some copies of $S^2\times S^1$. For instance, all the manifolds in $\calS_1$ are blocks. 

A \emph{framed block} is a pair $(M,L)$ where $M$ is a block and $L\subset \partial M$ is a framed link that consists of one framed fiber $\{pt\}\times S^1$ on each boundary component. The framing has only an auxiliary role, so we usually drop $L$ from the notation.

\subsection{Shadow of a block}
Let $X$ be a shadow with boundary and $N(X)$ its thickening. Suppose that $\partial N(X) \isom \#_h(S^2 \times S^1)$ for some $h\geq 0$. In this case we can perform the following construction, first defined in \cite{Ma:zero}, that produces a framed block $M$ from $X$.

\begin{figure}
\begin{center}
\includegraphics[width = 13 cm]{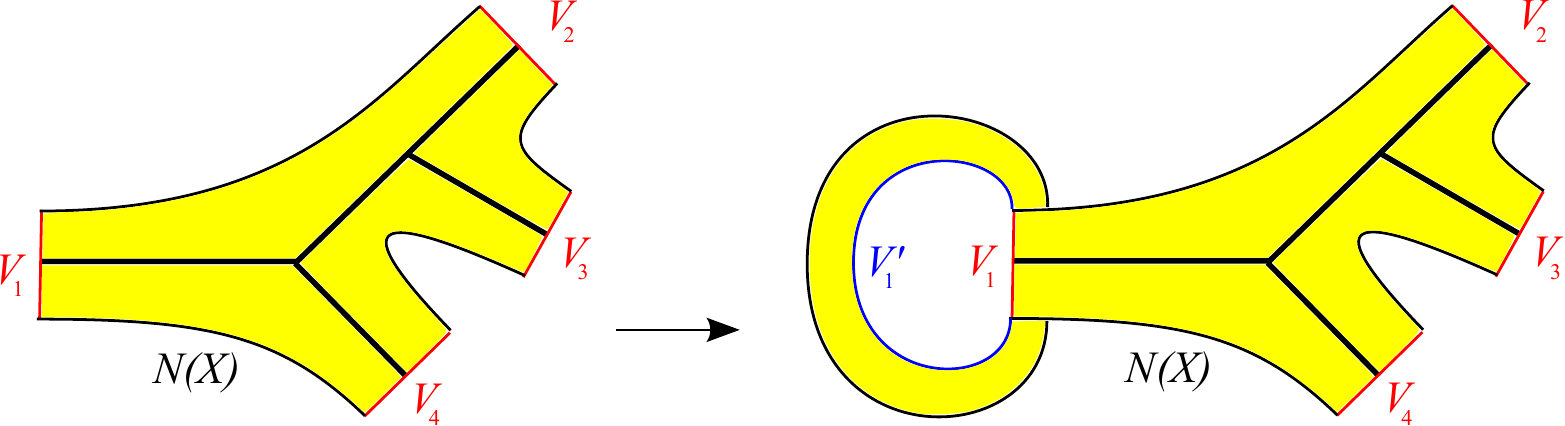}
\nota{How to construct a framed block $M$ from $X$. Here $X$ is represented in black (as a graph) and $N(X)$ is in yellow. Each vertical solid torus $V_i\subset\partial N(X)$ is doubled, so that $V_i\cup V_i'\isom S^2\times S^1$. (Here, this is shown for $i=1$ only.) 
}
\label{determina:fig}
\end{center}
\end{figure}

Let $X$ have $k$ boundary components $\gamma_1,\ldots, \gamma_k$, that are framed cores of the vertical solid tori $V_1,\ldots, V_k$ in $\partial_v N(X)$. As suggested by Figure \ref{determina:fig}, we pick $N(X)$ and we double each vertical solid torus $V_i$ along its boundary, thus adding another solid torus $V_i'$. Now $V_i \cup V_i' \isom S^2 \times S^1$. Moreover we thicken $V_i'$ as in the figure. 

We have thus enlarged $N(X)$ to a bigger compact 4-manifold $W$, that has $k+1$ boundary components. Of these, we have that $k$ are $V_i\cup V_i' \isom S^2 \times S^1$, and the last one is still diffeomorphic to $\partial N(X) \isom \#_h(S^2 \times S^1)$. We cap off the last boundary component by attaching $h$ 3-handles and one 4-handle, and call $M$ the resulting manifold.

We have constructed a block $M$ with $k$ boundary components. The block is framed as $(M,L)$ with $L= \gamma_1 \sqcup \cdots \sqcup \gamma_k$. Recall that each $\gamma_i$ has a framing induced by the gleam of the adjacent region in $X$. 

We say that $X$ is a shadow of the block $M$. When $\partial X = \emptyset$ then $\partial M = \emptyset$ and we recover here the original definition of shadow of a closed 4-manifold.

\begin{example}
Let $X$ be a surface with non-empty boundary equipped with some gleam. The thickening $N(X)$ is a disc bundle over $X$ and is also a 1-handlebody. Therefore $\partial N(X) \isom \#_h(S^2 \times S^1)$. We deduce that $X$ is a shadow of some framed block $M$, uniquely determined by $X$. We can see easily that $M$ is the unique oriented $S^2$-bundle over $X$.
\end{example}

\begin{rem}
Alternatively, we can say that a shadow for a block $(M, L)$ is a locally flat simple polyhedron $X \subset M$ such that $\partial X = L = X \cap \partial M$ and $M\setminus \interior{N(X\cup \partial M)}$ is a 1-handlebody. The embedding $X\subset M$ induces the appropriate gleams on $X$. See \cite{Ma:zero}. 
\end{rem}

\subsection{Important examples}
Here are some examples that are fundamental for us.

\begin{prop} \label{equipped:prop}
The simple polyhedra
$$D,\ A,\ P,\ Y_2,\ Y_{111},\ Y_{12},\ Y_3,\ X_1,\ \ldots,\ X_{11}$$
equipped with arbitrary gleams are shadows of the blocks
$$N_1,\ N_2,\ N_3,\ M_2,\ M_{111},\ M_{12},\ M_3,\ M_1^1,\ \ldots,\ M_{11}^1.$$
\end{prop}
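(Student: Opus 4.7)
My plan is to verify the correspondence case by case, grouping the listed polyhedra into three families: the surfaces $D, A, P, Y_2$; the circular pieces $Y_{111}, Y_{12}, Y_3$; and the eleven $8$-graph neighbourhoods $X_1, \ldots, X_{11}$.

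First I would dispense with the qualifier ``arbitrary gleams.'' If $f$ is a region of $X$ meeting a boundary component $\gamma_i\subset\partial X$, then changing $\gl(f)$ by $\pm 1$ corresponds in the thickening $N(X)$ to a $\pm1$ self-plumbing of the disc-bundle along a fibre over $f$; in the boundary $\partial N(X)$ this is a Dehn twist along a torus parallel to $\partial V_i$. In the doubling construction of Figure \ref{determina:fig} this twist is absorbed into the gluing identifying $V_i\cup V_i'$ with $S^2\times S^1$, and so alters only the framing of the fibre $\{\mathrm{pt}\}\times S^1$ without changing the diffeomorphism type of the block $M$. Thus I may choose zero (or any convenient) gleams throughout.

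The surface cases are then immediate from the example preceding the statement: $N(X)$ is the oriented $D^2$-bundle over $X$, which is trivial over $D, A, P$ and twisted over the M\"obius strip $Y_2$. Doubling each vertical solid torus produces the associated $S^2$-bundle over $X$, giving respectively $N_1 = S^2\times D^2$, $N_2 = S^2\times A$, $N_3 = S^2\times P$, and $M_2$. For the circular pieces I would exploit the fact that a regular neighbourhood of a circular component of $SX$ has a natural $S^1$-structure: $Y_{111}\isom Y\times S^1$ gives $N(Y_{111})$ as a mapping torus of the genus-$2$ handlebody thickening of the tripod $Y$, while $Y_{12}$ and $Y_3$ yield analogous mapping tori with M\"obius and $3$-fold monodromies. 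In each case I would check that the complement of the vertical solid tori in $\partial N(X)$ is $\#_h(S^2\times S^1)$ for the appropriate $h$ and that capping off with $3$- and $4$-handles produces precisely $M_{111}$, $M_{12}$, $M_3$, by matching the framed fibres to the Kirby diagrams in Figure \ref{blocchi:fig}.

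The bulk of the work, and the main obstacle, is the $8$-graph case. For each $X_i$ I would translate the combinatorial data recorded in Figure \ref{Xall:fig} (number of regions, their local twisting, and how they attach to the two circles of the graph) into an explicit Kirby diagram for $N(X_i)$ via the standard shadow-to-Kirby dictionary of Turaev and Costantino. I would then realise the doubling construction on each vertical torus as the replacement of a meridional disc by an $S^2$-summand, which is exactly dual to the ``drilling'' of a fibre $\{\mathrm{pt}\}\times S^1\subset S^2\times S^1$ used to define $M_i^1$ from $\#_2(S^3\times S^1)$. Comparing the two framed links in $\#_2(S^2\times S^1)$ via Kirby moves identifies the resulting block with $M_i^1$. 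This step is inherently tedious because the eleven pieces have distinct stratifications, each producing a different diagram, but each individual check is a finite, routine manipulation once the gleam-invariance observation has reduced each $X_i$ to a single canonical representative.
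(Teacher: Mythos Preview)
Your proposal would work in principle, but it takes a far more laborious route than the paper's proof, and in doing so it misses the single observation that makes all eighteen cases fall at once.

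The paper's argument is uniform and two lines long. First, every piece in the list collapses onto a $1$-complex: the surfaces $D,A,P,Y_2$ collapse onto a point or a graph, while each of $Y_{111},Y_{12},Y_3,X_1,\ldots,X_{11}$ has only annular regions, all of which touch $\partial X$, so the whole piece collapses onto its singular set $SX$ (a circle or an $8$-graph). Hence the $4$-dimensional thickening $N(X)$ is a $1$-handlebody, \emph{regardless of the gleams}, and in particular $\partial N(X)\cong\#_h(S^2\times S^1)$, so $X$ is automatically a shadow of \emph{some} block. Second, to identify that block with the candidate $M$ from the list, one observes that each listed $M$ is by construction the mirror of $N(X)$ along its horizontal boundary $\partial_h N(X)$: the drilling of $\#_k(S^3\times S^1)$ along the indicated links is exactly this mirror. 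Therefore $M\setminus \interior{N(X\cup\partial M)}$ is another copy of the open $1$-handlebody $\interior{N(X)}$, i.e.\ it consists of $3$- and $4$-handles, which is precisely the shadow condition.

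Compared with this, your plan has two inefficiencies. Your gleam-invariance argument via self-plumbings and absorbed Dehn twists is correct but unnecessary: once you notice that every region meets $\partial X$, the collapse onto $SX$ kills the gleam dependence immediately. More importantly, your proposed Kirby-calculus verification for each $X_i$ separately is exactly what the mirroring observation avoids; the paper never touches a single Kirby diagram here. Your approach buys nothing extra and costs eleven tedious computations, whereas the paper's argument applies verbatim to any simple polyhedron all of whose regions are annuli meeting the boundary.
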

\begin{proof}
Same proof as in \cite[Proposition 3.16]{Ma:zero}. Each polyhedron $X$ in the list, equipped with arbitrary gleams, thickens to a 4-manifold $N(X)$ which is in fact a 1-handlebody. Therefore $X$ is a shadow of some block $M$. To see that $M$ is as stated, note that the candidate $M$ is obtained by mirroring $N(X)$ along its horizontal boundary $\partial_hN(X)$, so $M \setminus \interior {N(X\cup \partial M)} \isom \interior{N(X)}$ is also an open 1-handlebody, that is it is made of 3- and 4-handles.
\end{proof}

The regions of all the simple polyhedra involved in the previous proposition are incident to the boundary, so by varying their gleams we only change the framing of the respective block.

\subsection{Complexity of blocks}
The \emph{complexity} $c(M)$ of a block $M$ is the minimum complexity of a shadow $X$ for $M$. The \emph{connected complexity} $c^*(M)$ is the minimum connected complexity of a shadow $X$ for $M$.

For instance, all the blocks listed in Proposition \ref{equipped:prop} have complexity zero or one.

\begin{rem}
The block $M_1= D^3 \times S^1$ is a bit peculiar. A natural shadow for it should be a 1-dimensional circle, since $D^3 \times S^1$ is obtained by adding a 3- and a 4-handle to its 4-dimensional thickening. We set $c(M_1) = c^*(M_1) = 0$ by convention.
\end{rem}

\subsection{Connected sum and assembling}
We now introduce some important manipulations on blocks and show how these can be easily translated into manipulations of shadows and decorated graphs.

We recall from \cite[Sections 4.1 and 4.3]{Ma:zero} the crucial operations of connected sum and assembling. Let $M$ be a (possibly disconnected) framed block. A connected sum consists as usual as the removal of the interior of two 4-discs from the interior of $M$ and the gluing of the two resulting boundary 3-spheres via an orientation-reversing diffeomorphism.
An assembling consists of gluing altogether two boundary components of $M$ via a framing-preserving orientation-reversing diffeomorphism. 

\begin{figure}
\begin{center}
\includegraphics[width = 12 cm]{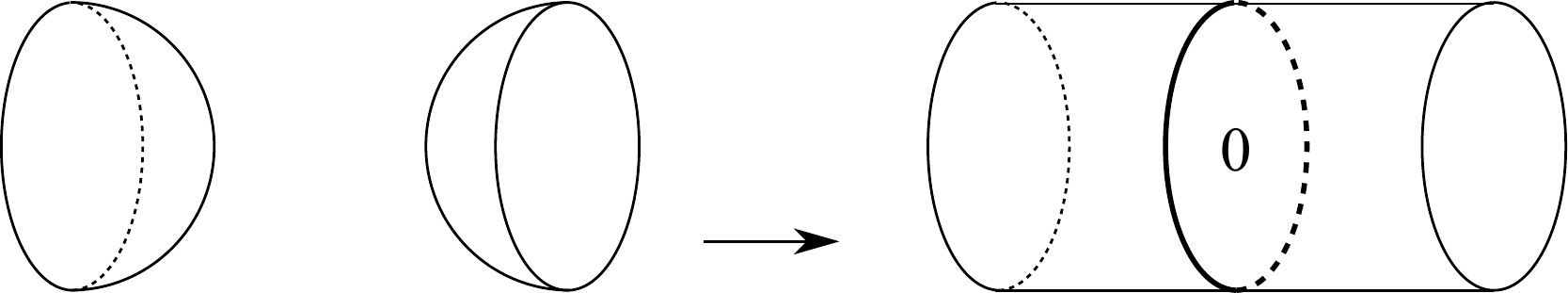}
\nota{This move on shadows corresponds to a connected sum of manifolds.}
\label{sum:fig}
\end{center}
\end{figure}

\begin{figure}
\begin{center}
\includegraphics[width = 12 cm]{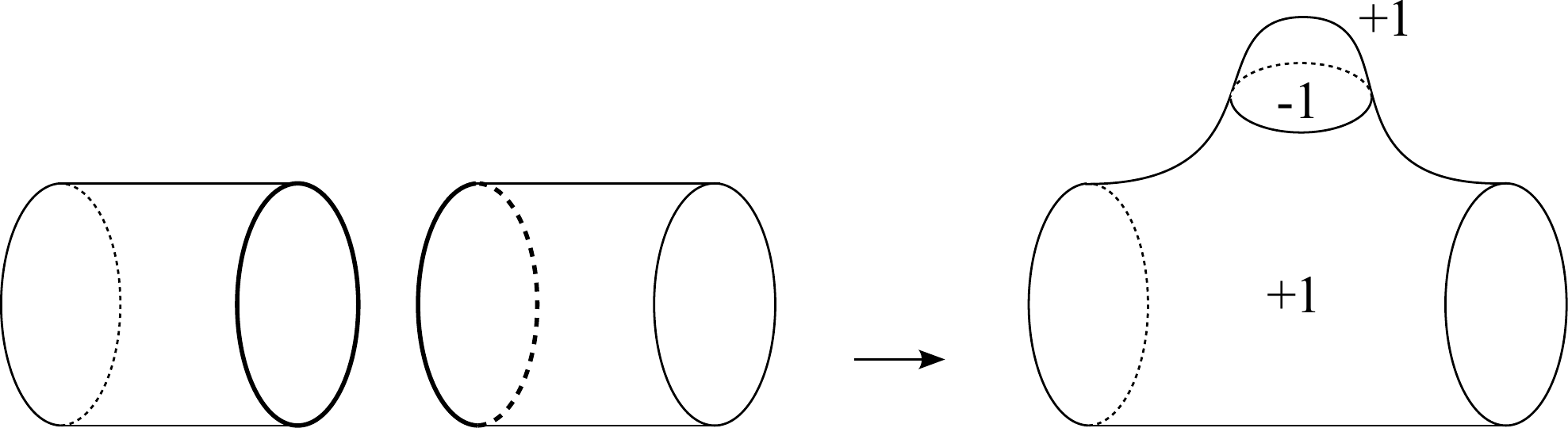}
\nota{This move on shadows represents an assembling of blocks. Two boundary components are glued, and a bubble is added.}
\label{assembling:fig}
\end{center}
\end{figure}

On shadows, connected sums and assemblings may be realized as in Figures \ref{sum:fig} and \ref{assembling:fig}, as proved in \cite[Sections 4.1 and 4.3]{Ma:zero}. We may encode these moves at the level of decorated graphs as in Figure \ref{sum_ass_new:fig}.

\begin{figure}
\begin{center}
\includegraphics[width = 14 cm]{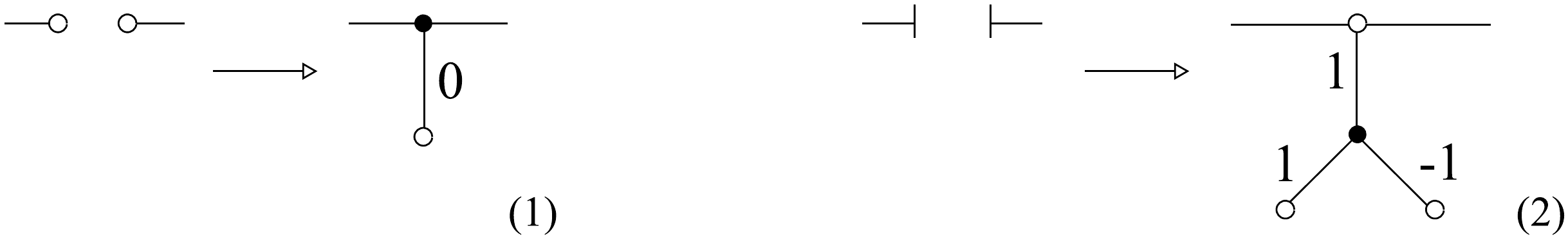}
\nota{Connected sum and assemblings of blocks via decorated graphs.}
\label{sum_ass_new:fig}
\end{center}
\end{figure}

A crucial observation is that both operations do not produce new vertices and hence do not increase the (connected) complexity of the shadows. Therefore the following holds.

\begin{prop} \label{leq:prop}
If a block $M'$ is obtained from $M$ by assembling or connected sum, then
$$c(M') \leq c(M), \qquad c^*(M') \leq c^*(M).$$
\end{prop}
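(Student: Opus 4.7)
The plan is to take a shadow $X$ of $M$ realising the minimum (connected) complexity, and to apply the explicit shadow-level moves depicted in Figures \ref{sum:fig} and \ref{assembling:fig}, which were shown in \cite{Ma:zero} to realise connected sum and assembling at the level of the underlying blocks. This produces a shadow $X'$ of $M'$ directly from $X$, and the two inequalities then reduce to a comparison of $SX'$ and $SX$: counting vertices gives the $c$ bound, while inspecting the connected component decomposition gives the $c^*$ bound.

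For the connected sum move, I would observe that the added tube in Figure \ref{sum:fig} is attached in the interiors of regions of the shadow and does not touch $SX$; hence $SX' = SX$ as 4-valent graphs, with the same component decomposition, so $c(X') = c(X)$ and $c^*(X') = c^*(X)$ at once. For the assembling move of Figure \ref{assembling:fig}, two boundary circles of $X$ are identified and a small bubble is attached. Here a careful local inspection is required: I must check that (i) the identification and the bubble together introduce no new vertex, and (ii) whatever new singular strata appear form \emph{fresh} components of $SX'$, each a vertex-free circle, while every pre-existing component of $SX$ survives unchanged inside $SX'$. Point (ii) is the main potential obstacle, since a priori the glued boundary circles could merge with some component of $SX$ and artificially increase its vertex count; this is ruled out by unwinding the local model of the move as used in \cite{Ma:zero}.

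Once both observations are in place, we have $c(X') \leq c(X)$ and no component of $SX'$ contains more vertices than some component of $SX$, so also $c^*(X') \leq c^*(X)$. Since $X'$ is a shadow of $M'$, this gives
$$c(M') \leq c(X') \leq c(X) = c(M), \qquad c^*(M') \leq c^*(X') \leq c^*(X) = c^*(M),$$
as desired.
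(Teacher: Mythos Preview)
Your proposal is correct and follows the same approach as the paper: the paper's proof is simply the one-line observation that the moves of Figures~\ref{sum:fig} and~\ref{assembling:fig} create no new vertices, and you have unpacked this observation in more detail. One small remark: a single shadow need not simultaneously realise both $c(M)$ and $c^*(M)$, so strictly speaking the two inequalities should be proved with possibly different minimising shadows; since your argument applies to \emph{any} shadow of $M$, this is immediate.
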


Strictly speaking, the previous discussion does not apply if we assemble a block with $D^3 \times S^1$ since the complexity of $D^3 \times S^1$ has been set zero by assumption. We now consider this peculiar assembling separately, and show that Proposition \ref{leq:prop} holds also in this case.

\subsection{Filling a block}
Let $M$ be a block. The \emph{filling} of a boundary component of $M$ is the assembling of $M$ with $D^3 \times S^1$ along that component. Equivalently, this operation consists of adding a 3- and a 4-handle. The result is a new block $M'$ with one boundary component less than $M$.

\begin{prop}  \label{filling:prop}
If a block $M'$ is obtained by filling a boundary component of $M$, then
$$c(M') \leq c(M), \qquad c^*(M') \leq c^*(M).$$
\end{prop}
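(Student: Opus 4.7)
The plan is to lift the filling operation on blocks to a shadow move that introduces no new vertices. Given a shadow $X$ of $M$ realising the minimum (connected) complexity, let $\gamma\subset\partial X$ be the boundary circle corresponding to the boundary component of $M$ that is being filled, and form $X':=X\cup_\gamma D$ by attaching a $2$-disc $D$ along $\gamma$ with a suitably chosen gleam. Since the boundary of a simple polyhedron is disjoint from its singular set, $\gamma$ avoids all vertices and triple points of $X$; gluing $D$ simply merges the type-$(4)$ half-disc neighbourhoods of $X$ along $\gamma$ with the corresponding half-disc of $D$ into full type-$(3)$ regular interior discs. Hence $X'$ is still a simple polyhedron, $SX'=SX$, and in particular $c(X')=c(X)$ and $c^*(X')=c^*(X)$.

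The substantive step is to verify that $X'$ is indeed a shadow of $M'$. The thickening $N(X')$ is obtained from $N(X)$ by gluing the disc bundle $N(D)\cong D^2\times D^2$ along the vertical solid torus $V\cong S^1\times D^2$ sitting above $\gamma$, or equivalently by attaching one $2$-handle to $N(X)$ along the core circle $\gamma$ of $V$ with the framing prescribed by the gleam of $D$. In particular $N(X')$ is still a $2$-handlebody. The shadow block construction applied to $X'$ will then double only the remaining $k-1$ vertical tori $V_j$, $j\neq i$, and cap off the residual boundary with $3$- and $4$-handles, yielding a block with $k-1$ boundary components.

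To match this block with $M'$, I will compare two ways of constructing $M'$ from $N(X)$. In the first, one doubles $V$ externally to form a boundary component $V\cup V'\cong S^2\times S^1$ and then fills it with $D^3\times S^1$, which amounts to attaching a $3$-handle along the equatorial $S^2\subset V\cup V'$ followed by a $4$-handle. In the second, one attaches a single $2$-handle to $N(X)$ along $\gamma$ with the appropriate framing, exactly as happens inside $N(X')$. A direct handle-theoretic comparison should show that these two procedures produce diffeomorphic $4$-manifolds once the remaining $V_j$ are doubled and the residual $3$- and $4$-handles are added. Selecting the gleam of $D$ to realise this framing then exhibits $X'$ as a shadow of $M'$, giving $c^*(M')\leq c^*(X')=c^*(X)=c^*(M)$, and the analogous inequality for $c$. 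The hard part will be this last identification: precisely matching the framing induced by the gleam with the canonical framing needed to see filling by $D^3\times S^1$ as a $2$-handle attachment, and in particular confirming that the resulting $\partial N(X')$ is still a $\#_{h'}(S^2\times S^1)$ so that $X'$ qualifies as a shadow in the first place.
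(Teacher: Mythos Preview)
Your approach has a genuine gap: capping $\gamma$ with a disc $D$ does \emph{not} model the filling operation. Attaching $D$ to $X$ corresponds to attaching a $2$-handle to $N(X)$ along the fibre $\gamma=\{\mathrm{pt}\}\times S^1$ of the boundary $S^2\times S^1$, whereas filling attaches $D^3\times S^1$, that is a $3$-handle and a $4$-handle. These are different operations and no choice of gleam on $D$ reconciles them. Concretely, after filling, the curve $\gamma$ becomes $\{\mathrm{pt}\}\times S^1\subset D^3\times S^1$, which generates $\pi_1(D^3\times S^1)=\matZ$ and hence does not bound any disc in $M'$; so there is in general no embedded $D$ to attach.

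A small example makes the failure explicit. Take $X$ to be an annulus, so that $M=N_2=S^2\times S^1\times[0,1]$; filling one boundary gives $M'=D^3\times S^1=M_1$. Your $X'=X\cup_\gamma D$ is a disc, and by Proposition~\ref{equipped:prop} a disc (with any gleam) is a shadow of $N_1=S^2\times D^2$. Since $\pi_1(S^2\times D^2)$ is trivial while $\pi_1(D^3\times S^1)=\matZ$, the block determined by your $X'$ is not $M'$. The ``handle-theoretic comparison'' you hope for therefore cannot succeed.

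The paper's proof proceeds in the opposite direction: rather than adding a cell at $\gamma$, one \emph{collapses} $X$ inward from $\gamma$, removing the adjacent region (and continuing as far as possible). Collapsing creates no new vertices, so the complexity does not increase. The only subtlety is that the collapse may leave a $1$-dimensional residue, which is handled by noting that it contributes only $S^3\times S^1$ summands, of complexity zero.
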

\begin{proof}
If $X$ is a shadow of $M$, a shadow $X'$ for $M'$ is constructed simply by collapsing the region of $X$ incident to that boundary component. Since by collapsing we do not create any new vertex, we get the inequality.

More precisely, we collapse $X$ starting from the region adjacent to the filled boundary as much as possible. We end up with a shadow $X''$ plus possibly a 1-dimensional part. The shadow $X''$ determines a block $M''$ and $M'$ is obtained from $M''$ via connected sums, possibly also with additional copies of $S^3\times S^1$. Since $c(S^3 \times S^1)=0$ we get $c(M') \leq c(M'') \leq c(M)$
and $c^*(M') \leq c^*(M'') \leq c^*(M)$.
\end{proof}

The proof of the proposition also shows how to construct a shadow $X'$ for $M'$ from one $X$ of $M$: we only have to collapse $X$ starting from that component of $\partial X$ contained in the boundary that we want to fill. We will use this move quite often.

\subsection{Drilling along a curve}
The inverse operation of filling is of course \emph{drilling} a block $M$ along a simple closed curve $\gamma\subset \interior M$. The result is a new block $M'$ with one additional boundary component.

If $X$ is a shadow of $M$, a shadow $X'$ for $M'$ is constructed by isotoping $\gamma$ to an immersed generic curve in $X$ and then attaching an annulus to $X$ along $\gamma$, and modifying the gleams of the regions near $\gamma$ in any way, provided that whenever a region $f$ of $X$ is subdivided into some regions $f_1', \ldots, f_k'$ of $X'$ the gleams of $f_1', \ldots, f_k'$ sum to the original gleam of $f$. There is of course some freedom here, but it has the only effect of modifying the framing of the new boundary component of $M'$.

This operation produces a shadow $X'$ for $M'$, that has more vertices than $X$ if $\gamma$ has self-intersections or intersects $SX$. So drilling along a curve may in principle increase arbitrarily the complexity of a manifold.

If $\gamma$ is embedded and does not intersect $SX$, then $X'$ has the same vertices as $X$. For instance if $X$ is described by a decorated graph $G$ and $\gamma$ corresponds to an edge of the graph, this operation is easily encoded as in Figure \ref{drill:fig}. The 3 edges of the new portion in Figure \ref{drill:fig}-(right) can be decorated with any half-integers, as long as the sum of the numbers on the two horizontal edges equal the number decorating the original one in Figure \ref{drill:fig}-(left). 

\begin{figure}
\begin{center}
\includegraphics[width = 5 cm]{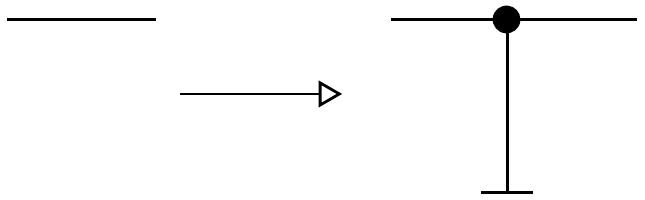}
\nota{Drilling along a curve determined by an edge of the decorated graph.}
\label{drill:fig}
\end{center}
\end{figure}

\subsection{The additional block}
We can finally exhibit a shadow for the additional block $M_{12}^1$. In Figure \ref{additional:fig} and in the rest of the paper we will use the following convention: 

\begin{center}
\emph{We indicate the fraction $\pm \frac 12$ simply via the sign $\pm $.}
\end{center}
So in Figure \ref{additional:fig} the edges decorated with $+$ and $-$ are actually decorated with $\frac 12$ and $-\frac 12$ respectively.

\begin{figure}
\begin{center}
\includegraphics[width = 3.5 cm]{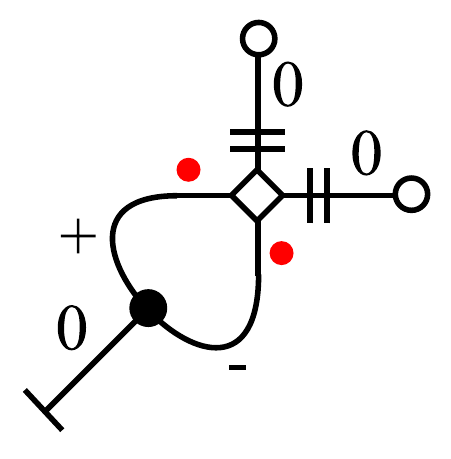}
\nota{A shadow $X_{12}$ for the block $M_{12}^1$. By our convention, the signs $+$ and $-$ represent the gleams $\frac 12$ and $-\frac 12$.}
\label{additional:fig}
\end{center}
\end{figure}

\begin{prop}
The shadow $X_{12}$ encoded in Figure \ref{additional:fig} is a shadow of $M_{12}^1$.
\end{prop}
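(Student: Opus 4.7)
My plan is to identify $X_{12}$ as the shadow obtained by applying the drilling construction to the shadow of $\matRP^3 \times S^1$ that was already exhibited in Figure \ref{examples_shadows:fig}. The key point is that $M_{12}^1$ is defined as the result of drilling $\matRP^3 \times S^1$ along a curve $l \times \{{\rm pt}\}$ with $l$ a projective line, so we only need to realize this drilling at the level of shadows without creating new vertices.

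First I would recall that Figure \ref{examples_shadows:fig} provides a shadow $X_0$ of $\matRP^3 \times S^1$, namely a projective plane with an annulus attached along two distinct projective lines, with all three regions of gleam zero, obtained as a small perturbation of $X' = (\matRP^2 \times \{1\}) \cup (l \times S^1) \subset \matRP^3 \times S^1$. Its singular set is a figure-eight graph, so $c^*(X_0) = 1$. Next I would choose the projective line $l$ used to define $M_{12}^1$ to lie on the $\matRP^2 \times \{1\}$ factor, distinct from the two lines that form $SX_0$. After the perturbation sending $X'$ to $X_0$, the curve $l \times \{{\rm pt}\}$ can be isotoped to a simple closed curve $\gamma$ embedded in the interior of one region of $X_0$ and disjoint from $SX_0$; in the decorated graph encoding, $\gamma$ corresponds to a single edge.

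Now I would apply the drilling move described in Figure \ref{drill:fig} along $\gamma$. Because $\gamma$ meets $SX_0$ trivially, the move does not create any new vertex, so the connected complexity remains one. Locally, the edge for $\gamma$ is replaced by the triple-edged piece on the right of Figure \ref{drill:fig}; the two new horizontal half-integer labels must sum to the original gleam, which is $0$, and so may be taken to be $+\tfrac12$ and $-\tfrac12$. The third label is free and only affects the framing of the new boundary component, which is auxiliary in the definition of a framed block. With these choices, the resulting decorated graph is exactly the one displayed in Figure \ref{additional:fig}, and by construction it is a shadow of $\matRP^3 \times S^1$ drilled along $l\times\{{\rm pt}\}$, that is of $M_{12}^1$.

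The main obstacle is the geometric verification in the second paragraph: arranging the perturbation so that $l \times \{{\rm pt}\}$ becomes an embedded edge of the encoding graph disjoint from $SX_0$, rather than a curve meeting the singular figure-eight. One way to make this rigorous is to note that, since we are free to choose any projective line $l \subset \matRP^3$, we can pick $l$ to lie in a $\matRP^2$ transverse to the $\matRP^2 \times \{1\}$ used to build the spine, and then to push $l \times \{{\rm pt}\}$ slightly off the singular locus. A second, sanity-checking obstacle is to confirm that the particular choice $\pm\tfrac12$ of decorations is forced by the parity convention (the adjacent regions of the new piece are odd, because of the $\matRP^2$-like attachment), so that $X_{12}$ carries a consistent gleam. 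This parity check and the computation of $\partial N(X_{12}) \cong \#_h(S^2\times S^1)$ at each boundary component are the only remaining verifications.
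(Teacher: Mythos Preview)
Your proposal is correct and follows essentially the same approach as the paper: start from the shadow of $\matRP^3 \times S^1$ in Figure \ref{examples_shadows:fig} and apply the drilling move of Figure \ref{drill:fig} along an edge representing the projective line $l \times \{{\rm pt}\}$. The paper's proof is a two-line version of exactly this argument; your added discussion of the isotopy of $l\times\{{\rm pt}\}$ into a region, the parity forcing the $\pm\tfrac12$ decorations, and the irrelevance of the third label for the framing are reasonable elaborations of details the paper leaves implicit.
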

\begin{proof}
We apply Figure \ref{drill:fig} to the shadow of $\matRP^3 \times S^1$ shown in
Figure \ref{examples_shadows:fig}. This amounts to drilling $\matRP^3 \times S^1$ along a curve that is isotopic to a line $l \times \{{\rm pt}\} \subset \matRP^3 \times S^1$.
\end{proof}

\subsection{Properties of the connected complexity}
We are now close to proving the constructive part of Theorem \ref{main:teo}.
We make an important observation: one reason for preferring the connected complexity in our investigation is 
that if we assemble an arbitrary number of blocks in $\calS_1$ we get as a result a new block $M$ with $c^*(M)\leq 1$, while $c(M)$ could be very large and hard to control.
More generally, the following holds.

\begin{prop} \label{closed:prop}
The set $\calB_n$ of all blocks having connected complexity $\leq n$ is closed under disjoint union, connected sum, assembling, and filling.
\end{prop}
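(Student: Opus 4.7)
The plan is to reduce each of the four operations to results that are already available in the excerpt. The key preliminary observation is that disjoint union behaves trivially with respect to $c^*$: if $X_1, X_2$ are shadows of blocks $M_1, M_2$, then $X_1 \sqcup X_2$ is a shadow of $M_1 \sqcup M_2$, and its singular set is $SX_1 \sqcup SX_2$. The connected components of the singular set of the union are exactly those of $SX_1$ together with those of $SX_2$, so
$$c^*(X_1 \sqcup X_2) = \max\bigl(c^*(X_1),\, c^*(X_2)\bigr).$$
Taking the infimum over shadows of $M_1$ and $M_2$ separately, we obtain $c^*(M_1 \sqcup M_2) \leq \max(c^*(M_1), c^*(M_2))$, so $\calB_n$ is closed under disjoint union.

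For connected sum and assembling applied to two distinct blocks $M_1, M_2 \in \calB_n$, I would first pass to the disjoint union $M_1 \sqcup M_2$, which lies in $\calB_n$ by the previous step, and then observe that the connected sum or assembling is an operation performed on a single (possibly disconnected) block. Proposition \ref{leq:prop} then gives that the resulting block $M'$ satisfies $c^*(M') \leq c^*(M_1 \sqcup M_2) \leq n$, so $M' \in \calB_n$. The same argument covers connected sum and assembling of a single block with itself, where the reduction to the disjoint union is unnecessary.

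For filling, we apply Proposition \ref{filling:prop} directly: if $M \in \calB_n$ and $M'$ is obtained from $M$ by filling a boundary component, then $c^*(M') \leq c^*(M) \leq n$, so $M' \in \calB_n$.

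There is no real obstacle here — the statement is essentially a bookkeeping corollary that packages Propositions \ref{leq:prop} and \ref{filling:prop} into a single closure property, with the only genuinely new (but trivial) ingredient being the behaviour of $c^*$ under disjoint union. The mild point to be careful about is that Propositions \ref{leq:prop} and \ref{filling:prop} are phrased for a single (possibly disconnected) block, which is precisely why the disjoint union step is needed as a first reduction before invoking them.
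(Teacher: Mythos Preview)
Your proposal is correct and matches the paper's intended argument: the paper does not write out a separate proof of Proposition~\ref{closed:prop}, treating it as an immediate consequence of Propositions~\ref{leq:prop} and~\ref{filling:prop} together with the evident componentwise behaviour under disjoint union. Your only addition is to spell out the disjoint-union step and the reduction through it, which is exactly the bookkeeping the paper leaves implicit.
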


As we mentioned, the set $\calB_n$ is not closed under drilling. This is quite reasonable: if we drill along a complicated curve, we get a more complicated manifold. 
Of course the closed oriented 4-manifolds with connected complexity $\leq n$ are precisely the blocks in $\calB_n$ with empty boundary. Our aim here is to understand $\calB_1$. 

\subsection{The constructive part}
We now prove the constructive part of Theorem \ref{main:teo}. 

\begin{teo} \label{easy:teo}
Let $M'$ be a graph manifold generated by $\calS_1$ and $h$ an integer. We have
$$c^*\big(M'\#_h \matCP^2\big) \leq 1.$$
\end{teo}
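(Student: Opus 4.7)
My plan is to assemble three essentially independent ingredients: (i) every individual block in $\calS_1$ has connected shadow complexity at most one; (ii) the class $\calB_1$ of blocks with $c^* \leq 1$ is closed under the operations needed to realise a graph manifold, so it contains every graph manifold $M'$ generated by $\calS_1$; (iii) the connected sum with $h$ copies of $\matCP^2$ (or $\overline{\matCP}^2$, if $h<0$) preserves membership in $\calB_1$.

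For (i), the nine blocks of $\calS_0$ already have $c^*=0$ by Theorem \ref{zero:teo}. For the eleven blocks $M_1^1,\ldots,M_{11}^1$, Proposition \ref{equipped:prop} exhibits the shadows $X_1,\ldots,X_{11}$ (Figure \ref{Xall:fig}), whose singular set is a single 8-shaped graph, so $c^*(X_i)=1$ and hence $c^*(M_i^1)\leq 1$. For the remaining block $M_{12}^1$, the shadow $X_{12}$ from Figure \ref{additional:fig} also has singular set a single 8-shaped graph, so $c^*(M_{12}^1)\leq 1$.

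For (ii) and (iii), I would invoke Proposition \ref{closed:prop}, which states that $\calB_1$ is closed under disjoint union, connected sum, assembling and filling. By definition, a graph manifold generated by $\calS_1$ is obtained from a disjoint union of copies of elements of $\calS_1$ by iteratively pairing their boundary components (each diffeomorphic to $S^2\times S^1$) and gluing via orientation-reversing diffeomorphisms; this is a finite sequence of assemblings, so $M'\in\calB_1$. Finally, $\matCP^2$ and $\overline{\matCP}^2$ both admit a shadow $S^2$ with gleam $\pm 1$ (Figure \ref{examples_shadows:fig}), hence lie in $\calB_0\subset\calB_1$; the closure of $\calB_1$ under connected sum, applied $|h|$ times, yields $c^*(M'\#_h\matCP^2)\leq 1$.

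The only mildly delicate point I anticipate is that the graph manifold construction allows any orientation-reversing diffeomorphism of $S^2\times S^1$ as a gluing, whereas the shadow assembling move a priori preserves framings; the four isotopy classes of gluings can however be realised by suitably adjusting the gleams of the regions of the shadow adjacent to the boundary component being assembled, a manoeuvre already implicit in the complexity zero argument of \cite{Ma:zero}. There is no genuinely hard step here: this theorem is the easy, constructive half of Theorem \ref{main:teo}, while the entire depth of the paper sits in the converse direction.
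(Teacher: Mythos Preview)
Your proof is correct and follows essentially the same approach as the paper's own (two-line) argument: every block in $\calS_1$ and $\matCP^2$ have connected complexity $\leq 1$, and Proposition \ref{closed:prop} guarantees that $\calB_1$ is closed under the relevant operations. Your remark about realising arbitrary gluing diffeomorphisms by adjusting gleams is the right way to bridge the framing-preserving assembling with the general graph-manifold definition, and is handled implicitly in the paper via the ``arbitrary gleams'' clause of Proposition \ref{equipped:prop}.
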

\begin{proof}
Connected sums and assemblings do not increase the connected complexity. All the blocks in $\calS_1$ and $\matCP^2$ have connected complexity $\leq 1$, so we are done.
\end{proof}

\subsection{Proof of the equivalence}
We can finally show that the Theorems \ref{main:teo} and \ref{main2:teo} are in fact equivalent.

\begin{prop}
A closed 4-manifold $M'$ is generated by $\calS_1$ if and only if $M' = D\big(N(\bar X)\big)$ for some simple polyhedron $X$ with $c^*(X) \leq 1$.
\end{prop}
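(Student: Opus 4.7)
The plan is to prove both implications by setting up an explicit dictionary between pieces in a decomposition of $\bar X$ and blocks in $\calS_1$ glued along their $S^2 \times S^1$ boundaries, using the tools of Section \ref{one:section} together with Proposition \ref{equipped:prop}.

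For the direction ($\Leftarrow$), given $X$ with $c^*(X) \leq 1$, I would decompose $X$ along circles in the interiors of regions into the canonical pieces $D,A,P,Y_2,Y_{111},Y_{12},Y_3,X_1,\ldots,X_{11}$. By Proposition \ref{equipped:prop}, each piece $Y_j$ equipped with the inherited gleams is a shadow of a block $B(Y_j) \in \calS_1 \setminus \{M_1, M_{12}^1\}$. Each $\matRP^3$ attached to a boundary circle $\gamma$ of $X$ in the passage to $\bar X$ contributes a block $M_{12}^1$, which is consistent with the identifications $D(N(\matRP^3)) = \matRP^3 \times S^1$ and $\matRP^3 \times S^1 = M_{12}^1 \cup_{S^2 \times S^1} N_1$ and with the fact that $X_{12}$ (Figure \ref{additional:fig}) is a shadow of $M_{12}^1$. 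It then remains to verify that the doubling operation $D$ converts each vertical solid torus $V_\gamma$ sitting at an internal gluing circle of $\bar X$ into an $S^2 \times S^1$ interface between the two adjacent blocks, exhibiting $M' = D(N(\bar X))$ as a graph manifold generated by $\calS_1$.

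For ($\Rightarrow$), I would reverse the process. Given $M' = \bigcup_i B_i / \!\sim$ generated by $\calS_1$, for each $B_i \in \calS_1 \setminus \{M_1, M_{12}^1\}$ I select the associated shadow piece from Proposition \ref{equipped:prop}; for $B_i = M_{12}^1$ I attach an $\matRP^3$ along the corresponding circle; and $B_i = M_1 = D^3 \times S^1$ is absorbed as a filling in the sense of Proposition \ref{filling:prop}, which does not alter the connected complexity since $c^*(M_1) = 0$ by convention. Assembling these shadow pieces along boundary circles prescribed by the $S^2 \times S^1$ identifications of $M'$ yields an enriched polyhedron $\bar X$ with $c^*(X) \leq 1$, and the same gluing-and-doubling analysis as in the $(\Leftarrow)$ direction shows that $D(N(\bar X)) = M'$.

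The main obstacle will be the careful verification that $D$ is compatible with the piece decomposition of $\bar X$. The essential content is the claim that the double of the 4-thickening of a $\matRP^3$ attached at $\gamma$, once the gluing to the adjacent piece of $X$ is accounted for, contributes exactly the block $M_{12}^1$; this is the single new feature at complexity one and must be justified by a direct local argument, tracing the construction of $X_{12}$ as a drilling of the standard shadow of $\matRP^3 \times S^1$. A secondary technicality is the bookkeeping in the degenerate case $X = S^1$, where $\bar X = \matRP^3$ and $D(N(\bar X)) = \matRP^3 \times S^1$ must be recovered as the graph manifold $M_{12}^1 \cup_{S^2 \times S^1} N_1$, together with the analogous treatment of $M_1$ blocks which do not correspond to any 2-dimensional shadow piece.
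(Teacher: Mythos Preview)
Your proposal is correct and follows essentially the same approach as the paper: decompose $X$ (resp.\ $\bar X$) into the standard pieces, invoke Proposition \ref{equipped:prop} to match each piece with its block in $\calS_1$, identify the $\matRP^3$-strata with copies of $M_{12}^1$ via the drilling description, and handle $M_1=D^3\times S^1$ separately through the collapsing argument of Proposition \ref{filling:prop}. The paper's treatment of the $M_1$ case is phrased slightly differently (it notes that the residual $\#_h(S^3\times S^1)$ summands are doubles of 1-handlebodies, which admit shadows with $c^*\leq 1$), but this is the same content as your ``absorbed as a filling'' remark.
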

\begin{proof}
Let $X$ be a simple polyhedron with $c^*(X)\leq 1$. It decomposes into pieces homeomorphic to 
$$D,\ A,\ P,\ Y_2,\ Y_{111},\ Y_{12},\ Y_3,\ X_1,\ \ldots,\ X_{11}.$$
The enriched polyhedron $\bar X$ decomposes into pieces of this kind, plus pieces homeomorphic to $\matRP^3$ with an annulus attached to a projective line.
The double $D\big(N(\bar X)\big)$ decomposes accordingly into blocks diffeomorphic to
$$N_1,\ N_2,\ N_3,\ M_2,\ M_{111},\ M_{12},\ M_3,\ M_1^1,\ \ldots,\ M_{11}^1$$
plus some blocks obtained by doubling $\matRP^3 \times [-1,1]$ and then drilling a line. These latter blocks are just copies of $M_{12}^1$. Therefore $D\big(N(\bar X)\big)$ is generated by $\calS_1$.

The converse is proved using the same argument in the opposite direction. The block $D^3 \times S^1$ is treated separately as in the proof of Proposition \ref{filling:prop}, noting that $\#_h(S^3 \times S^1)$ is the double of a 1-handlebody, and every 1-handlebody has a shadow $X$ with $c^*(X)\leq 1$.
\end{proof}

\begin{rem}
With similar techniques we can easily see that if we allow to enrich a simple polyhedron $X$ via copies of $S^3$ or $S^2 \times S^1$ instead of $\matRP^3$, attached to $\partial X$ along their Heegaard cores, we do not get any new manifold. 
In some sense $\matRP^3$ is the simplest 3-dimensional stratum that, when attached to a 2-dimensional simple polyhedron, may contribute in creating new manifolds like $\matRP^3 \times S^1$ (in fact, note that we already met $S^3 \times S^1$ and $S^2 \times S^1 \times S^1$ in complexity zero).
\end{rem}

\subsection{Explicit shadows}
It is now worth exhibiting an explicit shadow for any closed manifold $M$ with $c^*(M)\leq 1$. Let $M = D(N(\bar X)) \#_h \matCP^2$ be a manifold with connected complexity $\leq 1$, as described in Theorem \ref{main2:teo}. Here $X$ is any shadow (possibly with boundary) with connected complexity $\leq 1$ and $h\in \matZ$.

\begin{figure}
\begin{center}
\includegraphics[width = 12 cm]{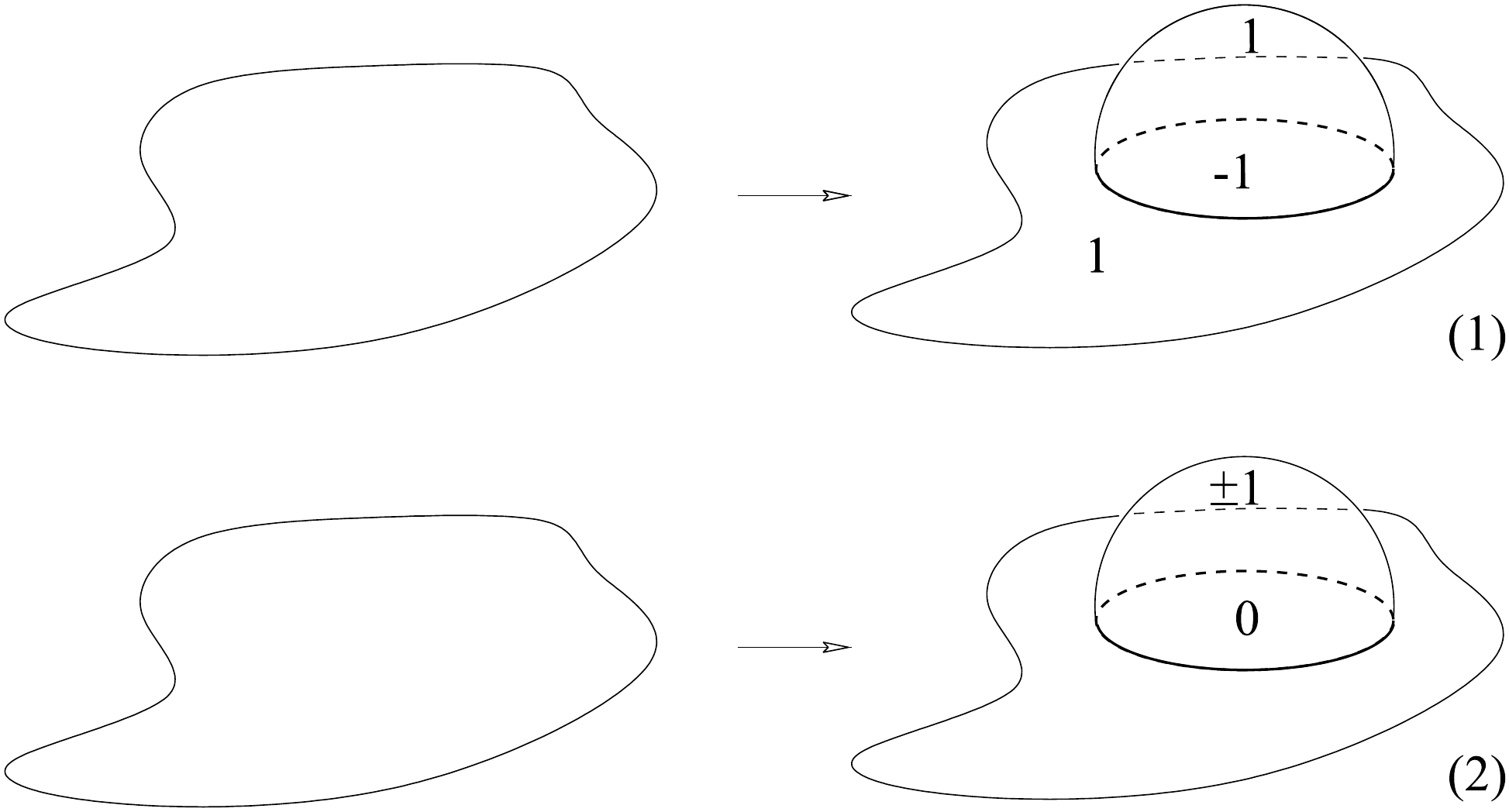}
\nota{Two bubble moves with different effects. The first one is useful to construct doubles, the second represents a connected sum with $\pm \matCP^2$.}
\label{bubble:fig}
\end{center}
\end{figure}

\begin{prop} \label{Xstar:prop}
A shadow $X_*$ for $M$ can be constructed from $X$ as follows:
\begin{enumerate}
\item Add one bubble as in Figure \ref{bubble:fig}-(1) to each region of $X$.
\item Add $|h|$ bubbles as in Figure \ref{bubble:fig}-(2) with signs coherent with $h$ to any region of $X$.
\item Attach one portion $X_{12}$ as in Figure \ref{additional:fig} at every boundary component of $X$.
\end{enumerate}
\end{prop}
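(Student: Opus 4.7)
The plan is to verify that each of the three steps corresponds to a specific geometric modification of the represented 4-manifold, and that their combined effect produces $M=D(N(\bar X))\#_h\matCP^2$ as promised by Theorem \ref{main2:teo}. I will build $X_*$ cumulatively and identify the intermediate 4-manifold after each step.

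First I would treat the case $\partial X=\emptyset$ and $h=0$, so $\bar X=X$ and $M=D(N(X))$. The content to verify here is that Step (1) alone produces a shadow of $D(N(X))$. The bubble of Figure \ref{bubble:fig}-(1) attached to a single region $f$ is a 2-sphere with gleam zero, whose local thickening is $S^2\times D^2$; glued along the horizontal boundary of $N(X)$ above $f$ it replaces the $D^2$-fibration over $f$ by the double of that disc bundle. Performing this move simultaneously at every region doubles the horizontal boundary of $N(X)$, which is precisely the doubling operation $N(X)\mapsto D(N(X))$, since the vertical part of $\partial N(X)$ is empty in this case.

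Next I would incorporate Step (2). The bubble of Figure \ref{bubble:fig}-(2) has a region with gleam $\pm 1$ and hence creates an embedded $2$-sphere of self-intersection $\pm 1$; a local computation (already established in \cite{Ma:zero}) shows that the effect on the thickening is the connected sum with $\pm\matCP^2$. Adding $|h|$ such bubbles with signs consistent with $h$ realizes the $\#_h\matCP^2$ factor.

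For Step (3) I would handle the enrichment at each boundary component $\gamma_i$ of $X$. The 4-piece $\matRP^3\times[-1,1]$ contributed by each $\matRP^3$ in $\bar X$ doubles to $\matRP^3\times S^1$, and this piece attaches to the double of $N(X)$ along a solid-torus neighbourhood of a fibre $\{\mathrm{pt}\}\times S^1$, i.e.\ exactly along a projective line in $\matRP^3$. By Proposition \ref{equipped:prop}, $X_{12}$ is a shadow of $M_{12}^1=\matRP^3\times S^1$ drilled along such a projective line, so gluing a copy of $X_{12}$ at each $\gamma_i$ inserts exactly the right block in the resulting graph-manifold decomposition described in the proof of the equivalence Theorem \ref{main:teo}\,$\Leftrightarrow$\,Theorem \ref{main2:teo}.

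The main obstacle I expect is the bookkeeping at the boundary components: one must check that the framings of the solid tori along which $X_{12}$ is glued agree with the framings induced by the type-(1) bubbles on the adjacent regions, so that the resulting thickenings fit together to yield the double of $N(\bar X)$ rather than some twisted cousin. This amounts to tracking how the gleam contributions on the regions incident to $\partial X$ match the framings that Proposition \ref{equipped:prop} allows on the $M_{12}^1$ block; since $X_{12}$ can be realized with arbitrary boundary framings by varying the gleams of its regions (the discussion following Proposition \ref{equipped:prop}), this freedom suffices to absorb any discrepancy, and the verification reduces to a local calculation near each $\gamma_i$.
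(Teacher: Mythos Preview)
Your proposal is correct and follows essentially the same three-step interpretation the paper gives: (1) produces the double along the horizontal boundary, (2) performs the $\#_h\matCP^2$, and (3) attaches the $M_{12}^1$ blocks. The only real difference is in how Step~(1) is justified: the paper derives the bubbles formally from the assembling move of Figure~\ref{assembling:fig} applied to the block decomposition of $D(N(X))$ (each assembly inserts one bubble, and redundant bubbles on the same region cancel as 2--3 handle pairs), whereas you argue more directly that a gleam-zero bubble locally doubles the disc fibration. Your geometric picture is the right intuition, but the paper's route is cleaner because it reduces everything to moves already proved in \cite{Ma:zero}; in particular it explains without further work why exactly one bubble per region suffices, a point your argument leaves implicit.
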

\begin{proof}
The shadow $X_*$ arises when we assemble and connect-sum the shadows of the different blocks and $\matCP^2$, as prescribed by Figures \ref{assembling:fig} and \ref{sum_ass_new:fig}. If this construction produces more than one bubble as in Figure \ref{bubble:fig}-(1) on a single region, actually one bubble suffices (there are canceling pairs of 2- and 3-handles otherwise).

In other words, with (1) we construct a shadow for the double of $N(X)$ along its horizontal boundary, with (2) we add $\#_h(\matCP^2)$, and with (3) we attach the blocks $M_{12}^1$ to the doubled vertical boundary.
\end{proof}

\subsection{Proof of Proposition \ref{alpha:prop}} \label{alpha:proof:subsection}
Let $X$ be a shadow with $c^*(X)\leq 1$. Proposition \ref{Xstar:prop} shows that a shadow $X_*$ for $M'=D(N(\bar X))$ is constructed from $X$ by adding bubbles to regions and copies of $X_{12}$ to $\partial X$. (We mean here the bubble in Figure \ref{bubble:fig}-(1).)

We will next prove that the move shown in Figure \ref{moves4:fig} modifies $X_*$ into another shadow of the same manifold $M'$. Since a bubble is attached to each region of $X$, this easily implies that the following moves modify the gleams of $X$ without affecting the double $M'$:

\begin{itemize}
\item Change the gleam at some region of $X$ by adding $\pm 2$.
\item At some edge of $X$, modify the gleams of the 3 adjacent regions by adding $+1$ on each.
\end{itemize}

Moreover, we will also prove the move in Figure \ref{X11moves:fig}-(4), and when applied to $X_{12}$ it shows that we can also do the following without affecting the double $M'$:

\begin{itemize}
\item At every region adjacent to $\partial X$, modify the gleam by adding $\pm 1$.
\end{itemize}

In other words, the manifold $M'$ only depends on the cocycle $\alpha$ in $\matZ/_{2\matZ}$ induced by the gleams, considered up to coboundaries: this is precisely the canonical class $\alpha\in H^2\big(X,\matZ/_{2\matZ}\big)$ of $X$, which is the pull-back of the Stiefel-Whitney class $w_2(M')$ along the inclusions $X \hookrightarrow X_* \hookrightarrow N(X_*) \hookrightarrow M'$.

If $\alpha=0$ then $w_2(N(X_*))=0$ because the canonical class of $X_{12}$ also vanish. Then $w_2(M')=0$ since $M'$ is obtained from $N(X_*)$ by adding 3- and 4-handles: so $M'$ is spin.

\section{Decompositions of $\#_h(S^2 \times S^1)$ along tori} \label{tori:section}
Having proved the constructive part of Theorem \ref{main:teo}, we are left to complete the harder task: proving that every closed 4-manifold $M$ with $c^*(M)\leq 1$ is of the type described by the theorem. This will occupy the rest of the paper.

We show here that this problem leads us naturally to study some decompositions of $ \#_h (S^2 \times S^1)$ along tori.

\subsection{Decomposition along tori}
Let $X$ be a shadow of a block $M$. By hypothesis we have $\partial N(X) \isom \#_h(S^2 \times S^1)$ for some $h\geq 0$.

We know that $X$ decomposes into pieces homeomorphic to 
$$D,\ A,\ P,\ Y_2,\ Y_{111},\ Y_{12},\ Y_3,\ X_1,\ \ldots,\ X_{11}.$$
The horizontal boundary $\partial_hN(X)$ fibers over $X$ and decomposes accordingly into 3-manifolds bounded by tori, one 3-manifold lying above each piece.

The 3-manifolds fibering above the first seven pieces $D, A, P, Y_2, Y_{111}, Y_{12}, Y_3$ are all Seifert manifolds. As shown in \cite[Table 1]{Ma:zero} these seven manifolds are respectively:
$$D \times S^1,\ A \times S^1, \ P \times S^1,\ (D,2,2), \ P\times S^1,\ (A,2), \ (D,3,3).$$
Here $(D,n,n)$ is the Seifert manifold with parameters $\big(D, (n,1), (n,-1)\big)$,  and $(A,2)$ is the Seifert manifold with parameters $\big(A,(2,1)\big)$.
The last four Seifert manifolds are the complements in $S^2 \times S^1$ of the corresponding links in Figure \ref{blocchi:fig}. 

On the other hand, the 3-manifolds fibering above $X_1, \ldots, X_{11}$ are 11 cusped hyperbolic manifolds $W_1,\ldots, W_{11}$. This fact was originally proved by Costantino and Thurston in a more general setting \cite{CoThu}. Hyperbolic manifolds of this kind were also studied in \cite{CoFriMaPe}. Each $W_i$ decomposes into two regular ideal octahedra.

If $G$ is a decorated graph that describes $X$, the same graph also describes a decomposition of $\partial N(X) \isom \#_h (S^2 \times S^1)$ into 3-manifolds along tori. Every boundary vertex (B) contributes with a vertical solid torus in $\partial_hN(X)$.

\begin{example}
The six graphs in Figure \ref{examples_shadows:fig} describe decompositions of $S^2 \times S^1$, $S^3$, $S^3$, $\#_2(S^2 \times S^1)$, $S^2 \times S^1$, and $\#_2(S^2 \times S^1)$. In the last two examples the central vertex represents a cusped hyperbolic manifold.
\end{example}

\subsection{Compressing discs} \label{discs:subsection}
We make the following simple but crucial observation.

\begin{prop}
Every torus $T\subset \#_h(S^2 \times S^1)$ has a compressing disc.
\end{prop}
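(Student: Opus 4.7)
The plan is to reduce the statement to a fundamental group argument combined with the loop theorem of Papakyriakopoulos.

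First I would note that $\#_h(S^2 \times S^1)$ is a closed orientable $3$-manifold with free fundamental group
\[
\pi_1\bigl(\#_h(S^2 \times S^1)\bigr) \isom F_h,
\]
the free group of rank $h$ (with $F_0$ trivial for $h=0$, where the ambient space is $S^3$). The torus $T$ has $\pi_1(T) \isom \matZ^2$. The key algebraic fact is that $\matZ^2$ does not embed in any free group: any two commuting elements in a free group lie in a common cyclic subgroup, so an injective homomorphism $\matZ^2 \hookrightarrow F_h$ is impossible. Consequently, the map $\pi_1(T) \to \pi_1\bigl(\#_h(S^2 \times S^1)\bigr)$ induced by inclusion has non-trivial kernel.

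Next I would invoke the loop theorem. Since $T$ is two-sided in the orientable manifold $\#_h(S^2 \times S^1)$, cutting along $T$ yields a compact $3$-manifold $N$ whose boundary contains at least one copy of $T$, and the inclusion $T \hookrightarrow N$ still fails to be $\pi_1$-injective (the composition with $N \hookrightarrow \#_h(S^2 \times S^1)$ has non-trivial kernel, which forces the first map to have non-trivial kernel too, up to choosing the correct side when $T$ is separating; in the non-separating case the single resulting boundary torus works directly). By the loop theorem there exists a properly embedded disc $D \subset N$ with $\partial D \subset T$ representing an essential simple closed curve in $T$. Pushing $D$ back into $\#_h(S^2 \times S^1)$ produces the desired compressing disc for $T$.

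The only delicate point is choosing the side to which the loop theorem applies when $T$ is separating. If $T$ separates $\#_h(S^2 \times S^1)$ into two pieces $N_1, N_2$, and the kernel of $\pi_1(T) \to \pi_1(N_i)$ were trivial for both $i=1,2$, then by van Kampen the amalgamated product $\pi_1(N_1) *_{\pi_1(T)} \pi_1(N_2)$ would contain $\matZ^2$ as a subgroup, contradicting what we established for $F_h$. Hence at least one side admits a non-trivial kernel, and the loop theorem applies there. This is the step I would be most careful about, but it is entirely standard.
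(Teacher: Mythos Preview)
Your proposal is correct and takes essentially the same approach as the paper, which simply observes that $\pi_1(T) \to \pi_1\big(\#_h(S^2 \times S^1)\big)$ cannot be injective and invokes the Dehn Lemma. One small slip: in the non-separating case, cutting along $T$ produces \emph{two} boundary tori, not one, and the reason at least one of the inclusions into $N$ fails $\pi_1$-injectivity is the HNN-extension structure of $\pi_1(M)$ rather than the factoring argument you sketch (which would require $\pi_1(N)\to\pi_1(M)$ to be injective); this is routine and exactly parallel to your van Kampen argument in the separating case.
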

\begin{proof}
The induced map $\pi_1(T) \to \pi_1 \big(\#_h(S^2 \times S^1)\big)$ cannot be injective, so the Dehn Lemma applies.
\end{proof}

Let $G$ be a decorated graph that encodes a shadow $X$ of some block $M$. Every edge $e$ of $G$ determines a simple closed curve $\gamma$ in some region of $X$ and hence a torus $T=(\pi|_{\partial N(X)})^{-1}(\gamma)$ in the decomposition of $\partial N(X)$ described above. Here $\pi\colon N(X) \to X$ is the projection.

By the proposition just stated, the torus $T$ has a compressing disc $D\subset \partial N(X\cup \partial M) \isom \#_h(S^2 \times S^1)$. We now show that we can add $D$ to $X$. To do so, we enlarge $D$ to a disc $D' \supset D$ by adding a vertical annulus
contained in the vertical solid torus $\pi^{-1}(\gamma)$, so that $\partial D' \subset X$. Moreover, we slightly perturb $D'$ so that $\partial D'$ is a generic closed curve in $X$ contained in a neighbourhood of $\gamma$, to ensure that $X' = X \cup D'$ is a simple polyhedron.

\begin{prop}
The shadow $X' = X\cup D'$, equipped with appropriate gleams near $D'$, is again a shadow of $M$.
\end{prop}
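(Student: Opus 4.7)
My plan is to verify the three defining conditions of a shadow of $M$: that $X'$ is a locally flat simple polyhedron, that the gleams are well-defined, and that the complement $M \setminus \text{int}(N(X' \cup \partial M))$ is a 1-handlebody. The first would follow from the generic perturbation of $\partial D' \subset X$, which ensures that $X'$ has only the standard local neighborhoods of Figure \ref{shadow:fig}, since $\partial D'$ crosses $SX$ transversely in finitely many points and the interior of $D'$ meets $X$ only along $\partial D'$. For the gleams, I would keep those of $X$ on regions not meeting $D'$; where $\partial D'$ subdivides a region $f$, I would split the original gleam between the two subregions consistently with the integrality/half-integrality convention, and assign to $D'$ the gleam dictated by its normal framing in $M$ (well-defined mod the parity rule since $D'$ is already embedded).

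The main geometric step would be to show that $B' := M \setminus \text{int}(N(X' \cup \partial M))$ is a 1-handlebody. The key observation I would exploit is that by construction $D' = D \cup A$ lies entirely inside $\overline{N(X)}$: the compressing disc $D$ sits in $\partial N(X) \cong \#_h(S^2 \times S^1)$ and the vertical annulus $A$ sits in $\pi^{-1}(\gamma) \subset N(X)$ (and after the small perturbation of $\partial D'$, the disc $D'$ is still isotopic rel boundary into $\overline{N(X)}$). Consequently the regular neighborhood $N(D') \subset M$ extends into the old complement $B := M \setminus \text{int}(N(X \cup \partial M))$ only as a ``boundary bubble'': locally a 4-ball $D \times D^2_+$ attached to $\partial B$ along the 3-ball $D \times D^1 \subset \partial B$. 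Since removing a 4-ball attached to the boundary along a 3-ball leaves the diffeomorphism type of a 4-manifold unchanged, I would conclude that $B' \cong B$ remains a 1-handlebody, and hence that $X'$ is a shadow of the same manifold $M$.

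The main obstacle I anticipate is rigorously realizing the ``boundary bubble'' picture after the generic perturbation of $\partial D'$: one must ensure the isotopy of $D'$ back into $\overline{N(X)}$ is supported in a small enough neighborhood not to disturb the handle structure of the complement elsewhere, and that the resulting carve-out in $B$ is genuinely of the product form $D \times D^2_+$. Once this is settled, the combinatorics of the gleam splittings across the several crossings of $\partial D'$ with $SX$ are routine to check, and the diffeomorphism $B' \cong B$ will give the desired shadow property for $X'$.
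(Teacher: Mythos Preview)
Your argument contains a genuine error: the conclusion $B' \cong B$ is false. The mistake is in identifying $N(X')$. You implicitly take $N(X') = N(X) \cup N(D')$ with the \emph{original} thickening $N(X)$, and since $D' \subset \overline{N(X)}$ you find that this union exceeds $N(X)$ only by the half-ball $D \times D^2_+$. But $N(X) \cup N(D')$ is \emph{not} a regular neighbourhood of $X'$: it still collapses onto $X$ (the half-ball collapses into $\partial N(X)$ and then $N(X)$ collapses to $X$), whereas $X'$ has strictly larger $H_2$ than $X$. A correct regular neighbourhood is $N(X') = N_\epsilon(X) \cup h_2$, where $N_\epsilon(X)$ is a \emph{thin} regular neighbourhood of $X$ and $h_2$ is the 2-handle with core $D' \setminus N_\epsilon(X)$. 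This core is a \emph{properly embedded} disc in $B_\epsilon = M \setminus \interior{N_\epsilon(X)} \cong B$, attached along a solid torus $\partial D^2 \times D^2 \subset \partial B_\epsilon$, not along a 3-ball. Removing the tubular neighbourhood of a boundary-parallel properly embedded 2-disc from a 4-manifold does \emph{not} leave the diffeomorphism type unchanged: already for $W = D^4$ and $\Delta$ the trivial disc one gets $D^4 \setminus N(\Delta) = D^2 \times (D^2 \setminus D^2_\epsilon) \cong S^1 \times D^3$.

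Concretely, take $M = S^4$, $X = S^2$ with gleam $0$, and $D$ a horizontal compressing disc for the torus over an equator $\gamma$. Then $B = S^1 \times D^3$, while $X'$ consists of three discs meeting along $\gamma$ with gleams $0,0,0$, so $\partial N(X') = \#_2(S^2 \times S^1)$ and $B' = \natural_2(S^1 \times D^3) \not\cong B$. The paper's proof handles this correctly: the 2-handle $h_2$ is attached along a curve bounding $D$ in $\partial N(X)$, hence is trivial and cancels with a 3-handle, so $B'$ is a 1-handlebody with \emph{one more} 1-handle than $B$. Your boundary-parallelism observation is the right starting point, but the conclusion to draw is that carving out $h_2$ adds a 1-handle, not that it leaves $B$ unchanged.
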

\begin{proof}
The complement of $N(X\cup \partial M)$ in $M$ is a 1-handlebody, and $D'$ is parallel to its boundary by construction. Therefore adding $D'$ is like adding a trivial 2-handle that cancels with some 3-handle. The complement of $N(X' \cup \partial M)$ is still a 1-handlebody, with one 1-handle more. So $X'$ is a shadow for $M$.
\end{proof}

The closed curve $\partial D \subset T$ projects to a curve in $\gamma$ that winds some $p\geq 0$ times around $\gamma$. The cases $p=0, 1$, and $2$ are of particular interest for us: the modification from $X$ to $X'$ is shown in Figure \ref{add_disc:fig} in these cases. The half-integer $n$ is determined by how many times $\partial D$ winds along the fiber of the fibration $T\to \gamma$.

\begin{figure}
\begin{center}
\includegraphics[width = 16 cm]{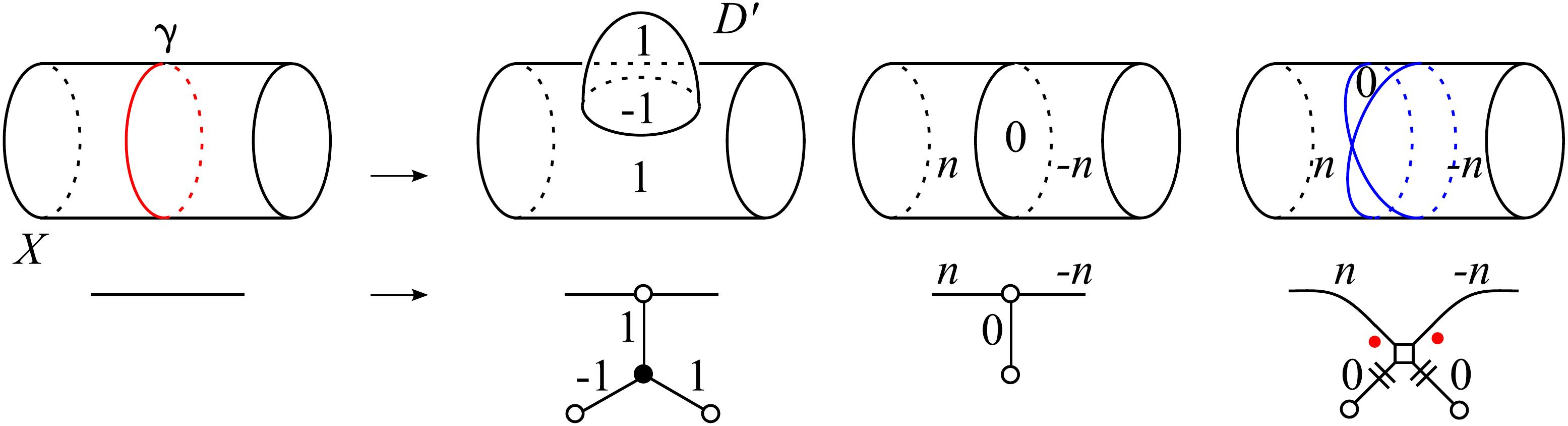}
\nota{How to add a disc $D'$ that winds $p=0, 1,$ and $2$ times respectively. The move is also shown below using graphs. Note the piece $X_{11}$ on the right. On the right, a 0-gleamed disc $D'$ is attached to the blue curve.}
\label{add_disc:fig}
\end{center}
\end{figure}

If $p=0$ or $p=1$ we say that the compressing disc $D$ is \emph{vertical} or \emph{horizontal} respectively. Both these cases were studied in \cite{Ma:zero}.

\begin{figure}
\begin{center}
\includegraphics[width =12 cm]{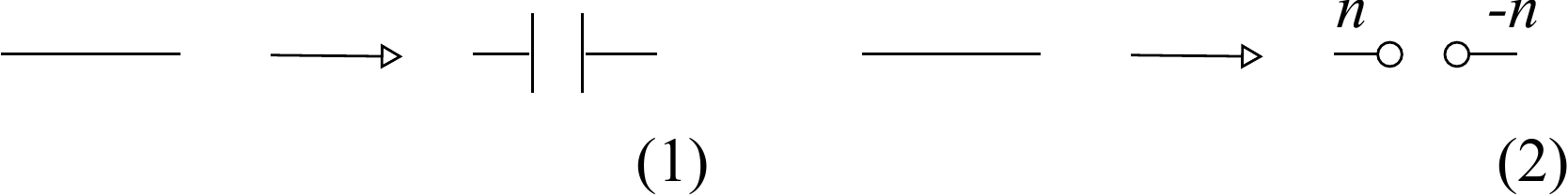}
\nota{If the torus $T$ above $\gamma$ has a vertical or horizontal compressing disc we can perform respectively the move (1) and (2).}
\label{hv:fig}
\end{center}
\end{figure}

\begin{prop}[Proposition 7.6 in \cite{Ma:zero}]
If $D$ is horizontal or vertical, the corresponding move in Figure \ref{hv:fig} transforms $X$ into a new shadow $X_*$ of a new block $M_*$. The block $M$ is obtained from $M_*$ by connected sum or assembling.
\end{prop}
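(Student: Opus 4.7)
The plan is to realize the moves in Figure \ref{hv:fig} as the combined effect of first attaching the extended compressing disc $D'$ from Section \ref{discs:subsection} to obtain an auxiliary shadow $X' = X \cup D'$ of the same block $M$, and then performing a collapse of $X'$ that kills the newly added 2-handle together with a cancelling 3-handle and simplifies the structure along $\gamma$. The starting point is the observation recalled just above: $\partial D'$ bounds a disc on each side of $\partial N(X \cup \partial M)$, namely $D'$ on one side and $D$ capped off by the vertical annulus on the other, so the 2-handle attached along $D'$ in $N(X')$ cancels with a 3-handle of the 1-handlebody complement, confirming that $X'$ still represents $M$.

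For the vertical case $p=0$, $\partial D'$ is parallel to a fiber of $T \to \gamma$, so $D$ together with a meridian disc of the vertical solid torus $\pi^{-1}(\gamma)$ assembles into an embedded, locally flat sphere $\Sigma \subset M$. Collapsing the annular region of $X'$ that runs around $\gamma$, starting from $\partial D'$, then separates (or cuts open) the edge $e$ of $X$ and exhibits the two sides of $\Sigma$ as pieces of the resulting shadow, yielding precisely the picture of Figure \ref{hv:fig}-(1). The block $M_*$ corresponds to cutting $M$ along $\Sigma$ and capping off the resulting $S^3$ (or $S^2 \times S^1$) boundary with balls and handles; reversing this identifies $M$ as a connected sum involving $M_*$ and its other piece, or as $M_* \#(S^3 \times S^1)$ when $\Sigma$ is non-separating.

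For the horizontal case $p=1$, $\partial D$ winds once around $\gamma$, so $D$ is a meridian disc for $\pi^{-1}(\gamma)$. After adding $D'$ and collapsing the new disc face together with $\gamma$, the curve $\gamma$ disappears from the singular set of $X$ and the edge $e$ opens into two arcs that become parts of $\partial X_*$, matching Figure \ref{hv:fig}-(2). Each such new boundary component of $X_*$ contributes a new $S^2 \times S^1$ component to $\partial M_*$, and the original $M$ is reconstructed from $M_*$ by assembling these two $S^2 \times S^1$'s, since this assembling rebuilds the torus $T$ together with its meridian disc $D$.

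The main technical obstacle is the bookkeeping of the gleams on the regions adjacent to $\gamma$: they must redistribute among the regions of $X_*$ so that the framed-link data on the newly created $S^2 \times S^1$ boundary components of $M_*$ are the correct ones, and one must simultaneously check that $\partial N(X_*) \cong \#_{h'}(S^2 \times S^1)$ for some $h' \geq 0$. Both points reduce to a local computation in a neighborhood of $\gamma$, using that compressing a torus along a disc inside $\#_h(S^2 \times S^1)$ stays within the class of connected sums of $S^2 \times S^1$'s by a standard handle-decomposition argument.
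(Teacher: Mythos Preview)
Your proposal is essentially correct in spirit but considerably more laborious than the paper's argument, and it re-derives machinery that the paper has already set up.  The paper's proof is a single line: once $X' = X \cup D'$ is in hand (a shadow of the \emph{same} block $M$, as established just before), one simply observes that the local picture of $X'$ near $\gamma$ for $p=0$ or $p=1$ is \emph{exactly} the right-hand side of the connected-sum move (Figure~\ref{sum:fig}) or the assembling move (Figure~\ref{assembling:fig}) as encoded in decorated graphs by Figure~\ref{sum_ass_new:fig}.  Reading that figure backwards yields $X_*$ and shows immediately that $M$ is obtained from $M_*$ by connected sum or assembling, with no gleam bookkeeping or boundary verification needed: those were already absorbed into the proof that Figures~\ref{sum:fig}--\ref{sum_ass_new:fig} correctly encode these operations.

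Your geometric route---building an explicit sphere $\Sigma \subset M$ in the vertical case, or a meridian disc in the horizontal case, and then cutting---is what underlies those figures, so you are effectively reproving them.  One minor imprecision: in the vertical case $p=0$ the curve $\partial D'$ is a small null-homotopic circle near $\gamma$, not something encircling an ``annular region that runs around $\gamma$'', so the collapse you describe does not quite match the picture; the correct local model is the bubble in Figure~\ref{sum:fig}.  The ``technical obstacle'' you flag (gleam redistribution and checking $\partial N(X_*) \cong \#_{h'}(S^2\times S^1)$) is real if you proceed geometrically, but disappears entirely once you use the combinatorial encoding: the converse of Figure~\ref{sum_ass_new:fig} is a purely formal move on decorated graphs, and the block $M_*$ it produces automatically has the right boundary because connected sum and assembling preserve the class of blocks.
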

\begin{proof}
Apply the converse of Figure \ref{sum_ass_new:fig} to $X'$ to get $X_*$.
\end{proof}

When $D$ is horizontal or vertical, we can cut the graph $G$ along the edge $e$ as shown in the figure. This leads to a simplification that will allow us to proceed by induction in many cases.

\subsection{Submanifolds of $\#_h(S^2 \times S^1)$}
Along the proof, we will use as a crucial tool the following lemma, which is peculiar of the manifolds of type $\#_h(S^2 \times S^1)$. Here $h\geq 0$ is any non-negative integer. Recall that a \emph{slope} in a torus is a non oriented non-trivial simple closed curve.

\begin{lemma} \label{peculiar:lemma}
Let $M\subset \#_h(S^2 \times S^1)$ be any connected submanifold with $\partial M$ consisting of tori $T_1,\ldots, T_k$. Each $T_i$ contains a slope $s_i$ that bounds a disc in $\#_h(S^2 \times S^1)$, such that 
by Dehn filling $M$ along $s_1, \ldots, s_k$ we get $\#_{h'}(S^2 \times S^1)$ for some $h' \geq 0$.
\end{lemma}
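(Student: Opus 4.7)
The plan is to induct on the number $k$ of boundary tori of $M$. The base case $k=0$ is immediate: since $M$ is a closed connected submanifold of the closed connected $N$, we have $M = N$ and may take $h' = h$.

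For the inductive step, I would exploit the fact that $\pi_1(N) = F_h$ is free and hence contains no $\matZ^2$ subgroup, so by Dehn's lemma each $T_i$ is compressible in $N$. A standard innermost-circle / outermost-disc argument applied to a compressing disc intersected with $T_1 \sqcup \cdots \sqcup T_k$ yields a compressing disc $D \subset N$ for some torus (relabel as $T_1$) lying entirely in either $M$ or $N' = \overline{N \setminus M}$. Setting $s_1 := \partial D$, the slope $s_1$ bounds a disc in $N$ by construction. A regular neighborhood of $T_1 \cup D$ in $N$ is diffeomorphic to $(S^1\times D^2) \setminus B^3$, with boundary $T_1 \sqcup \Sigma$; capping $\Sigma$ with a 3-ball produces a solid torus with meridian $s_1$, realizing the Dehn filling of one side at $T_1$.

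The crucial dichotomy is whether the sphere $\Sigma$ is separating or non-separating in $N$. If $\Sigma$ separates, then $N = \hat A \# \hat B$, and by uniqueness of the prime decomposition of $\#_h(S^2\times S^1)$ both summands are themselves connected sums of $S^2\times S^1$'s; when $D \subset N'$, the side of $\Sigma$ containing $M$ (augmented by the capped compression region) is diffeomorphic to the Dehn filling $M^{(1)}$ of $M$ along $s_1$, so $M^{(1)} \subset \#_{h_a}(S^2 \times S^1)$ is a submanifold with $k-1$ boundary tori. If $\Sigma$ is non-separating, sphere surgery on $\Sigma$ yields $\tilde N = \#_{h-1}(S^2\times S^1)$, and $M^{(1)}$ again embeds in $\tilde N$ as a submanifold with $k-1$ boundary tori (the ball added on the $M$-side of $\Sigma$ combines with the compression region to form the filling solid torus). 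In either case, the inductive hypothesis furnishes slopes $s_2, \ldots, s_k$ bounding discs in the ambient $\#_{h^\ast}(S^2 \times S^1)$; an innermost-circle argument pushes each of these discs off the added 3-balls so that they sit in $N$, and the Dehn filling of $M$ along $s_1, \ldots, s_k$ coincides with that of $M^{(1)}$ along $s_2, \ldots, s_k$, giving $\#_{h'}(S^2 \times S^1)$.

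The main obstacle will be the subcase $D \subset M$: here the Dehn filling $M^{(1)}$ itself acquires a reducing 2-sphere $\Sigma_M = D \cup D_1$ (where $D_1$ is the meridian disc of the filling solid torus), which turns out to be non-separating in $M^{(1)}$ because $D_1$ is non-separating in the filling solid torus. One must therefore perform an additional sphere surgery inside $M^{(1)}$ to extract $\bar M^{(1)}$ satisfying $M^{(1)} = (S^2 \times S^1) \# \bar M^{(1)}$, identify $\bar M^{(1)}$ with the other connected-sum factor $\hat B$ of $N$ (or with its analogue after sphere surgery in the non-separating case), and induct on $\bar M^{(1)}$; this only absorbs one extra $S^2 \times S^1$ summand into the final $h'$. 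The induction terminates because $k$ strictly decreases at each step.
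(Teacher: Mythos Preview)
Your argument is correct and follows essentially the same induction on $k$ as the paper: find a compressing disc for some $T_i$ lying entirely on one side of $\partial M$ by an innermost argument, form the sphere $\Sigma$ by compressing $T_1$ along it, and surger the ambient manifold along $\Sigma$ to set up the inductive step. The paper's version is more economical---it does not split into separating versus non-separating cases for $\Sigma$ in $N$, and when $D\subset M$ it reads off directly the splitting $M = M' \# (D^2\times S^1)$ exhibited by $\Sigma$ and inducts on $M'$, rather than passing through $M^{(1)}$ and your auxiliary non-separating sphere $\Sigma_M = D\cup D_1$---but the underlying mechanism is the same.
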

\begin{proof}
We prove the lemma by induction on $k$. The case $k=0$ is void, so we look at the generic case. 

Every $T_i$ has a slope $s_i$ that bounds a compressing disc in $\#_h(S^2 \times S^1)$. By an innermost argument there is one slope, say $s_1$, that bounds a disc $D$ entirely contained in $M$ or entirely outside $M$. By surgerying $T_1$ along $D$ we get a sphere $S$ that lies inside or outside $M$. 

Suppose that $S$ lies outside $M$. After surgerying $\#_h(S^2 \times S^1)$ along $S$ (that is, cutting along $S$ and capping the two new boundary components with balls: this operation transforms $\#_h(S^2 \times S^1)$ into one or two manifolds that are again of type $\#_{h'}(S^2 \times S^1)$) we may suppose that $S$ bounds a ball outside. Now $T_1$ bounds a solid torus outside $M$. We add the solid torus to $M$, to get a new $M'$ with one boundary component less, and we conclude by induction on $k$. 

Suppose that $S$ is inside $M$. Then $M = M' \# (D^2 \times S^1)$. We surger $\#_h(S^2 \times S^1)$ along $S$. The ambient manifold is still $\#_{h'}(S^2 \times S^1)$ for some $h'$, and $M$ has changed into $M'$, with one boundary torus less. We conclude by induction on $k$.
\end{proof}

\begin{rem}
In the statement of Lemma \ref{peculiar:lemma}, it may occur that the complement of $M$ in $\#_h(S^2 \times S^1)$ consists of solid tori: in this simple case the curves $s_i$ are the meridians of these solid tori and $h'=h$.

However, more complicated cases may also arise. It may be that $h'> h$ and some of the discs bounded by $s_i$ lie inside $M$. For instance, if $M\subset S^3$ is a knotted solid torus, then the slope $s$ is the meridian of the solid torus (there is no other choice) and by Dehn filling $M$ along $s$ we get $S^2 \times S^1$.
\end{rem}

\section{Exceptional fillings on the hyperbolic manifolds $W_1, \ldots, W_{11}$}
\label{exceptional:section}
We now study the hyperbolic manifolds $W_1,\ldots, W_{11}$ that fiber above the pieces $X_1,\ldots, X_{11}$. 
In light of Lemma \ref{peculiar:lemma}, we are interested in understanding when a Dehn filling of these manifolds gives rise to $\#_h(S^2\times S^1)$. We solve this problem completely in this section.

\subsection{Link surgery description} \label{link:surgery:subsection}
A presentation of $W_i$ as a link complement in $\#_2(S^2 \times S^1)$ is given in Figure \ref{M:fig} for all $i=1,\ldots, 11$. One important tool here is of course SnapPy \cite{Sna}. We have a fibration $W_i \to X_i$. We think of $W_i$ as a compact manifold bounded by tori, but sometimes we call $W_i$ also its hyperbolic interior for simplicity.

On every boundary torus $T$ of $W_i$ we will always use the meridian/longitude coordinates that are induced by this link diagram description. With this convention, the slope $\infty$ denotes the vertical simple closed curve (that is, the fibre of the fibration $T \to \gamma$ where $\gamma \subset \partial X_i$ is the boundary component corresponding to $T$) and all the horizontal curves (that is, the sections of the fibration $T \to \gamma$) will be integers.

\subsection{Cusp shapes}
As shown in \cite{CoThu}, each of the hyperbolic manifolds $W_i$ decomposes into two ideal regular octahedra and has volume $7.32772\ldots $ It fibers over the corresponding piece $X_i$, with one cusp for each boundary component $\gamma \subset \partial X$. Therefore $W_i$ has between 1 and 4 cusps, depending on $i=1,\ldots, 11$.

As shown in \cite{CoThu, CoFriMaPe}, the hyperbolic manifold $W_i$ has a maximal cusp section, obtained simply by matching the maximal (unit square) sections of the two regular ideal octahedra. The component of this maximal cusp section corresponding to the cusp lying above $\gamma$ is a flat torus $T$ as in Figure \ref{cusps:fig}. The flat torus $T$ is determined by two parameters: the length $q$ of $\gamma$ and whether the adjacent annulus $A$ is an even region or not. Both parameters can be found by looking at the vertex representing $X_i$ in Figure \ref{Xi:fig}, see Section \ref{encoding:graph:subsection}.

\begin{figure}
\begin{center}
\includegraphics[width = 11 cm]{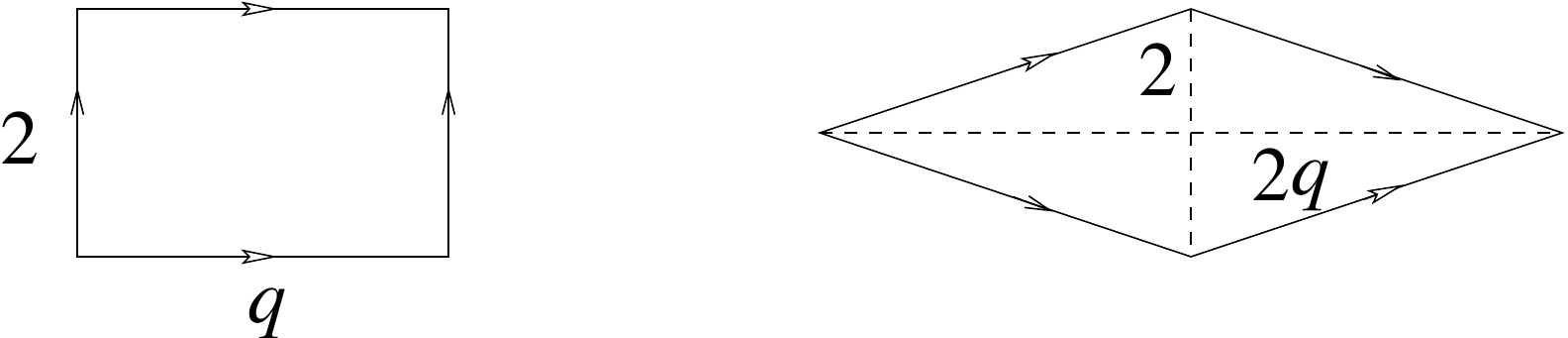}
\nota{The component $T$ of the maximal cusp above a boundary curve $\gamma$ of $\partial X_i$ of length $q$. The case depends on whether the adjacent annular region is even (left) or odd (right). In both cases opposite edges must be identified via a translation to get a flat torus.}
\label{cusps:fig}
\end{center}
\end{figure}

The vertical curve (that is, the fibre of the fibration $T\to \gamma$) is the length-2 vertical one in the picture, and the horizontal curves (that is, the sections of the fibration $T \to \gamma$) are those that intersect the vertical curve in one point. In Figure \ref{cusps:fig}-(left) there is a single shortest horizontal curve of length $q$. In Figure \ref{cusps:fig}-(right) there are two shortest horizontal curves, both of length $\sqrt{1+q^2}$. There is also a curve of length $2q$, that despite being horizontal in the picture it is not horizontal according to our definition, since it intersects the vertical curve in two points: indeed this curve winds twice along $\gamma$.

Recall that a Dehn filling on a hyperbolic manifold is \emph{exceptional} if the resulting manifold is not hyperbolic. The manifold $\#_h(S^2 \times S^1)$ is of course not hyperbolic.

\subsection{Two notable manifolds}
It is shown in \cite{IsKo} that
the manifolds $W_8$ and $W_{11}$ are diffeomorphic to the complements of two notable links in $S^3$, the Borromean link and the minimally twisted chain link with 4 components drawn in Figure \ref{chain_four:fig}. 

\begin{figure}
\begin{center}
\includegraphics[width = 8 cm]{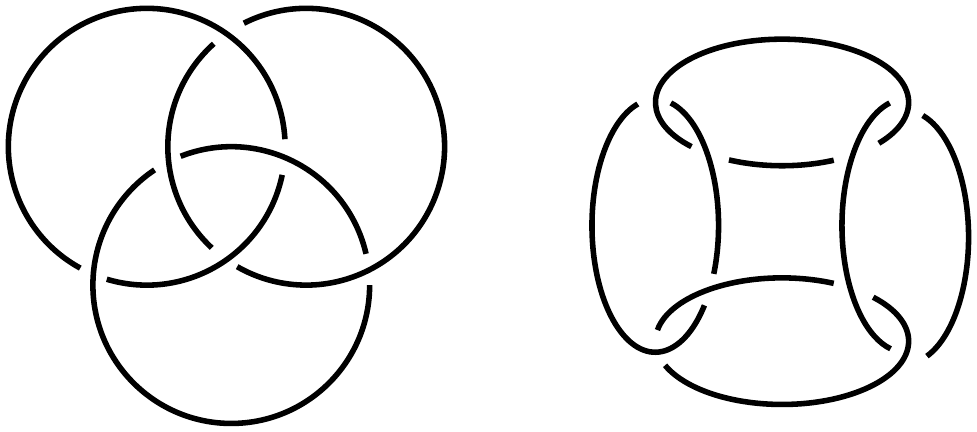}
\nota{The manifolds $W_8$ and $W_{11}$ are diffeomorphic to the complements of two notable links in $S^3$ shown here: the Borromean rings and the minimally twisted chain link $L_4$ with 4 components.}
\label{chain_four:fig}
\end{center}
\end{figure}

Note that all the 3 cusp shapes in $W_8$ are similar to a $1\times 2$ rectangle, while the 4 cusp shapes in $W_{11}$ are all squares.

Armed with patience, we now start to classify all the Dehn fillings of $W_1,\ldots, W_{11}$ that produce a manifold diffeomorphic to $\#_h(S^2 \times S^1)$.

\subsection{The manifolds $W_1$ and $W_2$}
The pieces $X_1$ and $X_2$ have each one boundary component, and determine two hyperbolic manifolds $W_1, W_2$ with one cusp. A Dehn filling is determined by a slope $\alpha \in \matQ\cup \{\infty\}$.

\begin{prop} \label{W12:prop}
A Dehn filling $\alpha$ on $W_1$ or $W_2$ gives $\#_h(S^2 \times S^1)$ if and only if $\alpha = \infty$ and $h=2$.
\end{prop}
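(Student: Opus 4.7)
The ``if'' direction is structural. The single cusp of $W_i$ lies above the unique boundary component $\gamma\subset\partial X_i$, and the slope $\infty$ is by definition the fibre of the $S^1$-bundle $\partial_vN(X_i)\to\gamma$ in meridian/longitude coordinates. Filling $W_i$ along $\infty$ therefore reattaches the vertical solid torus and recovers all of $\partial N(X_i)$. Since $X_1$ and $X_2$ thicken to 1-handlebodies (all their regions are incident to $\partial X_i$, so $N(X_i)$ retracts onto a graph), one has $\partial N(X_i)\isom\#_{k_i}(S^2\times S^1)$, and inspection of the link-surgery presentation in Figure \ref{M:fig} (or equivalently a count of $\chi$ and $\pi_1$-rank) forces $k_i=2$.

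For the converse, any $\alpha$ with $W_i(\alpha)\isom\#_h(S^2\times S^1)$ yields a reducible, non-hyperbolic manifold, so $\alpha$ must be exceptional on the one-cusped hyperbolic manifold $W_i$. The plan is to enumerate all exceptional slopes and eliminate each except $\infty$. I would invoke the Agol--Lackenby $6$-theorem: any slope of length exceeding $6$ on the maximal horotorus yields a hyperbolic filling. Using the explicit cusp sections of Figure \ref{cusps:fig}, whose parameters (the length $q$ of $\gamma$ and the parity of the adjacent annular region) are read off the vertex of $X_i$ in Figure \ref{Xi:fig}, this reduces the problem to a small, explicit list of candidate slopes $\alpha\in\matQ\cup\{\infty\}$.

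For each candidate $\alpha\ne\infty$ I would combine two checks. First, from the link-surgery presentation of Section \ref{link:surgery:subsection} one can write down a Mayer--Vietoris presentation matrix for $H_1(W_i(\alpha);\matZ)$; since $H_1(\#_h(S^2\times S^1))=\matZ^h$ is torsion-free, any slope whose filling has torsion or the wrong rank is ruled out immediately. Second, for the few slopes surviving the homological test, I would run SnapPy on the surgery diagram of Figure \ref{M:fig} to identify the filling geometrically, either certifying it as hyperbolic (hence not of the required form) or recognising it as a specific Seifert or toroidal manifold that is visibly not a connected sum of copies of $S^2\times S^1$. Exhausting the finite list forces $\alpha=\infty$, and by the first paragraph $h=2$.

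The main obstacle is not conceptual but combinatorial: one has to be scrupulous in matching SnapPy's framing to the meridian/longitude convention fixed in Section \ref{link:surgery:subsection}, and each exceptional slope must be identified unambiguously. This patient finite case-analysis is the template that will be repeated, with more cusps and longer slope lists, in the analogous propositions for $W_3,\ldots,W_{11}$.
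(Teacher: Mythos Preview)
Your approach is correct and matches the paper's: both use the Agol--Lackenby $6$-theorem on the maximal cusp section to reduce to finitely many slopes, then dispose of the survivors. The paper's execution is shorter than what you outline because the single boundary component of $X_1$ and $X_2$ has length $q=6$, so the cusp section is a $6\times 2$ rectangle (or its sheared odd analogue) and the only slopes of length $\leq 6$ are $\alpha=0$ and $\alpha=\infty$; for $\alpha=0$ the paper simply cites \cite{CoFriMaPe} to observe that the filling is Haken (hence irreducible, so not $\#_h(S^2\times S^1)$), avoiding any homology or SnapPy computation.
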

\begin{proof}
If $\alpha\neq 0,\infty$ the slope length in the Euclidean maximal cusp section is $>6$ and hence the Dehn filling cannot be $\#_h(S^2 \times S^1)$ by the ``6 Theorem'' of Agol and Lackenby \cite{Ag, La}. If $\alpha = 0$ we get a Haken manifold \cite{CoFriMaPe}.
\end{proof}

\subsection{The manifolds $W_3$ and $W_4$}
The pieces $X_3$ and $X_4$ have two boundary components, of length 1 and 5. Therefore $W_3$ and $W_4$ have each two cusps. A Dehn filling is determined by a pair $(\alpha, \beta)$ of slopes $\alpha, \beta \in \matQ \cup \{\infty\}$. Let $\alpha$ and $\beta$ be the slopes corresponding respectively to the boundary components of length 1 and 5.

\begin{prop} \label{W34:prop}
A Dehn filling $(\alpha, \beta)$ on $W_3$ or $W_4$ gives $\#_h(S^2 \times S^1)$ if and only if one of the following holds: 
\begin{itemize}
\item $\alpha = \infty , \beta = \infty$, and $h=2$, or 
\item $\alpha \in \matZ$, $\beta = \infty$, and $h=1$, or
\item $\alpha = 0, \beta \in \matZ$, and $h=0$.
\end{itemize}
\end{prop}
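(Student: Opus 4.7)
The plan is to combine the cusp-shape recipe of Figure \ref{cusps:fig} with the 6-Theorem of Agol and Lackenby \cite{Ag, La}, and then verify each remaining case by Kirby calculus on the link diagrams of Figure \ref{M:fig}. Because $\#_h(S^2\times S^1)$ is non-hyperbolic, any slope pair $(\alpha,\beta)$ realising such a filling must correspond to an exceptional filling of $W_3$ or $W_4$. First I would tabulate slope lengths on both cusps using Figure \ref{cusps:fig}. The length-$5$ cusp is a flat $5\times 2$ rectangle whose slopes of length at most $6$ are exactly $\beta\in\{\infty,0,\pm 1\}$ (of lengths $2$, $5$, and $\sqrt{29}$). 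The length-$1$ cusp is a $1\times 2$ rectangle with a larger but still finite list of short slopes, namely $\infty$, the integers with $|n|\le 2$, the slopes $\pm 1/k$ for $2\le k\le 5$, and $\pm 2/3$. By the 6-theorem, any filled manifold whose two slopes both have length $>6$ is hyperbolic, and in particular not $\#_h(S^2\times S^1)$, so we may assume at least one of $\alpha,\beta$ is short.

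For the \emph{if} direction I would verify each of the three families directly on the diagrams of Figure \ref{M:fig}. The pair $(\infty,\infty)$ reattaches the tubular neighborhoods of both drilled components and therefore restores the ambient manifold $\#_2(S^2\times S^1)$, giving $h=2$. For $\beta=\infty$ and integer $\alpha$, the surgery is an integer framing on the length-$1$ component; by a standard handle slide this cancels one $1$-handle of $\#_2(S^2\times S^1)$ against the new $2$-handle, leaving $\#_1(S^2\times S^1)$ for every $\alpha\in\matZ$. For $\alpha=0$, the longitudinal surgery on the length-$1$ component cancels both $1$-handles of $\#_2(S^2\times S^1)$, turning the length-$5$ component into an unknot in $S^3$ whose every integer surgery yields $S^3=\#_0(S^2\times S^1)$.

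For the \emph{only if} direction, when $\beta\in\{0,\pm 1\}$ I would compute $W_i(\alpha,\beta)$ for all short $\alpha$ and, for long $\alpha$, use Lemma \ref{peculiar:lemma} together with an incompressible torus or $H_1$-torsion inside $W_i(\beta)$ to obstruct reducibility. When $\beta$ is long the slope $\alpha$ must be short, and most of these short $\alpha$ can be dispatched uniformly by identifying $W_i(\alpha)$ as a small Seifert-fibered space with known Dehn fillings in the style of \cite{CoFriMaPe}, leaving only $\alpha=0$ as a source of an infinite $\beta$-family landing in $\#_h(S^2\times S^1)$. The main obstacle is this case-by-case elimination: there are roughly twenty short slopes on the length-$1$ cusp, and each must be treated for both $W_3$ and $W_4$. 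While most collapse into a uniform Seifert-fibered analysis, a handful of sporadic candidates demand explicit handle slides on the diagrams of Figure \ref{M:fig}, and an independent SnapPy verification would be a welcome sanity check.
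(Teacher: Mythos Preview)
Your approach via the 6-theorem is workable in principle but takes a genuinely different and much longer route than the paper's. The paper bypasses cusp geometry entirely: from the surgery description in Figure~\ref{M:fig} one computes directly that $H_1$ of the $(\alpha,\beta)$-filling on $W_3$ or $W_4$ is $(\matZ/_{q_1\matZ})\oplus(\matZ/_{q_2\matZ})$, where $\alpha=p_1/q_1$ and $\beta=p_2/q_2$. Since $\#_h(S^2\times S^1)$ has torsion-free homology, this forces $q_1,q_2\in\{0,1\}$, i.e.\ $\alpha,\beta\in\matZ\cup\{\infty\}$ --- a set of four combinatorial types rather than your twenty-odd short slopes. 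The case $\alpha,\beta\in\matZ$ is then cited to \cite{Co}, and the single remaining case $\alpha=\infty,\beta\in\matZ$ is dispatched by a short Kirby calculus showing the result is integral surgery on a knot whose Alexander polynomial is nontrivial.

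What your approach buys is independence from the linking-matrix computation and from the citation to \cite{Co}; what it costs is the large case analysis you yourself flag as the main obstacle. Note also that several of your short $\alpha$ (e.g.\ $\alpha=\pm 1/k$) will yield one-cusped manifolds $W_i(\alpha)$ that are still hyperbolic, so ``identifying $W_i(\alpha)$ as a small Seifert-fibered space'' will not apply uniformly and you would need a second layer of exceptional-filling analysis there. The homology argument avoids all of this in one line; it is worth adding to your toolkit whenever the target manifold has torsion-free $H_1$.
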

\begin{proof}
From the surgery description, we can compute the first homology groups of the Dehn fillings on $W_3$ and $W_4$. 
In both cases, these are $(\matZ/_{q_1\matZ})\oplus(\matZ/_{q_2\matZ})$ 
where we write $\alpha=\frac{p_1}{q_1}$ and $\beta=\frac{p_2}{q_2}$ as irreducible fractions. 
This group has no torsion if and only if $\alpha, \beta\in\matZ\cup\{\infty\}$. 
The case $\alpha, \beta \in \matZ$ was proved in \cite{Co}. 
Suppose $\alpha=\infty$ and $\beta\in\matZ$. 
We see that the Dehn fillings on $W_3$ and $W_4$ are obtained by knot surgeries as shown in Figures \ref{Kirby_L3:fig} and \ref{Kirby_L4:fig}, respectively. 
Those knots are not the unknot for any $\beta\in\matZ$ (as one can see by calculating their Alexander polynomial) and thus they do not produce $\#_h(S^2\times S^1)$. 
\begin{figure}
\begin{center}
\includegraphics[width = 15 cm]{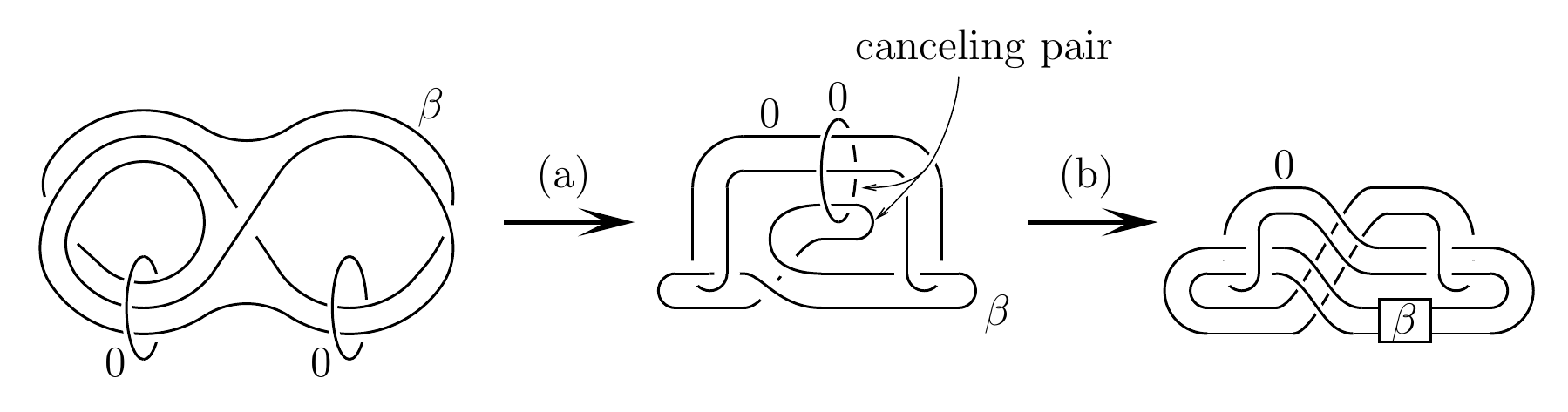}
\nota{We modify the diagram by isotopy (a) and deleting a cancelling pair (b)}
\label{Kirby_L3:fig}
\end{center}
\end{figure}
\begin{figure}
\begin{center}
\includegraphics[width = 15 cm]{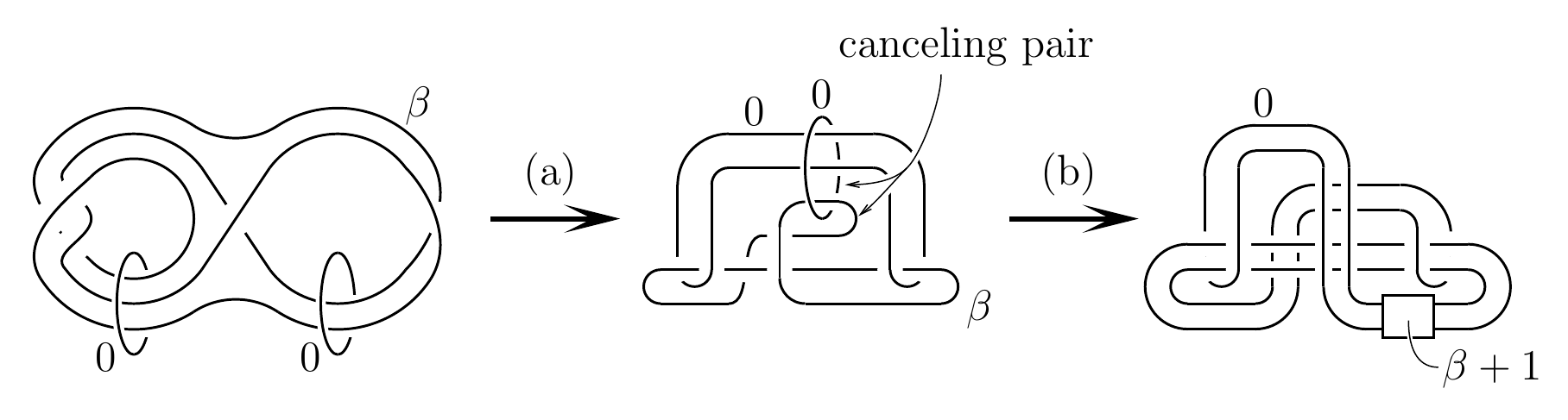}
\nota{We modify the diagram by isotopy (a) and deleting a cancelling pair (b)}
\label{Kirby_L4:fig}
\end{center}
\end{figure}
\end{proof}

\subsection{The manifolds $W_5$ and $W_6$}
The piece $X_5$ has two boundary components, of length 1 and 5, while $X_6$ has two boundary components of length 2 and 4. Therefore $W_5$ and $W_6$ have each two cusps. A Dehn filling is determined by a pair $(\alpha, \beta)$ of slopes $\alpha, \beta \in \matQ \cup \{\infty\}$. Let $\alpha$ and $\beta$ be the slopes corresponding respectively to the boundary components of length 1 (2) and 5 (4).

\begin{prop} \label{W56:prop}
A Dehn filling $(\alpha, \beta)$ on $W_5$ or $W_6$ gives $\#_h(S^2 \times S^1)$ if and only if one of the following holds: 
\begin{itemize}
\item $\alpha = \infty , \beta = \infty$, and $h=2$, or 
\item $\alpha \in \matZ$, $\beta = \infty$, and $h=1$.
\end{itemize}
\end{prop}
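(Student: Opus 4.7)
The plan is to mirror closely the strategy used for Proposition \ref{W34:prop}. Starting from the link surgery descriptions of $W_5$ and $W_6$ inside $\#_2(S^2\times S^1)$ given in Figure \ref{M:fig}, write $\alpha = p_1/q_1$ and $\beta = p_2/q_2$ as irreducible fractions and read off the first homology group of the Dehn filling $W_i(\alpha,\beta)$ from the associated linking matrix. A direct computation (identical in structure to the one implicit in the proof of Proposition \ref{W34:prop}) will produce torsion of order divisible by $q_1$ and $q_2$ on generators coming from the meridians of the drilled components. Since $H_1(\#_h(S^2\times S^1))=\matZ^h$ is torsion-free, this already forces $q_1,q_2\in\{0,1\}$, i.e.\ $\alpha,\beta\in\matZ\cup\{\infty\}$. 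This reduces the classification to four subcases.

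The case $\alpha=\beta=\infty$ is handled directly from the fibration $W_i\to X_i$: filling each cusp along the vertical slope recovers the horizontal boundary $\partial_h N(X_i)$ together with the vertical solid tori, hence gives $\partial N(X_i)\isom\#_2(S^2\times S^1)$, so $h=2$. For the case $\alpha\in\matZ$, $\beta=\infty$, I would perform Kirby moves on the link diagrams of Figure \ref{M:fig} analogous to those of Figures \ref{Kirby_L3:fig} and \ref{Kirby_L4:fig}: filling the second cusp at $\infty$ undrills the second link component, producing a cancelling 1-/2-handle pair; after removing it one is left with an integer surgery on an unknot in $S^2\times S^1$ that bounds a disc, and any integer surgery on that unknot yields $S^2\times S^1$, giving $h=1$. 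This establishes the ``if'' direction and cases (i), (ii).

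For the remaining subcases I would rule out that the result is ever $\#_h(S^2\times S^1)$. When $\alpha=\infty$ and $\beta\in\matZ$, the symmetric Kirby reduction expresses the Dehn filling as a $\beta$-surgery on a knot $K$ in $S^3$ (or in $S^2\times S^1$, depending on the piece), and I would check by computing its Alexander polynomial that $K$ is nontrivial for every $\beta\in\matZ$, which prevents the surgery from producing $\#_h(S^2\times S^1)$. When $\alpha,\beta\in\matZ$, I would appeal to Costantino's analysis \cite{Co} of integer fillings on these hyperbolic pieces, exactly as in the proof of Proposition \ref{W34:prop}; the essential difference with $W_3,W_4$ is that for $W_5,W_6$ no integer pair produces a connected sum of copies of $S^2\times S^1$, which can be certified either via Costantino's tables or by reducing the Kirby diagram to a surgery presentation of a non-$\#_h(S^2\times S^1)$ manifold and computing a simple invariant (for example, the Alexander polynomial of the resulting link, or a direct check that $H_1$ or $\pi_1$ is nontrivial).

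The main obstacle is the last subcase $\alpha,\beta\in\matZ$: here one must genuinely prove non-existence of a filling giving $\#_h(S^2\times S^1)$, in contrast to $W_3,W_4$ where such fillings did exist at $\alpha=0$. The safest route is a case-by-case Kirby simplification of the two diagrams in Figure \ref{M:fig} for $W_5$ and $W_6$, followed by an Alexander polynomial computation on the resulting one- or two-component link in $S^3$ to rule out triviality for all integer fillings. If Costantino's classification applies verbatim to these two pieces, this step becomes a quotation; otherwise, one carries out the computation by hand using the explicit link diagrams, completely in parallel with the arguments of Figures \ref{Kirby_L3:fig} and \ref{Kirby_L4:fig}.
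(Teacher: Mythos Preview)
Your overall strategy is sound, but you are underestimating the strength of the homology computation, and as a result you are setting up auxiliary work (Alexander polynomials, Costantino's tables) that the paper does not need.

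The key point is that the linking matrices for $W_5$ and $W_6$ are \emph{not} ``identical in structure'' to those of $W_3$ and $W_4$. For $W_3,W_4$ the reduction only yields $H_1\cong(\matZ/_{q_1\matZ})\oplus(\matZ/_{q_2\matZ})$, which forces merely $\alpha,\beta\in\matZ\cup\{\infty\}$ and leaves genuine work to do. For $W_5$ and $W_6$ the paper writes down the explicit $4\times 4$ presentation matrices from the link diagrams in Figure \ref{M:fig} and reduces them carefully: for $W_5$ one finds a $9q_2$ entry after elementary operations, and for $W_6$ the determinant is $16q_1q_2$ and a short case split produces a $4q_2$ entry. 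Torsion-freeness then forces $q_2=0$ outright, i.e.\ $\beta=\infty$, and $q_1\in\{0,1\}$. Thus the homology step alone already eliminates the two subcases $\alpha=\infty,\beta\in\matZ$ and $\alpha,\beta\in\matZ$ that you regard as the ``main obstacle''; no Alexander polynomial computation and no appeal to Costantino is required. The converse (that the surviving fillings really give $\#_h(S^2\times S^1)$) is then checked directly from the diagrams.

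So your proposal would likely succeed, but it takes a detour: you assume the homology only buys you $\alpha,\beta\in\matZ\cup\{\infty\}$ and then plan to kill two cases by external means, whereas actually carrying out the matrix reduction for these specific links kills those cases for free.
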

\begin{proof}
Let us consider the $4$-component link in $S^3$ shown in Figure \ref{M:fig}-(5,6) and let $W^0_i$ be the complement of this link. 
Set $\alpha=\frac{p_1}{q_1}$ and $\beta=\frac{p_2}{q_2}$. 

The first homology group of the Dehn filling on $W_5$ is isomorphic to $\matZ^4/_{\mathrm{Im}f_5}$, 
for some linear map $f_5\colon \matZ^4\to H_1(W^0_5,\matZ) \isom \matZ^4$ (the isomorphism is obtained by taking the meridians as a basis) that one can infer from the diagram. The map $f_5$ is represented by the following matrix, which changes by elementary transformations as indicated: 
$$    \left(\begin{matrix}
      p_1 & 0 & 1 & 0 \\
      0 & p_2 & 0 & 3 \\
      q_1 & 0 & 0 & 0 \\
      0 & 3q_2 & 0 & 0 
    \end{matrix}\right) \to    \left(\begin{matrix}
      0 & 0 & 1 & 0 \\
      0 & p_2 & 0 & 3 \\
      q_1 & 0 & 0 & 0 \\
      0 & 3q_2 & 0 & 0 
    \end{matrix}\right).$$ 
Suppose that the homology group of the Dehn filling on $W_5$ is tosion free. 
Then $3$ and $p_2$ are coprime and hence
$p_2\equiv \pm1\ (\mathrm{mod}\ 3)$. 
The matrix further transforms as follows. 
\[
    \left(\begin{matrix}
      0 & 0 & 1 & 0 \\
      0 & \pm1 & 0 & 3 \\
      q_1 & 0 & 0 & 0 \\
      0 & 3q_2 & 0 & 0 
    \end{matrix}\right) \to 
    \left(\begin{matrix}
      0 & 0 & 1 & 0 \\
      0 & \pm1 & 0 & 1 \\
      q_1 & 0 & 0 & 0 \\
      0 & 3q_2 & 0 & \mp6q_2 
    \end{matrix}\right) \to 
    \left(\begin{matrix}
      0 & 0 & 1 & 0 \\
      0 & 0 & 0 & 1 \\
      q_1 & 0 & 0 & 0 \\
      0 & 9q_2 & 0 & \mp6q_2 
    \end{matrix}\right) \to 
    \left(\begin{matrix}
      0 & 0 & 1 & 0 \\
      0 & 0 & 0 & 1 \\
      q_1 & 0 & 0 & 0 \\
      0 & 9q_2 & 0 & 0 
    \end{matrix}\right)
\]
We have $(q_1,q_2)=(0,0)$ or $(1,0)$, that is, 
$\alpha = \infty , \beta = \infty$ or $\alpha \in \matZ$, $\beta = \infty$. 

We next turn to the case $W_6$. 
Similarly, the first homology group of the Dehn filling on $W_6$ is $\matZ^4/_{\mathrm{Im}f_6}$, 
where $f_6$ is represented by the matrix
$$    \left(\begin{matrix}
      p_1 & 0 & -1 & 1 \\
      0 & p_2 & 2 & 2 \\
      -q_1 & 2q_2 & 0 & 0 \\
      q_1 & 2q_2 & 0 & 0 
    \end{matrix}\right).$$
The determinant of this matrix is $16q_1q_2$. 
Suppose that the homology has no torsion. 
Then we have $q_1q_2=0$. 
In the case $p_1=1, q_1=0$, 
the matrix changes as follows 
\[
    \left(\begin{matrix}
      1 & 0 & -1 & 1 \\
      0 & p_2 & 2 & 2 \\
      0 & 2q_2 & 0 & 0 \\
      0 & 2q_2 & 0 & 0 
    \end{matrix}\right)\to
    \left(\begin{matrix}
      1 & 0 & 0 & 0 \\
      0 & p_2 & 2 & 0 \\
      0 & 2q_2 & 0 & 0 \\
      0 & 0 & 0 & 0 
    \end{matrix}\right).
\] 
Thus $p_2$ and $2$ are coprime. 
\[
    \left(\begin{matrix}
      1 & 0 & 0 & 0 \\
      0 & 1 & 2 & 0 \\
      0 & 2q_2 & 0 & 0 \\
      0 & 0 & 0 & 0 
    \end{matrix}\right)\to
    \left(\begin{matrix}
      1 & 0 & 0 & 0 \\
      0 & 1 & 0 & 0 \\
      0 & 2q_2 & -4q_2 & 0 \\
      0 & 0 & 0 & 0 
    \end{matrix}\right)\to
    \left(\begin{matrix}
      1 & 0 & 0 & 0 \\
      0 & 1 & 0 & 0 \\
      0 & 0 & -4q_2 & 0 \\
      0 & 0 & 0 & 0 
    \end{matrix}\right)
\]
Then $q_2$ is also $0$. 
Hence $\alpha = \infty , \beta = \infty$. 
In the case $p_2=1, q_2=0$, 
we have 
\[
    \left(\begin{matrix}
      p_1 & 0 & -1 & 1 \\
      0 & 1 & 2 & 2 \\
      -q_1 & 0 & 0 & 0 \\
      q_1 & 0 & 0 & 0 
    \end{matrix}\right)\to
    \left(\begin{matrix}
      0 & 0 & -1 & 0 \\
      0 & 1 & 0 & 0 \\
      -q_1 & 0 & 0 & 0 \\
      0 & 0 & 0 & 0 
    \end{matrix}\right). 
\]
Then $q_1$ is $0$ or $1$. Hence 
$\alpha = \infty , \beta = \infty$, or  $\alpha \in \matZ$, $\beta = \infty$. 

Conversely, in all the cases listed we easily check that the Dehn filled manifold is indeed homeomorphic to $\#_h(S^2 \times S^1)$ as stated.
\end{proof}

\subsection{The manifold $W_7$}
The piece $X_7$ has two boundary components, both of length 3, and a symmetry that interchanges them. Therefore $W_7$ has two cusps, and an isometry that interchanges them. A Dehn filling is determined by a pair $(\alpha, \beta)$ of slopes $\alpha, \beta \in \matQ \cup \{\infty\}$. The order does not matter. 

\begin{prop} \label{W7:prop}
A Dehn filling $(\alpha, \beta)$ on $W_7$ gives $\#_h(S^2 \times S^1)$ if and only if one of the following holds: 
\begin{itemize}
\item $\alpha = \infty , \beta = \infty$, and $h=2$, or 
\item $\alpha \in \matZ$, $\beta = \infty$, and $h=1$, or
\item $\alpha = \infty$, $\beta \in \matZ$, and $h=1$.
\end{itemize}
\end{prop}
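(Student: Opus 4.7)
The plan is to follow the same strategy used for $W_3, W_4, W_5, W_6$ in Propositions \ref{W34:prop} and \ref{W56:prop}. From the diagram of Figure \ref{M:fig}-(7) I would read off a 4-component link $L_7 \subset S^3$, whose exterior, after prescribed integer surgery on two of its components, realises $W_7$ as a link complement in $\#_2(S^2 \times S^1)$. The Dehn filling on $W_7$ with slopes $\alpha = p_1/q_1$ and $\beta = p_2/q_2$ (written in lowest terms) is then obtained from $L_7$ by Dehn surgery with four rational coefficients, so its first homology is $\matZ^4/\mathrm{Im}(f_7)$ for a linear map $f_7\colon \matZ^4 \to \matZ^4$ whose matrix is determined by the linking numbers and the slopes.

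Next I would impose that $H_1$ be torsion free, since otherwise the Dehn filling cannot be $\#_h(S^2\times S^1)$. As in the proofs of Propositions \ref{W34:prop} and \ref{W56:prop}, I would simplify the matrix by elementary row and column operations, keeping track of the divisibility constraints on $p_i, q_i$. The length-$3$ boundary components should produce the factor $3$ in some rows, forcing $p_i \equiv \pm 1 \pmod 3$ in the generic case, exactly as the factor $3$ appeared in the analysis of $W_5$. The symmetry of $X_7$ that swaps the two boundary circles induces a corresponding symmetry of the matrix, which I would exploit to halve the number of cases: up to relabelling one may assume that $q_1 = 0$, and then analyse the subcases $p_1 = \pm 1$ and $p_1 \neq \pm 1$ separately, concluding that the only torsion-free possibilities are $(\alpha,\beta) = (\infty,\infty)$, $(n,\infty)$ with $n\in\matZ$, and, by the symmetry, $(\infty,n)$ with $n\in\matZ$.

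For the converse, in each of the three listed cases I would exhibit the filled manifold explicitly as $\#_h(S^2\times S^1)$. When $(\alpha,\beta)=(\infty,\infty)$ both boundary tori are filled along the vertical slope, which closes the $D^2$-bundle structure and produces $\#_2(S^2\times S^1)$; the cases $(n,\infty)$ and $(\infty,n)$ are handled by a short Kirby calculus argument analogous to Figures \ref{Kirby_L3:fig} and \ref{Kirby_L4:fig}, in which the integer filling on one cusp, combined with the surgery components describing $\#_2(S^2\times S^1)$, cancels down through isotopies and handle slides to leave precisely one unknotted $0$-framed component; this yields $S^2 \times S^1$, so $h=1$.

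The main obstacle will be step two, that is the careful manipulation of the $4\times 4$ presentation matrix: the linking pattern for the piece $X_7$ in Figure \ref{M:fig} involves three-fold type gleams on both sides (coming from the two length-$3$ boundary components), so the matrix has factors of $3$ in two different rows simultaneously, and the elementary operations must be chosen so as to clear these factors without destroying the divisibility information needed to force $q_1 = q_2 = 0$ outside of the three listed families. The symmetry of $X_7$ should be used throughout to avoid duplicating work.
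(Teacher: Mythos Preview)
Your overall strategy is correct and matches the paper's: compute the presentation matrix for $H_1$ of the filled manifold as $\matZ^4/\mathrm{Im}(f_7)$ from the surgery diagram, impose torsion-freeness, and use the symmetry of $X_7$ to halve the case analysis. However, you over-anticipate the difficulty. The actual matrix one reads off is
\[
\begin{pmatrix}
p_1 & -q_2 & 1 & 2 \\
-q_1 & p_2 & 2 & -1 \\
q_1 & 2q_2 & 0 & 0 \\
2q_1 & -q_2 & 0 & 0
\end{pmatrix},
\]
with determinant $25\,q_1 q_2$ --- the factor is $5$, not $3$. (The length-$3$ boundaries do not produce factors of $3$ in the linking matrix in the way you guess; the relevant linking numbers give entries $1$ and $2$, and it is the $2\times 2$ minor $\begin{smallmatrix}1&2\\2&-1\end{smallmatrix}$ of determinant $-5$ that controls things.) Torsion-freeness therefore forces $q_1 q_2 = 0$ immediately, with no congruence analysis mod $3$ needed. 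Your proposed subcase split ``$p_1=\pm 1$ versus $p_1\neq\pm 1$'' is then vacuous: once $q_1=0$ and the fraction is in lowest terms, $p_1=\pm 1$ automatically. From $q_1=0$ a single round of elementary operations reduces the matrix to one showing $q_2\in\{0,1\}$, and the converse verifications are as you describe.
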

\begin{proof}
Set $\alpha=\frac{p_1}{q_1}$ and $\beta=\frac{p_2}{q_2}$. 
As in the proof of Proposition \ref{W56:prop}, 
it is easy to check that the first homology group of the Dehn filling $(\alpha,\beta)$ on $W_7$ is isomorphic to
$\matZ^4/_{\mathrm{Im}  f}$ where $f\colon \matZ^4 \to \matZ^4$ is encoded by the matrix
$$\left(\begin{matrix}
       p_1 & -q_2 & 1 & 2 \\
      -q_1 &  p_2 & 2 & -1 \\
       q_1 & 2q_2 & 0 & 0 \\
      2q_1 & -q_2 & 0 & 0 
    \end{matrix}\right).$$
This matrix has determinant $25q_1q_2$. 
If this homology group has no torsion, 
then $q_1q_2=0$. 
Up to symmetry we may suppose that $p_1=1, q_1=0$. The matrix changes as follows: 
\[
 \left(\begin{matrix}
      1 & -q_2 & 1 & 2 \\
      0 &  p_2 & 2 & -1 \\
      0 & 2q_2 & 0 & 0 \\
      0 & -q_2 & 0 & 0 
    \end{matrix}\right) \to
 \left(\begin{matrix}
      1 & 0 & 0 & 0 \\
      0 &  0 & 0 & -1 \\
      0 & 0 & 0 & 0 \\
      0 & -q_2 & 0 & 0 
    \end{matrix}\right).
\]
Thus we have $q_2=0$ or $1$. Conversely, in all the cases listed we easily check that the Dehn filled manifold is indeed homeomorphic to $\#_h(S^2 \times S^1)$ as stated.
\end{proof}

\subsection{The manifold $W_8$}
As already mentioned, the manifold $W_8$ is diffeomorphic to the complement of the 
Borromean rings shown in Figure \ref{chain_four:fig}-(left). We therefore study its Dehn surgeries.

\begin{prop} \label{W8:prop}
A Dehn surgery $(\alpha, \beta, \gamma)$ of the Borromean rings produces $\#_h(S^2 \times S^1)$ if and only if up to interchanging $\alpha, \beta$ and $\gamma$ one of the following holds: 
\begin{itemize}
\item $\alpha =\infty$, $\beta = 0$, $\gamma = 0$, and $h=2$, or
\item $\alpha =\infty$, $\beta = \frac 1m, m \in \matZ$, $\gamma = 0$, and $h=1$, or
\item $\alpha =\infty$, $\beta = \frac 1m$, $\gamma = \frac 1n, m, n \in \matZ$, and $h=0$. 
\end{itemize}
\end{prop}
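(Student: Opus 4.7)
The plan is to handle the two directions separately. For the sufficient direction, I would verify each of the three listed configurations by direct Kirby calculus, exploiting the fact that the Borromean rings are Brunnian: $\infty$-surgery on one component leaves the other two as an unlink of unknots in $S^{3}$. From there, $0$-surgery on an unknot in $S^{3}$ produces $S^{2}\times S^{1}$ while $1/m$-surgery yields $S^{3}$, so the three cases $(\infty,0,0)$, $(\infty,1/m,0)$, $(\infty,1/m,1/n)$ collapse to $\#_{2}(S^{2}\times S^{1})$, $S^{2}\times S^{1}$, and $S^{3}$, respectively.

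For the converse, I would first perform a homology computation exactly as in the proofs of Propositions \ref{W34:prop}--\ref{W7:prop}. Because the Borromean rings have vanishing pairwise linking numbers, the surgery presentation matrix is diagonal with entries $p_{1},p_{2},p_{3}$ (writing $\alpha=p_{1}/q_{1}$, $\beta=p_{2}/q_{2}$, $\gamma=p_{3}/q_{3}$ in lowest terms), so $H_{1}$ of the surgered manifold is $\bigoplus_{i}\matZ/p_{i}\matZ$. Since $\#_{h}(S^{2}\times S^{1})$ has torsion-free first homology, each $p_{i}$ must lie in $\{0,\pm1\}$; equivalently, each slope is either $0$ or of the form $1/m$ with $m\in\matZ$ (where $m=0$ corresponds to $\infty$). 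This reduces the problem to ruling out the configurations in which no slope equals $\infty$, namely those with every $q_{i}\geq 1$ and every $p_{i}\in\{0,\pm1\}$.

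For this residual task, the key idea is to exploit the classical description of the Borromean rings complement as $T^{3}$ with three meridional circles removed. Under this identification, $(0,0,0)$-surgery reproduces $T^{3}$, which is aspherical and therefore not $\#_{h}(S^{2}\times S^{1})$. More generally, surgeries with no slope at $\infty$ and all $p_{i}\in\{0,\pm1\}$ will produce closed geometric manifolds of Nil, Euclidean, or hyperbolic type (nilmanifolds when some slopes are $0$ and some are $\pm1/q$, and flat or hyperbolic manifolds when all slopes are $\pm1/q$); I would verify this case-by-case either by explicit Kirby calculus or by recognising the resulting surgery as belonging to one of the standard families of exceptional fillings on the Borromean complement known from the literature. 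In every instance the outcome is aspherical, and so cannot coincide with $\#_{h}(S^{2}\times S^{1})$, since the latter has nontrivial $\pi_{2}$ for $h\geq 1$ and nontrivial $\pi_{3}$ for $h=0$.

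The main obstacle is the final step: cleanly identifying or otherwise excluding the finitely many non-$\infty$ configurations with $p_{i}\in\{0,\pm1\}$. The cleanest path is to invoke asphericity (or, equivalently, to check that each resulting closed $3$-manifold is irreducible with infinite fundamental group), since this handles all remaining cases uniformly without having to diffeomorphism-classify every surgery individually. If a direct asphericity argument seems heavy, an alternative is to invoke the cusp-shape bound of Figure \ref{cusps:fig} together with the $6$-Theorem of Agol--Lackenby to show that all slopes outside a small finite list are hyperbolic fillings, and then eliminate the small finite list by hand via Kirby calculus, matching each to a known Seifert-fibred or flat manifold.
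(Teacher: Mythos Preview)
Your outline matches the paper through the homology reduction: both arguments use the vanishing linking numbers to force each slope into $\{0\}\cup\{1/m:m\in\matZ\}$, and the Brunnian property handles the ``if'' direction identically. The divergence is in the final elimination step. The paper does \emph{not} try to identify the geometry of the remaining fillings. Instead, once at least two slopes are of the form $1/l$ and $1/m$, it performs two Rolfsen twists to absorb those components, leaving a single surgery on the double twist knot $K_{l,m}$ (Figure~\ref{double_twist:fig}) with coefficient $\gamma$. Since $\#_h(S^2\times S^1)$ can arise from surgery on a knot in $S^3$ only when the knot is the unknot (Gordon--Luecke for $h=0$, Gabai's Property~R for $h=1$, and homology for $h\ge 2$), one immediately gets $l=0$ or $m=0$, i.e.\ one of the original slopes was $\infty$. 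The single remaining configuration $(1/n,0,0)$ is then recognised directly as the Seifert manifold $\big(T,(n,-1)\big)$ for $n\neq 0$.

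Your asphericity route is plausible but carries a real risk. The blanket assertion that the non-$\infty$ fillings are ``Nil, Euclidean, or hyperbolic'' is not quite right as stated: already for $(1,1,1/n)$ the Rolfsen reduction gives $1/n$-surgery on the figure-eight knot, whose small exceptional fillings are Seifert over $S^2$ with three singular fibres (so $\widetilde{SL_2}$, not Nil or Euclidean), and for other small $(l,m)$ one must still exclude spherical space forms among the resulting homology spheres. Doing this honestly forces you either back to Gordon--Luecke/Property~R or into an enumeration of exceptional fillings of the Borromean complement --- exactly the work the Rolfsen-twist argument bypasses in one line. So your strategy is not wrong, but it is strictly more laborious than the paper's, and the step you flag as the ``main obstacle'' is precisely where the paper's trick earns its keep.
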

\begin{proof}
Assume that a Dehn surgery $(\alpha, \beta, \gamma)$ of the Borromean rings produces $\#_h(S^2 \times S^1)$ for some $h$. 
Its first homology group has no torsion. 
Since the pairwise linking numbers of the Borromean rings are $0$, 
the coefficients $\alpha, \beta, \gamma$ are in $\{0\}\cup\{\frac 1n\mid n\in\matZ\}$. 

If $\alpha = \beta = \gamma = 0$, the Dehn surgery is the $3$-torus. 

In the case $\alpha = \frac 1n, n \in \matZ$, $\beta = 0$, $\gamma = 0$, 
the Dehn surgery is diffeomorphic to the Seifert manifold $\big(T, (n,-1)\big)$ except for $n=0$. 
If $n=0$, the Dehn surgery is actually $(S^2 \times S^1)\# (S^2 \times S^1)$. 

Suppose that at least two of $\alpha, \beta, \gamma$ are in $\{\frac 1n\mid n\in\matZ\}$. Up to symmetry we may suppose $\alpha = \frac 1l$ and $\beta = \frac 1m$ for $l, m \in \matZ$. 
By performing two Rolfsen twists along the components with coefficients $\alpha$ and $\beta$, 
we see that the resulting manifold is obtained by a knot surgery as shown in Figure \ref{double_twist:fig}. 
This knot must be the unknot. Hence we have $l=0$ or $m=0$. 
If $\gamma = 0$ it gives $S^2 \times S^1$, and $S^3$ otherwise. 
\begin{figure}
\begin{center}
\includegraphics[width = 3 cm]{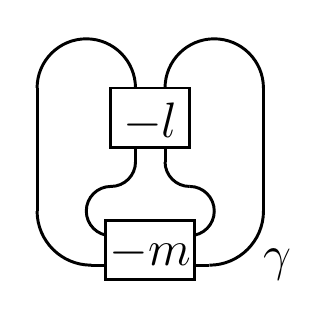}
\nota{The double twist knot with $-l$ and $-m$ full twists. }
\label{double_twist:fig}
\end{center}
\end{figure}
\end{proof}

We now turn back to $W_{8}$ with the usual meridian/longitude basis described in Section \ref{link:surgery:subsection}. The piece $X_8$ has 3 boundary components, of order 1, 1, and 4, and a symmetry that interchanges the first two. 
Therefore $W_8$ has 3 cusps, and an isometry that interchanges the first two (it has also more isometries that are not apparent from this description).
A Dehn filling is determined by a triple $(\alpha, \beta, \gamma)$ of slopes $\alpha, \beta, \gamma \in \matQ \cup \{\infty\}$ where $\alpha$ and $\beta$ correspond to the boundary components of order 1. 

As already mentioned, we can regard this Dehn filling as a Dehn surgery of the Borromean rings by performing slam-dunks as in Figure \ref{W8slamdunk:fig}. 

\begin{figure}
\begin{center}
\includegraphics[width = 13 cm]{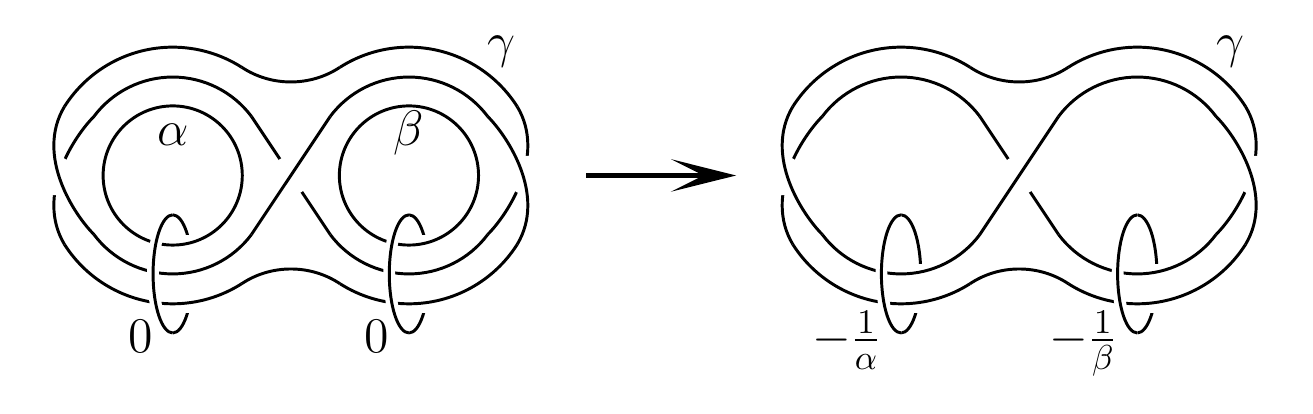}
\nota{This figure shows slam-dunk operations on the components with coefficients $\alpha$ and $\beta$. The link pictured in the right part is the Borromean rings. The manifold $W_8$ is diffeomorphic to the complement of the Borromean rings.}
\label{W8slamdunk:fig}
\end{center}
\end{figure}

\begin{cor} \label{W8:cor}
A Dehn filling $(\alpha, \beta, \gamma)$ on $W_8$ gives $\#_h(S^2 \times S^1)$ if and only if up to interchanging $\alpha$ and $\beta$ one of the following holds: 
\begin{itemize}
\item $\alpha = \infty , \beta = \infty, \gamma = \infty$, and $h=2$, or 
\item $\alpha = 0, \beta = \infty, \gamma = 0$, and $h=2$, or
\item $\alpha =0$, $\beta \in \matZ$, $\gamma = 0$, and $h=1$, or
\item $\alpha =0$, $\beta = \infty$, $\gamma = \frac 1n, n \in \matZ$, and $h=1$, or
\item $\alpha \in \matZ$, $\beta = \infty$, $\gamma = \infty$, and $h=1$, or
\item $\alpha = 0$, $\beta \in \matZ$, $\gamma = \frac 1n, n \in \matZ$, and $h=0$, or
\item $\alpha, \beta \in \matZ$, $\gamma = \infty$, and $h=0$.
\end{itemize}
\end{cor}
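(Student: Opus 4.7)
The plan is to derive Corollary \ref{W8:cor} directly from Proposition \ref{W8:prop} by tracking the slope transformation induced by the slam-dunk moves of Figure \ref{W8slamdunk:fig}. First, I view $W_8$ as the complement of a $5$-component link in $S^3$: the three components $L_\alpha, L_\beta, L_\gamma$ carrying the Dehn filling slopes $\alpha, \beta, \gamma$, together with two $0$-framed unknots $C_1, C_2$ whose $1$-handle description produces the ambient $\#_2(S^2 \times S^1)$. The components $L_\alpha$ and $L_\beta$, which correspond to the two length-$1$ boundary circles of $\partial X_8$, are meridians of $C_1$ and $C_2$ respectively.

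Next, I would work out the slope dictionary. A slam-dunk on $L_\alpha$ with slope $\alpha = p/q$ in lowest terms removes $L_\alpha$ and changes the coefficient of $C_1$ from $0$ to $0 - q/p = -q/p$; analogously for $L_\beta$ and $C_2$. As Figure \ref{W8slamdunk:fig} records, the resulting three-component surgery diagram in $S^3$ is the Borromean rings with coefficients $(\alpha^*, \beta^*, \gamma)$, where $\alpha = p/q \leftrightarrow \alpha^* = -q/p$; in particular
\[
\alpha = \infty \ \leftrightarrow\ \alpha^* = 0, \qquad \alpha = 0 \ \leftrightarrow\ \alpha^* = \infty, \qquad \alpha = -m \in \matZ \ \leftrightarrow\ \alpha^* = 1/m,
\]
and the analogous bijection holds for $\beta \leftrightarrow \beta^*$, while the slope $\gamma$ is unchanged.

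The final step is the case enumeration. By Proposition \ref{W8:prop}, the triple $(\alpha^*, \beta^*, \gamma)$ yields $\#_h(S^2 \times S^1)$ if and only if, up to permutation, the unordered triple equals $\{\infty, 0, 0\}$ with $h=2$, or $\{\infty, 1/m, 0\}$ with $h=1$, or $\{\infty, 1/m, 1/n\}$ with $h=0$. In our setting only the pair $\alpha, \beta$ may be swapped (by the symmetry of $X_8$ exchanging its two length-$1$ boundary components, which extends to an isometry of $W_8$); the slope $\gamma$ is distinguished. I would therefore subdivide each Borromean triple according to which slot $\gamma$ occupies in the multiset and translate back via the dictionary above, producing $2+3+2=7$ sub-cases that reproduce exactly the seven items of Corollary \ref{W8:cor}.

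The step requiring most care is this final case analysis: sub-cases that collapse under the full $S_3$-symmetry of the Borromean rings can give genuinely different statements on the $W_8$ side, because only the $\alpha \leftrightarrow \beta$ swap survives. For instance, $\gamma = \infty$ versus $\gamma = 0$ inside the multiset $\{\infty, 0, 0\}$ yield the distinct items (1) and (2) of the corollary, and one must be careful not to conflate them. The slam-dunk identity itself and the recognition of the resulting link as the Borromean rings are standard once Figure \ref{W8slamdunk:fig} is in hand, so no conceptual obstacle arises beyond the bookkeeping.
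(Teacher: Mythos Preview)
Your proposal is correct and follows essentially the same approach as the paper: the paper's proof consists of the single sentence that a Dehn filling $(\alpha,\beta,\gamma)$ on $W_8$ equals the Borromean surgery $(-\tfrac{1}{\alpha},-\tfrac{1}{\beta},\gamma)$ via Figure~\ref{W8slamdunk:fig}, leaving the translation through Proposition~\ref{W8:prop} implicit. Your write-up simply makes explicit the slope dictionary $\alpha\leftrightarrow -1/\alpha$ and the $2+3+2$ case split arising because only the $\alpha\leftrightarrow\beta$ symmetry (not the full $S_3$ of the Borromean rings) is available on the $W_8$ side, which is exactly the bookkeeping the paper omits.
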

\begin{proof}
A Dehn filling $(\alpha, \beta, \gamma)$ on $W_8$ is the same as a Dehn surgery $(-\frac{1}{\alpha},-\frac{1}{\beta},\gamma)$ of the Borromean rings as shown in Figure \ref{W8slamdunk:fig}. 
\end{proof}

\subsection{The manifold $W_9$}
The piece $X_9$ has 3 boundary components, of length 1, 1, and 4, where the first is adjacent to an even region and the second to an odd one. Therefore $W_9$ has 3 cusps. A Dehn filling is determined by a triple $(\alpha, \beta, \gamma)$ of slopes $\alpha, \beta, \gamma \in \matQ \cup \{\infty\}$ where $\alpha$ and $\beta$ correspond to the boundary components of length 1, with $\alpha$ to the one adjacent to the even region.

\begin{prop} \label{W9:prop}
If a Dehn filling $(\alpha, \beta, \gamma)$ on $W_9$ gives $\#_h(S^2 \times S^1)$ then one of the following holds: 
\begin{itemize}
\item $\alpha=0$, $\beta\in\matZ$, $\gamma = 0$ and $h=1$, or
\item $\alpha=\infty$, $\beta\in\matZ$, $\gamma = \infty$ and $h=1$, or
\item $\alpha=0$, $\beta\in\matZ$, $\gamma = \frac 1n, n\in\matZ$ and $h=0$, or
\item $\alpha\in\matZ$, $\beta\in\matZ$, $\gamma = \infty$ and $h=0$, or
\item $\beta=\infty$. 
\end{itemize}
\end{prop}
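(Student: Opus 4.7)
The plan is to mirror the homological strategy used in the proofs of Propositions \ref{W56:prop} and \ref{W7:prop}. Write the filling slopes as irreducible fractions $\alpha = p_1/q_1$, $\beta = p_2/q_2$, $\gamma = p_3/q_3$, and use the surgery description of $W_9$ read off from Figure \ref{M:fig}-(9). This will realise the Dehn-filled manifold as integer/rational surgery on a $5$-component link in $S^3$ (two auxiliary components producing the ambient $\#_2(S^3\times S^1)$, and three components corresponding to the cusps). From this description I would write down explicitly the linking matrix with respect to the meridian basis, and obtain $H_1$ of the filling as the cokernel of a $5\times 5$ integer matrix $M(\alpha,\beta,\gamma)$ whose entries are linear in the $p_i, q_i$.

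Since $H_1\big(\#_h(S^2\times S^1),\matZ\big) = \matZ^h$ is torsion-free, the necessary condition $\mathrm{Im}\,f = \ker(\matZ^5 \to \matZ^h)$ forces $\mathrm{coker}\, M$ to be torsion-free. I would then compute $\det M$; from the cases $W_5$--$W_7$ one expects $\det M$ to factor as a monomial in the $q_i$ times a small integer constant (reflecting the even/odd region pattern of $X_9$). The torsion-freeness condition then kills most products of the $q_i$'s, producing a small number of cases according to which $q_i$'s vanish. Within each case, I would perform Smith-normal-form style elementary row and column operations on the simplified matrix, as was done in the $W_5$, $W_6$ analyses, to extract further divisibility constraints on the $p_i$'s (for example conditions like $\gcd(p_j,c) = 1$ for small constants $c$).

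The five conclusions listed correspond to the surviving vanishing patterns of the $q_i$'s together with these divisibility constraints. The case $\beta = \infty$ is left unrestricted, which suggests that when $q_2=0$ the cokernel is automatically torsion-free regardless of $\alpha$ and $\gamma$, so the homology test alone cannot exclude any filling with $\beta=\infty$; all the discriminating work happens in the complementary case $q_2\neq 0$, where one must rule out all configurations except those with $q_1 q_3 = 0$ and prescribed values of $p_1, p_3$.

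The main obstacle I anticipate is bookkeeping: unlike $W_7$, the piece $X_9$ has no symmetry between its three boundary components (the first is adjacent to an even region, the second to an odd region, and the third has multiplicity $4$), so the matrix $M$ lacks the symmetries that simplified the earlier cases, and the case analysis must be carried out separately for each pattern of vanishing denominators. In particular the mixed even/odd decoration produces factors of $2$ in the relevant determinantal entries, so the delicate point will be to verify that these factors of $2$, combined with the imposed torsion-freeness, actually force $\alpha \in \{0,\infty\}$ (and $\gamma \in \{\infty\}\cup\{1/n : n \in \matZ\}$) rather than merely restricting the parity of $p_1$ or $p_3$. Since the statement is only a necessary condition (the proposition has a one-way implication), no verification that the listed fillings really give $\#_h(S^2\times S^1)$ is required, which shortens the argument compared to Propositions \ref{W34:prop}--\ref{W8:prop}.
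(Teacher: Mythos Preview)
Your plan correctly identifies the first step --- computing $H_1$ from a surgery presentation and imposing torsion-freeness --- and this is indeed how the paper begins. (The paper first slam-dunks the two auxiliary $0$-framed components to reduce to a $3$-component link, so the matrix is $3\times 3$ rather than $5\times 5$; after removing the row and column for $\beta$ one is left with the $2\times 2$ block
\[
\begin{pmatrix} q_1 & 2q_3 \\ -2p_1 & p_3 \end{pmatrix},
\]
whose determinant is $q_1p_3 + 4p_1q_3$.) The case $q_2=0$ gives the last bullet $\beta=\infty$, exactly as you anticipate.

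The gap is in the complementary case $|q_2|=1$. You hope that torsion-freeness of the cokernel of this $2\times 2$ block will force $\alpha\in\{0,\infty\}$ or $\gamma\in\{\infty\}\cup\{1/n\}$. It does not. For instance $(\alpha,\gamma)=(1,-3)$ gives the matrix $\left(\begin{smallmatrix}1&2\\-2&-3\end{smallmatrix}\right)$ with determinant $1$, hence trivial cokernel; and $(\alpha,\gamma)=(1/4,-1)$ gives $\left(\begin{smallmatrix}4&2\\-2&-1\end{smallmatrix}\right)$ of rank~$1$ with entry-gcd $1$, hence cokernel $\matZ$. Both pass the homology test but lie outside the five listed cases, so a purely homological argument cannot close the proof. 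Your own worry that the factors of $2$ might ``merely restrict parity'' is in fact what happens: the homology constraint is the single Diophantine condition $|q_1p_3+4p_1q_3|\le 1$ (plus a gcd condition in the rank-$1$ case), which has infinitely many solutions not on the list.

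The paper supplies the missing geometric input: when $|q_2|=1$, a further slam-dunk on the $\beta$-component exhibits the filled manifold as the $(-1/\alpha,\gamma)$-surgery on a $2$-bridge link, and Wu's theorem \cite[Theorem~5.1]{Wu} says that any such surgery with both coefficients finite is laminar, hence irreducible, hence not $\#_h(S^2\times S^1)$. This forces $-1/\alpha=\infty$ or $\gamma=\infty$, i.e.\ $\alpha=0$ or $\gamma=\infty$, and the remaining bullets then drop out of the determinant condition. So you need one external $3$-manifold result beyond linear algebra; without it the argument cannot be completed.
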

\begin{proof}
The Dehn filling $(\alpha,\beta,\gamma)$ on $W_9$ is described in Figure \ref{W9_diag:fig}, 
and it is equivalent to a Dehn surgery along a $3$-component link as shown in the rightmost figure. 
\begin{figure}
\begin{center}
\includegraphics[width = 15 cm]{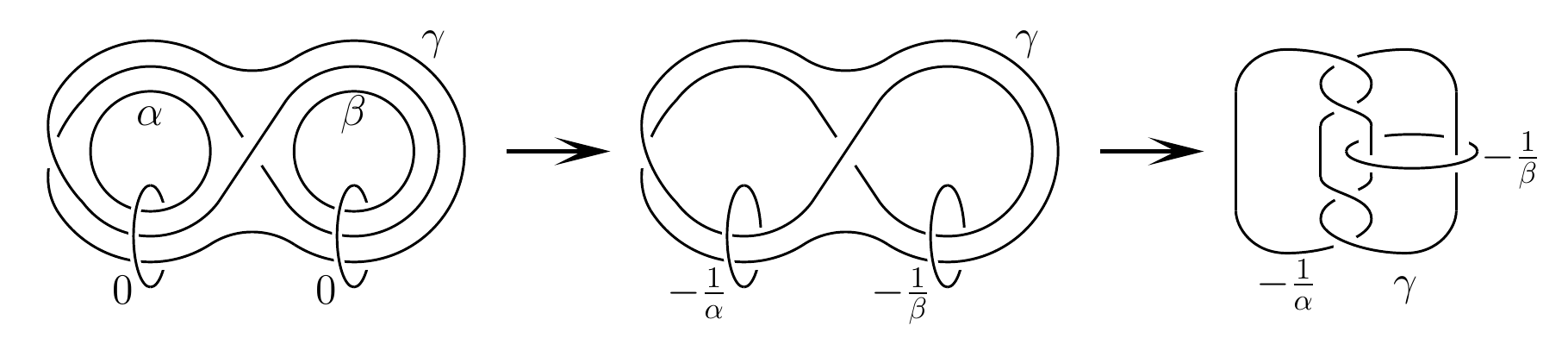}
\nota{The Dehn filling $(\alpha,\beta,\gamma)$ on $W_9$.}
\label{W9_diag:fig}
\end{center}
\end{figure}
Set $\alpha=\frac{p_1}{q_1}$, $\beta=\frac{p_2}{q_2}$ and $\gamma=\frac{p_3}{q_3}$. 
From the surgery diagram, it is easy to check that 
the first homology of the Dehn filling $(\alpha,\beta,\gamma)$ on $W_9$ is
$\matZ^3/_{\mathrm{Im} f}$ with $f$ represented by the matrix
$$\left(\begin{matrix}
       q_1 &   0 & 2q_3 \\
         0 & q_2 &    0 \\
     -2p_1 &   0 &  p_3 
      \end{matrix}\right).$$
This matrix has determinant $q_2(4p_1q_3+q_1p_3)$. 
If the homology group has no torsion, 
then one of the following holds:
\begin{enumerate}
\item $|q_2|=1$ and $4p_1q_3+q_1p_3=0$, or 
\item $|q_2|=1$ and $|4p_1q_3+q_1p_3|=1$, or 
\item $q_2=0$ and $4p_1q_3+q_1p_3=0$, or 
\item $q_2=0$ and $|4p_1q_3+q_1p_3|=1$. 
\end{enumerate}

In the cases (1) and (2), we have $|q_2|=1$ and
the Dehn filling $(\alpha,\beta,\gamma)$ on $W_9$ is obtained by a Dehn surgery $(-\frac{1}{\alpha},\gamma)$ along a $2$-bridge link. 
By Wu's result \cite[Theorem 5.1]{Wu}, 
the resulting manifold is a laminar $3$-manifold if $-\frac{1}{\alpha}$ and $\gamma$ are both different from $\infty$. 
Assume $\frac{1}{\alpha}=\infty$, that is $\alpha=0$. 
Then we have $\gamma=\frac 1n,n\in\matZ$ in case (1), and $\gamma=0$ in case (2). 
In these cases, the Dehn filling is actually $S^3$ or $S^2\times S^1$. 
Assume $\gamma=\infty$. 
Then we have $\alpha=\infty$ if (1), and $\alpha \in \matZ$ if (2). 
Also in these cases, the Dehn filling is $S^3$ or $S^2\times S^1$. 
\end{proof}

\subsection{The manifold $W_{10}$}
Using SnapPy we see that $W_{10}$ is diffeomorphic to the manifold shown in Figure \ref{chain5:fig}, namely the complement of the minimally twisted chain link $L_5$ with 5 components, two of which are surgered with coefficients $-2$ and $-\frac 12$.

\begin{figure}
\begin{center}
\includegraphics[width = 5.5 cm]{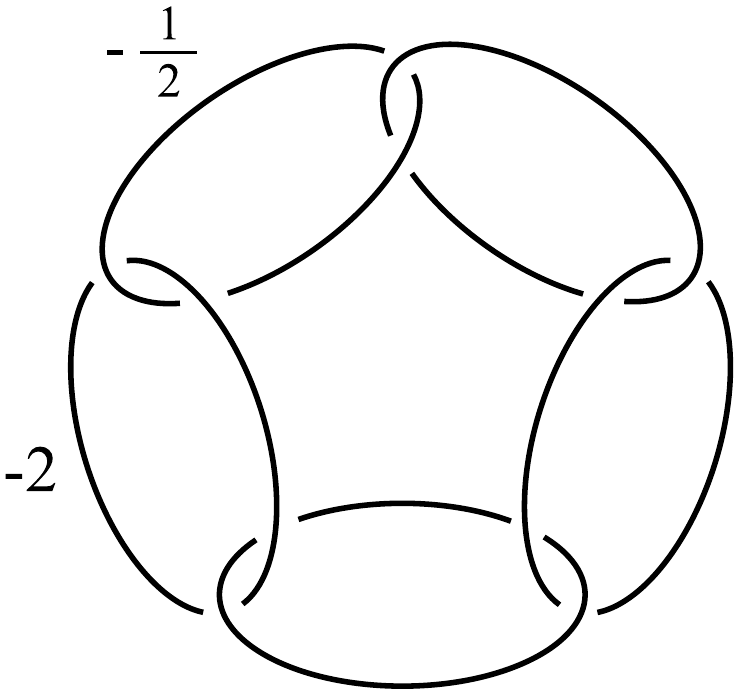}
\nota{The manifold $W_{10}$ is diffeomorphic to the complement of the minimally twisted chain link $L_5$ with 5 components, two of which are surgered as shown.}
\label{chain5:fig}
\end{center}
\end{figure}

The exceptional Dehn surgeries on $L_5$ have been completely classified in \cite{MaPeRo}, so to understand which fillings of $W_{10}$ give $\#_h(S^2 \times S^1)$ we only need to apply carefully the results stated there. We discover the following. The components in $L_5$ are oriented in clockwise order, and so are the Dehn filling coefficients.

\begin{prop}
If a Dehn surgery $(-2, -\frac 12, \alpha, \beta, \gamma)$ of $L_5$ produces $\#_h(S^2 \times S^1)$, then at least one of the following 3 conditions holds:
$$\alpha \in \{0,1,\infty\}, \qquad \beta \in \{0,1,2,\infty\}, \qquad \gamma \in \{0,1,\infty\}.$$
Moreover:
\begin{itemize}
\item if $\beta =0$ we either get $\alpha = \infty$ and $\gamma \in \matZ$, or $\alpha \in \matZ$ and $\gamma = \infty$,
\item if $\gamma = \infty$ we either get $\alpha = \infty$ and $\beta \in \matZ$, or $\alpha \in \matZ$ and $\beta = \infty$.
\end{itemize}
\end{prop}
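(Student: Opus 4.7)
\smallskip
\noindent\textbf{Proof proposal.}
The plan is to reduce everything to the classification of exceptional Dehn surgeries on the minimally twisted $5$-chain link $L_5$ carried out in \cite{MaPeRo}. Since $\#_h(S^2 \times S^1)$ is never hyperbolic, and in fact reducible as soon as $h \geq 1$, any surgery of $L_5$ producing it must be an exceptional surgery. With two of the five slots already filled by $-2$ and $-\tfrac{1}{2}$, the hyperbolic manifold under consideration is $W_{10}$, and the remaining three slopes are $\alpha,\beta,\gamma$. So the exceptional slopes for $W_{10}$ that yield $\#_h(S^2\times S^1)$ form a subset of the exceptional slope triples listed in \cite{MaPeRo} once those two coefficients are plugged in.

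First I would write $\alpha=p_1/q_1$, $\beta=p_2/q_2$, $\gamma=p_3/q_3$ and compute the linking matrix of the $5$-component surgery, specialising the first two rows/columns at $(-2,1)$ and $(-1,2)$. This produces an explicit $5\times 5$ integer matrix presenting $H_1$. Since $H_1(\#_h(S^2\times S^1))=\matZ^h$ is torsion-free, the Smith normal form of this matrix must have only $0$s and $\pm 1$s on the diagonal, which gives a very restrictive divisibility condition in the variables $p_i,q_i$. This step alone is the mechanism that forces $\alpha,\beta$, or $\gamma$ into the small finite sets $\{0,1,\infty\}$ or $\{0,1,2,\infty\}$; the extra slope $2$ allowed for $\beta$ arises because the adjacent fixed surgery coefficient is the non-integer $-\tfrac12$, which breaks the symmetry and introduces an additional elementary divisor.

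Next I would intersect the list from the homological test with the explicit tabulation in \cite{MaPeRo} of exceptional surgeries on $L_5$, throwing away all triples whose result is hyperbolic, a Seifert fibration with non-trivial base orbifold, or otherwise has non-trivial $\pi_1$-torsion. What remains is exactly the union of the three conditions $\alpha\in\{0,1,\infty\}$, $\beta\in\{0,1,2,\infty\}$, $\gamma\in\{0,1,\infty\}$.

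Finally, for the two refined statements: when $\beta=0$, surgery with slope $0$ on the middle component converts that component into a sphere that reduces $L_5$ (via a slam-dunk / Rolfsen twist along the adjacent component carrying $-\tfrac12$) to a simpler $3$- or $4$-component link whose remaining surgeries are directly analysable by the same homology-plus-Kirby-calculus argument used in the proofs of Propositions \ref{W56:prop}--\ref{W9:prop}; this forces one of $\alpha$ or $\gamma$ to be $\infty$ and the other to be an integer. The case $\gamma=\infty$ is analogous: slope $\infty$ simply deletes that component from the surgery diagram, and the resulting $4$-component Kirby picture again succumbs to a direct homological/Alexander-polynomial computation. The main obstacle is the bookkeeping: translating the meridian/longitude conventions of $W_{10}$ into the conventions of \cite{MaPeRo} so that the exceptional slope list can be applied verbatim, and then performing the Kirby moves cleanly enough that the homology constraint is transparent in each reduced diagram.
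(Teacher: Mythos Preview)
Your plan has the right ingredients but reverses their roles, and this creates a genuine gap. You claim that the torsion-free $H_1$ condition ``alone is the mechanism that forces $\alpha,\beta,\gamma$ into the small finite sets.'' This is not true: the Smith normal form constraint on the surgery presentation matrix rules out many slopes, but it leaves infinitely many triples (for instance plenty of integer triples give integral homology spheres), and it certainly does not by itself cut down to $\{0,1,\infty\}$ and $\{0,1,2,\infty\}$. The finite sets come from the exceptional-filling classification in \cite{MaPeRo}, not from homology.

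The paper's actual argument runs in the opposite order. One starts with \cite[Theorem 4.2]{MaPeRo}, which says that (up to the isometry $(\alpha,\beta,\gamma)\mapsto(1-\alpha,\tfrac{\beta}{\beta-1},1-\gamma)$ of $W_{10}$) every exceptional filling either already satisfies one of the three listed conditions, or lies in a short explicit list of sporadic pairs and triples such as $(\alpha,\gamma)\in\{(-3,2),(-2,2),(-1,2),(-1,\tfrac32),(\tfrac12,\tfrac12)\}$, $(\alpha,\beta)=(-1,-1)$, etc. The real work is then to eliminate each sporadic case individually, and this is done not by homology but by Fenn--Rourke moves and the symmetries of $L_4,L_5$ that reduce each case to a surgery on the magic manifold $L_3$ or on $L_4$, whose exceptional fillings are tabulated in \cite{MaPe,MaPeRo}. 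Your outline has no analogue of this sporadic-case analysis. Similarly, for the ``moreover'' clauses the paper does not use slam-dunks and Alexander polynomials; it invokes the explicit graph-manifold identifications \cite[Corollaries 1.3 and 3.6]{MaPeRo} to write the filled manifold as a union of two Seifert pieces over discs, from which the conclusion is read off directly.
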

\begin{proof}
We first note from \cite[Theorem 0.1]{MaPeRo} that an isometry of $W_{10}$ induces the following symmetry on Dehn fillings:
\begin{equation} \label{sym:eqn}
\big(-2, -\tfrac 12, \alpha, \beta, \gamma\big) \longmapsto \big(-2, - \tfrac 12, 1-\alpha, \tfrac \beta{\beta -1}, 1-\gamma\big).
\end{equation}
Theorem 4.2 in \cite{MaPeRo} furnishes a complete list of exceptional surgeries on $L_5$ up to the action of the isometry group of its complement. By analysing carefully this list we find that every exceptional surgery of type $(-2, -\frac 12, \alpha, \beta, \gamma)$
fulfills one of the following requirements, up to applying the symmetry (\ref{sym:eqn}):
\begin{equation} \label{abc:eqn}
\alpha \in \{0,1,\infty\}, \qquad \beta \in \{0,1,2,\infty\}, \qquad \gamma \in \{0,1,\infty\},
\end{equation}
$$ (\alpha, \gamma) \in \big\{ (-3,2),\ (-2,2),\ (-1,2),\ (-1,\tfrac 32),\ (\tfrac 12, \tfrac 12) \big\}.$$
$$ (\alpha, \beta) \in \big\{ (-1,-1) \big\}, \qquad
(\beta, \gamma) \in \big\{ (-2,-1), \ (-1,-1) \big\},
$$
or $(\alpha, \beta, \gamma)$ belongs to a list of 15 triples. Using SnapPy we see that of these 15 triples, only two produce a closed manifold whose homology has no torsion, namely:
$$(\alpha, \beta, \gamma) \in \big\{(-3,-1,-2), \quad (-2,-1,-2)\big\}.$$
To conclude, it remains to show that in these cases we never get $\#_h(S^2 \times S^1)$ unless any of the conditions in (\ref{abc:eqn}) is satisfied. 

If $\gamma = -1$, the surgery $(-2, -\frac 12, \alpha, \beta, -1)$ on $L_5$ is equivalent to the surgery $(\frac 12, \alpha, \beta +2)$ on the chain link $L_3$ with 3 components. We prove this in two steps:
$$L_5\big(-2, -\tfrac 12, \alpha, \beta, -1\big) = L_4\big(-1, -\tfrac 12, \alpha, \beta+1\big) = L_3\big(\tfrac 12, \alpha, \beta+2\big)$$
where $L_4$ is the minimally twisted chain link with 4 components. Here we use two Fenn-Rourke moves, see \cite[Figure 5]{MaPeRo}.
The complement $S^3 \setminus L_3$ is the \emph{magic manifold} and its exceptional Dehn filling are fully described in \cite{MaPe}. By looking at \cite[Tables 2 and 3]{MaPe} we deduce that we never get $\#_h(S^2 \times S^1)$ unless $\alpha$ or $\beta$ equals $\infty$. (Note that all signs must be reversed when looking at the tables in \cite{MaPe} because the mirrored chain link is considered there.)
Using (\ref{sym:eqn}), the previous discussion applies also to the case $\gamma = 2$.

If $\alpha = -1$, the surgery $(-2, -\frac 12, -1, \beta, \gamma)$ is equivalent to the surgery $(5, 2-\beta, \frac \gamma{\gamma-1})$ on $L_3$. We prove this as follows:
$$L_5\big(-2, -\tfrac 12, -1, \beta, \gamma\big) = L_4\big(-2, \tfrac 12, \beta+1, \gamma\big) = 
L_4\big(4,-1,1-\beta,\tfrac{\gamma}{\gamma-1}\big) = L_3\big(5, 2-\beta, \tfrac \gamma{\gamma -1}\big)$$
where in the middle equality we use the symmetry \cite[Equation (3.15)]{MaPeRo}. We are only interested in the cases $(\alpha,\beta) = (-1,-1)$ and $(\alpha,\gamma)=(-1,\frac 32)$ and they both lead to $L_3(5,3, \delta)$ for $\delta = 1-\alpha$ or $\frac \beta{\beta-1}$. Again from \cite[Tables 2 and 3]{MaPe} we see that we do not get $\#_h(S^2 \times S^1)$ unless $\delta = \infty$, that is $\alpha = \infty$ or $\beta =1$.

If $(\alpha, \gamma) = (\tfrac 12, \tfrac 12)$, we get 
$$L_5\big(-2, -\tfrac 12, \tfrac 12, \beta, \tfrac 12\big) =
L_5\big(-\tfrac 12, 2, \tfrac 32, -1, 1-\beta \big) = L_4\big(-\tfrac 12, 2, \tfrac 52, 2-\beta \big) = L_4\big(\tfrac 53, 0, \tfrac 13, \tfrac{\beta}{\beta-1}\big)
$$
where we have used \cite[Theorem 0.1 and Equation (3.15)]{MaPeRo}. The manifold $L_4\big(\tfrac 53, 0, \tfrac 13, \tfrac{\alpha}{\alpha-1}\big)$ is a graph manifold and  \cite[Corollary 3.6]{MaPeRo} easily implies that it is not $\#_h(S^2 \times S^1)$.

Finally, if $(\alpha, \beta, \gamma) \in \{(-3,-1,-2), (-2,-1,-2)\}$ we can use the same techniques to show that the filled manifold is not $\#_h(S^2 \times S^1)$.

We now turn to the last assertion. If $\beta = 0$, and $\alpha =\frac pq, \gamma = \frac rs$, we get 
$$L_5\big(-2, -\tfrac 12, \tfrac pq, 0, \tfrac rs\big) = 
L_5\big(3, \tfrac 13, \tfrac{q-p}q, \tfrac sr, \infty \big)
 = \big( D, (3,-1), (3,1) \big) \bigcup_{\tiny{\matr 0110}} \big(D, (q,q-p), (s,-r)\big)$$
where we have used \cite[Equation (1.3) and Corollary 1.3]{MaPeRo}. To get $\#_h(S^2 \times S^1)$ here we must either have $\alpha = \infty$ and $\gamma \in \matZ$, or $\alpha \in \matZ$ and $\gamma = \infty$. Similarly, if $\gamma = \infty$, and $\alpha = \frac pq, \beta = \frac rs$, we get
$$L_5\big(-2, -\tfrac 12, \tfrac pq, \tfrac rs, \infty\big) =
\big( D, (2,1), (2,-1) \big) \bigcup_{\tiny{\matr 0110}} \big(D, (q,p), (r,-s)\big)$$
and we conclude analogously.
\end{proof}

We now turn back to $W_{10}$ with the usual meridian/longitude basis described in Section \ref{link:surgery:subsection}. A Dehn filling is determined by a triple $(\alpha, \beta, \gamma)$ of slopes $\alpha, \beta, \gamma \in \matQ \cup \{\infty\}$ where $\alpha, \beta$, and $\gamma$ correspond to the boundary components of $X_{10}$ of length 1, 2, and 3.

\begin{cor} \label{W10:cor}
If a Dehn filling $(\alpha, \beta, \gamma)$ on $W_{10}$ gives $\#_h(S^2 \times S^1)$ then one of the following 3 conditions holds: 
$$\alpha \in \big\{\infty, 0, \tfrac 12, 1\big\}, \qquad \beta \in \big\{\infty, -1, 0\big\}, \qquad
\gamma \in \big\{\infty, -3, -2 \big\}.$$
Moreover:
\begin{itemize}
\item  if $\alpha =\infty$ we either get $\beta = \infty$ and $\gamma \in \matZ$, or $\beta \in \matZ$ and $\gamma = \infty$,
\item if $\beta = \infty$ we either get $\alpha = \infty$ and $\gamma \in \matZ$, or $\alpha \in \matZ$ and $\gamma = \infty$.
\end{itemize}
\end{cor}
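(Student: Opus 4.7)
The plan is to deduce the corollary from the preceding proposition by translating slope coordinates on each of the three remaining cusps of $W_{10}$ from the link-surgery basis used in the preceding proposition to the shadow-induced meridian/longitude basis fixed in Section~\ref{link:surgery:subsection}.

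First, for each of the three cusps of $W_{10}$ (corresponding to the boundary components of $X_{10}$ of length $1, 2, 3$), I would compute the change of basis between the link coordinates on the corresponding component of $L_5$ and the shadow coordinates. The shadow basis has $\infty$ given by the fibre of the fibration $\partial_h N(X_{10}) \to X_{10}$ and integer slopes given by horizontal sections, and can be read off the cusp shapes described in Figure~\ref{cusps:fig}. The link basis is the standard meridian/longitude of the corresponding component of $L_5$. The relationship can be made explicit either via a direct SnapPy computation, or by tracking how the Seifert fibration of $\partial_h N(X_{10})$ sits inside $S^3 \setminus L_5$ after the two initial Dehn fillings.

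Next, I would apply the resulting changes of basis on each cusp to translate the list of bad slopes furnished by the preceding proposition. The first cusp, of length~$1$, has a change of basis whose $\mathrm{GL}(2,\matQ)$ character (inherited from the relationship between primitive and doubly-wrapped curves in the odd-region case of Figure~\ref{cusps:fig}) carries the set $\{0, 1, \infty\}$ from the proposition into the set $\{\infty, 0, \tfrac 12, 1\}$. The second and third cusps translate the sets $\{0, 1, 2, \infty\}$ and $\{0, 1, \infty\}$ into $\{\infty, -1, 0\}$ and $\{\infty, -3, -2\}$ respectively. The \emph{moreover} clauses translate along the same lines: the condition $\beta_{\mathrm{link}} = 0$ in link coordinates corresponds to $\alpha = \infty$ in shadow coordinates, and the condition $\gamma_{\mathrm{link}} = \infty$ in link coordinates corresponds to $\beta = \infty$ in shadow coordinates, with the integer-slope constraints on the remaining two variables carried across accordingly.

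The main obstacle is accurately identifying the three change-of-basis matrices. One of the three cusps involves an odd region in $X_{10}$ and therefore the naive identification of the shadow's meridian/longitude with that of the corresponding $L_5$ component fails, producing the half-integer slope $\tfrac 12$ in the final statement. Once these matrices are pinned down the rest is a transparent bookkeeping exercise translating each bad slope of the proposition into its image in shadow coordinates, and the resulting list matches the statement of the corollary exactly.
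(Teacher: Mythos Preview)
Your approach is exactly the paper's: its proof reads in full ``Use SnapPy to figure out the correct change of basis.'' One small slip worth flagging: a change of basis on a cusp torus lies in $\mathrm{GL}(2,\matZ)$ and is therefore a bijection on slopes, so a three-element set cannot be carried to a four-element one; what actually happens is that the identification of $W_{10}$ with the filled $L_5$ complement also permutes the cusps, so the length-$1$ boundary of $X_{10}$ corresponds to the middle $L_5$ component (whose bad set $\{0,1,2,\infty\}$ already has four elements), and the slope $\tfrac 12$ then arises from an honest integral change of basis rather than any $\mathrm{GL}(2,\matQ)$ or odd-region phenomenon.
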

\begin{proof}
Use SnapPy to figure out the correct change of basis. 
\end{proof}

\subsection{The manifold $W_{11}$}
Using SnapPy we discover that $W_{11}$ is diffeomorphic to the complement of the minimally twisted chain link $L_4$  with 4 components, shown in Figure \ref{chain_four:fig}-(right). This is the smallest orientable hyperbolic manifold with 4 cusps \cite{Yo}, and its exceptional fillings have already been classified in \cite{MaPeRo}. 

All the cusp shapes are squares. Therefore at every cusp we have two shortest slopes and two second shortest slopes. These are respectively $(\infty,1)$ and $(0,2)$. 

\begin{prop} \label{X11:surgery:prop}
If a Dehn surgery along $L_4$ gives $\#_h(S^2 \times S^1)$ for some $h\geq 0$, then at least one of the 4 Dehn surgery coefficients is in the set $\{0,1,2,\infty\}$.
Moreover:
\begin{itemize}
\item if $\alpha = 0$ one of the following holds:
$$\beta \in \matZ, \quad \delta \in \matZ, \quad \gamma \in \{1,\infty\}, \quad {\rm or} \quad \beta = \delta = \infty.$$
\item if $\alpha = \infty$ one of the following holds:
$$\tfrac 1 \beta \in \matZ \cup \{\infty\}, \quad \gamma \in \matZ\cup \{\infty\}, \quad {\rm or} \quad \tfrac 1 \delta \in \matZ \cup \{\infty\},$$
\end{itemize}
\end{prop}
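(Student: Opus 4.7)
The plan is to prove Proposition~\ref{X11:surgery:prop} by adapting the strategy already deployed for $W_{10}$: combine the torsion-freeness of $H_1(\#_h(S^2\times S^1)) = \matZ^h$ with the complete classification of exceptional Dehn surgeries on $L_4$ provided in \cite{MaPeRo}, together with the rich symmetry group of $L_4$. Since $L_4$ is the minimally twisted chain link, its symmetry group acts transitively on the four components (at least by a dihedral $\matZ/4 \rtimes \matZ/2$), and this will be the main tool for collapsing the case analysis.

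First I would write down the presentation matrix for $H_1$ of the Dehn filled manifold. Because consecutive components of $L_4$ have linking number $\pm 1$ and non-consecutive ones have linking $0$, setting $\alpha = p_1/q_1$, $\beta = p_2/q_2$, $\gamma = p_3/q_3$, $\delta = p_4/q_4$ as irreducible fractions yields a $4\times 4$ banded matrix whose determinant factors in a way controllable by elementary row/column operations. Requiring no torsion in the quotient eliminates most fillings immediately, and combined with \cite[Theorem~4.2]{MaPeRo} (listing all exceptional surgeries on $L_4$ up to isometries of its complement) it forces at least one coefficient into $\{0,1,2,\infty\}$.

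For the refined statements about $\alpha = 0$ and $\alpha = \infty$, I would reduce to the magic manifold $S^3\setminus L_3$, whose exceptional fillings are fully tabulated in \cite[Tables~2, 3]{MaPe}. When $\alpha = \infty$ the corresponding component is simply removed and we are left with a Dehn surgery on the three remaining chain link components (a 3-component sublink). When $\alpha = 0$, a Rolfsen twist followed by a Fenn--Rourke move converts the Dehn surgery on $L_4$ into a surgery on a 3-component link that can be identified with (a twisted image of) $L_3$, exactly as in the $\gamma = -1$ step of the proof for $W_{10}$. In each case I then read off from \cite[Tables~2, 3]{MaPe} precisely which remaining slopes yield $\#_h(S^2\times S^1)$ and verify that every such solution belongs to the listed families $\beta\in\matZ$, $\delta\in\matZ$, $\gamma\in\{1,\infty\}$, $\beta=\delta=\infty$, etc.

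The main obstacle will be two intertwined difficulties. First, many Dehn fillings of $L_4$ have torsion-free $H_1$ without being $\#_h(S^2\times S^1)$ (for instance certain Seifert or graph manifolds that arise as ``boundary'' cases of the classification in \cite{MaPeRo}); the homology test alone does not suffice, so one must consult the geometric classification to eliminate them one by one, using techniques like those for $(\alpha,\gamma) = (\tfrac12,\tfrac12)$ in the $W_{10}$ case. Second, making this bookkeeping tractable requires a careful and systematic use of the full symmetry group of the $L_4$-complement, including the hidden hyperbolic symmetries beyond the obvious dihedral ones, in order to collapse the nominally enormous list of $(\alpha, \beta, \gamma, \delta)$ candidates down to the ones actually asserted.
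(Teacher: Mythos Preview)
Your strategy is essentially the one the paper uses, but the paper's execution is considerably more streamlined and you add some unnecessary detours.

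First, the $H_1$ computation you propose plays no role in the paper's argument and is not needed. The paper goes straight to \cite[Section~3.5]{MaPeRo}, which already states that (up to the isometry group of $S^3\setminus L_4$, which permutes both the cusps and the slopes $\{0,1,2,\infty\}$ within each cusp) every exceptional filling either has some coefficient in $\{-1,0,1,2,\infty\}$ or equals $(2,2,2,2)$. The $(2,2,2,2)$ case is discarded by inspection, and only the value $-1$ requires further work: here the paper reduces to the magic manifold $L_3$ via the Fenn--Rourke move $L_4(-1,\alpha_2,\alpha_3,\alpha_4)=L_3(\alpha_2+1,\alpha_3,\alpha_4+1)$ and reads off from \cite[Theorem~1.3]{MaPe} that some $\beta_i\in\{1,2,\infty\}$, which pushes the coefficient back into $\{0,1,2,\infty\}$. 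This is exactly the reduction you sketch, but applied only to the single residual case $-1$ rather than as a general tool.

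Second, for the ``moreover'' clauses the paper does not perform the explicit Rolfsen/Fenn--Rourke reductions you outline; it simply invokes \cite[Corollary~3.6]{MaPeRo}, which already describes the graph-manifold structure of the relevant partial fillings of $L_4$ and makes the conclusions immediate. Your proposed route for $\alpha=\infty$ has a small gap: performing $\infty$-surgery on one component of $L_4$ leaves the complement of an \emph{open} three-chain, which is not literally $L_3$, so you would still need to identify that complement (or one of its fillings) with a filling of the magic manifold before the tables in \cite{MaPe} apply. This identification exists, but it is precisely what \cite[Corollary~3.6]{MaPeRo} packages for you, so citing it directly is both shorter and safer.
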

\begin{proof}
It is shown in \cite[Section 3.5]{MaPeRo} that the isometries of the hyperbolic manifold $S^3 \setminus L_4$ permute the cusps and the slopes $\{0,1,2,\infty\}$ in them. 

Let $\alpha = (\alpha_1,\alpha_2, \alpha_3,\alpha_4)$ be 4 coefficients that yield $\#_h(S^2\times S^1)$. This manifold is not hyperbolic, so the discussion in \cite[Section 3.5]{MaPeRo} shows that, up to isometries of $S^3\setminus L_4$, either $\alpha = (2,2,2,2)$ or $\alpha_i \in \{-1,0,1,2,\infty\}$ for some $i$, say $i=1$. Moreover $(2,2,2,2)$ does not yield $\#_h(S^2 \times S^1)$, so it can be discarded.

It remains to consider the case $\alpha_1 = -1$. It is shown in \cite[Section 3.5]{MaPeRo} that the $(-1,\alpha_2,\alpha_3,\alpha_4)$-Dehn surgery on $L_4$ is diffeomorphic to the
$(\beta_1, \beta_2, \beta_3)$-Dehn surgery on the chain link $L_3\subset S^3$ with 3 components, where
$(\beta_1,\beta_2,\beta_3)= (\alpha_2+1, \alpha_3, \alpha_4+1)$. The tables in \cite[Theorem 1.3]{MaPe} easily show that to get $\#_h(S^2\times S^1)$ we must have $\beta_i \in \{1,2, \infty\}$ for some $i$. (Note that all signs must be reversed in these tables because the mirrored link is considered there.) Therefore we are done.

The last assertions are easy consequences of \cite[Corollary 3.6]{MaPeRo}.
\end{proof}

We now turn back to $W_{11}$ with the usual meridian/longitude basis described in Section \ref{link:surgery:subsection}. A Dehn filling is determined by a 4-tuple $(\alpha, \beta, \gamma, \delta)$ of slopes $\alpha, \beta, \gamma, \delta \in \matQ \cup \{\infty\}$ where $\alpha, \beta$, and $\gamma, \delta$ correspond to boundary components of $X_{11}$ of length 1 and 2 respectively.

\begin{cor} \label{W11:cor}
If a Dehn filling $(\alpha, \beta, \gamma, \delta)$ on $W_{11}$ gives $\#_h(S^2 \times S^1)$ then one of the following 4 conditions holds: 
$$\alpha \in \big\{\infty, -1, -\tfrac 12, 0\big\}, \qquad 
\beta \in \big\{\infty, -1, -\tfrac 12, 0\big\}, \qquad
\gamma \in \big\{\infty, 0, 1, 2 \big\}, \qquad
\delta \in \big\{\infty, 0, 1, 2 \big\}.$$
Moreover:
\begin{itemize}
\item if $\alpha = \infty$ one of the following holds:
$$\gamma \in \matZ, \quad \delta \in \matZ, \quad
\beta \in \{-1,0\}, \quad {\rm or} \quad \gamma = \delta = \infty,$$
\item if $\gamma = \infty$ one of the following holds:
$$
\alpha \in \matZ \cup \{\infty\}, \quad
\beta \in \matZ \cup \{\infty\}, \quad {\rm or} \quad
\delta \in \matZ \cup \{\infty\}.
$$
\end{itemize}
\end{cor}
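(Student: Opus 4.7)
The plan is to deduce Corollary \ref{W11:cor} from Proposition \ref{X11:surgery:prop} by transferring the condition through an explicit peripheral change of basis at each cusp.

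First, I would use SnapPy to verify the identification $W_{11} \cong S^3 \setminus L_4$ and to compute, at each cusp, the $2\times 2$ matrix in $\GL(2, \matZ)$ that expresses the meridian/longitude pair coming from the link diagram in Figure \ref{M:fig}-(11) in terms of the meridian/longitude pair for $L_4$. Since each cusp of $W_{11}$ has square shape, and both descriptions equip each cusp with a peripheral basis, the matrix is an integral change of basis; there is one matrix for each of the two types of cusps, corresponding to the boundary components of $X_{11}$ of length $1$ (carrying the slopes $\alpha, \beta$) and of length $2$ (carrying $\gamma, \delta$). The matrices can then be checked by comparing a few short slopes on both sides.

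Next, I would apply these change-of-basis matrices to Proposition \ref{X11:surgery:prop}. Under the identification, the distinguished $L_4$-set $\{0,1,2,\infty\}$ is carried to $\{\infty, -1, -\tfrac12, 0\}$ on the length-$1$ cusps and to $\{\infty, 0, 1, 2\}$ on the length-$2$ cusps; this yields the four-alternative necessary condition in the corollary. Integer $L_4$-coefficients translate to integer coefficients for the length-$2$ cusps and to reciprocals of integers (that is, slopes of the form $\tfrac{1}{n}$, which equivalently means $\tfrac 1\beta \in \matZ \cup \{\infty\}$) for the length-$1$ cusps; this is exactly how the moreover-clauses of Proposition \ref{X11:surgery:prop} get rephrased. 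In particular, the condition ``$\tfrac 1\beta \in \matZ \cup \{\infty\}$'' after translation becomes the condition that $\beta$ lies in the appropriate $W_{11}$-integer set.

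Finally, the ``moreover'' statements of the corollary are obtained by specializing Proposition \ref{X11:surgery:prop} to the cases $\alpha = \infty$ (corresponding on $L_4$ to the $0$-slope of its distinguished set, i.e.\ one of the exceptional special-slope conditions) and $\gamma = \infty$ (similarly translated), and then rewriting the conclusions of Proposition \ref{X11:surgery:prop} under the basis change. The only real obstacle is the bookkeeping: one must check that SnapPy's labelling of the four cusps of $W_{11}$ matches the labelling $\alpha, \beta, \gamma, \delta$ induced by the fibration $W_{11} \to X_{11}$, and one must be careful about orientations when transporting integer slopes through the change-of-basis matrix. Once this is pinned down, the corollary follows immediately from Proposition \ref{X11:surgery:prop} by direct substitution.
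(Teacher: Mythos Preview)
Your proposal is correct and follows exactly the paper's approach: the paper's proof consists of the single sentence ``Use SnapPy to figure out the correct change of basis,'' and your proposal is a faithful elaboration of what that entails. The extra bookkeeping you describe (matching cusp labels, tracking how $\{0,1,2,\infty\}$ and integer slopes transform under the peripheral change of basis for the two cusp types) is precisely the content hidden behind that one line.
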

\begin{proof}
Use SnapPy to figure out the correct change of basis.
\end{proof}

\section{Moves on shadows} \label{moves:section}
Every block $M$ has infinitely many different shadows. As is customary in low-dimensional topology, whenever we have a combinatorial representation of an object (like a knot in $S^3$ or a manifold), there are some local moves that one can use to transform the combinatorial representation without varying the object. 

We introduce here a number of moves that transform a shadow $X$ into another shadow $X'$ of the same block. These will be used in the subsequent sections to prove Theorem \ref{main:teo}. Maybe in contrast with other contexts, we are forced here to consider more than 30 different moves: we consider this to be a manifestation of the intrinsic difficulty one has to manipulate smooth 4-manifolds.

We are particularly interested in the moves that involve the pieces $X_{10}$ and $X_{11}$, for a reason that will be clarified in the next section, related to the fact that $W_{10}$ and $W_{11}$ have many Dehn fillings that yield $\#_h(S^2 \times S^1)$, as discovered in Section \ref{exceptional:section}.

\subsection{Basic moves}
A \emph{move} is an operation that modifies only a portion of a shadow $X$ leaving the rest unaltered, thus transforming it into another shadow $X'$. A move may modify the topological structure of $X$ and/or its gleams.

As proved by Turaev \cite{Tu}, the moves in Figure \ref{move_all:fig} transform a shadow $X$ of a framed block $M$ into another shadow $X'$ of the same block $M$. 
We will use these 5 ``basic moves'' to construct more complicated moves in the next pages. As a start, the construction of the slightly more elaborated moves shown in Figure \ref{more_moves:fig} is left as an exercise.

\begin{figure}
\begin{center}
\includegraphics[width = 12 cm]{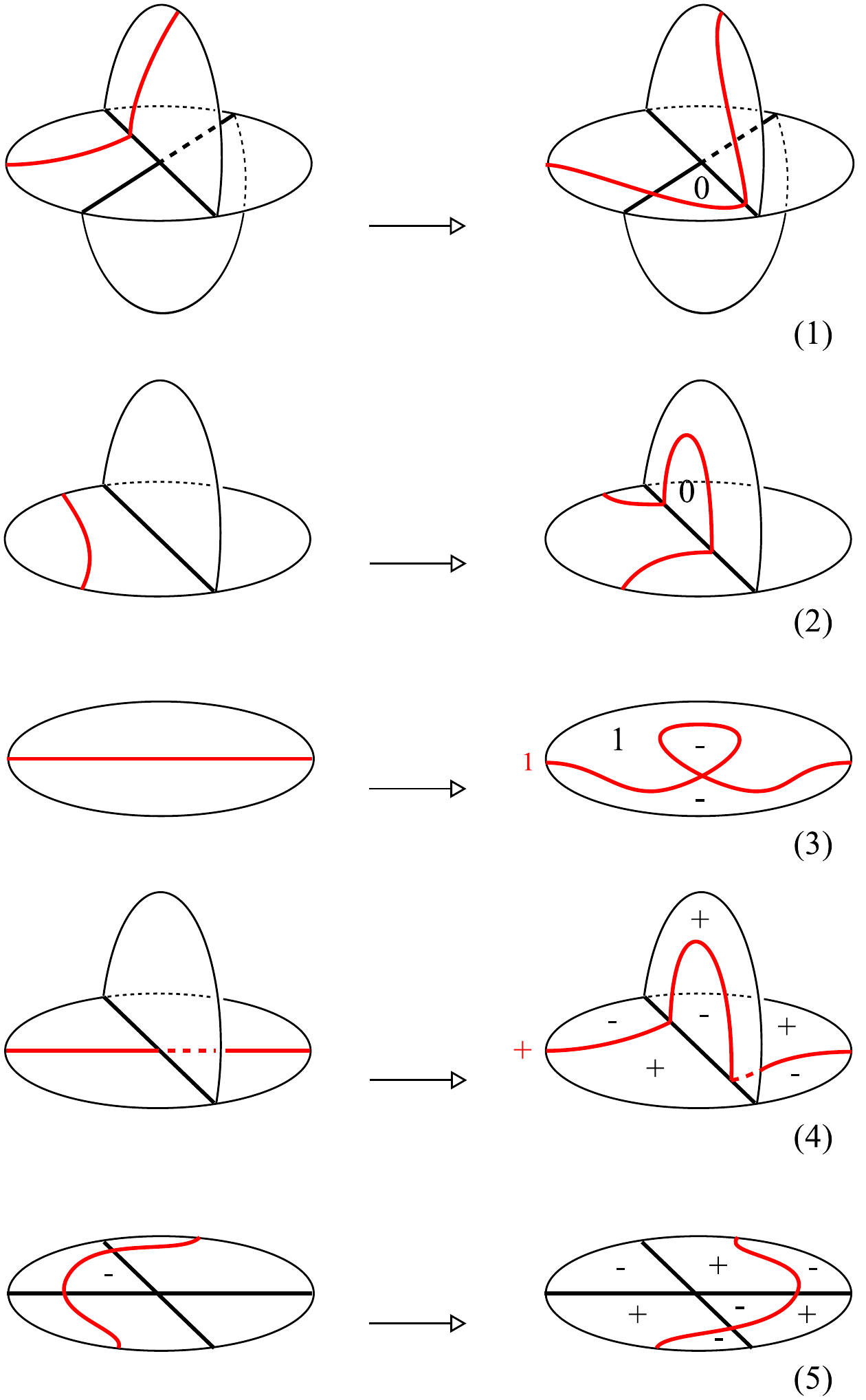}
\nota{Five ``basic moves'' that transform a shadow $X$ into another shadow $X'$ of the same framed block. A disc is attached along each red arc. Moves (1) and (2) can be embedded in a 3-dimensional slice, while the moves (3) and (4) cannot. In moves (3) and (4), the gleam of the red region is modified after the move respectively by adding $1$ and $\frac 12$ (the number is pictured in red).  In (3), (4), (5) we can also apply the same move with all signs reversed.
Move (5) is obtained by composing multiple times the moves (1), (2), (4) and their inverses.
}
\label{move_all:fig}
\end{center}
\end{figure}

\begin{figure}
\begin{center}
\includegraphics[width = 16 cm]{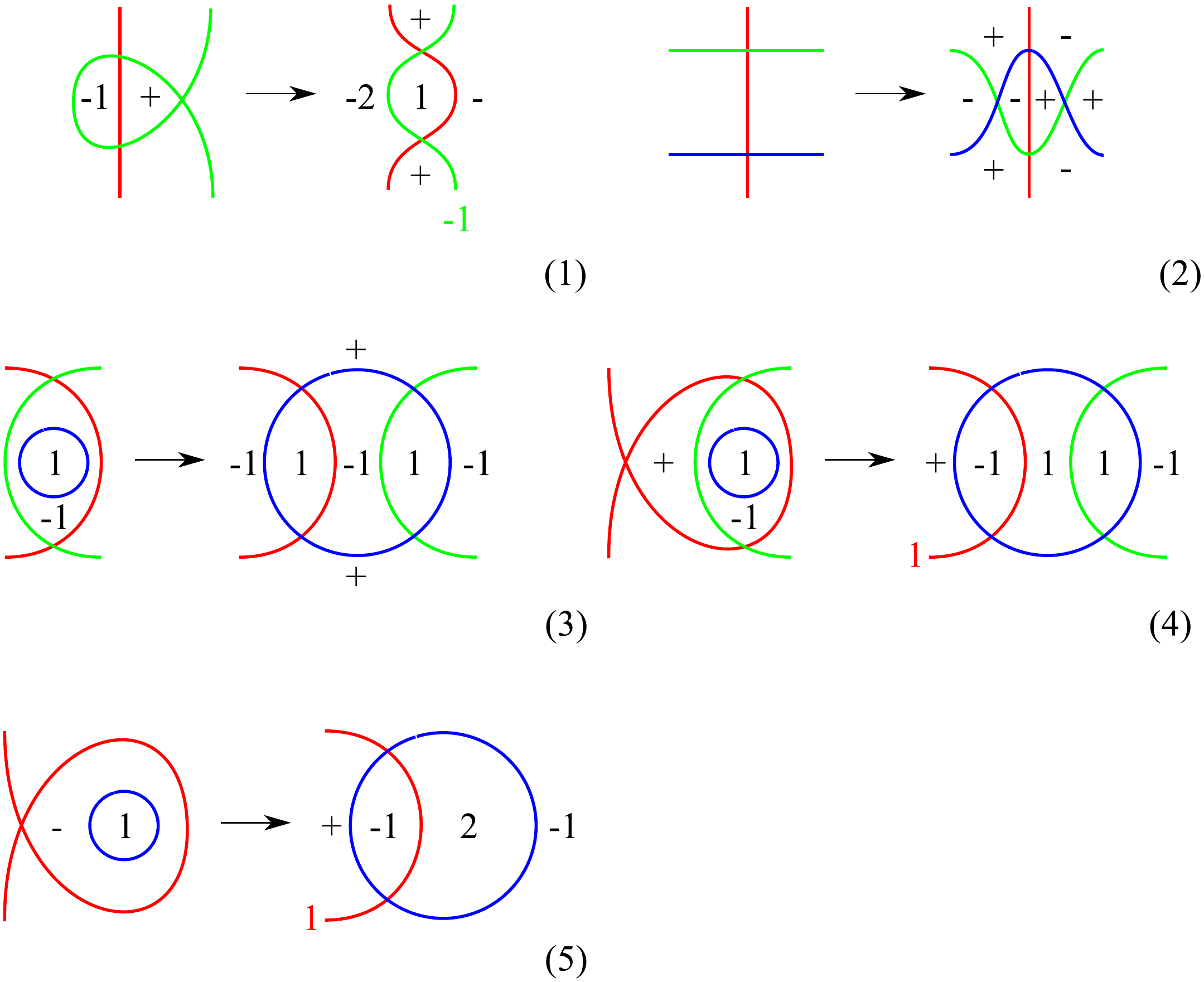}
\nota{These moves transform a shadow $X$ into another shadow $X'$ of the same block. They can be easily obtained by composing a few of the basic moves of Figure \ref{move_all:fig} and their inverses. A region is attached to each coloured arc. Where a coloured number appears, that is the green $-1$ in (1) and the red 1 in (4) and (5), the region attached to the coloured arc changes its gleam by adding this number. The moves also hold with all signs reversed.
}
\label{more_moves:fig}
\end{center}
\end{figure}

\subsection{Collapsing regions}
Four-manifolds are intrinsically more complicated than 3-manifolds, so it is not surprising that we are forced to discover many different kinds of moves to prove our main theorem, and quite frustratingly different moves often require different proofs. We try in our exposition to select whenever possible a few number of moves that somehow ``generate'' all the others.

A simple way to generate more moves from a given one is by collapsing some regions. Whenever a move transforms a portion $X_*$ into another portion $X_*'$, more moves can be found by collapsing (that is, removing) a disc region of $X_*$, when possible. For instance, in Figure \ref{move_all:fig} the move (3) is generated by (4) after collapsing the bottom-right region. In Figure \ref{more_moves:fig}, move (5) is obtained from (4). We will employ this technique quite often.

\subsection{Moves without vertices}
A table of moves that involve portions of shadow without vertices is shown in Figure \ref{thickening:fig} using the decorated graph notation. Each of the moves shown there transforms a shadow $X$ into another shadow $X'$ of the same block $M$. These were proved in \cite{Ma:zero} and used there as important tools to classify the manifolds with complexity zero.

\begin{figure}
\begin{center}
\includegraphics[width = 15 cm]{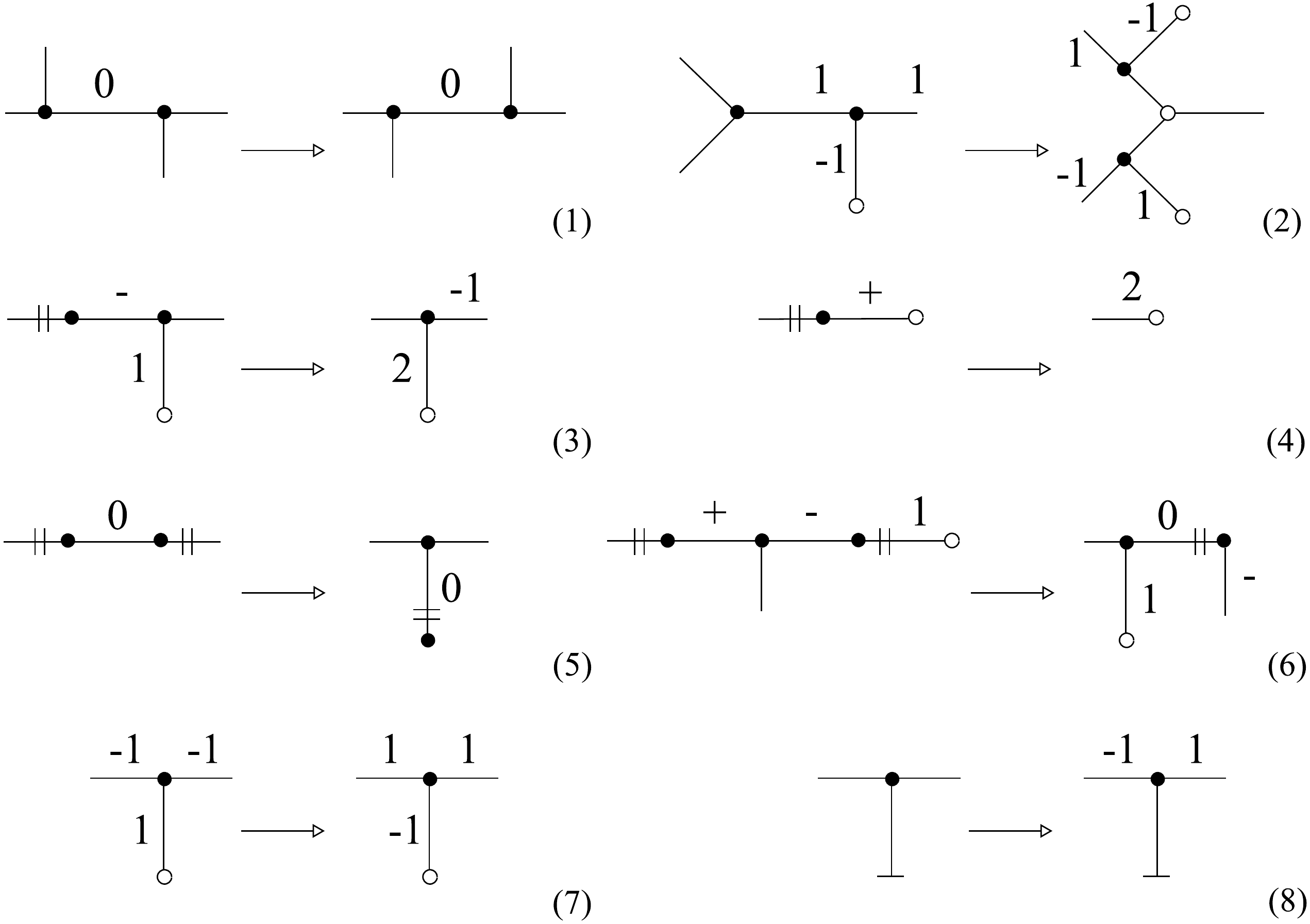}
\nota{Some moves on portions without vertices. They transform a shadow $X$ into another shadow $X'$ of the same block. Similar moves hold if we reverse all the signs.} 
\label{thickening:fig}
\end{center}
\end{figure}

Note that (3) generates (4) by collapsing the right edge.

\subsection{New moves with few vertices}
We now add more moves that involve portions with a small number of vertices. In the first two moves in Figures \ref{moves1:fig} and \ref{m1piu:fig} we have two adjacent bigons with some particular gleams. In Figures \ref{moves2:fig} and \ref{moves3:fig}
the dashed opposite sides should be identified via a translation, so that squares and rectangles represent annuli. 

\begin{figure}
\begin{center}
\includegraphics[width = 11 cm]{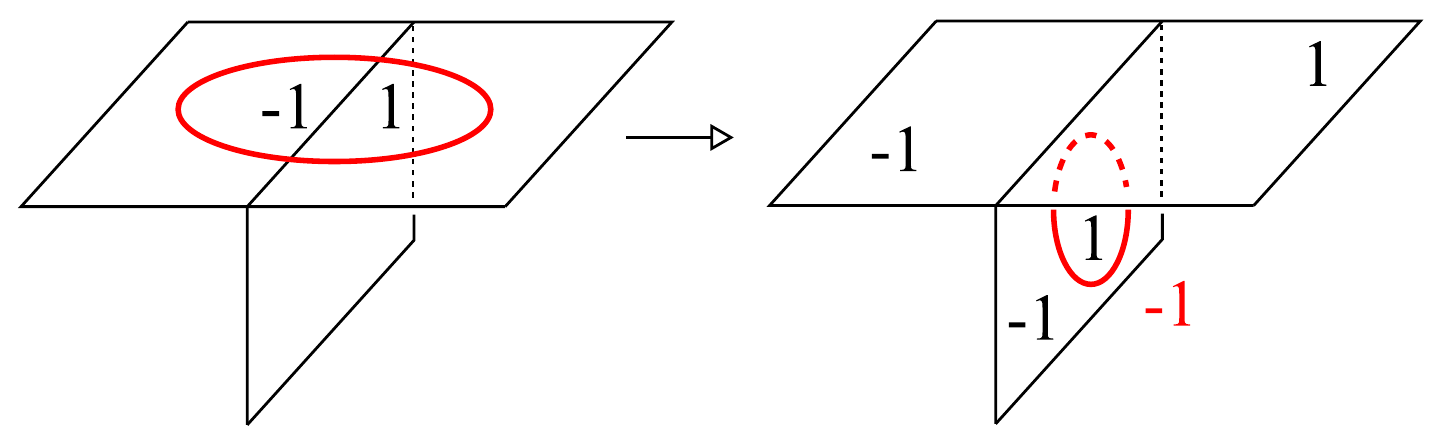}
\nota{A move that transforms a shadow $X$ into another shadow $X'$ of the same block. A region $f$ is attached along the red curve, and the gleam of $f$ changes by adding $-1$ after the move, as suggested by the red $-1$ in the right figure. A similar move holds by reversing all signs.}
\label{moves1:fig}
\end{center}
\end{figure}

\begin{figure}
\begin{center}
\includegraphics[width = 9 cm]{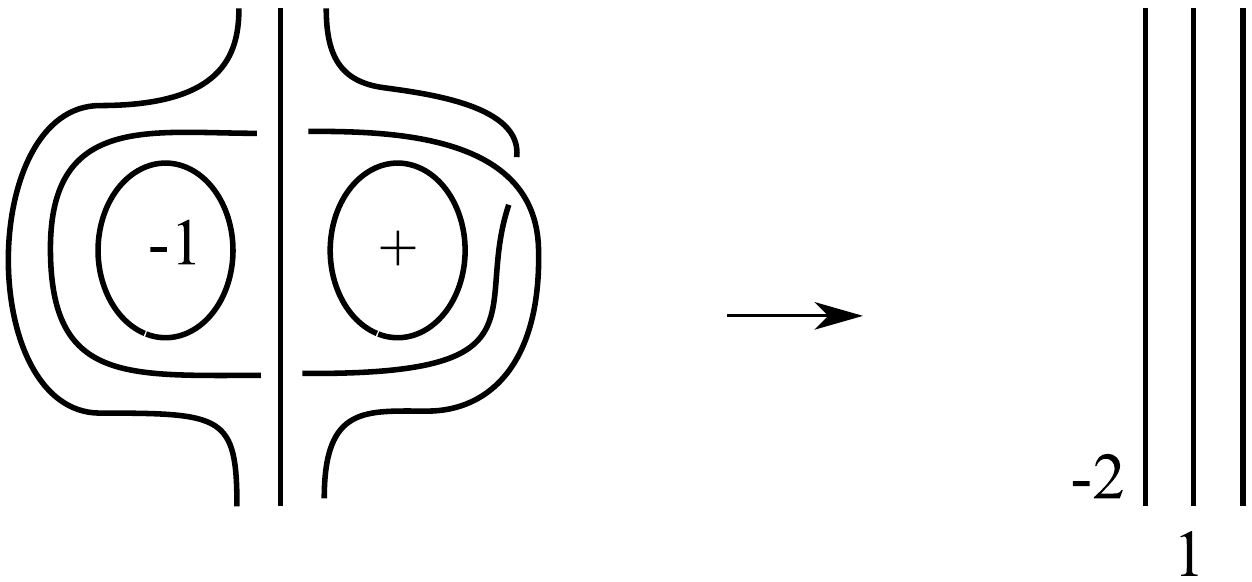}
\nota{A move that transforms a shadow $X$ into another shadow $X'$ of the same block. The picture in the left shows two adjacent bigons with gleams $-1$ and $+\frac 12$, that are eliminated in the move. In $X'$ the gleams of the 3 regions change as shown (by adding $-2$, $1$, or zero). A similar move holds by reversing all signs.}
\label{m1piu:fig}
\end{center}
\end{figure}

\begin{figure}
\begin{center}
\includegraphics[width = 13 cm]{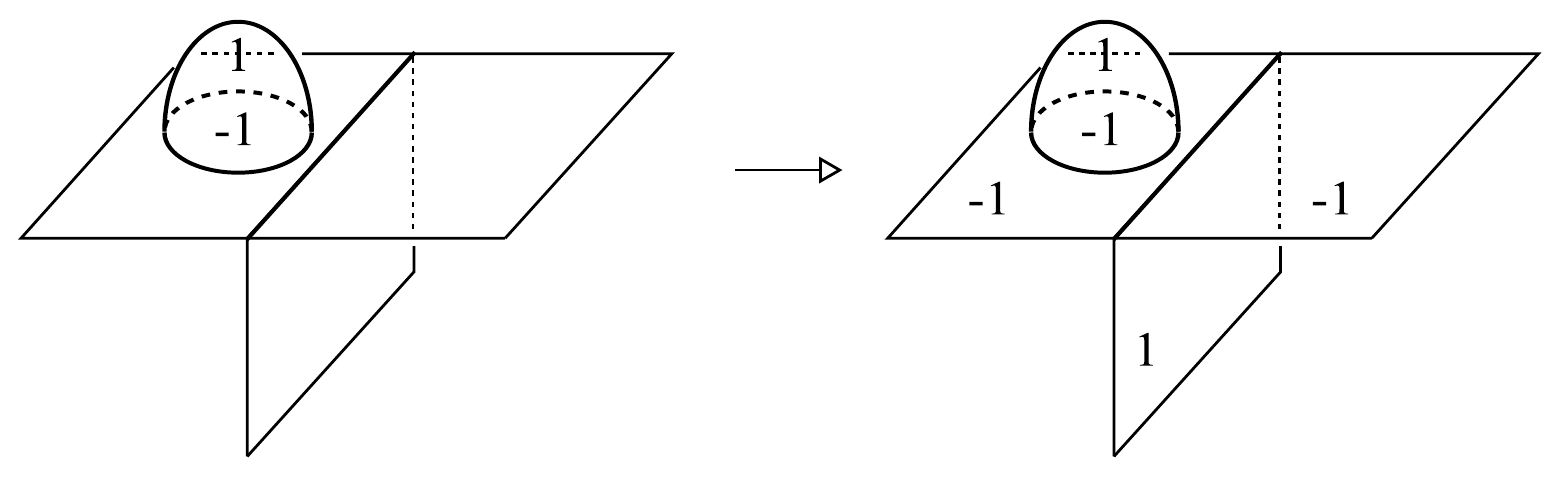}
\nota{A move that transforms a shadow $X$ into another shadow $X'$ of the same block. A similar move holds by reversing all signs.}
\label{moves4:fig}
\end{center}
\end{figure}

\begin{figure}
\begin{center}
\includegraphics[width = 16 cm]{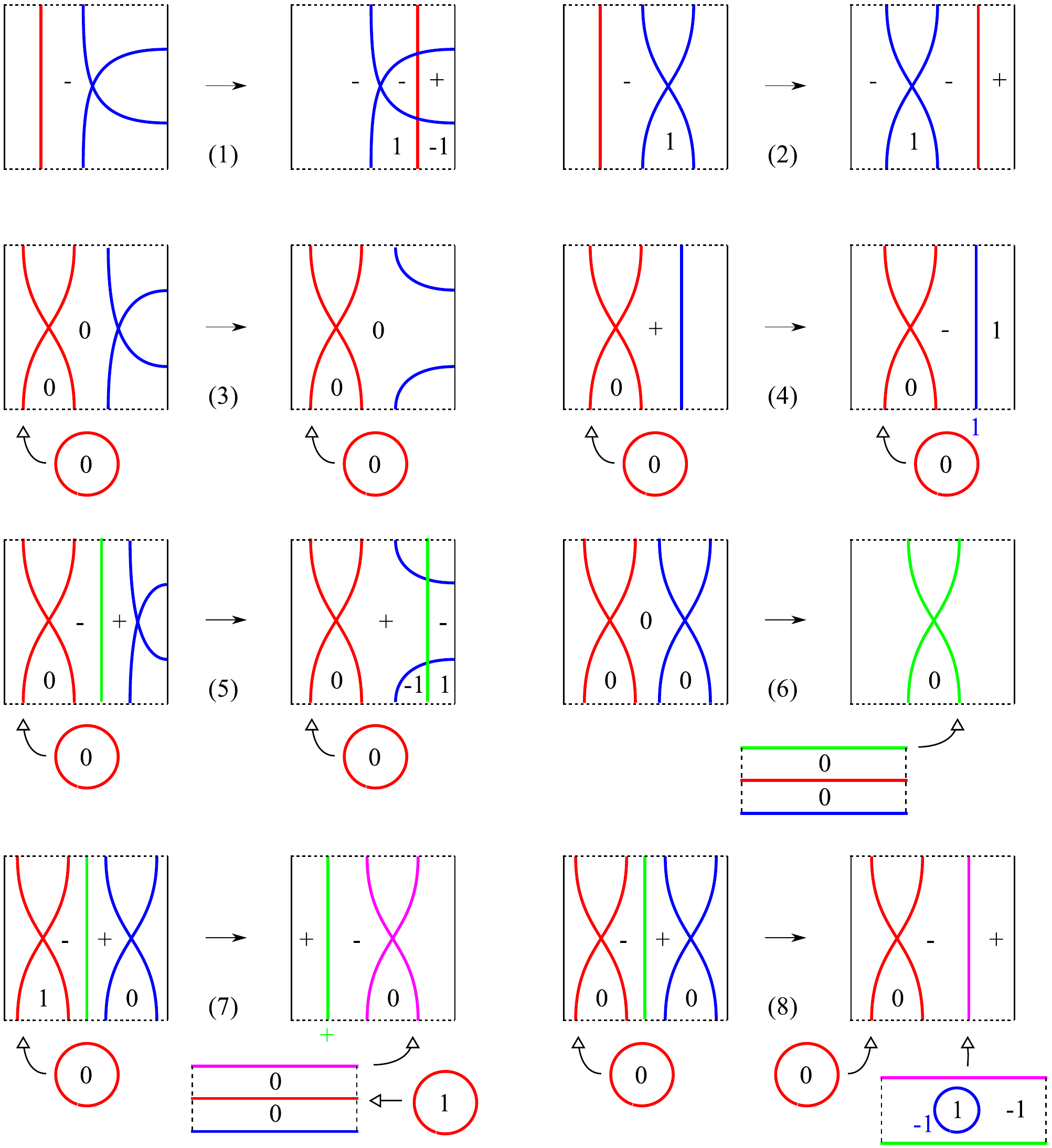}
\nota{Some moves that transform a shadow $X$ into another shadow $X'$ of the same block. In all the pictures the dashed opposite sides should be identified via a translation, so that squares and rectangles represent annuli. In (3), \ldots, (8) the arrow indicates that a disc or a more complicated portion should be attached as indicated. The blue number $\pm 1$ in (4) and (8) must be added to the gleam of the region attached to the blue curve. Similar moves hold by reversing all signs.}
\label{moves2:fig}
\end{center}
\end{figure}

\begin{figure}
\begin{center}
\includegraphics[width = 16 cm]{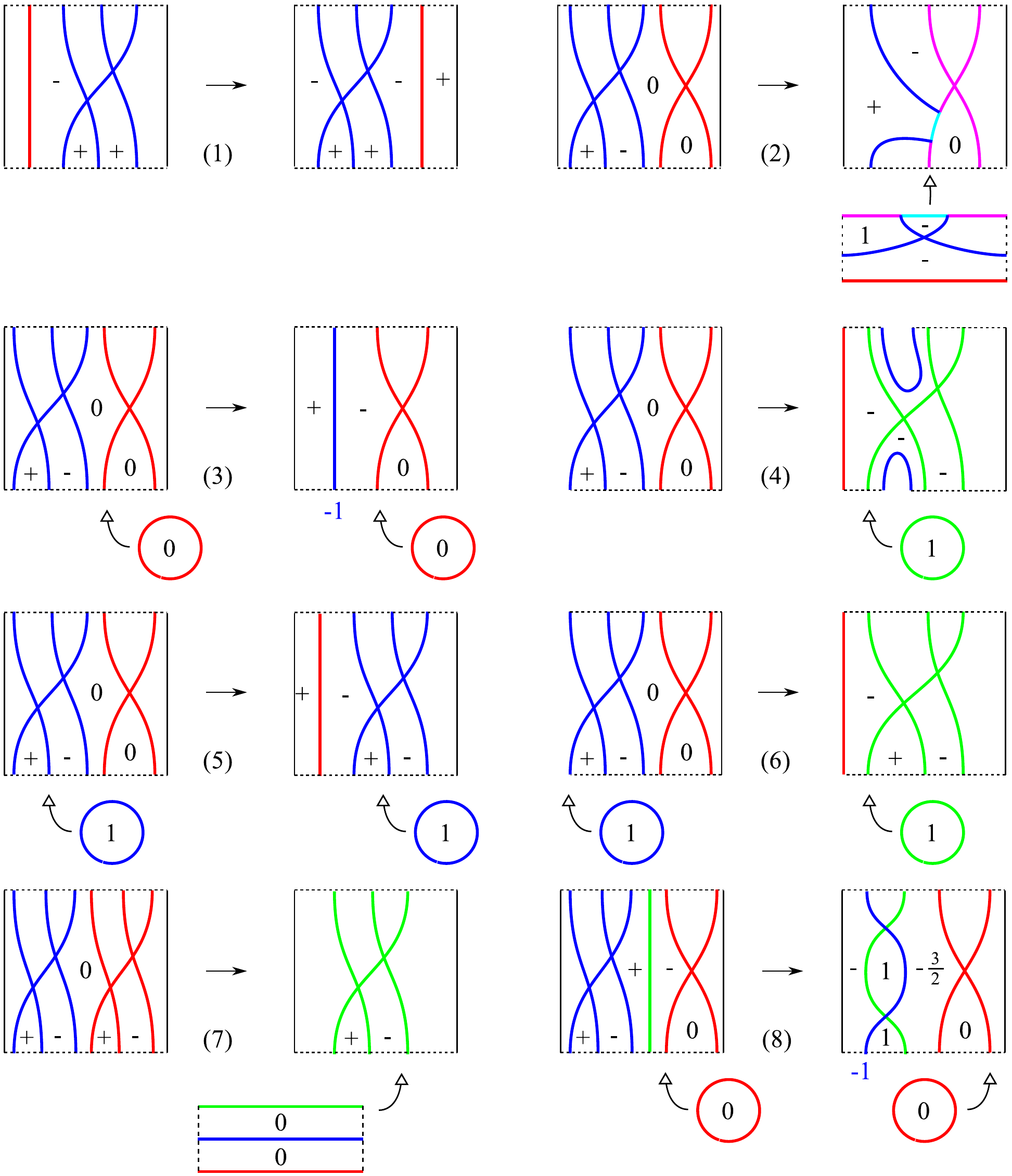}
\nota{Some moves that transform a shadow $X$ into another shadow $X'$ of the same block. In all the pictures the dashed opposite sides should be identified via a translation, so that squares and rectangles represent annuli. The blue number $-1$ in (3) and (8) must be added to the region attached to the blue curve.
Similar moves hold by reversing all signs.
}
\label{moves3:fig}
\end{center}
\end{figure}

\begin{prop}
The moves in Figures \ref{moves1:fig}, \ref{m1piu:fig}, \ref{moves4:fig}, \ref{moves2:fig}, and \ref{moves3:fig} modify a shadow $X$ into another shadow $X'$ of the same block.
\end{prop}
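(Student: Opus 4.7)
The plan is to verify each of the listed moves by exhibiting it as a composition of the five basic Turaev moves from Figure \ref{move_all:fig}, together with the already-established moves in Figures \ref{more_moves:fig} and \ref{thickening:fig}. Since each of these building blocks is known to preserve the framed block determined by the shadow, any such composition does so as well. A useful auxiliary principle is that whenever a disc region can be collapsed after a move, the move restricted to the reduced polyhedron is again valid; this lets us derive several of the moves by first proving a ``parent'' version with an extra disc region and then collapsing it.

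I would organise the proof move by move, starting with Figure \ref{moves1:fig}. Here the two adjacent bigons form a configuration that can be opened by a basic move of type (2), creating a neighbourhood on which a type (3) or (4) move eliminates the two vertices; the net gleam change on the region bounded by the red curve is exactly the $-1$ recorded in the figure. The move of Figure \ref{m1piu:fig} is similar but must also absorb a half-integer gleam $+\tfrac12$ into neighbouring regions, which is accomplished by a single application of move (4) of Figure \ref{move_all:fig}; the parity convention together with the local gleam accounting then forces the adjustments $-2$, $+1$, $0$ shown in the picture. For Figure \ref{moves4:fig} I would argue that both sides are shadows of the same thickening by realising the move inside a disc bundle over a bigon, where basic moves (3) and (4) interconvert the two configurations and the bookkeeping of gleams is immediate.

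For the moves in Figures \ref{moves2:fig} and \ref{moves3:fig}, which involve annuli encoded by rectangles with identified dashed sides, my plan is to ``cut open'' the annulus to a strip, apply the sequence of basic moves on the simply-connected piece, and then reglue. Subfigures \ref{moves2:fig}(1)--(2) follow at once from the moves of Figure \ref{thickening:fig}. For \ref{moves2:fig}(3)--(8) the strategy is first to slide the attached portion across a bigon using moves (1) and (2) of Figure \ref{move_all:fig}, then to use (3) or (4) to cancel pairs of vertices, tracking the gleam adjustments marked by the blue $\pm 1$. The moves of Figure \ref{moves3:fig} are derived analogously; subfigures (3) and (8) require one extra application of move (4), which is responsible for the blue $-1$.

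The main obstacle will be the bookkeeping of gleams under the composition of many basic moves, especially the parity convention that forces half-integer adjustments on odd regions: this is where mistakes most easily appear, and each figure has to be checked individually rather than settled by a single generic argument. A secondary point is that several of the annular moves have non-trivial global topology (some regions wrap around the annulus more than once), so one must check that the auxiliary discs needed to perform a move (1) or (2) of Figure \ref{move_all:fig} really can be found inside the given polyhedron; this is always the case for the configurations drawn, but it has to be verified case by case.
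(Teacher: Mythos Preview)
Your overall strategy---reduce every move to a composition of the basic Turaev moves of Figure \ref{move_all:fig} together with the already-proved moves of Figure \ref{more_moves:fig}, and use region-collapse to derive child moves from parent moves---is exactly the approach the paper takes. In that sense the plan is sound.

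However, several of your specific claims are wrong or would not go through as stated. Most concretely, you say that moves (1)--(2) of Figure \ref{moves2:fig} ``follow at once from the moves of Figure \ref{thickening:fig}''. They do not: the moves in Figure \ref{thickening:fig} live at the decorated-graph level and involve portions of $X$ without vertices of $SX$, whereas the configurations in Figure \ref{moves2:fig} contain a vertex. The paper obtains move (1) from the basic moves (2) and (5) of Figure \ref{move_all:fig}, and move (2) from that plus the inverse of (2). Similarly, your ``cut open the annulus to a strip, apply basic moves, reglue'' idea is not how the argument works: the moves are local and are applied directly inside the annular piece (for instance, move (3) of Figure \ref{moves2:fig} is a genuine slide of the blue curve over the red disc using (1) and (2) of Figure \ref{move_all:fig} repeatedly, as the paper records in a separate figure). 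Your derivation of Figure \ref{moves4:fig} via ``disc bundle over a bigon'' is also off; in the paper it follows from Figure \ref{moves1:fig} together with the inverse of basic move (2). Finally, what is really missing is the content: the paper supplies, for almost every one of these moves, an explicit pictorial sequence of basic moves (Figures \ref{bubble_slide:fig}, \ref{moves2_proof:fig}--\ref{moves2_proof4:fig}, \ref{moves3_proof:fig}--\ref{moves3_proof3:fig}) that carries out the verification. Your proposal names the ingredients but does not actually perform any of these checks, and the gleam bookkeeping you flag as ``the main obstacle'' is precisely the part where a vague description like ``a type (3) or (4) move eliminates the two vertices'' is not enough.
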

\begin{proof}
The move in Figure \ref{moves1:fig} is proved in Figure \ref{bubble_slide:fig}. 
The move in Figure \ref{m1piu:fig} is obtained by performing the opposite of Figure \ref{move_all:fig}-(4) and (3). The move in Figure \ref{moves4:fig} is obtained from Figure \ref{moves1:fig} and the inverse of Figure \ref{move_all:fig}-(2).

We now turn to Figure \ref{moves2:fig}. Move (1) is obtained from Figure \ref{move_all:fig}-(2, 5). Move (2) is obtained from (1) using the inverse of Figure \ref{move_all:fig}-(2).
Move (3) is obtained using the moves in Figure \ref{move_all:fig}-(1,2) multiple times to slide the blue curve over the red disc as in Figure \ref{moves2_proof:fig}. Move (4) is proved in Figure \ref{moves2_proof2:fig} using (3). Move (5) is obtained by composing (1) (with reversed signs) and (3). Move (6) is obtained by sliding completely the blue curve above the red, similarly as in (3). Moves (7) and (8) are proved in Figures \ref{moves2_proof3:fig} and \ref{moves2_proof4:fig}.

We now consider Figure \ref{moves3:fig}. Moves (1) and  (2) are proved in Figures \ref{moves3_proof:fig} and \ref{moves3_proof2:fig}. Moves (3) and (4) follow from (2) respectively by adding a disc and by collapsing a region. Move (5) is proved in Figure \ref{moves3_proof3:fig}. Move (6) is then obtained by collapsing the left region. Move (7) is obtained by sliding entirely the red curve onto the blue region and is left as an exercise. Move (8) is proved with Figure \ref{moves2:fig}-(5) plus Figure \ref{more_moves:fig}-(1).
\end{proof}

\begin{figure}
\begin{center}
\includegraphics[width = 9 cm]{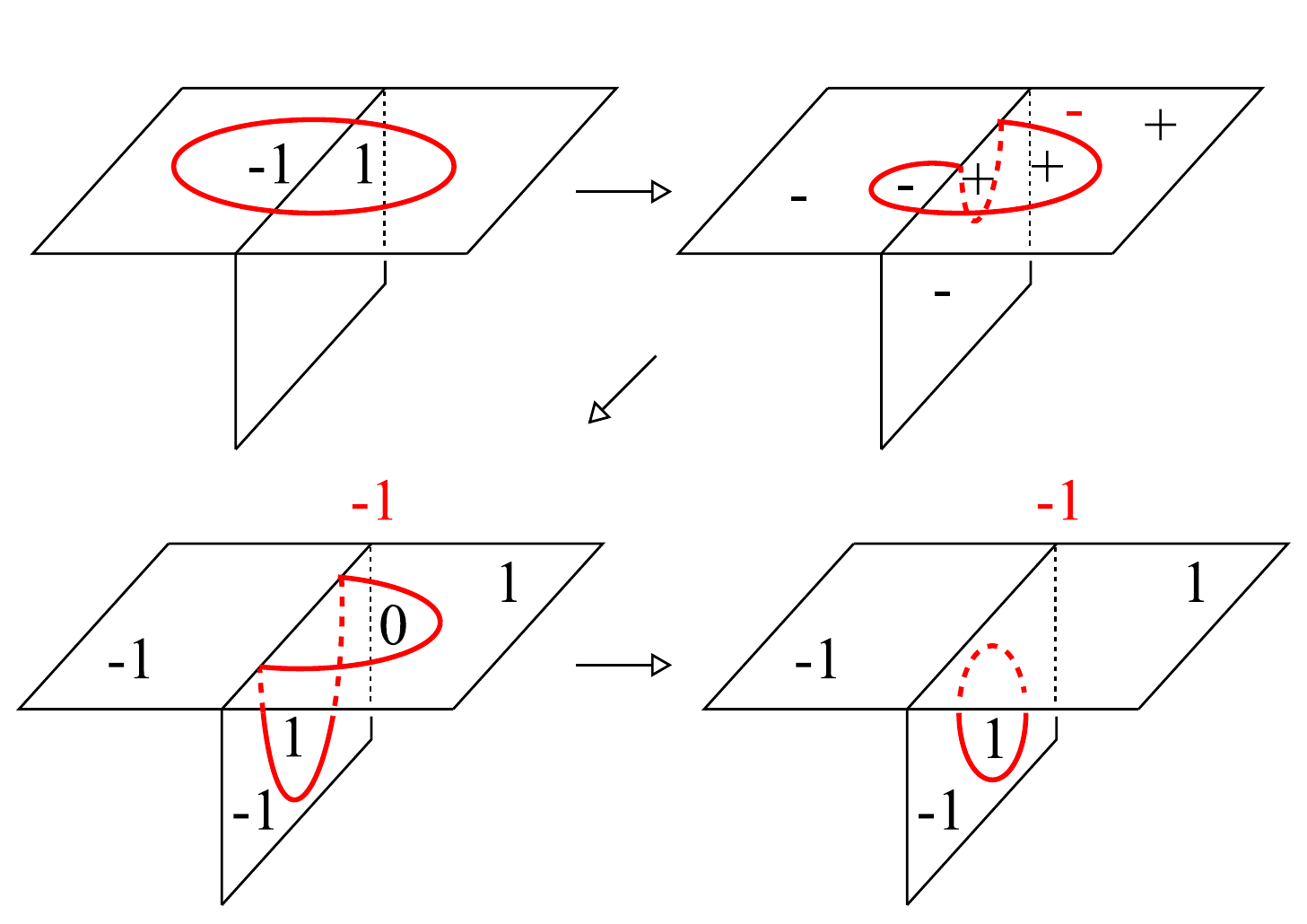}
\nota{We apply Figure \ref{move_all:fig}-(4) (with reversed signs), its inverse, and the inverse of Figure \ref{move_all:fig}-(2). The red gleams $-\frac 12$ and $-1$ are assigned to the region attached to the red closed curve.}
\label{bubble_slide:fig}
\end{center}
\end{figure}

\begin{figure}
\begin{center}
\includegraphics[width = 15 cm]{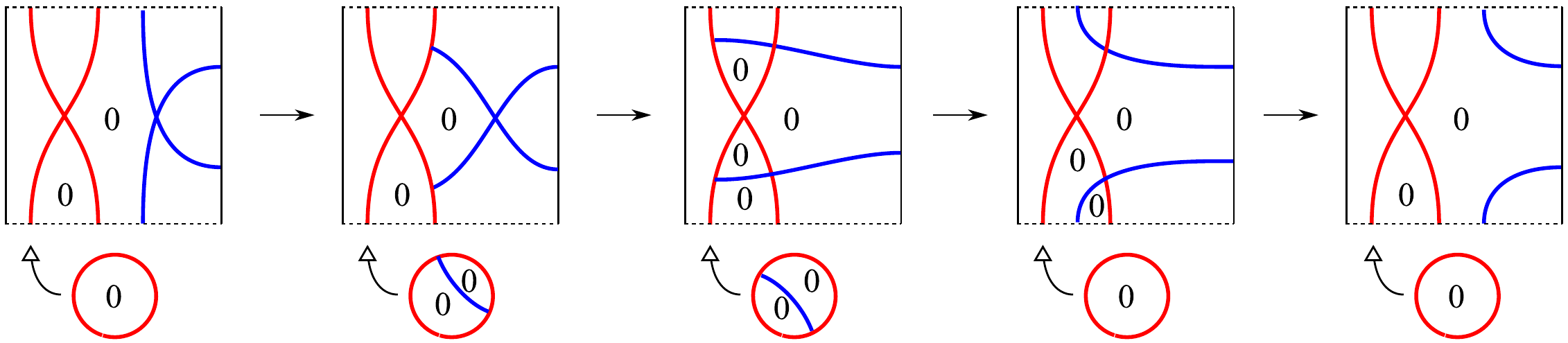}
\nota{A proof of Figure \ref{moves2:fig}-(3).}
\label{moves2_proof:fig}
\end{center}
\end{figure}

\begin{figure}
\begin{center}
\includegraphics[width = 9 cm]{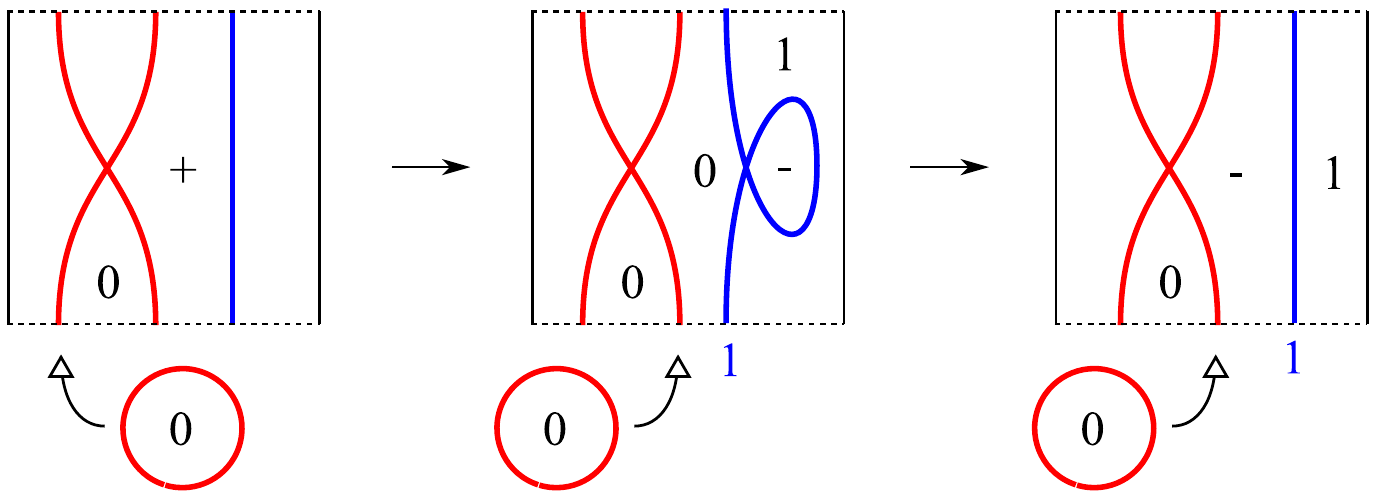}
\nota{A proof of Figure \ref{moves2:fig}-(4).}
\label{moves2_proof2:fig}
\end{center}
\end{figure}

\begin{figure}
\begin{center}
\includegraphics[width = 12 cm]{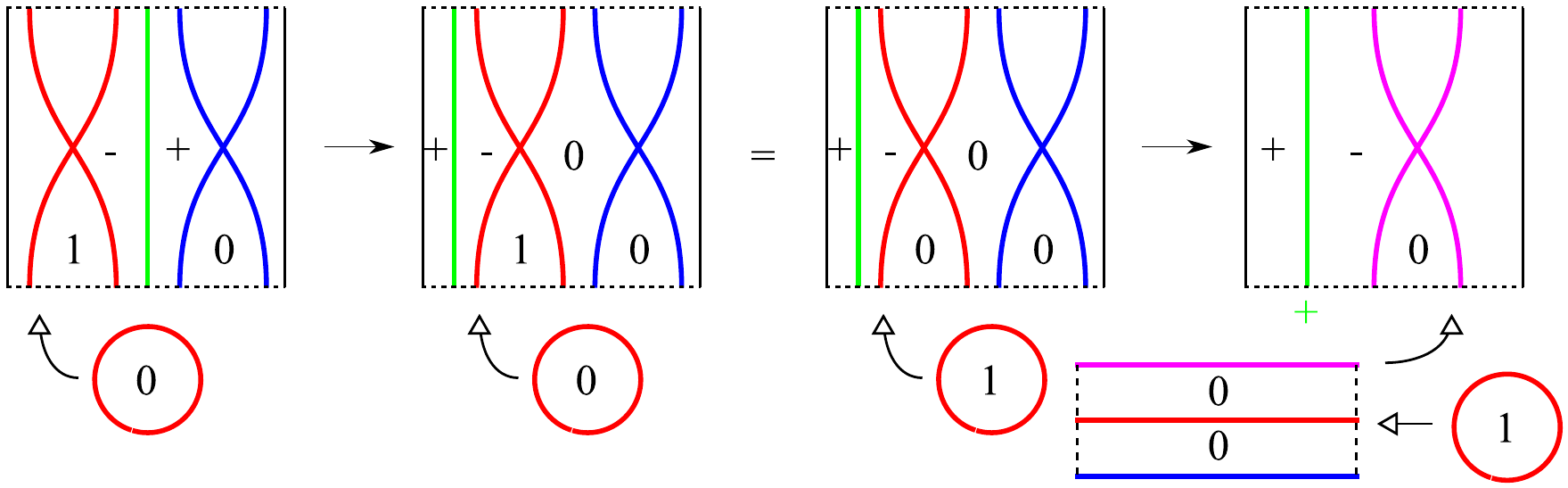}
\nota{A proof of Figure \ref{moves2:fig}-(7).}
\label{moves2_proof3:fig}
\end{center}
\end{figure}

\begin{figure}
\begin{center}
\includegraphics[width = 16 cm]{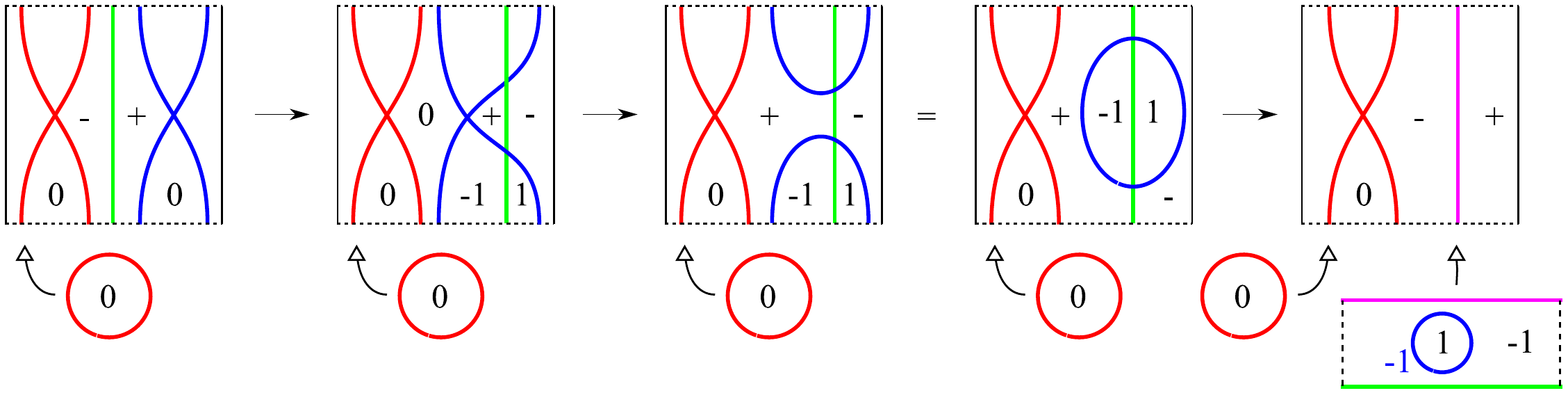}
\nota{A proof of Figure \ref{moves2:fig}-(8).}
\label{moves2_proof4:fig}
\end{center}
\end{figure}

\begin{figure}
\begin{center}
\includegraphics[width = 12.5 cm]{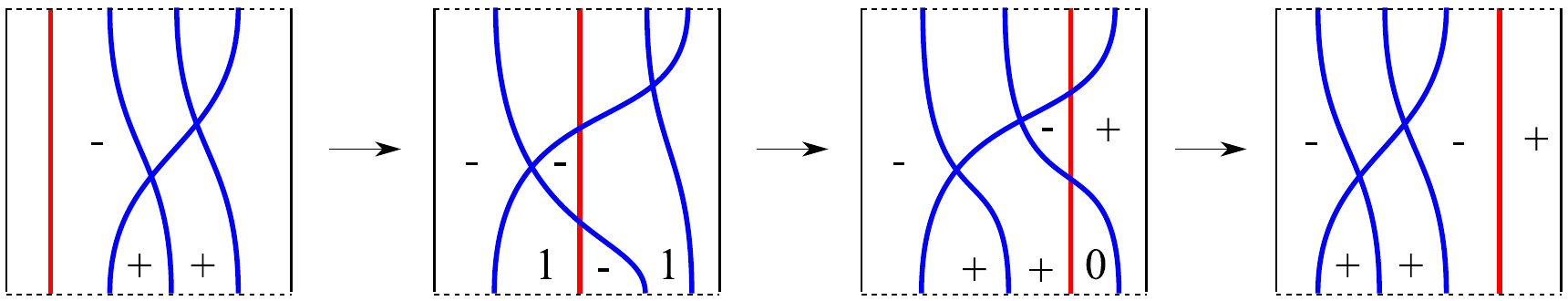}
\nota{A proof of Figure \ref{moves3:fig}-(1).}
\label{moves3_proof:fig}
\end{center}
\end{figure}

\begin{figure}
\begin{center}
\includegraphics[width = 16 cm]{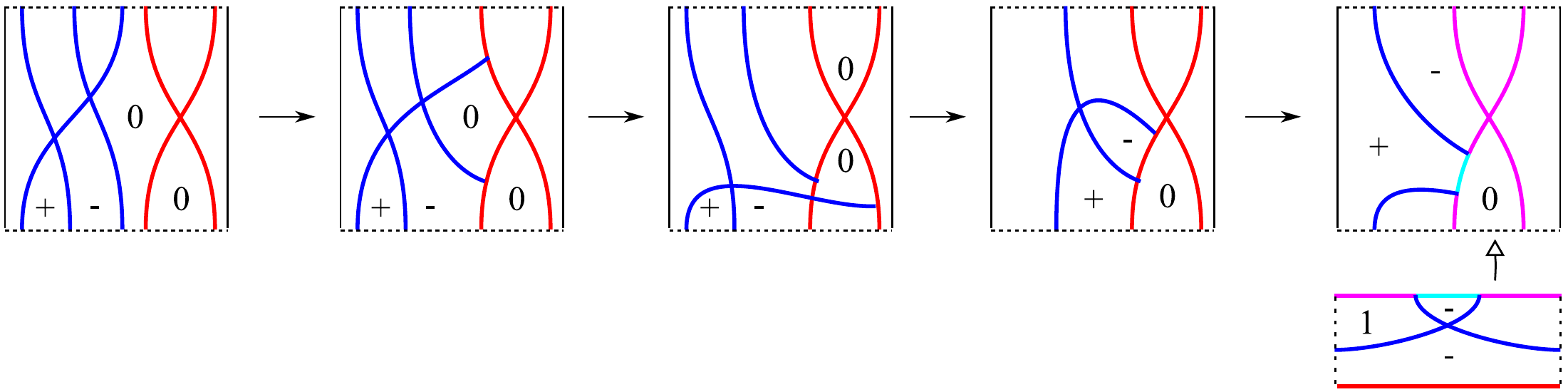}
\nota{A proof of Figure \ref{moves3:fig}-(2).}
\label{moves3_proof2:fig}
\end{center}
\end{figure}

\begin{figure}
\begin{center}
\includegraphics[width = 16 cm]{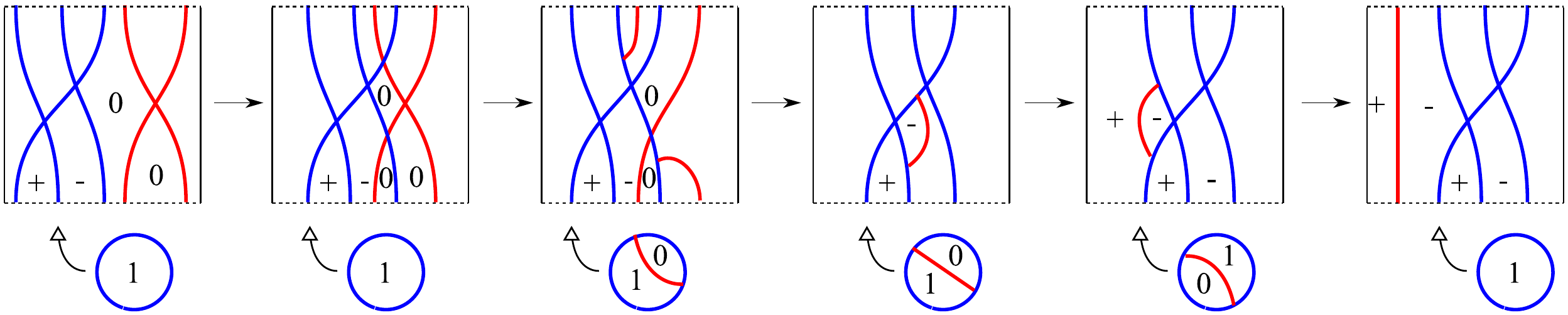}
\nota{A proof of Figure \ref{moves3:fig}-(5).}
\label{moves3_proof3:fig}
\end{center}
\end{figure}

\subsection{Moves that involve $X_{11}$}
We now use the many moves of the previous section to build a table of moves that involve $X_{11}$ and are described using the decorated graph language. We will use these moves to prove Theorem \ref{main:teo}.

\begin{figure}
\begin{center}
\includegraphics[width = 15 cm]{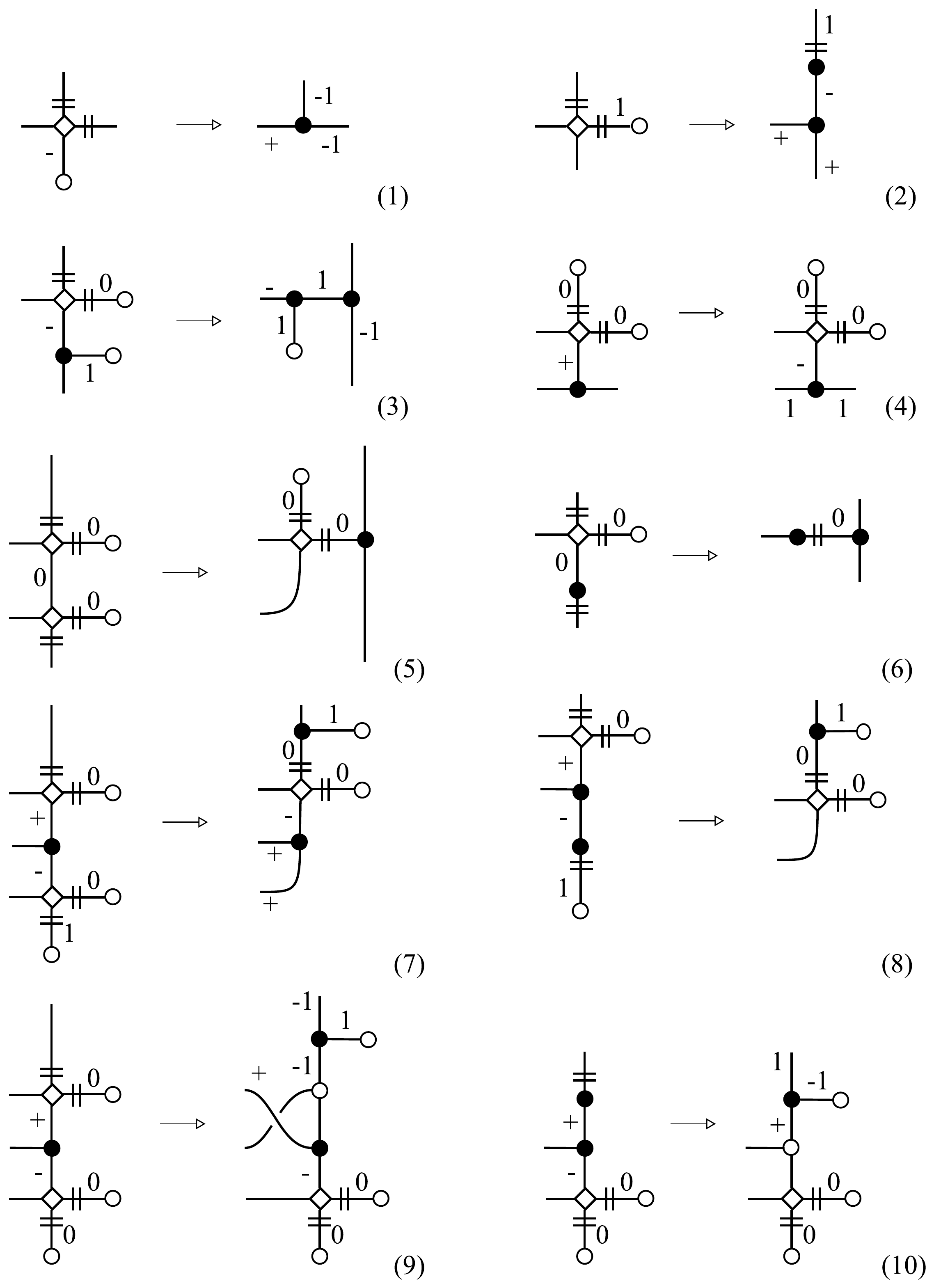}
\nota{These moves transform a shadow $X$ into another shadow $X'$ of the same block. Analogous moves hold with all signs reversed.}
\label{X11moves:fig}
\end{center}
\end{figure}

\begin{prop}
The moves in Figure \ref{X11moves:fig} modify a shadow $X$ into another shadow $X'$ of the same block.
\end{prop}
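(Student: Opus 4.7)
The plan is to verify each move in Figure \ref{X11moves:fig} by unpacking the decorated graph notation back into an explicit picture of the simple polyhedron (with its gleams), and then exhibit a sequence of the previously established moves that realises the transformation. Since Figures \ref{move_all:fig}, \ref{more_moves:fig}, \ref{thickening:fig}, \ref{moves1:fig}, \ref{m1piu:fig}, \ref{moves4:fig}, \ref{moves2:fig}, and \ref{moves3:fig} together give us a rich toolbox, for each individual move in Figure \ref{X11moves:fig} the task reduces to finding a finite sequence of these tools whose net effect matches the claimed move. This is the same strategy used throughout the previous proposition, where each new move was derived by composition of earlier ones.

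First I would draw, for every move appearing in Figure \ref{X11moves:fig}, the regular neighbourhood of the 8-shaped singular component $X_{11}$ together with the adjacent annular regions whose boundary slopes and gleams are prescribed by the decorated edges. The dashes, red dots, and half-integer decorations translate directly into the number of dashes (run-counts along edges of $SX_{11}$), the twisting of the interval bundle on each boundary circle, and the gleams of the regions, exactly as specified in Section \ref{encoding:graph:subsection}. This explicit picture puts each move into the same framework as Figures \ref{moves2:fig} and \ref{moves3:fig}, where the ambient polyhedron is represented with dashed sides identified to form annuli.

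Next, for each move I would produce a chain of elementary transformations. The prototype arguments are those given in Figures \ref{moves2_proof:fig}--\ref{moves3_proof3:fig}: slide a curve across a region using Figure \ref{move_all:fig}-(1,2), remove or create bigons using Figure \ref{move_all:fig}-(3,4) while tracking the compensating $\pm 1$ or $\pm \tfrac12$ gleam modifications, and then use Figure \ref{moves2:fig}-(5) or Figure \ref{moves3:fig}-(2) to absorb the result back into the $X_{11}$ pattern. The bookkeeping of gleams is automatic once one uses the convention that the bubble moves of Figure \ref{move_all:fig}-(3,4) deposit their integer/half-integer contributions on the red region; any apparent discrepancy in the target picture is exactly the record of those deposits.

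The main obstacle will be the sheer combinatorial bulk: $X_{11}$ has four boundary circles with different length/parity types, so each move has several ``flavours'' according to which boundary edges participate, and the corresponding composition of basic moves can be long. The delicate point in every case is keeping the gleams consistent across regions that are split or merged by the manipulation: whenever a region $f$ of the intermediate shadow is cut into subregions $f_1,\dots,f_k$, their gleams must sum to $\gl(f)$, and whenever bigons are created or cancelled via Figure \ref{move_all:fig}-(3,4) one must add the prescribed half-integer to the correct region. I expect no conceptual difficulty beyond this: as in the proofs of Figures \ref{moves2:fig} and \ref{moves3:fig}, each move of Figure \ref{X11moves:fig} should collapse to a short, if tedious, diagram chase, and the statement with all signs reversed follows by the mirror symmetry already exploited in the previous propositions.
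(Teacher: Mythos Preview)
Your strategy is correct and matches the paper's approach: each move in Figure~\ref{X11moves:fig} is obtained by composing previously established moves. However, you are anticipating far more work than is actually needed. The paper's proof is very short because most of the moves in Figure~\ref{X11moves:fig} are not new derivations at all---they are simply the decorated-graph translations of moves already proved in Figure~\ref{moves2:fig}. Specifically, moves (4), (5), (7), (9) are literally Figure~\ref{moves2:fig}-(4), (6), (7), (8); moves (2), (6), (8), (10) are obtained from moves in Figure~\ref{moves2:fig} by collapsing a single region (the general technique described just before the proposition); move (1) is a single application of Figure~\ref{move_all:fig}-(3); and only move (3) requires a short genuine composition (Figures~\ref{more_moves:fig}-(5) and~\ref{moves1:fig}). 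So rather than redrawing each $X_{11}$ neighbourhood and running a fresh diagram chase, the efficient route is to recognise that the annulus-with-identified-sides pictures in Figure~\ref{moves2:fig} already \emph{are} the explicit pictures of $X_{11}$ with various attachments, and the graph moves are just their transcriptions.
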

\begin{proof}
Move (1) is proved using Figure \ref{move_all:fig}-(3), and (2) follows from Figure \ref{moves2:fig}-(2) by collapsing the left region. Move (3) is proved in Figure \ref{X11coso:fig}, where we use Figures \ref{more_moves:fig}-(5) and \ref{moves1:fig}. 
Move (4) is Figure \ref{moves2:fig}-(4).

Move (5) is Figure \ref{moves2:fig}-(6) and move (6) is obtained by collapsing one region. Move (7) is Figure \ref{moves2:fig}-(7) and (8) is obtained by collapsing. Move (9) is Figure \ref{moves2:fig}-(8) and (10) is again obtained by collapsing (and we use Figure \ref{thickening:fig}-(7)).
\end{proof}

\begin{figure}
\begin{center}
\includegraphics[width = 16 cm]{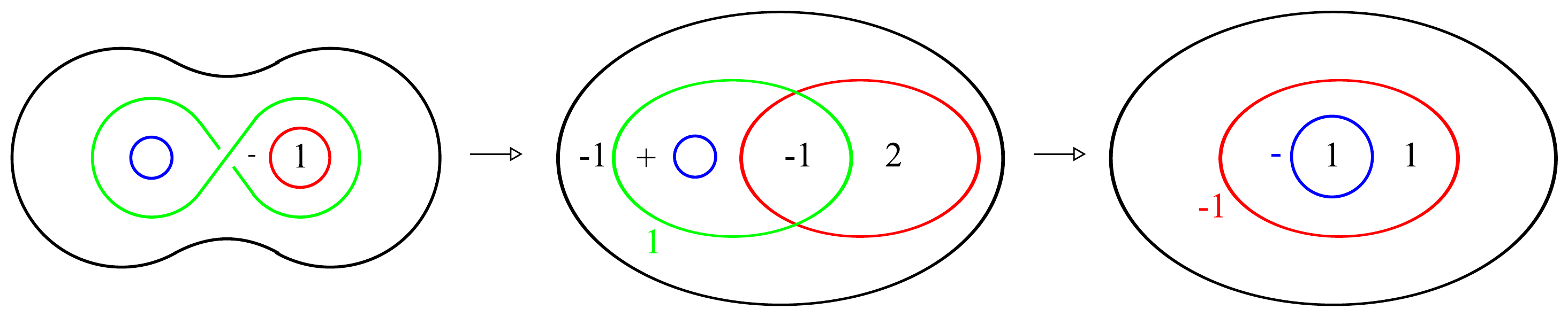}
\nota{Proof of Figure \ref{X11moves:fig}-(3). There is a disc with gleam zero attached to the green curve; the gleam becomes 1 after the first move, as indicated. In the final position the gleam of the region attached to the blue curve changes by $-\frac 12$, as indicated.}
\label{X11coso:fig}
\end{center}
\end{figure}

\subsection{Moves that involve $X_{10}$}
We now build a table of moves that involve $X_{10}$. The moves are described using the decorated graph language. We will use these moves to prove Theorem \ref{main:teo}.

\begin{figure}
\begin{center}
\includegraphics[width = 15 cm]{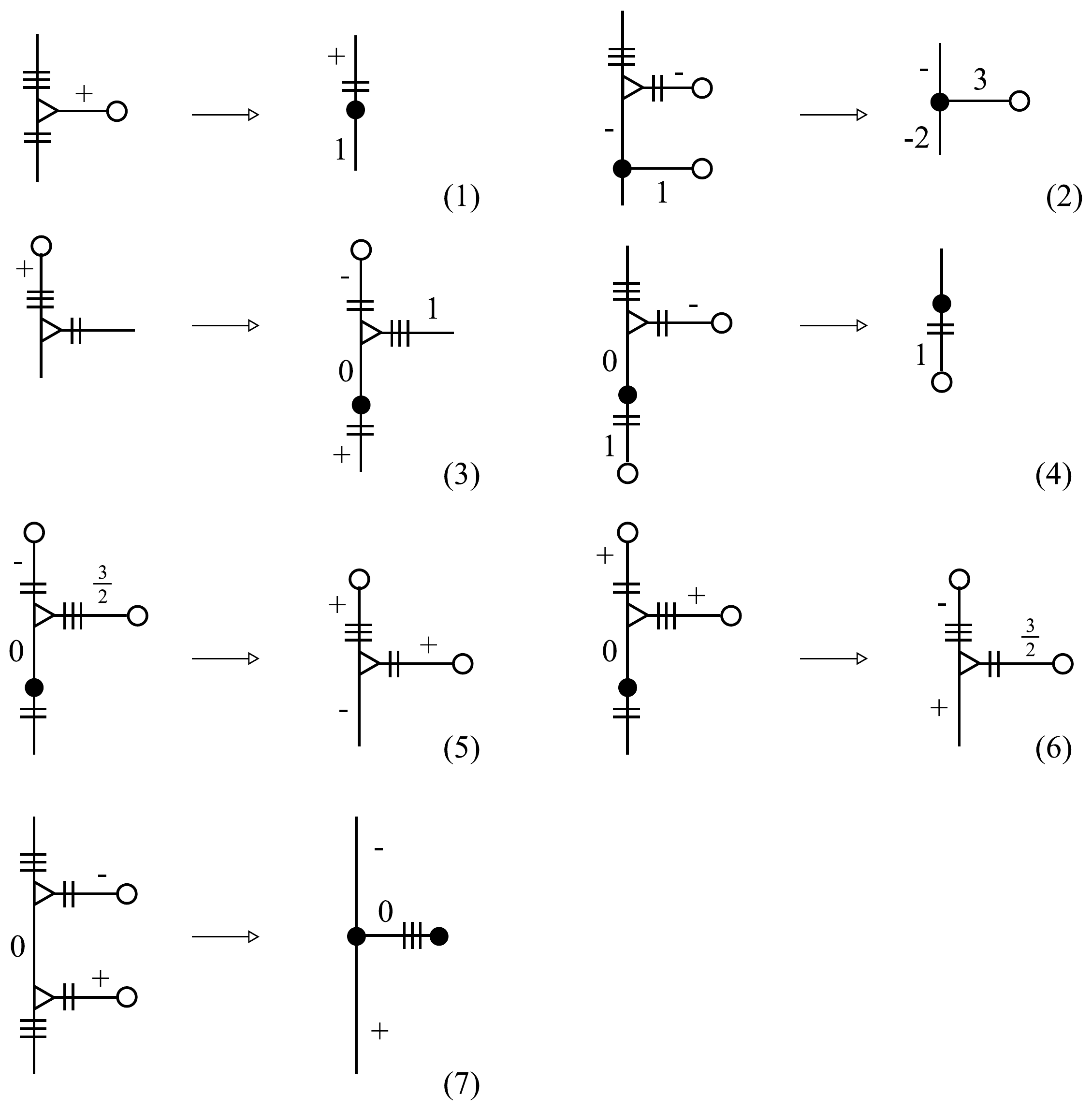}
\nota{These moves transform a shadow $X$ into another shadow $X'$ of the same block. Analogous moves hold with all signs reversed.}
\label{X10moves:fig}
\end{center}
\end{figure}

\begin{prop}
The moves in Figure \ref{X10moves:fig} modify a shadow $X$ into another shadow $X'$ of the same block.
\end{prop}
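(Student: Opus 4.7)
The plan is to mimic exactly the strategy already used for Figure \ref{X11moves:fig}: unfold each move of Figure \ref{X10moves:fig} from the decorated-graph notation into an explicit planar picture (with dashed opposite sides identified to form annuli, as in Figures \ref{moves2:fig} and \ref{moves3:fig}), and then exhibit each move as a short composition of the basic moves of Figures \ref{move_all:fig}, \ref{more_moves:fig}, together with the moves in Figures \ref{moves1:fig}, \ref{m1piu:fig}, \ref{moves2:fig}, and (most importantly) \ref{moves3:fig}. The decoration dictionary from Section \ref{encoding:graph:subsection} tells us precisely how to translate dashes, red dots, and half-integer labels on a graph edge into multiplicities, parities, and gleams on the corresponding annular region of the planar picture, so the translation is mechanical.

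First I would fix a single reference drawing of $X_{10}$ in the plane (analogous to the reference drawing of $X_{11}$ implicit in the proof of Figure \ref{X11moves:fig}), agreeing on an orientation of each of its three boundary components. Then, for each move of Figure \ref{X10moves:fig}, I would (i) draw both the left- and right-hand sides inside this reference picture, (ii) identify on each side the relevant red/blue arcs and the regions whose gleams are being altered, and (iii) match the resulting local transformation to one of the moves of Figure \ref{moves3:fig}. As in the $X_{11}$ case, I expect each of the "collapsed" variants in Figure \ref{X10moves:fig} to follow from its "uncollapsed" companion by removing a disc region incident to a single boundary component of the piece, using Figure \ref{thickening:fig}-(7) where necessary to absorb a half-integer gleam correction.

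The main obstacle is not conceptual but combinatorial bookkeeping. The piece $X_{10}$ has three boundary components of distinct lengths ($1$, $2$, and $3$), so the parity of each annular region incident to the singular graph varies from edge to edge; consequently the half-integer gleam corrections inherent in the moves of Figure \ref{move_all:fig}-(3), (4) and Figure \ref{moves3:fig}-(3), (8) must be reconciled with the integrality constraint on even regions after every intermediate step. In particular, one must verify case by case that when a bubble or disc is slid across the trivalent loop in the length-$3$ boundary component, the resulting gleam increments add up to an integer on the even regions and to a half-integer on the odd ones. Once this accounting is done carefully for a few representative moves, the remaining moves, together with their mirror-image (sign-reversed) variants, reduce to the same verification.
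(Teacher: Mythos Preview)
Your general strategy is the same as the paper's: unfold each graph-level move into an explicit picture of $X_{10}$ and reduce it to a short composition of the basic moves of Figure~\ref{move_all:fig} together with those of Figures~\ref{more_moves:fig}, \ref{moves1:fig}, \ref{m1piu:fig}, and especially \ref{moves3:fig}. You are right that Figure~\ref{moves3:fig} is the key ingredient, and right that several moves in Figure~\ref{X10moves:fig} are obtained from one another by collapsing a region.

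What is missing is the execution. Your proposal is a plan, not a proof: you do not verify a single one of the seven moves, and the phrase ``once this accounting is done carefully for a few representative moves, the remaining moves \ldots\ reduce to the same verification'' is not a substitute for doing it. In the paper each move is pinned down individually: move~(1) is a direct application of Figure~\ref{move_all:fig}-(3); move~(2) requires a dedicated picture (Figure~\ref{X10coso:fig}) using Figures~\ref{more_moves:fig}-(5) and~\ref{m1piu:fig}; move~(3) is the inverse of Figure~\ref{moves3:fig}-(4) after collapsing a region; moves~(4), (5), (6) are then derived \emph{from move~(3) itself} rather than from independent planar computations; and move~(7) is Figure~\ref{moves3:fig}-(7) with two regions collapsed. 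So your expectation that the moves pair off as ``collapsed/uncollapsed companions'' is only partly accurate: the real internal dependencies run through move~(3), and moves~(1) and~(2) stand alone with their own derivations. Your discussion of the ``main obstacle'' (gleam parity bookkeeping across the length-$3$ boundary) is also somewhat beside the point: in practice the parity constraints are automatically respected once you quote the correct move from Figure~\ref{moves3:fig}, and no separate reconciliation step is needed.
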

\begin{proof}
Move (1) is obtained by applying Figure \ref{move_all:fig}-(3). Move (2) is proved in Figure \ref{X10coso:fig}, where we use Figures \ref{more_moves:fig}-(5) and \ref{m1piu:fig}. Move (3) is obtained from the inverse of Figure \ref{moves3:fig}-(4) by collapsing the right region. Move (4) is the inverse of (3) plus (1). Moves (5) and (6) are consequences of the inverse of (3).
Move (7) is Figure \ref{moves3:fig}-(7) after collapsing the left and right regions.
\end{proof}

\begin{figure}
\begin{center}
\includegraphics[width = 16 cm]{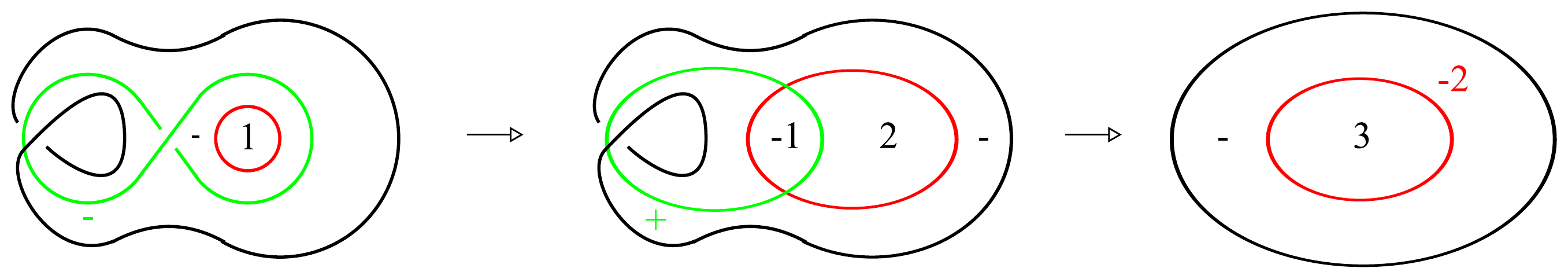}
\nota{Proof of Figure \ref{X10moves:fig}-(2). There is a disc with gleam $-\frac 12$ attached to the green curve, as indicated; the gleam becomes $\frac 12$ after the first move. In the final position the gleam of the region attached to the red curve changes by $-2$.}
\label{X10coso:fig}
\end{center}
\end{figure}

\subsection{Moves that involve $X_{10}$ and $X_{11}$}
We now build a table of moves that involve both $X_{10}$ and $X_{11}$. The moves are described using the decorated graph language. We will use these moves to prove Theorem \ref{main:teo}.

\begin{prop}
The moves in Figures \ref{X10e11moves:fig} and \ref{X10e11moves2:fig} modify a shadow $X$ into another shadow $X'$ of the same block.
\end{prop}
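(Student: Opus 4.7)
The plan is to verify each of the moves in Figures \ref{X10e11moves:fig} and \ref{X10e11moves2:fig} by exhibiting an explicit sequence of previously established moves that realizes it. All the ingredients needed are already in our toolbox: the basic moves of Figure \ref{move_all:fig}, the derived moves of Figure \ref{more_moves:fig}, the moves without vertices of Figure \ref{thickening:fig}, the few-vertex moves of Figures \ref{moves1:fig}, \ref{m1piu:fig}, \ref{moves4:fig}, \ref{moves2:fig}, \ref{moves3:fig}, and especially the piece-specific moves of Figures \ref{X11moves:fig} and \ref{X10moves:fig}. Every move we wish to prove is local, so it suffices to show that the composition produces the required polyhedral modification and the required change of gleams.

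First, I would translate each decorated-graph move into its underlying polyhedral realization: the portion $X_{10}$ is an explicit neighborhood of an 8-shaped singular component, and similarly for $X_{11}$ (see Figures \ref{Xi:fig} and \ref{Xall:fig}). Since the two pieces interact along a common boundary circle, the strategy is to isolate the effect on $X_{10}$ first, using the moves from Figure \ref{X10moves:fig}, and then carry out the remaining modification on the $X_{11}$ side using Figure \ref{X11moves:fig}. The intermediate configurations will involve small bigons, annular bubbles, and auxiliary discs in the shared regions, exactly as in the proofs of Figures \ref{X10coso:fig} and \ref{X11coso:fig}; these are to be manipulated via Figures \ref{moves2:fig} and \ref{moves3:fig} to arrive at the target.

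The main obstacle is bookkeeping rather than topology. The topological fact that the two simple polyhedra agree after the move is essentially forced by the combinatorics of the decorated graphs together with the realizations above. What genuinely requires care is tracking the cumulative gleam change on every region, since each elementary step in the composition alters gleams by half-integers such as $\pm\tfrac 12$, $\pm 1$, or $\pm 2$, and the net contributions must match the decorations drawn in the target figure, including the parity constraint that integer gleams correspond to even regions. A single sign error anywhere in the chain would invalidate the move, so for each move I would write the proof as a diagram analogous to Figures \ref{moves2_proof:fig}--\ref{moves3_proof3:fig}, listing the intermediate shadows and annotating the gleam updates at each step.

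Finally, I would organize the moves so that the simpler ones are established first and then reused to shorten the proofs of the more complex ones, exactly as in the previous propositions. In particular, the symmetric versions obtained by reversing all signs follow automatically from the reversed-sign versions of the basic moves, so it suffices to establish each move in one sign convention. With this layout, the proof reduces to a finite case analysis whose only nontrivial content is the gleam arithmetic carried out move by move.
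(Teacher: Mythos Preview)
Your approach is exactly the paper's approach: each move in Figures \ref{X10e11moves:fig} and \ref{X10e11moves2:fig} is to be derived as a composition of moves already established (Figures \ref{move_all:fig}, \ref{more_moves:fig}, \ref{moves2:fig}, \ref{moves3:fig}, \ref{X11moves:fig}, \ref{X10moves:fig}, etc.), with careful tracking of gleams. So conceptually there is no divergence.

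The gap is that what you have written is a plan, not a proof. The entire content of this proposition \emph{is} the bookkeeping you describe as ``the main obstacle''; there is no overarching idea that reduces the problem to a routine check. The paper's proof, though terse, actually supplies the specific recipe for each of the nine moves: for instance, move (1) is Figure \ref{moves3:fig}-(8) with a region collapsed; move (5) is the composition of Figure \ref{moves3:fig}-(4), Figure \ref{moves2:fig}-(3), and Figure \ref{X11moves:fig}-(2); move (8) requires a more elaborate chain passing through the auxiliary Figures \ref{diamine2:fig} and \ref{diamine:fig}; and the move of Figure \ref{X10e11moves2:fig} combines Figures \ref{moves3:fig}-(6,7,1) and \ref{moves4:fig}. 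Your proposal does not commit to any such sequence for any of the moves, and the general heuristic ``first handle $X_{10}$ via Figure \ref{X10moves:fig}, then $X_{11}$ via Figure \ref{X11moves:fig}'' is not how several of them actually go (e.g.\ moves (1), (2), (7), and the move of Figure \ref{X10e11moves2:fig} are obtained directly from the annular moves of Figures \ref{moves2:fig} and \ref{moves3:fig} rather than by chaining the $X_{10}$- and $X_{11}$-specific tables). To complete the proof you must, for each move, exhibit the concrete chain and verify the gleam arithmetic; until that is done, the proposition is not proved.
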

\begin{proof}
Move (1) is Figure \ref{moves3:fig}-(8) with the left region collapsed. Move (2) follows from Figure \ref{moves2:fig}-(3). Move (3) is proved in Figure \ref{oioia:fig}, followed by Figure \ref{X11moves:fig}-(2). Move (4) follows from (3). To get (5) we 
first apply Figure \ref{moves3:fig}-(4), then Figure \ref{moves2:fig}-(3) and finally Figure \ref{X11moves:fig}-(2). 

To get (6), we apply Figure \ref{moves3:fig}-(6), the inverse of Figure \ref{moves3:fig}-(5), the inverse of Figure \ref{moves3:fig}-(6), and finally (5). Move (7) is proved in Figure \ref{oimmena:fig}; in that picture, we start like in Figure \ref{moves2_proof:fig}, then we apply Figures \ref{more_moves:fig}-(4,1) and \ref{moves1:fig}. 

The proof of move (8) is more elaborated: we first use Figure \ref{moves3:fig}-(6) to transform the portion as in Figure \ref{diamine2:fig}-(left) with the left region collapsed. Then we conclude as shown there, using Figures \ref{moves2:fig}-(2) and 
\ref{diamine:fig}. In Figure \ref{diamine:fig} we use Figure \ref{moves3:fig}-(2) and conclude via basic moves.

Finally, Figure \ref{X10e11moves2:fig} is proved by combining Figures \ref{moves3:fig}-(6, 7, 1) and Figure \ref{moves4:fig}. Details are left as an exercise.
\end{proof}

\begin{figure}
\begin{center}
\includegraphics[width = 15 cm]{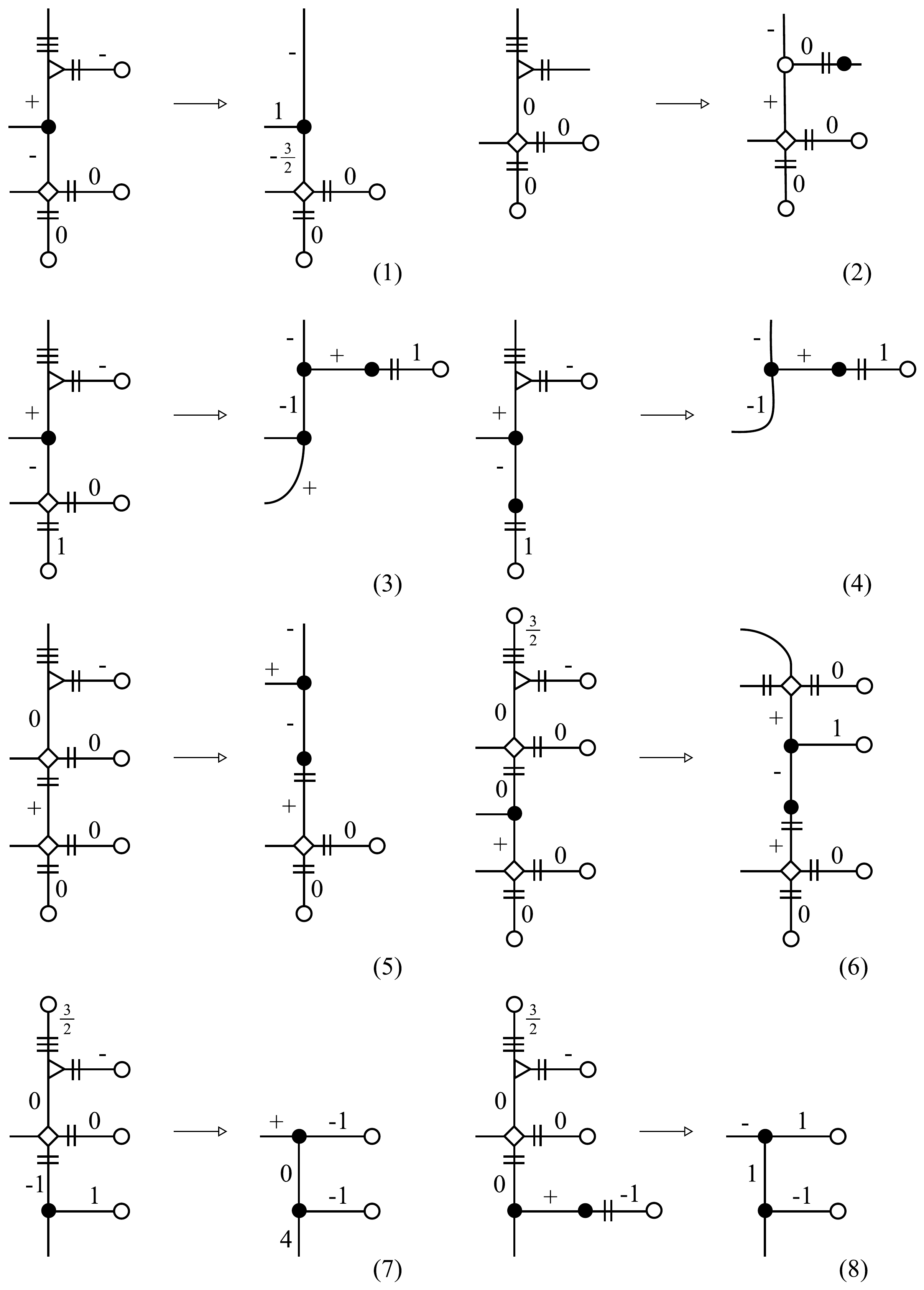}
\nota{These moves transform a shadow $X$ into another shadow $X'$ of the same block. Analogous moves hold with all signs reversed.}
\label{X10e11moves:fig}
\end{center}
\end{figure}

\begin{figure}
\begin{center}
\includegraphics[width = 9 cm]{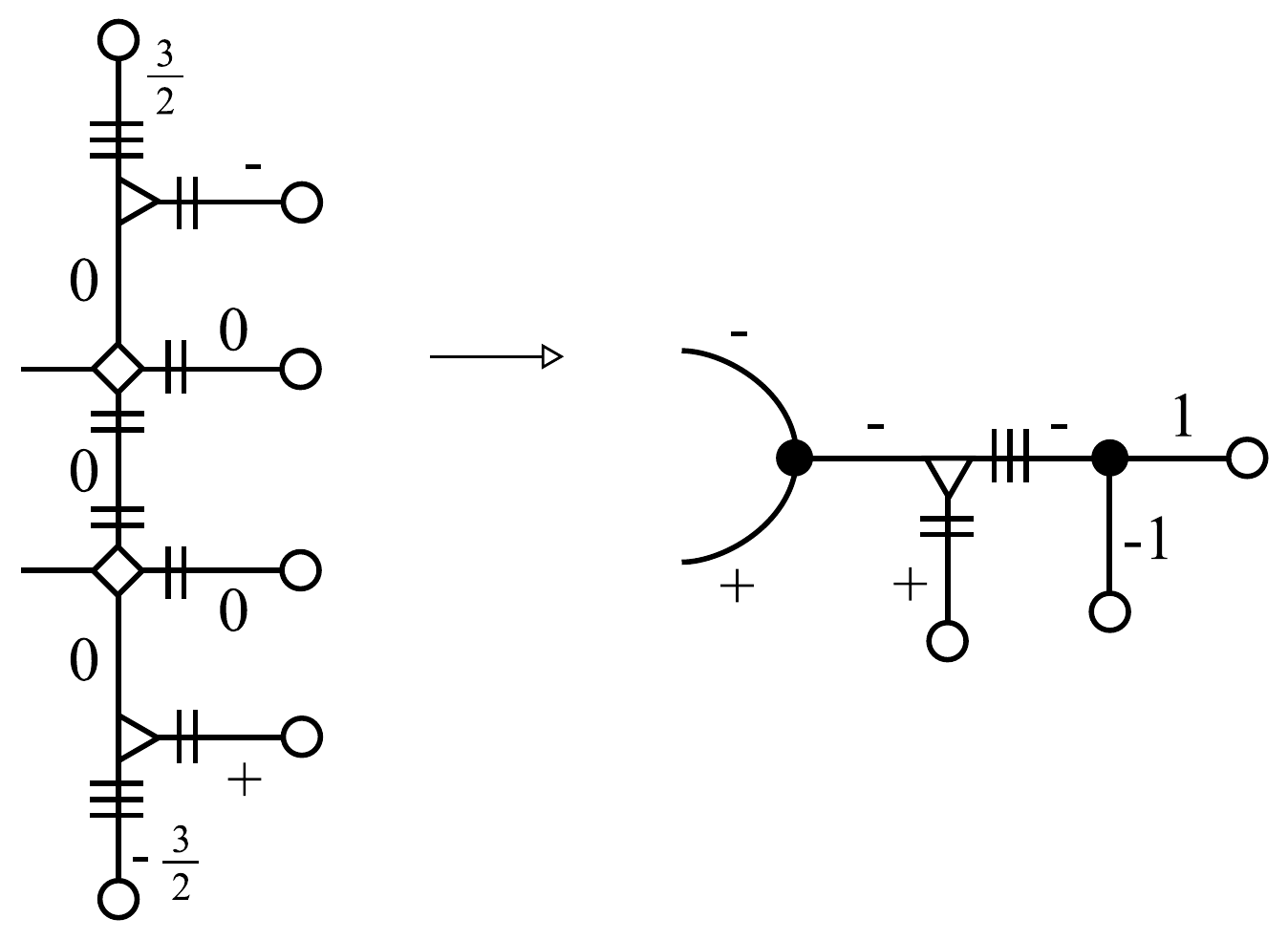}
\nota{This move transforms a shadow $X$ into another shadow $X'$ of the same block.}
\label{X10e11moves2:fig}
\end{center}
\end{figure}

\begin{figure}
\begin{center}
\includegraphics[width = 15 cm]{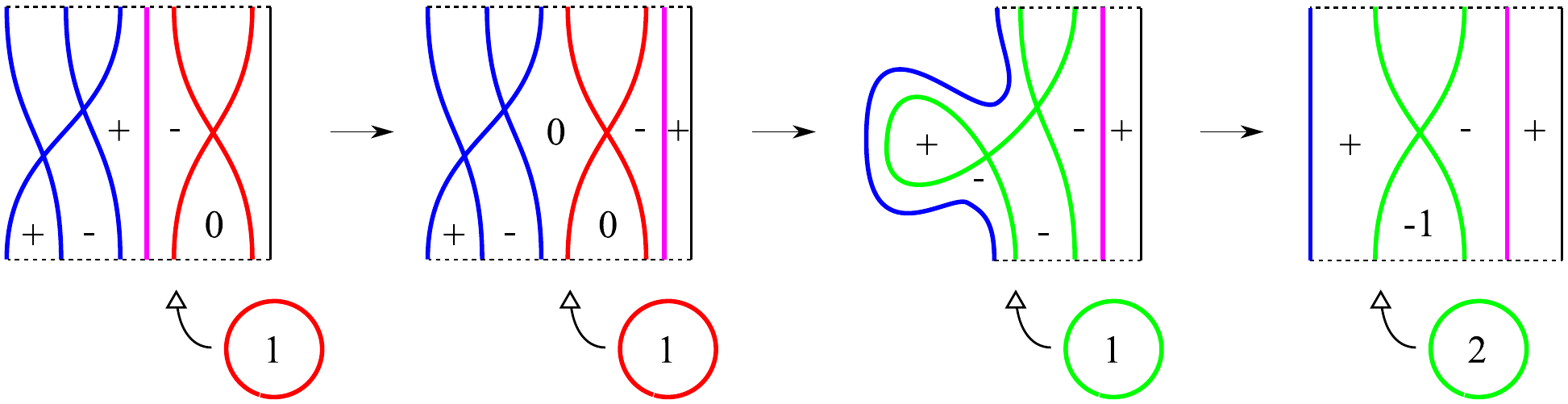}
\nota{Proof of Figure \ref{X10e11moves:fig}-(3). }
\label{oioia:fig}
\end{center}
\end{figure}

\begin{figure}
\begin{center}
\includegraphics[width = 16 cm]{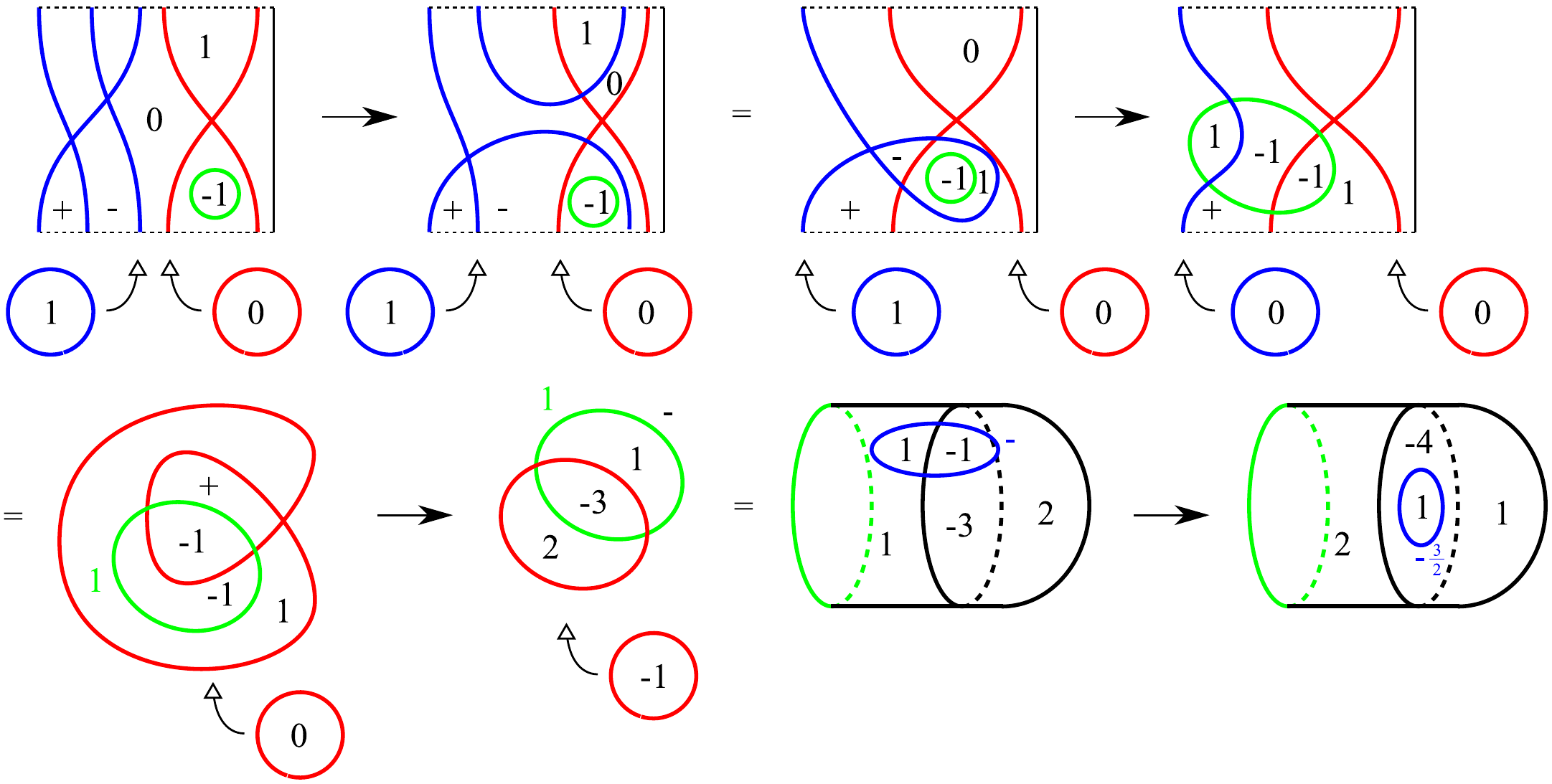}
\nota{Proof of Figure \ref{X10e11moves:fig}-(7). }
\label{oimmena:fig}
\end{center}
\end{figure}

\begin{figure}
\begin{center}
\includegraphics[width = 12 cm]{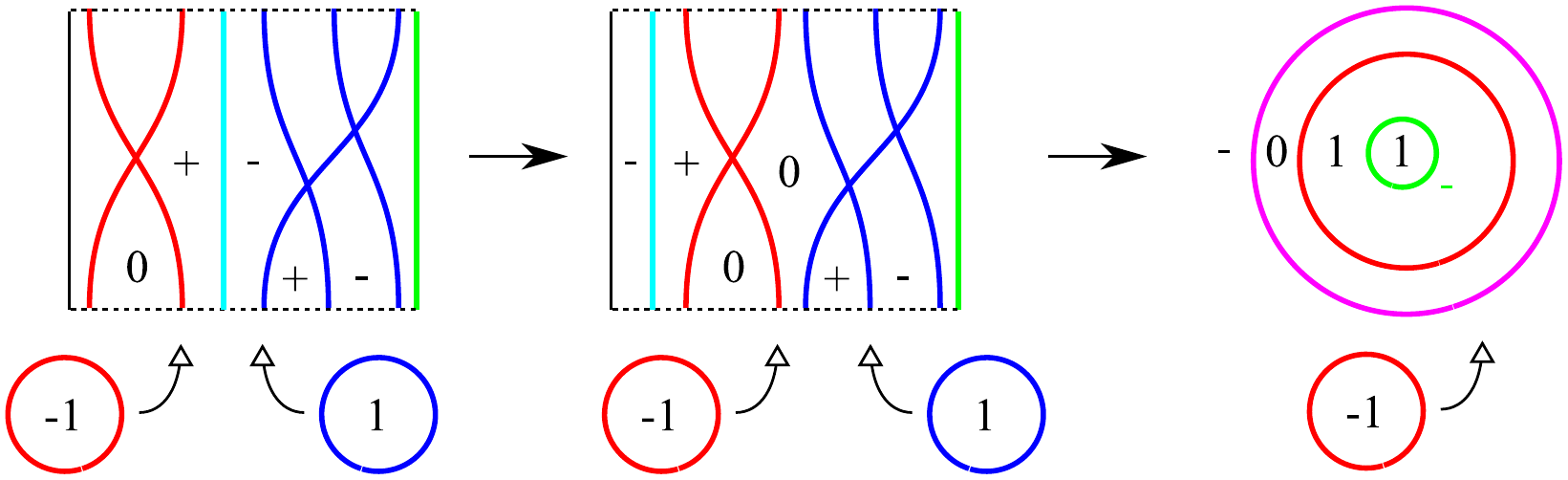}
\nota{Proof of Figure \ref{X10e11moves:fig}-(8). The second move is proved in Figure \ref{diamine:fig}.}
\label{diamine2:fig}
\end{center}
\end{figure}

\begin{figure}
\begin{center}
\includegraphics[width = 16 cm]{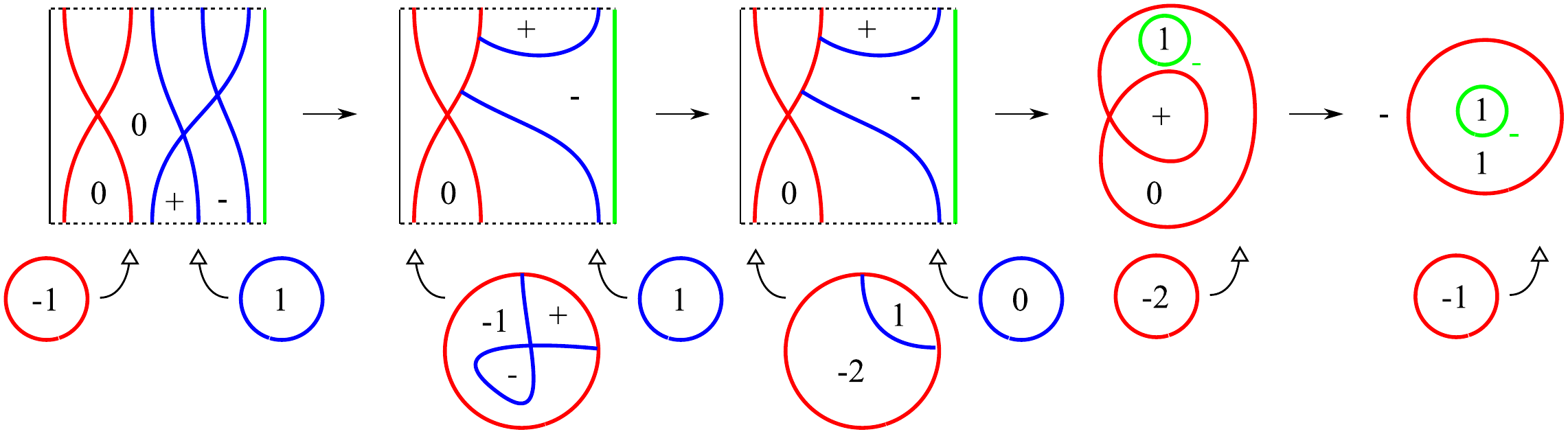}
\nota{Proof of the second move in Figure \ref{diamine2:fig}.}
\label{diamine:fig}
\end{center}
\end{figure}

\subsection{Moves with the vertex B}
Recall that the vertex B represents a boundary component of $X$. This vertex has a peculiar behaviour.

\begin{figure}
\begin{center}
\includegraphics[width = 15 cm]{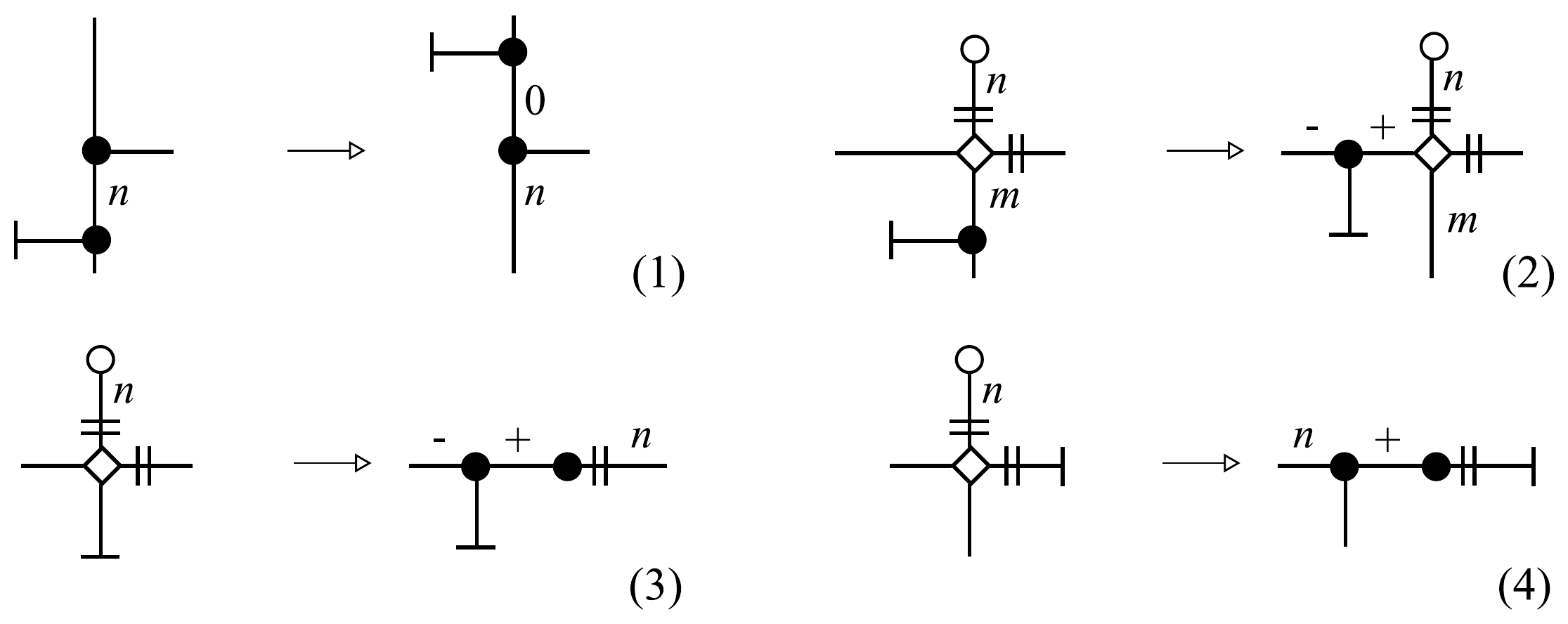}
\nota{These moves transform a shadow $X$ into another shadow $X'$ of the same block.}
\label{Bmoves:fig}
\end{center}
\end{figure}

\begin{prop}
The moves in Figure \ref{Bmoves:fig} modify a shadow $X$ into another shadow $X'$ of the same block.
\end{prop}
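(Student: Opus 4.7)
The plan is to verify each move in Figure \ref{Bmoves:fig} separately by reducing it to a composition of the basic moves of Figure \ref{move_all:fig} and the derived moves already established in Figures \ref{more_moves:fig}, \ref{thickening:fig}, \ref{moves1:fig}, \ref{m1piu:fig}, \ref{moves4:fig}, \ref{moves2:fig}, \ref{moves3:fig}, and \ref{X11moves:fig}--\ref{X10e11moves2:fig}. The vertex B encodes a boundary component $\gamma \subset \partial X$, which in $M$ is the framed core of a vertical solid torus $V \subset \partial M$; the dashes and red dots on the incident edge record how the adjacent region winds and twists along $\gamma$, while the half-integer gleam records its framing. Checking that a move on B produces another shadow of the same block thus amounts to checking two things: (i) that the 4-dimensional thickening $N(X)$ is unchanged up to diffeomorphism rel $\partial_h N(X)$, so that the block $M$ is unchanged, and (ii) that the framed vertical fibre $\gamma \subset V \subset \partial M$ is preserved.

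For each panel of Figure \ref{Bmoves:fig}, I would first isolate its topological content away from gleams by collapsing, when possible, the disc region adjacent to B, so that the move becomes a purely 3-dimensional manipulation of a simple polyhedron with boundary; Matveev's calculus (i.e.\ moves (1) and (2) of Figure \ref{move_all:fig}) suffices there, and re-attaching the collapsed region restores B. For the panels where a gleam must be modified, I would use the bubble-sliding technique of Figure \ref{bubble_slide:fig} to transport the gleam shift to the region incident to B, and then apply Figure \ref{moves1:fig} or the appropriate variant from Figure \ref{thickening:fig} to absorb it. A recurring device will be the correspondence between a B-vertex and the disc obtained by filling: if we temporarily cap off $\gamma$ with a disc of any fixed gleam, the previously proved moves involving the pieces $X_i$ or surface regions apply verbatim, and we recover B by uncollapsing that disc at the end.

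The main obstacle is bookkeeping the framing, since each basic move of Figure \ref{move_all:fig}-(3,4) shifts gleams of adjacent regions by $\pm 1$ or $\pm\tfrac12$, and the parity decoration on the edge incident to B changes according to whether dashes and red dots are created or cancelled. I expect every panel to decompose into a short sequence of these steps whose gleam contributions at the region adjacent to B sum to exactly what the diagram prescribes, in complete analogy with the verifications carried out for Figures \ref{X11moves:fig}, \ref{X10moves:fig}, and \ref{X10e11moves:fig}. Once the gleam accounting is organized as in Proposition \ref{equipped:prop}, each individual verification becomes a routine picture-chase of the same flavour as the proofs in Figures \ref{X11coso:fig}, \ref{X10coso:fig}, and \ref{oioia:fig}.
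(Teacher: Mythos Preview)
Your plan treats the B-moves as if they were of the same nature as the moves in Figures \ref{X11moves:fig}, \ref{X10moves:fig}, \ref{X10e11moves:fig}, namely as combinatorial identities that should decompose into Turaev's basic moves. The paper does \emph{not} do this, and for good reason: the basic moves of Figure \ref{move_all:fig} are local moves in the interior of a simple polyhedron and do not manipulate boundary components directly. Your workaround---cap off $\gamma$ with a disc, apply interior moves, then uncollapse the disc---has a real gap: once you have performed interior moves on the capped shadow, you must locate \emph{the same} disc region and remove it to recover a shadow of the original (unfilled) block $M$, with the correct framing on $\gamma$. Nothing in your outline guarantees that the capping disc survives the sequence of moves as a recognisable disc region, nor that removing whatever disc you end up with reproduces $X'$ rather than some other shadow. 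Filling changes the block, so equality of the filled blocks does not by itself give equality of the unfilled ones.

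The paper's argument bypasses all of this. It observes that a B-vertex is, by the construction in Section \ref{constructive:section}, precisely the record of a drilling of $M$ along a framed curve. Two shadows that differ only near a B-vertex therefore represent the same ambient block drilled along two curves; if those curves are homotopic in $M$ (a $4$-manifold, so homotopic implies isotopic), the drilled blocks are diffeomorphic. Moves (1) and (2) of Figure \ref{Bmoves:fig} are then checked by exhibiting the two drilling curves explicitly (Figure \ref{movesB:fig}) and seeing they are homotopic; move (3) follows from (2) by collapsing a region; move (4) is the observation that both sides describe a thickened annulus drilled along a curve running twice around. No reduction to basic moves is attempted or needed.
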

\begin{proof}
The moves (1) and (2) are drawn more explicitly in Figure \ref{movesB:fig}. In each move, both pieces are the result of drilling along homotopic (and hence isotopic) closed curves in $M$, so they are the same block. Move (3) is obtained from (2) by collapsing a region. In move (4), both pieces represent a thickened annulus drilled along a simple closed curve that runs twice along the annulus.
\end{proof}

\begin{figure}
\begin{center}
\includegraphics[width = 15 cm]{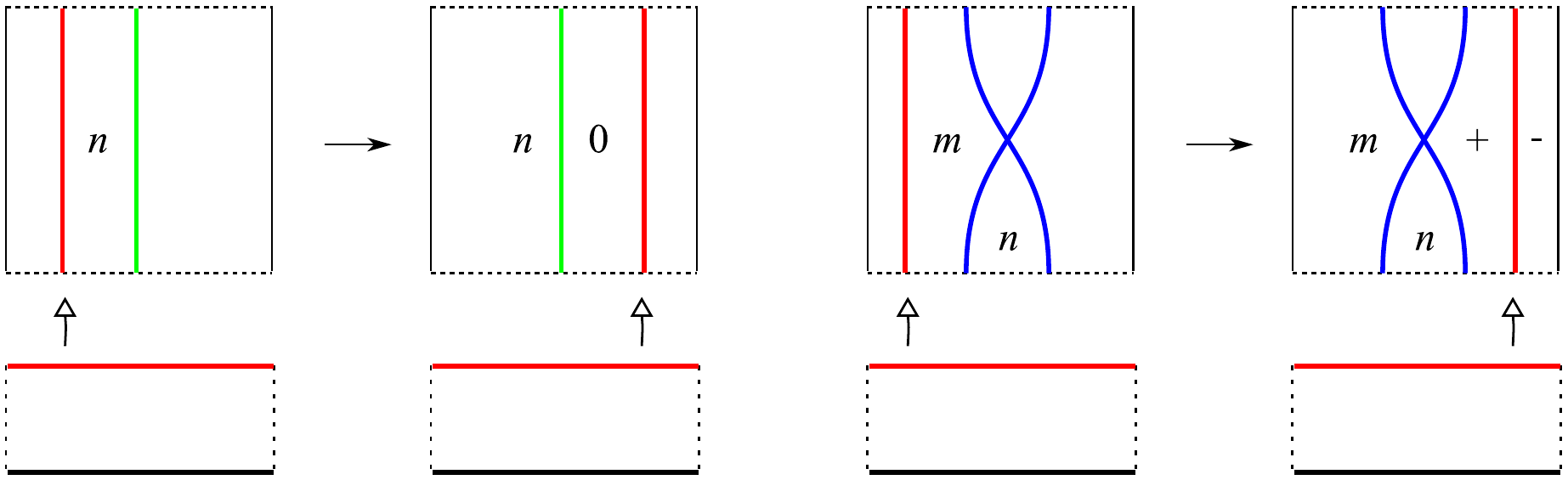}
\nota{The moves in Figures \ref{Bmoves:fig}-(1,2). The black curve indicates a component of $\partial X$.}
\label{movesB:fig}
\end{center}
\end{figure}

\section{Proof of Theorem \ref{main:teo} I: elimination of pieces.} \label{proof:section}
Having prepared all the necessary ingredients, we can now finally enter into the proof of Theorem \ref{main:teo}.

\subsection{The theorem}
Our aim is to prove the following half of Theorem \ref{main:teo}, that we state in the larger context of blocks.

\begin{teo} \label{main:block:teo}
Let $X$ be a shadow of some block $M$ with $c^*(X)\leq 1$. Then $M = M' \#_h \matCP^2$ for some $h\in \matZ$ and $M'$ generated by $\calS_1$.
\end{teo}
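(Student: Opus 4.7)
\medskip

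The plan is to proceed by induction on the number of pieces of type $X_1,\ldots,X_{11}$ appearing in the decomposition of $X$ from Section \ref{one:section}. If there are none, then $c^*(X)=0$ and $M$ is a graph manifold generated by $\mathcal{S}_0\subset \mathcal{S}_1$ (possibly after splitting off $\matCP^2$ summands) by the already-proved Theorem \ref{zero:teo} applied blockwise (using that $\mathcal{S}_0$ is closed under assembling and filling of $D^3\times S^1$). So the whole content is the inductive step: given an $X_i$ piece in $X$, either excise it outright as an $\mathcal{S}_1$-block, or modify $X$ by the moves of Section \ref{moves:section} so that fewer $X_i$ pieces remain and the underlying block only changes by connected sums with $\matCP^2$ or $\overline{\matCP}^2$.

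The key mechanism is the disc-attaching procedure of Section \ref{discs:subsection}: each edge $e$ of the decorated graph $G$ determines a torus $T\subset \partial N(X\cup \partial M)\isom \#_h(S^2\times S^1)$, which has a compressing disc by Lemma \ref{peculiar:lemma}, and the wrapping number $p$ of its boundary around the fibre $\gamma$ dictates a local modification of $X$ as in Figure \ref{add_disc:fig}. When $p=0$ or $p=1$ the disc is vertical or horizontal and the moves of Figure \ref{hv:fig} either disconnect the graph or exhibit a connected-sum/assembling decomposition, so induction applies. Thus the inductive step reduces to showing that, after possibly some preliminary reshuffling, \emph{every} $X_i$ piece admits at least one edge whose torus can be compressed vertically or horizontally.

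I would then work through the 11 pieces in turn. Fix an $X_i$ piece: its thickening is the hyperbolic manifold $W_i$, and the slopes of the compressing discs on its cusps are the Dehn-filling slopes that make the filled manifold $\#_{h'}(S^2\times S^1)$ for some $h'$, since Dehn filling along the slopes $s_1,\ldots,s_k$ of Lemma \ref{peculiar:lemma} yields such a manifold. These slopes are sharply constrained by the classification proved in Section \ref{exceptional:section} (Propositions \ref{W12:prop}--\ref{W9:prop} and Corollaries \ref{W8:cor}, \ref{W10:cor}, \ref{W11:cor}). For each $i$, one inspects the possible tuples of compressing slopes and checks that at least one slope is vertical ($\infty$) or horizontal (integer); if so, Figure \ref{hv:fig} applies and we are done. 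Otherwise the slope is one of the small number of ``exotic'' values ($0$, $\tfrac{1}{m}$, $-\tfrac12$, $2$, etc.) and then one invokes the ad hoc moves in Figures \ref{X10moves:fig}, \ref{X11moves:fig}, \ref{X10e11moves:fig}, \ref{X10e11moves2:fig}, \ref{Bmoves:fig}, together with the basic moves in Figures \ref{moves1:fig}--\ref{moves3:fig}, to transform $X$ into a shadow in which the corresponding edge now supports a vertical or horizontal compression (at the cost of possibly absorbing some $\matCP^2$ factors through the gleam changes).

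The main obstacle is precisely this final dispatch: the cases where the slope tuple forces us into the ``exotic'' regime, most prominently for $W_{10}$ and $W_{11}$ whose Dehn filling classifications (Corollaries \ref{W10:cor}, \ref{W11:cor}) admit many more spurious solutions than the other $W_i$. For each such configuration one must exhibit, by hand, a chain of moves from Section \ref{moves:section} that simplifies the local graph while tracking the effect on the block; this is where the ``about 100 local configurations'' mentioned in the introduction enter, and where the large toolbox of moves in Section \ref{moves:section} was assembled for exactly this purpose. Once each such local reduction is verified, the induction closes and the theorem follows, with the accumulated $\matCP^2$-summands produced by gleam-changing moves accounting for the $\#_h\matCP^2$ factor in the statement.
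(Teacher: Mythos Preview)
Your overall strategy---induction on the number of $X_i$ pieces, using compressing discs constrained by the exceptional-filling classifications---captures the first phase of the paper's argument correctly: this is exactly how Proposition \ref{only:five:prop} eliminates the pieces $P, Y_2, Y_3, X_1, \ldots, X_9$, and how Propositions \ref{suppose:10:prop} and \ref{suppose:11:prop} force each surviving $X_{10}$ or $X_{11}$ into a normal form (adjacent to a disc as in Figures \ref{X10_condition:fig} and \ref{X11_condition:fig}).

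However, your induction cannot close at this point, and this is where the proposal has a genuine gap. After normalization, the pieces $X_{10}$ and $X_{11}$ are \emph{not} eliminated: Corollaries \ref{W10:cor} and \ref{W11:cor} allow compressing slopes such as $-\tfrac12$ that are neither vertical nor horizontal, and attaching such a disc (Figure \ref{add_disc:fig}, right) creates a \emph{new} $X_{11}$ vertex rather than destroying one. So the count of $X_i$ pieces need not decrease, and no purely local argument at a single $X_i$ forces a simplification. The ``about 100 configurations'' you allude to do not arise from this local dispatch.

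What is missing is the entire second, global phase of the proof. One passes from $G$ to a modified graph $G'$ (Figure \ref{G':fig}), imposes on each component $G_i'$ the structure of a \emph{decorated tree with levels} (Proposition \ref{leveled:prop}), and then reads off along the root a plumbing line of spheres whose boundary is $S^3$ or $S^2\times S^1$. Lemma \ref{plumbing:lemma} (a strengthened Neumann--Weintraub result) forces some Euler number in this line to be small, and the $\sim 100$ configurations are precisely the possible local pictures at that spot (Figures \ref{AB:fig}, \ref{AC:fig}, \ref{CC:fig}). Only at this global level does one locate where a simplifying move must exist; the moves of Figures \ref{X10moves:fig}--\ref{X10e11moves2:fig} are tailored to these plumbing configurations, not to the exotic Dehn-filling slopes directly. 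Without this tree-with-levels and plumbing machinery the argument does not terminate.
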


The rest of this paper is devoted to the proof of this theorem. By hypothesis $X$ decomposes into pieces 
$$D,\ P,\ Y_2,\ Y_{111},\ Y_{12},\ Y_3,\ X_1, \ \ldots,\  X_{11}$$ 
and can be described by a graph $G$ with vertices as explained in Section \ref{encoding:graph:subsection}. 

\subsection{Elimination of most pieces}
In the first step of the proof we quickly eliminate many of the possible pieces.

\begin{prop} \label{only:five:prop}
We can suppose that $X$ decomposes only in the following pieces:
$$D,\ Y_{111},\ Y_{12},\ X_{10},\  X_{11}.$$ 
\end{prop}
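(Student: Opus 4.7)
The plan is to eliminate the forbidden pieces---namely $P$, $Y_2$, $Y_3$, and $X_1,\ldots,X_9$---one at a time, by combining Lemma \ref{peculiar:lemma} with the Dehn filling classifications of Section \ref{exceptional:section} and the horizontal/vertical cutting moves of Figure \ref{hv:fig}. The induction runs on a complexity measure of the piece decomposition of $X$ (for instance the number of forbidden pieces appearing in the decomposition); each reduction step removes a forbidden piece or replaces it by allowed pieces, possibly at the cost of an assembling or a connected sum, both of which preserve membership in $\calB_1$ by Proposition \ref{closed:prop}.

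For a forbidden hyperbolic piece $X_i$ with $i \leq 9$, the 3-manifold $W_i$ fibering above it embeds in $\partial_h N(X) \subset \#_h(S^2 \times S^1)$; by Lemma \ref{peculiar:lemma} each boundary torus of $W_i$ carries a compressing slope such that the simultaneous Dehn filling of $W_i$ along these slopes yields some $\#_{h'}(S^2 \times S^1)$. Inspection of Propositions \ref{W12:prop}, \ref{W34:prop}, \ref{W56:prop}, \ref{W7:prop}, \ref{W9:prop} and Corollary \ref{W8:cor} reveals that in every listed Dehn-filling tuple at least one slope equals $\infty$ (vertical) or lies in $\matZ$ (horizontal), so the corresponding horizontal or vertical move of Figure \ref{hv:fig} applies at the associated edge of the encoding graph $G$, strictly simplifying the shadow. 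By contrast, Corollaries \ref{W10:cor} and \ref{W11:cor} permit exotic slopes such as $\pm \tfrac 12$ or $2$ that are neither vertical nor horizontal, which is precisely why $X_{10}$ and $X_{11}$ must survive this first reduction; they will be dealt with by the specialized moves of Section \ref{moves:section} in subsequent propositions.

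The Seifert forbidden pieces $P$, $Y_2$, $Y_3$ are handled analogously. Above them sit the Seifert fibered spaces $P \times S^1$, $(D,2,2)$, $(D,3,3)$, respectively, and a direct classification of their Dehn fillings producing $\#_{h'}(S^2 \times S^1)$---using torsion-freeness of the first homology together with the explicit Seifert invariants---forces at least one boundary slope to be horizontal or vertical, so Figure \ref{hv:fig} again applies. For $P$ there is a conceptual shortcut: the central fibre of $\pi_1(P \times S^1) = F_2 \times \matZ$ cannot have a nontrivial image in the (centreless) free group $\pi_1(\#_h(S^2 \times S^1)) = F_h$ when $h \geq 2$, so it lies in the kernel of the induced map, yielding a vertical compressing disc on any boundary torus; the low-$h$ cases are ad hoc but easy.

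The main obstacle will be the extensive case-by-case bookkeeping: for each of the twelve forbidden pieces one must identify the correct edge of $G$ on which to apply Figure \ref{hv:fig} and verify that the resulting configuration has strictly smaller induction parameter without reintroducing a previously-eliminated piece on the other side of the cut. This is mitigated by organising the eliminations in a carefully chosen order---first the $X_i$ with fewest cusps (where the set of admissible slopes is tightest), then the multi-cusp hyperbolic pieces, and finally the Seifert pieces---so that each reduction step lands in a configuration already controlled by the previous cases.
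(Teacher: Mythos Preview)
Your high-level strategy---apply Lemma \ref{peculiar:lemma}, read off the Dehn filling classifications of Section \ref{exceptional:section}, and cut using Figure \ref{hv:fig}---matches the paper's. But the step ``at least one slope equals $\infty$ or lies in $\matZ$, so the corresponding move of Figure \ref{hv:fig} applies, strictly simplifying the shadow'' is a genuine gap. A single application of Figure \ref{hv:fig}-(1) merely severs one edge of $G$, leaving the forbidden piece $X_i$ in place with a new boundary vertex $B$; likewise a horizontal disc on an arbitrary boundary just caps one edge with a $D$, and the offending vertex is still there. The paper needs two further ingredients that you omit. First, a \emph{propagation principle}: if two of the three regions meeting along an edge of $SX$ carry vertical compressing discs, so does the third. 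Because every slope $\infty$ produced by Propositions \ref{W12:prop}--\ref{W9:prop} occurs either on \emph{all} boundary components or on one of length $\geq 3$ (hence a region running twice over some edge), propagation upgrades one vertical disc to vertical discs on every boundary of $X_i$, and only then can the piece be discarded as a stand-alone block $M_i^1 \in \calS_1$. Second, when instead a horizontal disc appears, it is always with slope $0$ on an \emph{even length-one} boundary; after Figure \ref{hv:fig}-(2) this gives a $0$-gleamed disc, and one still needs the vertex-destroying move of \cite[Figure 6]{Co} to actually collapse $X_i$. Neither mechanism is triggered by a generic integer slope on a long boundary, so your blanket claim does not go through.

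Your ``conceptual shortcut'' for $P$ is also not correct as written: the central fibre of $\pi_1(P\times S^1)=F_2\times\matZ$ need only commute with the \emph{image} of this group in $F_h$, not with all of $F_h$, and that image may well be cyclic (or trivial), in which case the fibre can map nontrivially. The paper instead invokes the argument of \cite[Propositions 7.7--7.8]{Ma:zero}, which shows directly that each of $P$, $Y_2$, $Y_3$ has a boundary torus admitting a vertical compressing disc (or a horizontal one, for $P$), after which the same discard/collapse dichotomy applies.
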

\begin{proof}
We extend the proof of \cite[Propositions 7.7 and 7.8]{Ma:zero} to this context. 
As shown there, every piece $P$, $Y_{2}$, or $Y_3$ has some boundary component $\gamma$ whose fibre torus $T\to \gamma$ has either a vertical or a horizontal compressing disc (the latter case only with $P$). If the disc is horizontal we apply to $G$ the corresponding move in Figure \ref{hv:fig}-(2) and eliminate $P$. If it is vertical, then all the boundary components of the piece have vertical compressing discs, so we apply Figure \ref{hv:fig}-(1) to each and then remove the piece from $X$. In both cases we obtain a shadow $X'$ of a block $M'$ such that $M$ is obtained from $M'$ by connected sum and assembling. After finitely many steps we have eliminated all the pieces $P, Y_2,$ and $Y_3$.

Let now $X$ contain a piece $X_i$ with $i=1,\ldots, 9$. Therefore $\partial N(X) = \#_h(S^2\times S^1)$ contains a hyperbolic manifold $W_i$. By Lemma \ref{peculiar:lemma} the tori of $W_i$ have slopes $s_i$ that bound discs in $\partial N(X)$ and give $\#_{h'}(S^2 \times S^1)$.

Propositions \ref{W12:prop}, \ref{W34:prop}, \ref{W56:prop}, \ref{W7:prop}, \ref{W8:prop}, and \ref{W9:prop} show that one of the following holds:
\begin{enumerate}
\item We have $s_j= \infty$ for all $j$,
\item We have $s_j=0$ for some $s_j$ that corresponds to an even component of $\partial X_i$ of length 1,
\item We have $s_j = \infty$ for some $s_j$ that corresponds to a component of $\partial X_i$ of length $\geq 3$.
\end{enumerate}

The slope $\infty$ corresponds to a vertical compressing disc. In case (1) all the boundary components have vertical compressing discs; therefore
we can apply Figure \ref{hv:fig}-(1) to all of them and we discard $X_i$ from $X$. 

In case (2) we get a horizontal disc. We apply the move in Figure \ref{hv:fig}-(2). After this move the length-1 component of $\partial X_i$ bounds a disc with gleam $s_j=0$. Now we can apply the move in \cite[Figure 6]{Co} that substitutes $X_i$ with a portion without vertices that should be further collapsed. After the collapsing and the removal of 1-dimensional portions (that correspond to removing $S^3 \times S^1$ summands) we end up with a shadow with a smaller number of vertices. 

Case (3) is similar to (1) because of the following general \emph{propagation principle}: if two regions adjacent to an edge of $SX'$ have a vertical compressing disc, then also the third one has. Since the component having the vertical compressing disc $s_j=\infty$ has length $\geq 3$, it runs twice on some edge of $SX_i$, and in a couple of steps we deduce that all the boundary components of $X_i$ have vertical compressing discs, so we conclude as above.

In all cases after finitely many steps we eliminate all the pieces $X_1,\ldots, X_9$.
\end{proof}

\subsection{The piece $X_{10}$.}
We have happily eliminated most of the pieces and we are left with only 4 of them. From now on, and until the end of this paper, we suppose that $X$ is a shadow of some block $M$ that decomposes into pieces homeomorphic to $D, Y_{111}, Y_{12}, X_{10}$, and $X_{11}.$ Let $G$ be a graph representing $X$. 

\begin{figure}
\begin{center}
\includegraphics[width = 6 cm]{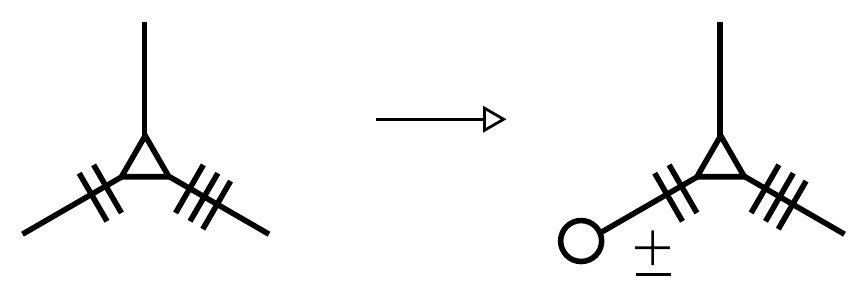}
\nota{We may suppose that every vertex in $G$ that represents $X_{11}$ is adjacent to a vertex representing a disc $D$ as shown here, with gleam $\pm \frac 12$.}
\label{X10_condition:fig}
\end{center}
\end{figure}

\begin{prop} \label{suppose:10:prop}
We can suppose that every vertex of $G$ of type $X_{10}$ is adjacent to a vertex $D$ as in Figure \ref{X10_condition:fig}.
\end{prop}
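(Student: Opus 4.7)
The plan is to argue vertex-by-vertex. Fix a vertex $v$ of $G$ representing $X_{10}$ and let $e$ be its length-$1$ boundary edge. Let $T_e \subset \partial N(X) \cong \#_h(S^2 \times S^1)$ be the corresponding fibered torus. By Lemma \ref{peculiar:lemma}, $T_e$ admits a compressing disc, and by Corollary \ref{W10:cor} its slope $\alpha$ (in the meridian/longitude coordinates fixed in Section \ref{link:surgery:subsection}) must lie in $\{\infty, 0, \tfrac 12, 1\}$, since filling this cusp of $W_{10}$ together with compatible fillings at the other two cusps must produce a manifold of the form $\#_{h'}(S^2 \times S^1)$.

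I would first dispose of the three cases $\alpha \in \{\infty, 0, 1\}$. If $\alpha = \infty$, the compressing disc is vertical and Figure \ref{hv:fig}-(1) cuts $e$, realising $M$ as an assembling of two simpler blocks (Proposition \ref{closed:sum:prop}), which permits recursion on the number of non-standard $X_{10}$ vertices. If $\alpha \in \{0,1\}$, the compressing disc is horizontal, and Figure \ref{hv:fig}-(2) attaches a disc $D'$ along $e$ with integer gleam; absorbing this integer gleam into the adjacent annular region of $X_{10}$ by repeated use of the slide moves in Figures \ref{X10moves:fig} and \ref{thickening:fig} converts the local picture around $v$ into a configuration that no longer contains an $X_{10}$ piece, again allowing recursion.

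The remaining case $\alpha = \tfrac 12$ is the one that produces the standard form required by the proposition. Here the compressing disc winds twice along $\gamma$, so by Figure \ref{add_disc:fig}-(3) its addition glues an $X_{11}$-type fragment (carrying a $0$-gleamed disc) to $v$ along $e$. Applying the moves of Figures \ref{X10e11moves:fig} and \ref{X10e11moves2:fig}, which are tailor-made to interconvert local $X_{10}$--$X_{11}$ configurations, collapses this fragment to a single disc vertex $D$ attached to $v$ via $e$; a direct inspection of the gleams produced along the way, combined with the gleam-shifting moves of Figures \ref{moves1:fig}, \ref{moves4:fig}, and \ref{thickening:fig}-(1,2) and the $\pm 1$-shift afforded by Figure \ref{X11moves:fig}-(4) at the boundary, then normalises the gleam of $D$ to $\pm \tfrac 12$.

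The main obstacle is the case $\alpha = \tfrac 12$: after attaching the new compressing disc one is left with a moderately bulky local neighbourhood of $v$, and the sequence of moves must be chosen so as not to re-introduce any of the pieces $P, Y_2, Y_3, X_1, \ldots, X_9$ excluded by Proposition \ref{only:five:prop}, nor to disturb the $X_{10}$ vertices already placed in standard form. An ordered choice of moves, combined with induction on the number of $X_{10}$ vertices not yet of the standard type, closes the argument.
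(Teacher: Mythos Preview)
Your argument has a genuine gap at the very first step: you misread Corollary~\ref{W10:cor}. That corollary asserts a \emph{disjunction} over the three cusps of $W_{10}$ --- at least one of $\alpha\in\{\infty,0,\tfrac12,1\}$, $\beta\in\{\infty,-1,0\}$, $\gamma\in\{\infty,-3,-2\}$ holds --- not that $\alpha$ alone is forced into its short list. Lemma~\ref{peculiar:lemma} provides slopes on all three boundary tori simultaneously, and the conclusion may constrain only $\beta$ or only $\gamma$ while $\alpha$ is arbitrary. So restricting attention to the length-$1$ edge $e$ and its torus $T_e$ is not justified; you must run a case analysis over all three cusps, as the paper does.

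This misstep also leads you to misidentify the mechanism producing the standard form. In the paper the desired configuration of Figure~\ref{X10_condition:fig} arises from the $\beta\in\{-1,0\}$ case: a horizontal disc at the length-$2$ boundary gives a disc vertex with gleam $\pm\tfrac12$ there (note that the gleam is half-integral precisely because the adjacent region of $X_{10}$ at the length-$2$ boundary is odd). By contrast, the case $\alpha=\tfrac12$ does \emph{not} yield the standard form at all: adding the $p=2$ disc creates an adjacent $X_{11}$, and then Figure~\ref{X10e11moves:fig}-(2) \emph{destroys} the $X_{10}$ vertex, trading it for an $X_{11}$. Your proposed normalisation of gleams to produce a $\pm\tfrac12$ disc at the length-$1$ edge is aiming at the wrong target. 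Finally, your treatment of $\alpha=\infty$ is too quick: cutting a single edge via Figure~\ref{hv:fig}-(1) does not by itself let you discard the piece; the paper uses the extra clauses of Corollary~\ref{W10:cor} together with the propagation principle to obtain vertical discs at \emph{all} boundary components of $X_{10}$ before discarding it.
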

\begin{proof}
Let a vertex of $G$ denote a piece $X_{10} \subset X$.
The manifold $\partial N(X) = \#_h(S^2\times S^1)$ contains a hyperbolic manifold $W_{10}$. By Lemma \ref{peculiar:lemma} the boundary tori of $W_{10}$ have slopes $\alpha, \beta, \gamma$ that bound discs in $\partial N(X)$ and give $\#_{h'}(S^2 \times S^1)$.
Corollary \ref{W10:cor} shows that one of the following holds:
$$\alpha \in \big\{\infty, 0, \tfrac 12, 1\big\}, \qquad \beta \in \big\{\infty, -1, 0\big\}, \qquad
\gamma \in \big\{\infty, -3, -2 \big\}.$$
The corollary also says that if either $\alpha$ or $\beta$ equals to $\infty$, then some other slope is also $\infty$. Therefore, if any of the slopes $\alpha, \beta, \gamma$ is $\infty$, the propagation principle used in the proof of Proposition \ref{only:five:prop} shows that there are vertical compressing discs everywhere and we can discard $X_{10}$ from $X$. We are left with the cases
$$\alpha \in \big\{0, \tfrac 12, 1\big\}, \qquad \beta \in \big\{-1, 0\big\}, \qquad
\gamma \in \big\{-3, -2 \big\}.$$
If $\alpha = 0$ or $1$, we get a horizontal disc. We apply the move in Figure \ref{hv:fig}-(1). After this move the first component of $\partial X_{10}$ bounds a disc with gleam $\pm \frac 12$. Now we can apply Figure \ref{X10moves:fig}-(1) that destroys $X_{10}$.

If $\alpha = \frac 12$, we can add the disc as in Figure \ref{add_disc:fig}-(right). This operation adds to $G$ a new vertex $X_{11}$ near $X_{10}$, the two vertices being separated by a 0-gleamed edge. Then we apply Figure \ref{X10e11moves:fig}-(2) to destroy $X_{10}$. Thus we have substituted a $X_{10}$ with a $X_{11}$.

If $\beta = -1$ or $0$ we can add the horizontal disc and apply the move in Figure \ref{hv:fig}-(2). After the move we get a portion as in Figure \ref{X10_condition:fig}, as stated.

If $\gamma =-3$ or $-2$ we can add the horizontal disc and apply the move in Figure \ref{hv:fig}-(2). We get (up to reversing signs) a portion as in Figure \ref{X10moves:fig}-(3) and by applying the move there we transform it into a portion where $X_{10}$ is as stated.
\end{proof}

We will henceforth suppose that every vertex of $G$ of type $X_{10}$ is adjacent to a vertex $D$ as in Figure \ref{X10_condition:fig}.

\begin{rem}
When a piece $X_i$ has a boundary curve $\gamma$ of length 1, its cusp shape is one of the two shown in Figure \ref{shape1:fig}. The picture shows all the slopes $a,b,c,d,e,f$ of length $\leq 2$ in both cases. In the previous proofs we have shown that if either of $a,b,c,d,e,f$ bounds a compressing disc, then we can often construct a move that simplifies the shadow by destroying or discarding $X_i$.
\end{rem}

\begin{figure}
\begin{center}
\includegraphics[width = 6 cm]{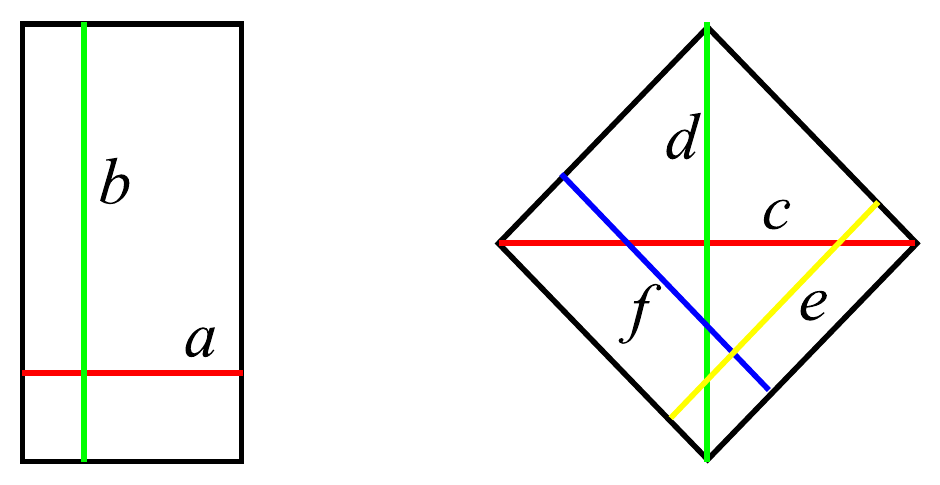}
\nota{The maximal cusp shape corresponding to a boundary curve $\gamma$ of length one is of one of these two types. The slopes of length $\leq 2$ are indicated as $a, b, c, d, e$ and have length 1, 2, 2, 2, $\sqrt 2$, $\sqrt 2$ respectively.}
\label{shape1:fig}
\end{center}
\end{figure}

\subsection{The piece $X_{11}$}
The piece $X_{11}$ is the one with the maximum number of boundary components.
We prove a fact similar to Proposition \ref{suppose:10:prop}.

\begin{figure}
\begin{center}
\includegraphics[width = 6 cm]{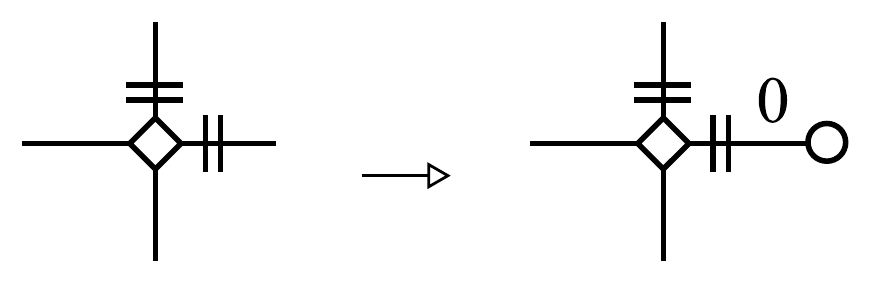}
\nota{We may suppose that every vertex in $G$ that represents $X_{11}$ is adjacent to a vertex representing a disc $D$ as shown here, with gleam $0$.}
\label{X11_condition:fig}
\end{center}
\end{figure}

\begin{prop} \label{suppose:11:prop}
We can suppose that every vertex of $G$ of type $X_{11}$ is adjacent to a vertex $D$ as in Figure \ref{X11_condition:fig}.
\end{prop}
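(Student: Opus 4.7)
The plan is to proceed in direct analogy with the proof of Proposition \ref{suppose:10:prop}, replacing Corollary \ref{W10:cor} by Corollary \ref{W11:cor} and using the move tables assembled for $X_{11}$ (Figure \ref{X11moves:fig}) and for the $X_{10}$--$X_{11}$ interaction (Figures \ref{X10e11moves:fig}, \ref{X10e11moves2:fig}). Fix a vertex of $G$ representing a piece $X_{11}\subset X$. The corresponding hyperbolic submanifold $W_{11}\subset \partial N(X)=\#_h(S^2\times S^1)$ has four boundary tori, and by Lemma \ref{peculiar:lemma} each of them carries a slope $\alpha,\beta,\gamma,\delta$ that bounds a disc in $\#_h(S^2\times S^1)$, such that the Dehn filling along these slopes yields $\#_{h'}(S^2\times S^1)$. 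Corollary \ref{W11:cor} restricts the quadruple $(\alpha,\beta,\gamma,\delta)$ to a small finite list of possibilities.

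First I would dispose of the vertical case. Suppose that $\infty$ occurs among the slopes. If it occurs four times, we apply Figure \ref{hv:fig}-(1) to every boundary of $X_{11}$ and remove the piece outright. If $\alpha=\infty$ or $\gamma=\infty$, Corollary \ref{W11:cor} forces a second $\infty$ to appear at an adjacent boundary in each of the ``mixed'' subcases (or else forces a horizontal/diagonal slope at a neighbouring cusp, treated below). The propagation principle exploited in Propositions \ref{only:five:prop} and \ref{suppose:10:prop}, applied along the $8$-shaped singular graph $SX_{11}$, then spreads the vertical compressing disc around all four boundary components in a couple of steps, and we discard $X_{11}$ as above.

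Next I would handle the finite slopes. If a horizontal slope appears at a length-2 boundary ($\gamma=0$ or $\delta=0$), we apply Figure \ref{hv:fig}-(2) to obtain an integral-gleamed disc attached to a boundary of $X_{11}$; using Figure \ref{X11moves:fig}-(1, 4) and Figure \ref{moves4:fig} the gleam can be normalised to $0$, giving exactly the configuration of Figure \ref{X11_condition:fig}. If a short slope appears at a length-1 cusp ($\alpha\in\{-1,-\tfrac12,0\}$, and symmetrically for $\beta$), we add the corresponding compressing disc using Figure \ref{add_disc:fig} with $p=1$ or $p=2$: this introduces either a half-integer gleamed disc or a new $X_{11}$-piece glued to our $X_{11}$ by a $0$-gleamed edge. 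In the first case Figure \ref{X11moves:fig}-(2, 3) absorbs the half-gleam and produces a $0$-gleamed disc on an adjacent length-2 boundary; in the second case Figure \ref{X10e11moves:fig}-(3--6) (whose $X_{10}$--$X_{11}$ analogues we have already proved in Section \ref{moves:section}) reorganises the two adjacent $X_{11}$-pieces so that one of them becomes adjacent to a $0$-gleamed disc, as required. The remaining subcase $\gamma\in\{1,2\}$ (and symmetrically $\delta$) is handled similarly, by adding the disc of Figure \ref{add_disc:fig} and then applying Figure \ref{X11moves:fig}-(5--10) to clean up the local configuration.

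The main obstacle here will not be any single case but the sheer bookkeeping. Corollary \ref{W11:cor} admits roughly a dozen slope configurations modulo the $\alpha\leftrightarrow\beta$ and $\gamma\leftrightarrow\delta$ symmetries of $X_{11}$, and for each one must exhibit an explicit sequence of shadow moves together with a careful tracking of the half-integer gleams created at the length-1 cusps. As anticipated in the introduction, it is precisely this case-by-case analysis (together with the parallel analysis for $X_{10}$) that forces the long hand-drawn computations of the next section, and one has to be patient enough to verify that every surviving configuration is reduced either to the target form of Figure \ref{X11_condition:fig} or to an earlier case.
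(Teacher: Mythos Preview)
Your overall plan coincides with the paper's: invoke Lemma \ref{peculiar:lemma} and Corollary \ref{W11:cor}, then treat the resulting short list of slope values via the disc-attachment of Figure \ref{add_disc:fig} and the $X_{11}$ move table. Two concrete points, however, are not right as stated.

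First, the vertical-slope case is more delicate than you indicate. When, say, $\alpha=\infty$ but the further conclusion of Corollary \ref{W11:cor} is $\gamma\in\matZ$ (or $\delta\in\matZ$), no second $\infty$ is forced, and the propagation principle around $SX_{11}$ does \emph{not} produce vertical discs on all four boundaries. What happens instead is that after applying Figure \ref{hv:fig}-(1) you now have a boundary vertex $B$ attached to $X_{11}$, together with a horizontal disc on the $\gamma$-side; the resulting portion is exactly the one treated by the boundary moves of Figure \ref{Bmoves:fig}-(3) (and, in the $\gamma=\infty$ case with $\alpha,\beta$ or $\delta\in\matZ\cup\{\infty\}$, by Figure \ref{Bmoves:fig}-(4) or by a direct replacement when the drilled curve runs once). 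These $B$-moves are essential here and are missing from your account.

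Second, several of your move citations do not match the configurations they are supposed to simplify. When $\alpha\in\{-1,0\}$ the attached disc sits on a length-$1$ (odd) boundary with gleam $\pm\tfrac12$, and the relevant move is Figure \ref{X11moves:fig}-(1), which \emph{destroys} the $X_{11}$ piece; moves (2) and (3) take input on the length-$2$ side and do not apply. Similarly, when $\gamma\in\{0,1,2\}$ the horizontal disc on the length-$2$ boundary carries gleam $\gamma-1\in\{-1,0,1\}$: only $\gamma=1$ gives the target configuration of Figure \ref{X11_condition:fig} directly, while $\gamma\in\{0,2\}$ leads to gleam $\pm1$, at which point Figure \ref{X11moves:fig}-(2) destroys $X_{11}$ rather than normalising the gleam. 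Finally, in the $\alpha=-\tfrac12$ subcase the paper uses Figure \ref{moves2:fig}-(3) (two adjacent $X_{11}$'s), not the $X_{10}$--$X_{11}$ moves of Figure \ref{X10e11moves:fig} that you cite.
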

\begin{proof}
Let a vertex in $G$ denote a piece $X_{11} \subset X$.
The piece $X_{11}$ determines a submanifold $W_{11} \subset \partial N(X) \cong \#_h(S^2 \times S^1)$. By Lemma \ref{peculiar:lemma} the tori of $W_{11}$ have slopes $\alpha, \beta, \gamma, \delta$ that bound discs in $\partial N(X)$ such that by filling $W_{11}$ along them we get $\#_{h'}(S^2 \times S^1)$. Corollary \ref{W11:cor} shows 
that one of the following 4 conditions holds: 
$$\alpha \in \big\{\infty, -1, -\tfrac 12, 0\big\}, \qquad 
\beta \in \big\{\infty, -1, -\tfrac 12, 0\big\}, \qquad
\gamma \in \big\{\infty, 0, 1, 2 \big\}, \qquad
\delta \in \big\{\infty, 0, 1, 2 \big\}.$$

If $\alpha$ or $\beta$ is equal to $-1$ or $0$ we add a horizontal disc as in Figure \ref{hv:fig}-(2) and we simplify using Figure \ref{X11moves:fig}-(1). If it is equal to $-\frac 12$ we add the disc as in Figure \ref{add_disc:fig}-(right). This operation adds to $G$ a new vertex $X_{11}'$ near the original one $X_{11}$, the two vertices being separated by a 0-gleamed edge. Then we apply Figure \ref{moves2:fig}-(3) to destroy the original $X_{10}$. Thus we have substituted an old $X_{10}$ with a new $X_{10}'$ that has the advantage of being adjacent to a couple of 0-gleamed discs, so it is as in Figure \ref{X11_condition:fig}.

If $\gamma$ or $\delta$ equals to $x=0,1,2$ we add the horizontal disc and apply Figure \ref{hv:fig}-(2). Now the piece $X_{11}$ is adjacent to a disc with gleam $x-1 = -1,0,1$. If the gleam is zero we get Figure \ref{X11_condition:fig} and we are done. If it is $\pm 1$ we destroy $X_{11}$ using Figure \ref{X11moves:fig}-(2).

If $\alpha$ or $\beta$ equals $\infty$, we get a vertical disc and we apply Figure \ref{hv:fig}-(1).
Corollary \ref{W11:cor} says that either $\gamma\in \matZ$, or $\delta \in \matZ$, or $\beta \in\{-1,0\}$, or $\gamma = \delta = \infty$. 
In the first (or second) case, we also add a horizontal disc to the boundary component corresponding to $\gamma$ (or $\delta$), apply Figure \ref{hv:fig}-(2) and get a portion as in Figure \ref{Bmoves:fig}-(3). That move destroys the vertex $X_{11}$.
The third case has already been considered, and in the fourth case the propagation principle used in the proof of Proposition \ref{only:five:prop} shows that there are vertical compressing discs everywhere and we can discard $X_{11}$ from $X$. 

If $\gamma$ or $\delta$ (say $\gamma$) equals $\infty$, we get a vertical disc and we apply Figure 
\ref{hv:fig}-(1). Corollary \ref{W11:cor} says that either $\alpha$, $\beta$, or $\delta$ belongs to $\matZ \cup \{\infty\}$. If it is $\infty$, we conclude by the propagation principle as above. If it is an integer, we add a horizontal disc and apply Figure \ref{hv:fig}-(2). We get a portion that describes an annulus drilled along a simple closed curve that runs either one or twice along the annulus. If it runs once we may substitute everything with a portion without vertices. If it runs twice we apply Figure \ref{Bmoves:fig}-(4).
\end{proof}

We will henceforth suppose that every vertex of $G$ of type $X_{11}$ is adjacent to a vertex $D$ as in Figure \ref{X11_condition:fig}.

\subsection{Boundary contributions}
Now that every $X_{10}$ and $X_{11}$ is adjacent to a disc, it is important to understand the contribution that the two pieces altogether give to $\partial N(X) = \#_h(S^2 \times S^1)$. 
 
\begin{figure}
\begin{center}
\includegraphics[width = 12.5 cm]{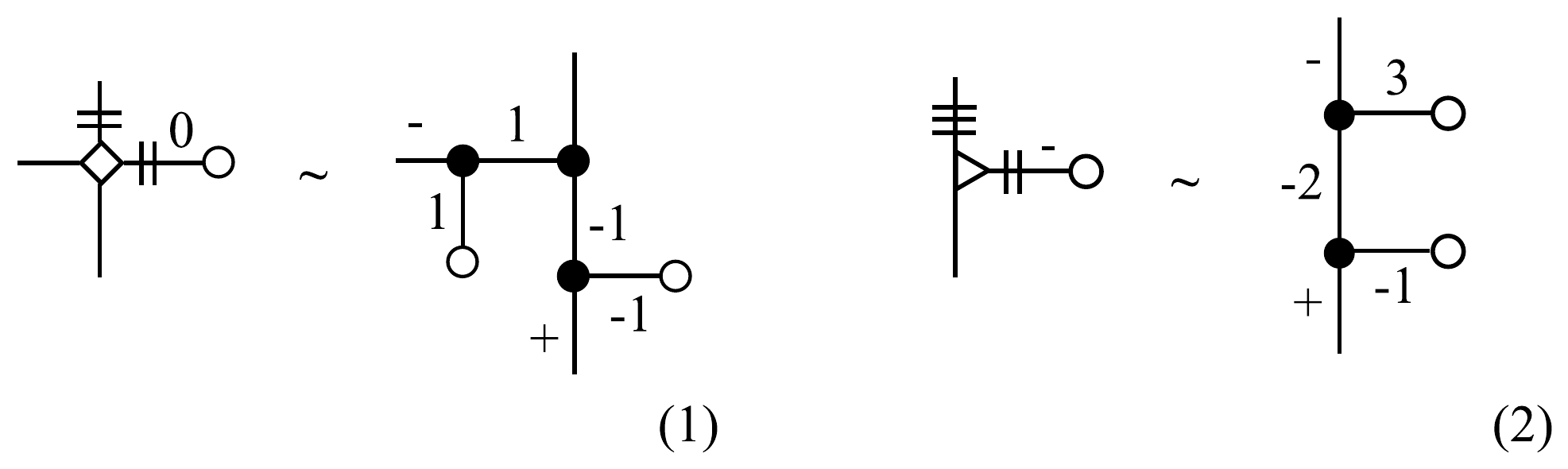}
\nota{These portions of graph contribute the same to $\partial N(X)$.} 
\label{boundary_contribution:fig}
\end{center}\end{figure}

\begin{prop}
Each pair of pieces in Figure \ref{boundary_contribution:fig} contributes the same to $\partial N(X)$.
\end{prop}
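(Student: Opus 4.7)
The horizontal boundary $\partial_h N(X)$ fibers over $X$ via $\pi\colon N(X) \to X$, and the piece of $\partial_h N(X)$ lying above $X_{10}$ or $X_{11}$ is the cusped hyperbolic manifold $W_{10}$ or $W_{11}$ realised as a link complement in $\#_2(S^2 \times S^1)$ (Figure \ref{M:fig}). Attaching a disc of gleam $g$ to a boundary circle $\gamma$ of $X_{10}$ or $X_{11}$ corresponds, at the level of $\partial N(X)$, to a Dehn filling of the cusp above $\gamma$ along the slope $g$, expressed in the meridian/longitude basis fixed in Section \ref{link:surgery:subsection}. Consequently, the contribution to $\partial N(X)$ of each portion in Figure \ref{boundary_contribution:fig} is a partially Dehn-filled copy of $W_{10}$ or $W_{11}$, with the unfilled tori equipped with their meridian/longitude framings inherited from $\partial N(X)$. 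The proposition then asserts that each displayed pair gives diffeomorphic partially-filled 3-manifolds by a diffeomorphism that matches framings on the remaining cusps.

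\textbf{Carrying it out.} For every pair I would (1) read the Dehn filling slopes off the gleams, remembering the convention that a gleam $\pm \tfrac12$ on a disc attached to $X_{10}$ corresponds to the half-integer slope in the length-$1$ cusp (Figure \ref{X10_condition:fig}), and a gleam $0$ on a disc attached to $X_{11}$ corresponds to the integer slope in a length-$1$ cusp (Figure \ref{X11_condition:fig}); (2) perform a short Kirby-calculus computation on the link surgery diagrams of Figure \ref{M:fig}, using slam-dunks, Rolfsen twists, and handle slides (exactly the toolkit used in the exceptional-filling analysis of Section \ref{exceptional:section}, e.g.\ Figures \ref{W8slamdunk:fig} and \ref{W9_diag:fig}), to convert one partially-filled manifold into the other; (3) verify that the induced homeomorphism sends the meridian and longitude of each remaining cusp of $W_{10}$ (resp.\ $W_{11}$) to the meridian and longitude of the corresponding cusp of $W_{11}$ (resp.\ $W_{10}$), so that the slopes given by gleams on the still-unfilled boundary circles of the two portions agree.

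\textbf{A shortcut via existing moves.} Many of the pairs in Figure \ref{boundary_contribution:fig} are in fact the two sides of moves already proved in Section \ref{moves:section}, for instance the $X_{10}\leftrightarrow X_{11}$ exchanges in Figures \ref{X10e11moves:fig} and \ref{X10e11moves2:fig} and the collapsing moves of Figures \ref{X10moves:fig} and \ref{X11moves:fig}. Those moves transform one shadow $X$ into another shadow $X'$ of the \emph{same} framed block $M$; in particular they are realised by sequences of the basic moves of Figure \ref{move_all:fig}, each of which changes $N(X)$ only by the addition or cancellation of a $(2,3)$-handle pair and therefore preserves $\partial N(X)$. For every such pair this immediately yields the equality of boundary contributions, with no further computation required. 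For the remaining pairs (if any) one falls back on the Kirby calculation above.

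\textbf{Main obstacle.} The delicate point is the framing bookkeeping in step (3): since $W_{10}$ has cusps of lengths $1,2,3$ while $W_{11}$ has cusps of lengths $1,1,2,2$, the homeomorphism produced by the Kirby moves must be tracked carefully in the meridian/longitude basis of every remaining cusp, so as to ensure that the gleams attached to the unfilled boundary circles of the two portions really correspond under the identification. Once this bookkeeping is organised, the verification is mechanical; the shortcut of the previous paragraph avoids it altogether whenever it applies.
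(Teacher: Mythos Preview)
Your ``shortcut via existing moves'' is exactly the paper's approach, and it suffices on its own; no Kirby calculus fallback is needed. The paper's proof consists of a short sequence of shadow moves displayed in a proof figure: for the $X_{11}$ portion it applies a move from \cite[Figure~71-(4)]{Ma:zero} followed by Figure~\ref{X11moves:fig}-(3), and for the $X_{10}$ portion it applies Figure~\ref{X10moves:fig}-(2). These transform each left-hand portion into the corresponding right-hand one (a $Y_{111}$ piece contributing $P\times S^1$, and a piece contributing $(A,3)$, respectively), so the boundary contributions coincide.

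One small correction: Turaev's basic moves in Figure~\ref{move_all:fig} do not change $N(X)$ by a $(2,3)$-handle pair; they preserve the 4-dimensional thickening $N(X)$ itself up to diffeomorphism. (The $(2,3)$-cancelling-pair language belongs to the disc-addition operation of Section~\ref{discs:subsection}, which is a different manoeuvre.) This only strengthens your conclusion that $\partial N(X)$ is preserved.

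Your Kirby-calculus route would also work and is a legitimate alternative, but it is more laborious: one would have to identify the partially filled $W_{11}$ with $P\times S^1$ and the partially filled $W_{10}$ with $(A,3)$ directly from the link diagrams of Figure~\ref{M:fig}, while tracking the meridian/longitude framings on the remaining cusps. The shadow-move argument bypasses this bookkeeping entirely.
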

\begin{proof}
This is proved in Figure \ref{boundary_contribution_proof:fig}.
These portions contribute the same to $\partial N(X)$. In (1) we use \cite[Figure 71-(4)]{Ma:zero} and Figure \ref{X11moves:fig}-(3). In (2) we use Figure \ref{X10moves:fig}-(2).
\end{proof}

\begin{figure}
\begin{center}
\includegraphics[width = 16 cm]{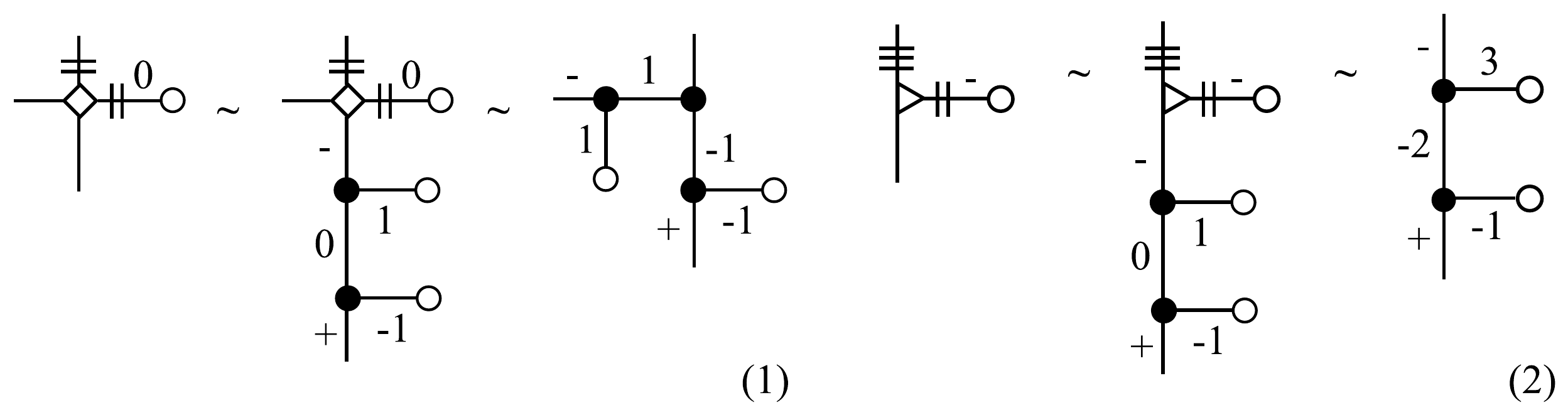}
\nota{Proof of Figure \ref{boundary_contribution:fig}.}
\label{boundary_contribution_proof:fig}
\end{center}\end{figure}

\begin{figure}
\begin{center}
\includegraphics[width = 8 cm]{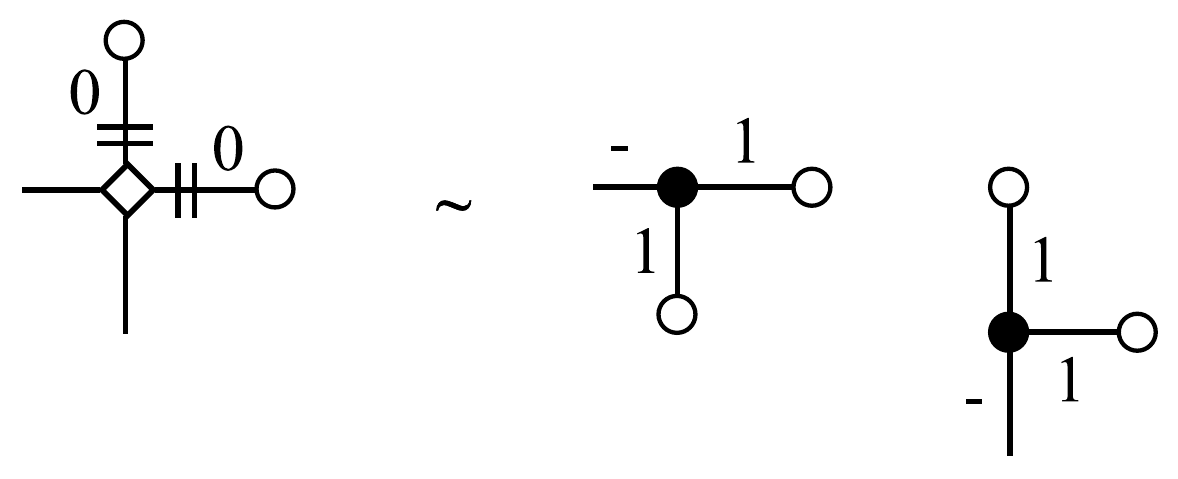}
\nota{These portions of graph contribute the same to $\partial N(X)$. There is a connected sum between the two components that is not indicated, so the manifold is in fact a connected sum of two solid tori.}
\label{Z00_boundary:fig}
\end{center}\end{figure}

We have just discovered that the two portions in Figure \ref{boundary_contribution:fig} contribute to $\partial N(X)$ with submanifolds diffeomorphic respectively to $P\times S^1$ and $(A,3)$, where the latter denotes the 
Seifert manifold with parameters $\big(A,(3,1)\big)$. 

Another piece that plays an important role in our proof is shown in Figure \ref{Z00_boundary:fig}-(left). 

\begin{prop}
The two pieces in Figure \ref{Z00_boundary:fig} contribute the same to $\partial N(X)$. \end{prop}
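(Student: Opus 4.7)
The plan is to mirror the strategy of the preceding proposition: apply moves from Section \ref{moves:section} to transform the left-hand configuration of Figure \ref{Z00_boundary:fig} into the right-hand one (or to bring both to a common intermediate form). Since every move listed in Section \ref{moves:section} replaces a shadow of a block by another shadow of the \emph{same} block, such a transformation automatically preserves $N(X)$, and therefore the contributions to $\partial N(X)$ agree.

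First I would apply Figure \ref{X10moves:fig}-(2), exactly as in the proof of Figure \ref{boundary_contribution:fig}-(2), in order to absorb the $X_{10}$ piece on the left together with the adjacent $0$-gleamed disc guaranteed by Proposition \ref{suppose:10:prop}. This converts the $X_{10}$ portion into a simpler picture made of annuli and discs with prescribed gleams, in which no $8$-shaped singular component survives.

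Next, using the basic moves of Figure \ref{move_all:fig}, the thickening moves of Figure \ref{thickening:fig}, and the region-collapse technique, I would further simplify the intermediate picture. Each $0$-gleamed disc that survives thickens to a solid torus $D^2\times S^1$; once the remaining bigons and trivial portions have been cancelled (producing at most some $S^3\times S^1$ summands that get filled away), what is left is precisely two such solid tori joined by the connected-sum bridge indicated implicitly in the caption, which matches the right-hand side of Figure \ref{Z00_boundary:fig}.

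The main obstacle is the bookkeeping of gleams: the move in Figure \ref{X10moves:fig}-(2) shifts the gleam of one adjacent region by $-2$, and subsequent basic moves introduce further half-integer changes, so one must track these shifts carefully and verify that the final gleam on every surviving region equals the value shown on the right, and that no spurious $S^2\times S^1$ summand has sneaked into $\partial N(X)$ beyond the single one accounted for by the implicit connected sum between the two solid tori.
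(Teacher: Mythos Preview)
Your proposal rests on a misidentification of the piece in Figure~\ref{Z00_boundary:fig}-(left). That configuration is an $X_{11}$ with \emph{two} $0$-gleamed discs attached along its length-$2$ boundary components (this is the situation of Figure~\ref{G':fig}-(2), yielding a connected sum of two solid tori); it is not an $X_{10}$ with an adjacent $\pm\tfrac12$-gleamed disc. Consequently the move in Figure~\ref{X10moves:fig}-(2) does not apply, Proposition~\ref{suppose:10:prop} is irrelevant here, and the chain of simplifications you describe never gets started.

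The paper's argument instead leverages the \emph{previous} proposition directly: one invokes Figure~\ref{boundary_contribution:fig}-(1) to replace the $X_{11}$ together with one of its $0$-gleamed discs by a $Y_{111}$ (contributing $P\times S^1$ to $\partial N(X)$), leaving the second $0$-gleamed disc attached to one leg of $Y_{111}$; then the inverse of the connected-sum move (Figure~\ref{sum:fig}) and \cite[Figure~71-(3)]{Ma:zero} split this into the two solid tori shown on the right. Note also that these steps are only claimed to preserve $\partial N(X)$, not $N(X)$ itself: the substitutions coming from Figure~\ref{boundary_contribution:fig} and from \cite[Figure~71]{Ma:zero} are boundary-level equivalences, not shadow moves of the block, so your framing ``such a transformation automatically preserves $N(X)$'' is stronger than what is needed or asserted.
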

\begin{proof}
This is proved in Figure \ref{Z00_boundary_proof:fig}. On (1) we use Figure \ref{boundary_contribution:fig} and then apply the opposite of Figure \ref{sum:fig}. To get (2) we use \cite[Figure 71-(3)]{Ma:zero}.
\end{proof}

\begin{figure}
\begin{center}
\includegraphics[width = 16 cm]{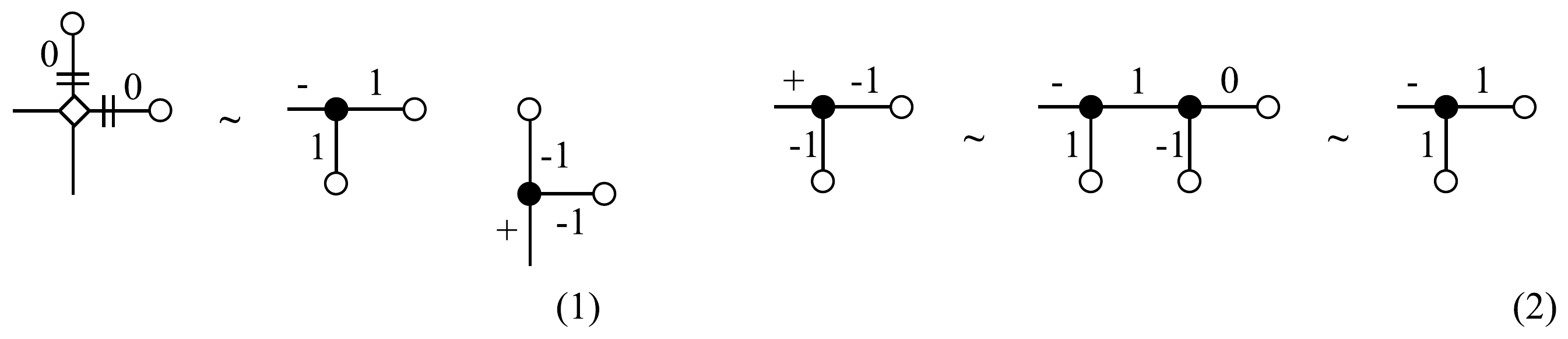}
\nota{Proof of Figure \ref{Z00_boundary:fig}.}
\label{Z00_boundary_proof:fig}
\end{center}\end{figure}

The piece shown in Figure \ref{Z00_boundary:fig} contributes with a connected sum of two solid tori, obtained from $P\times S^1$ by a fiber-parallel Dehn filling.

\section{Proof of Theorem \ref{main:teo} II: decorated tree with levels.}
We conclude in this section the proof of Theorem \ref{main:teo}.

\subsection{Conditions on the shadow $X$}
Let $X$ be a shadow with $c^*(X) \leq 1$. As proved in the previous section, we may suppose that $X$ decomposes only into pieces homeomorphic to
$$D,\ Y_{111},\ Y_{12},\ X_{10},\ {\rm or}\ X_{11}.$$ 
Moreover, each piece $X_{10}$ or $X_{11}$ is adjacent to a disc as in Figures \ref{X10_condition:fig} and \ref{X11_condition:fig}.

\subsection{The decorated graphs $G$ and $G'$}
Let $G$ be a decorated graph that represents $X$, with vertices of type 
$D, Y_{111}, Y_{12}, X_{10}$, and $X_{11}$. 
We now construct a new graph $G'$ from $G$ as shown in Figure \ref{G':fig}. Every time we find a vertex $X_{10}$ together with a $\pm$-gleamed disc as in (1), we substitute it with a new type of vertex as shown there. When we find a vertex $X_{11}$ that is attached to \emph{two} 0-gleamed discs along its length-2 boundaries as in (2), we substitute it with two new vertices as shown. If it is adjacent to only one, we substitute it with one new vertex as in (3).

\begin{figure}
\begin{center}
\includegraphics[width = 16 cm]{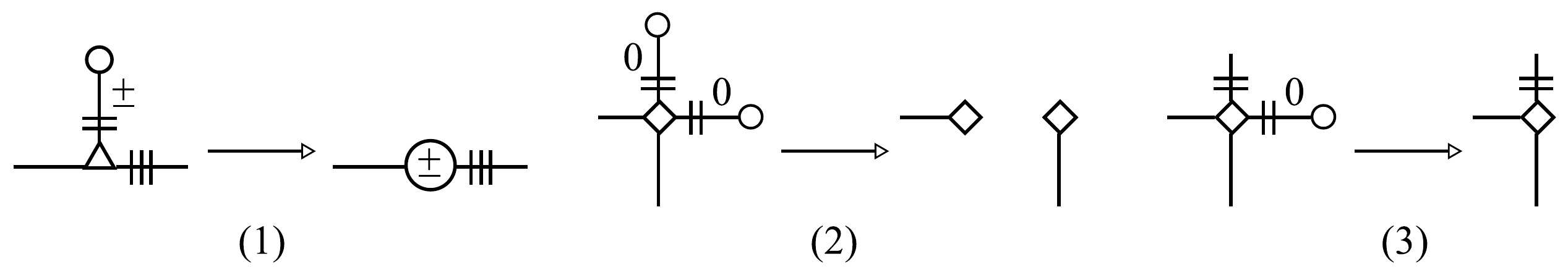}
\nota{How to transform $G$ into $G'$.}
\label{G':fig}
\end{center}\end{figure}

The graph $G'$ may be disconnected because of (2). Each connected component $G_i'$ determines a decomposition of $\#_h(S^2 \times S^1)$ along tori, where $h$ depends on $i$. The new vertices created in Figure \ref{G':fig} represent portions homeomorphic to $(A,3)$, solid tori, and $P\times S^1$, as prescribed by Figures \ref{boundary_contribution:fig} and \ref{Z00_boundary:fig}. We now study this decomposition in detail.

Summing up, the graph $G'$ contains vertices of these types:
$$\includegraphics[width = 1 cm]{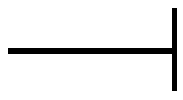} \quad 
\includegraphics[width = 1 cm]{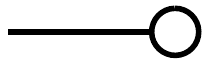}\quad 
\includegraphics[width = 1 cm]{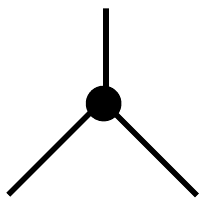}
\quad \includegraphics[width = 1 cm]{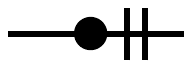}
\quad \includegraphics[width = 1.7 cm]{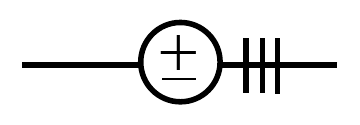}
\quad \includegraphics[width = 1 cm]{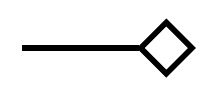}
\quad \includegraphics[width = 1.3 cm]{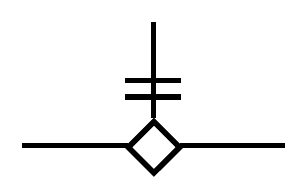}
$$

Each 1-valent vertex contributes to the decomposition of $\#_h(S^2 \times S^1)$ with a solid torus; the 2-valent vertices contribute with the Seifert manifolds $(A,2)$ and $(A,3)$; each 3-valent vertex contributes with $P\times S^1$.

A \emph{flat} vertex is a vertex $v$ if type \includegraphics[width = 0.6 cm]{0.pdf}.

\begin{prop} \label{no:flat:Z0:prop}
We may require that every flat vertex is adjacent to a vertex \includegraphics[width = 0.6 cm]{4.pdf}.
\end{prop}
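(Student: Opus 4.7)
The plan is to perform a case analysis on the type of vertex $w$ adjacent to a given flat vertex $v$ in $G'$. A flat vertex $v$ encodes a disc attached along a single boundary circle $\gamma$ of the piece represented by $w$, and the half-integer decorating the edge $vw$ determines a slope on the torus $T$ fibering over $\gamma$ inside $\partial N(X) \isom \#_h(S^2 \times S^1)$. The strategy is: for each possible type of $w$ distinct from \includegraphics[width = 0.6 cm]{4.pdf}, either simplify the shadow (reducing total vertex count) or locally rearrange the configuration to produce the desired type-\includegraphics[width = 0.6 cm]{4.pdf} adjacency, and then induct on a suitable complexity.

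First I would dispose of the trivial cases. If $w$ is itself flat, or if the component of $G'$ containing $v$ is just two flat vertices joined by an edge, then this component corresponds to a closed summand of the form $S^4$, $\matCP^2$, $\overline{\matCP}^2$, or $S^2 \times S^2$, which can be absorbed into the $\#_h\matCP^2$ factor of $M$ via the inverse of Figure \ref{sum:fig} and removed from $G'$. If $w$ is a 2- or 3-valent vertex of type \includegraphics[width = 0.6 cm]{1.pdf} (the $Y_{111}$- or $Y_{12}$-derived vertex), the flat disc can be pushed through using the basic moves of Figure \ref{move_all:fig}-(2) together with the graph-level moves of Figure \ref{thickening:fig}, yielding either a collapse of $v$ or an assembling-style decomposition that strictly simplifies the graph as in Proposition \ref{leq:prop}.

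The core of the argument concerns the cases when $w$ is a vertex of type \includegraphics[width = 0.6 cm]{5.pdf}, \includegraphics[width = 0.6 cm]{7.pdf}, \includegraphics[width = 0.6 cm]{8.pdf}, or \includegraphics[width = 0.6 cm]{9.pdf}, which come from the pieces $X_{10}$ and $X_{11}$ (already carrying one or two standard 0-gleamed discs by Propositions \ref{suppose:10:prop} and \ref{suppose:11:prop}). Here I would invoke Lemma \ref{peculiar:lemma} on the torus $T$ separating $v$ from $w$: the slope $s$ bounding a compressing disc in $\#_h(S^2 \times S^1)$ must belong to the exceptional list given by Corollaries \ref{W10:cor} and \ref{W11:cor}. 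When $s = \infty$, the propagation principle from Proposition \ref{only:five:prop} forces a vertical compression everywhere around the block, so $w$ itself collapses; when $s$ is an integer, adding the horizontal compressing disc via Figure \ref{hv:fig}-(2) creates precisely one of the configurations on the left-hand side of Figures \ref{X10moves:fig}, \ref{X11moves:fig}, or \ref{X10e11moves:fig}, and the move then either eliminates $v$ or converts $w$ into a type-\includegraphics[width = 0.6 cm]{4.pdf} vertex adjacent to $v$. The fractional cases $s = \pm\tfrac{1}{2}$ produce exactly the type-\includegraphics[width = 0.6 cm]{4.pdf} adjacency directly via Figure \ref{add_disc:fig}-(right).

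The main obstacle will be bookkeeping: I need to certify that none of the moves introduces fresh flat vertices adjacent to non-type-\includegraphics[width = 0.6 cm]{4.pdf} vertices elsewhere in $G'$. To handle this, I would introduce the lexicographic complexity $(N_{\mathrm{tot}}, N_{\mathrm{bad}})$ where $N_{\mathrm{tot}}$ counts all vertices of $G'$ and $N_{\mathrm{bad}}$ counts the flat vertices violating the conclusion of the proposition, and verify that every move listed above strictly decreases this pair. Since the complexity is well-founded, finitely many applications yield a shadow $X$ of the same block $M$ whose graph $G'$ has the required property.
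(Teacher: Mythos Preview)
You have misidentified what a flat vertex is, and this leads you to miss the one-line argument. A flat vertex (the type \includegraphics[width = 0.6 cm]{0.pdf}) is \emph{not} an attached disc $D$; it is a boundary component $B$ of the shadow $X$. Its contribution to the decomposition of $\partial N(X)\isom\#_h(S^2\times S^1)$ is the \emph{vertical} solid torus $V_i\subset\partial_vN(X)$ lying over that boundary circle, and the meridian of this solid torus is the vertical fibre. Hence the torus $T$ between $v$ and the adjacent vertex $w$ automatically bounds a \emph{vertical} compressing disc --- there is nothing to search for, and no gleam-dependent slope. The paper's proof is exactly this sentence, followed by a reference to the vertical-disc cases already handled inside Propositions~\ref{suppose:10:prop} and~\ref{suppose:11:prop} (and their complexity-zero analogues in \cite{Ma:zero}) for each possible type of $w$ other than $Y_{111}$.

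Because of this misreading, most of your plan is off target. Your opening assertion that ``the half-integer decorating the edge $vw$ determines a slope on $T$'' describes a disc vertex $D$, not a flat vertex; two flat vertices joined by an edge do not give $S^4$ or $\matCP^2$ but rather an annulus with two free boundary circles; and the invocation of Lemma~\ref{peculiar:lemma} together with the exceptional-filling Corollaries~\ref{W10:cor}, \ref{W11:cor} is superfluous, since you already possess the vertical compressing disc without appealing to anything about $\#_{h'}(S^2\times S^1)$ fillings. Your case split on $s\in\{\infty,\matZ,\pm\tfrac12\}$ collapses to the single case $s=\infty$. Once you make that correction, the argument becomes the paper's: apply Figure~\ref{hv:fig}-(1), and note that for every adjacent piece except $Y_{111}$ the vertical disc either propagates (via the principle in Proposition~\ref{only:five:prop}) or feeds directly into a destroying move already catalogued for $Y_{12}$, $X_{10}$, and $X_{11}$.
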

\begin{proof}
If the flat vertex is adjacent to some other vertex $v$, it furnishes a vertical compressing disc. The arguments already used in the proofs of Proposition \ref{suppose:10:prop} and \ref{suppose:11:prop} show that $v$ can be either discarded or simplified in all cases except \includegraphics[width = 0.6 cm]{4.pdf}.
\end{proof}

The proof now pursues as follows: we have a decomposition of $\#_h(S^2 \times S^1)$ into Seifert manifolds of 4 types $D\times S^1$, $P \times S^1$, $(A,2)$, and $(A,3)$. Such a decomposition must simplify somewhere (in a sense that we need to state precisely) and in all cases this simplification translates into a simplification move on $X$, so we conclude by iteration. 

To apply rigorously this idea we unfortunately need to face some technicalities. The first is to equip $G$ with the structure of a \emph{tree with levels}, as in \cite{Ma:zero}. This structure will allow us to identify a simplification of the 3-dimensional boundary. The second technical part is the translation of this 3-dimensional simplification into a simplification of the shadow $X$. Unfortunately, there will be many cases to consider. All the moves in Figures \ref{X11moves:fig}, \ref{X10moves:fig}, \ref{X10e11moves:fig}, and \ref{X10e11moves2:fig} will be needed.

\subsection{Decorated trees with levels}
We now need to recall a more technical notion from \cite{Ma:zero}. 
Recall that $G'$ has some connected components $G_i'$. A \emph{level function} on $G'_i$ is a function that assigns a non-negative integer $l(v)$ (the level) to every vertex $v$ of $G'_i$, such that the following holds:

\begin{enumerate}
\item There are $k\geqslant 2$ vertices with level zero, which form a path $v_1,\ldots, v_k$ called \emph{root}. 
\item Every vertex $v$ of valence 2 or 3   
is adjacent to precisely one vertex $v'$ of strictly higher level  $l(v')>l(v)$;
\item For every $L\geq 0$ the portion of vertices $v$ with bounded level $l(v)\leq L$ form a (connected) subtree. 
\end{enumerate}

\begin{figure}
\begin{center}
\includegraphics[width = 7 cm]{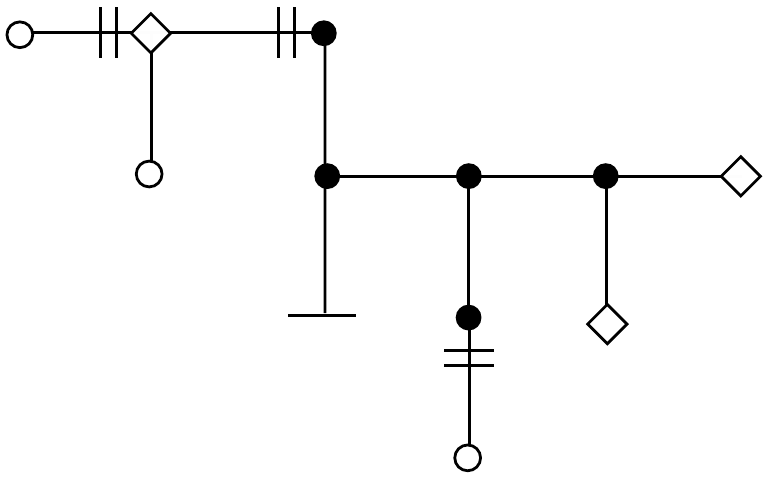}
\nota{A tree with levels. The level function may be deduced from the picture, with the convention that vertices at the same height have the same level, and vertices lying below have higher level than those lying above. There are three vertices at level 0 (the root), five vertices at level 1, three at level 2, and one at level 3.}
\label{leveled:fig}
\end{center}
\end{figure}

In particular, the graph $G_i'$ is a tree. One example is shown in Figure \ref{leveled:fig}. There is also a fourth condition related to the induced decomposition of $\#_h(S^2\times S^1)$ that we state soon. For every vertex $v$, let $S_v$ be the set of all vertices $v'$ such that there is a path
$$v = v_1, \ldots, v_k = v'$$
with $k\geq 2$ and $l(v_2)>l(v_1)$. 

The set $S_v$ is non-empty if and only if $v$ has valence 2 or 3. 
When non-empty, the set $S_v$ contains precisely one vertex adjacent to $v$, and possibly more; the set $S_v$ forms a subtree of $G_i'$. The vertex $v$ is the only one in $G_i'\setminus S_v$ which is adjacent to some vertex in $S_v$. We say that $S_v$ is the \emph{branch} that starts from $v$. The vertex $v$ is the \emph{base} of the branch.

\begin{rem}
We note that this terminology differs from that used in \cite{Ma:zero}, where we distinguished between \emph{branches}, \emph{leaves}, and \emph{fruits}. The presence of additional kinds of vertices here would complicate too much the terminology, so we prefer to use the term branch in all the possible cases.
\end{rem}

Every vertex $v$ in $G_i'$ determines a submanifold $M_v \subset \#_h(S^2 \times S^1)$ bounded by tori. If $S$ is a set of vertices of $G_i'$, we write $M_S = \cup_{v\in S} M_v$. 

For every $v\in G_i'$ 
of valence 2 or 3, the submanifold $M_{S_v}$ is connected and has only one boundary torus, attached to one boundary torus of $M_v$. The piece $M_v$ is a Seifert manifold, homeomorphic to either $P^2\times S^1$, $(A,2)$, or $(A,3)$. In all cases, the Seifert fibration is unique up to isotopy and induces a fibration on the boundary tori. We can now state the fourth and last requirement for our level function $l$.

\begin{enumerate}
\addtocounter{enumi}{3}
\item For every $v$
of valence 2 or 3, the manifold $M_{S_v}$ should be a solid torus, whose meridian is attached to a section of the fibration of $M_v$.
\end{enumerate}

\subsection{Reduction to decorated trees with levels}
We now adapt \cite[Theorem 8.1]{Ma:zero} to our context. 

\begin{prop} \label{leveled:prop}
We may suppose that each graph $G_i'$ has a structure of decorated tree with levels.
\end{prop}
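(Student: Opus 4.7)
The plan is to adapt the strategy of \cite[Theorem 8.1]{Ma:zero} to the enriched setting, where in addition to the classical pieces $D$, $Y_{111}$, $Y_{12}$ we now have the $2$-valent vertex of type $(A,3)$ coming from $X_{10}$ and the solid-torus/pair-of-pants vertices coming from $X_{11}$. I would argue by strong induction on the total number of vertices of $G'$, showing that whenever some $G_i'$ fails to carry a level function satisfying conditions (1)--(4), the shadow $X$ may be replaced by a strictly simpler one via the moves of Section \ref{moves:section}.

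First I would establish that $G_i'$ may be assumed to be a tree. Suppose, for contradiction, that $G_i'$ contains a cycle; then some edge $e$ of the cycle corresponds to a torus $T \subset \partial N(X) \isom \#_h(S^2\times S^1)$ which is non-separating within the decomposition induced by $G_i'$. By Lemma \ref{peculiar:lemma}, after Dehn-filling the remaining boundary tori with disc-bounding slopes we find a compressing disc for $T$ inside $\#_h(S^2\times S^1)$. According to whether this disc is vertical or horizontal with respect to the fibration of $T$ over the underlying circle in $X$ (see Section \ref{discs:subsection}), the corresponding move of Figure \ref{hv:fig}-(1) or (2) either detaches the cycle or reduces the number of vertices, so by induction we may assume each $G_i'$ is a tree.

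Next I would construct the level function inductively from the outside inwards. Since $G_i'$ is a finite tree, it contains $1$-valent vertices, each contributing a solid torus to the decomposition. I would grow branches by absorbing any $2$- or $3$-valent vertex $v$ whose adjacent branch $S_v$ already satisfies condition (4): that is, $M_{S_v}$ is a solid torus whose meridian is glued to a section of the Seifert fibration of $M_v$. What remains after this process defines the root path. If condition (4) ever fails at some prospective base $v$, I would apply Lemma \ref{peculiar:lemma} to the torus $T$ shared between $M_v$ and $M_{S_v}$: the resulting slope, read against the fibration of $M_v$, produces either a horizontal or vertical compressing disc. For $M_v$ of classical type this reduces to the arguments of \cite{Ma:zero}; for $M_v$ of one of the new types, Corollaries \ref{W10:cor} and \ref{W11:cor} classify the admissible slopes, and in each case the corresponding move from Figures \ref{X11moves:fig}, \ref{X10moves:fig}, \ref{X10e11moves:fig}, \ref{X10e11moves2:fig}, together with Figures \ref{boundary_contribution:fig} and \ref{Z00_boundary:fig}, strictly simplifies $X$, allowing the inductive step to go through.

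The main obstacle will be the case analysis in the last step: the new $2$- and $3$-valent vertices interact nontrivially with the classical pieces, and the ``compressing disc to simplification move'' translation is no longer immediate. One must verify that in every configuration of root vertex type, branch type, and admissible slope given by Corollaries \ref{W10:cor} and \ref{W11:cor}, some composition of the moves compiled in Section \ref{moves:section} produces a strict decrease in a suitably chosen complexity measure of $X$. Once this is established, condition (3) (connectedness of the sublevel subtrees) follows automatically from the inductive construction, as the root is built first and each branch attaches at a unique vertex of the previously constructed subtree.
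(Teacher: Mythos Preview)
Your overall induction strategy is in the right spirit, but you are working much harder than necessary and have one conceptual confusion. The key observation you miss is that the dichotomy at the heart of \cite[Theorem 8.1]{Ma:zero} is a purely $3$-dimensional statement about decompositions of $\#_h(S^2\times S^1)$ into Seifert pieces: either some $P\times S^1$ piece has a fibre-parallel compressing disc, or the graph already admits a level structure. After the substitutions of Figure~\ref{G':fig}, every piece $M_v$ in $G_i'$ is Seifert --- a solid torus, $(A,2)$, $(A,3)$, or $P\times S^1$ --- and the original argument from \cite{Ma:zero} goes through verbatim (the presence of $(A,3)$ alongside $(A,2)$ is immaterial, since both have unique fibrations and play the same role). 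So the only genuinely new work is handling the fibre-parallel disc when it lands on the \emph{new} type of $3$-valent vertex, the one coming from $X_{11}$: there a horizontal disc converts it into the configuration of Figure~\ref{Z00_boundary:fig}-(left), reducing the number of $3$-valent vertices by one, and induction on that number finishes the proof. There is no need to reprove that $G_i'$ is a tree or to rebuild the level function from scratch.

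Your invocation of Corollaries~\ref{W10:cor} and~\ref{W11:cor} is misplaced. Those classify Dehn fillings of the hyperbolic manifolds $W_{10}$, $W_{11}$ and were already used, in Propositions~\ref{suppose:10:prop} and~\ref{suppose:11:prop}, to arrange that every $X_{10}$ and $X_{11}$ is adjacent to a disc. At the level of $G'$ the hyperbolic pieces have been absorbed into Seifert pieces via Figures~\ref{boundary_contribution:fig} and~\ref{Z00_boundary:fig}; the compressing-disc analysis on $M_v$ that you need here is elementary Seifert geometry, not hyperbolic Dehn-filling combinatorics. The case analysis you anticipate in your last paragraph would therefore be chasing the wrong invariants, and the large catalogue of moves from Section~\ref{moves:section} is not required at this stage --- it enters only later, in the plumbing-line analysis.
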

\begin{proof}
The claim in the proof of \cite[Theorem 8.1]{Ma:zero} holds also here with the same proof, and it says that either a piece $P\times S^1$ has a fibre-parallel compressing disc, or $G_i'$ has a structure of decorated tree with levels. In the latter case we are done, so we investigate the former. The piece $P\times S^1$ is determined by some 3-valent vertex $v$ of $G'$.

If $v$ is \includegraphics[width = 0.6 cm]{4.pdf} the move in Figure \ref{scoppia:fig} applies and simplifies $X$. If $v$ is \includegraphics[width = 0.8 cm]{8.pdf}, we
can attach a horizontal compressing disc and apply Figure \ref{hv:fig}-(2) to transform the vertex into one as in Figure \ref{Z00_boundary:fig}-(left). After the move $G'$ contains one 3-valent vertex less and we conclude by induction on their number.
\end{proof}

\begin{figure}
\begin{center}
\includegraphics[width = 5 cm]{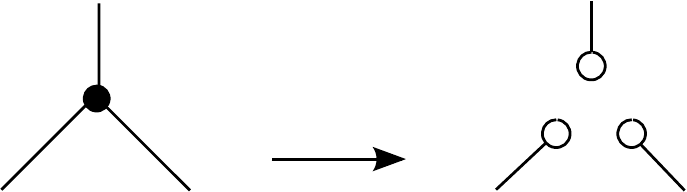}
\nota{This simplifying move applies when the fiber of the $P^2\times S^1$ lying above the vertex bounds a compressing disc (which is horizontal with respect to the 3 incident regions in the shadow).}
\label{scoppia:fig}
\end{center}
\end{figure}

We will henceforth suppose that each decorated graph $G_i'$ has a structure of decorated tree with levels. A decorated tree with levels describes a 3-manifold that is either $S^3$ or $S^2 \times S^1$, see \cite[Proposition 8.6]{Ma:zero}.

\subsection{Symmetries}
We will henceforth consider only trees with vertices representing the pieces $B, D, Y_{111}, Y_{12},\ X_{10}$, or $X_{11}$. We note that each of these pieces has a symmetry that fixes each boundary component and reverses its orientation. Therefore, as anticipated in Section \ref{encoding:graph:subsection}, there will be no need to write explicitly how two adjacent pieces are glued.

\subsection{Nice flat vertices}
A flat vertex $v$ in $G_i'$ is \emph{nice} if it is adjacent to a \includegraphics[width = 0.6 cm]{4.pdf} of strictly lower level, and moreover $v$ should
not be contained in a portion as in Figure \ref{not_nice:fig}. 

\begin{figure}
\begin{center}
\includegraphics[width = 2.5 cm]{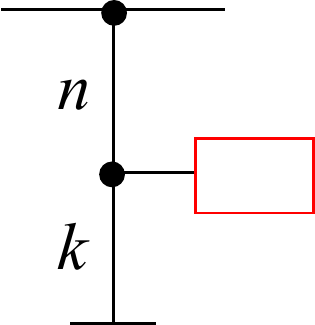}
\nota{A flat vertex that is not nice.}
\label{not_nice:fig}
\end{center}
\end{figure}

\begin{prop} \label{nice:prop}
We may suppose that every flat leaf in $G_i'$ is nice.
\end{prop}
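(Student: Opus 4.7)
The plan is to proceed by induction on a complexity measure such as the lexicographic pair (number of $X_{11}$ pieces, number of $X_{10}$ pieces) of the shadow $X$. I show that any flat leaf $v$ that fails to be nice permits a local modification of $X$ into a shadow of the same block $M$ with strictly smaller complexity. After each modification I reapply Propositions \ref{only:five:prop}, \ref{suppose:10:prop}, \ref{suppose:11:prop}, and \ref{leveled:prop} to restore the standing hypotheses and the tree-with-levels structure. Iterating finitely many times produces a shadow in which every flat leaf is nice.

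Let $v$ be a flat leaf and $w$ its unique neighbor in $G'_i$, which by Proposition \ref{no:flat:Z0:prop} is of type \includegraphics[width = 0.6 cm]{4.pdf}, representing a piece $M_w\cong (A,2)$. Niceness requires (a) $l(w) < l(v)$ strictly, and (b) $v$ is not in the configuration drawn in Figure \ref{not_nice:fig}. For (a), the connectedness condition (3) on the level function forces $l(w)\leq l(v)$, so the only bad case is $l(w) = l(v)$. In that case $v\notin S_w$ and $w\notin S_v=\emptyset$, while condition (4) forces $M_{S_w}$ to be a solid torus attached along a section of the Seifert fibration of $M_w$. Since $M_v$ is also a solid torus (the $\pm \frac12$ filling of the length-$1$ cusp of $W_{10}$, whose product with $M_w$ sits as a connected summand of $\#_h(S^2\times S^1)$, and which by Corollary \ref{W10:cor} is fibered so as to admit a compressing disc in $\partial N(X)$), the two tori bounding $M_w$ are both filled to produce a closed summand homeomorphic to $S^3$ or $S^2\times S^1$. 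In each case the corresponding compressing disc yields a configuration to which one of the moves in Figure \ref{X10moves:fig} applies, eliminating the $X_{10}$ piece underlying $v$.

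For (b) one inspects the portion depicted in Figure \ref{not_nice:fig} and identifies the corresponding move in our catalogue — most likely among those of Figures \ref{X10e11moves:fig} and \ref{X10e11moves2:fig}, possibly combined with the boundary-contribution substitutions of Figures \ref{boundary_contribution:fig} and \ref{Z00_boundary:fig} — that strictly reduces the complexity measure. The principal obstacle will be the case analysis and bookkeeping in (b): since several of the available moves convert an $X_{10}$ into an $X_{11}$ or create extra $Y_{111}$ and $Y_{12}$ pieces (compare Proposition \ref{suppose:10:prop}), one must choose the complexity measure carefully. Ordering pairs (\#$X_{11}$, \#$X_{10}$) lexicographically (and invoking Proposition \ref{leveled:prop} to re-establish the level structure after each simplification) is expected to suffice for termination.
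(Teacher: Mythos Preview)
Your proposal rests on a misreading of what a \emph{flat vertex} is. In the paper a flat vertex is a vertex of type \includegraphics[width = 0.6 cm]{0.pdf}, which encodes a boundary component $B$ of the shadow $X$; it contributes to the $3$-manifold decomposition a \emph{vertical} solid torus in $\partial_v N(X)$ (this is exactly why, in the proof of Proposition~\ref{no:flat:Z0:prop}, a flat vertex ``furnishes a vertical compressing disc''). There is no ``$X_{10}$ piece underlying $v$'', and the $\pm\frac12$-filling of a cusp of $W_{10}$ plays no role here. Relatedly, the neighbour $w$ of type \includegraphics[width = 0.6 cm]{4.pdf} is the $3$-valent vertex $Y_{111}$, whose boundary contribution is $P\times S^1$, not $(A,2)$; compare the proof of Proposition~\ref{leveled:prop}, where the same symbol is treated via Figure~\ref{scoppia:fig}.

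Because of this, your induction on $(\#X_{11},\#X_{10})$ is aimed at the wrong target: eliminating $X_{10}$ or $X_{11}$ pieces neither addresses whether a boundary leaf sits below its adjacent $Y_{111}$ in the level structure, nor whether it avoids the configuration of Figure~\ref{not_nice:fig}. The paper's argument is much lighter: it simply invokes the leaf-rearrangement moves from \cite[Figure~53]{Ma:zero} (the analogues in this paper appear in Figure~\ref{move_leaf:fig}), which slide a flat leaf along the tree without changing the block $M$, until the leaf lands in a nice position. No pieces are destroyed and no heavy case analysis is needed. Your treatment of case~(b) is in any event only a sketch (``most likely among those of\ldots''), not a proof; but the more basic issue is that the objects you are manipulating are not the ones the proposition is about.
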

\begin{proof}
As in \cite[Proposition 9.6]{Ma:zero}, this is done using the moves in \cite[Figure 53]{Ma:zero}. 
\end{proof}

\begin{figure}
\begin{center}
\includegraphics[width = 11 cm]{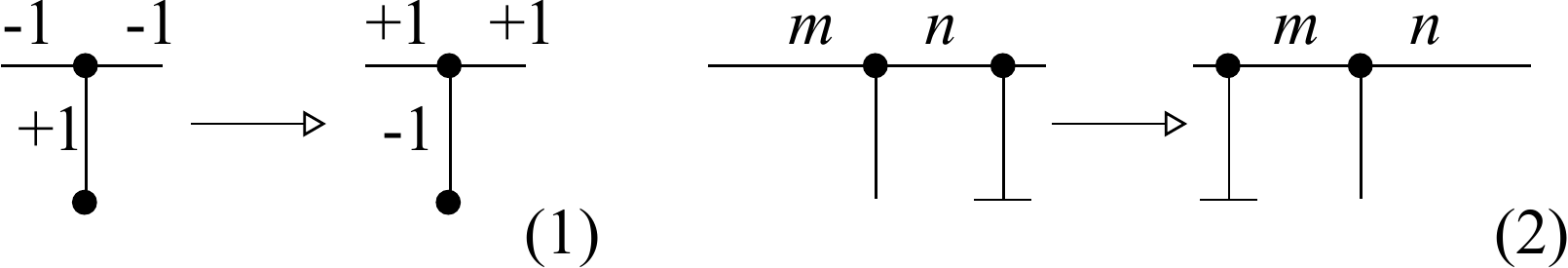}
\nota{These moves related different decorated tree with levels of the same block $M$.}
\label{move_leaf:fig}
\end{center}
\end{figure}

We will henceforth suppose that every flat leaf in $G_i'$ is nice.
The moves in Figure \ref{move_leaf:fig} modify a shadow into another shadow of the same block.

\subsection{Torsion of branches}
We are particularly interested in the 3 types of branches shown in Figure \ref{branch_torsion:fig}. Each such branch has a \emph{torsion} $q \in \matZ$, defined in \cite[Section 9.6]{Ma:zero} as follows.

Let $v$ be the base of the branch. It defines a block $M_v$ diffeomorphic to the Seifert manifold $P\times S^1$, $(A,2)$, or $(A,3)$. The branch defines a solid torus $M_{S_v}$ attached to a boundary torus $T$ of $M_v$. The torus $T$ has a preferred homology basis $(\mu, \lambda)$. The meridian $\mu$ is the vertical fiber $\pi^{-1} (x)$ of a point in $X$ along the natural projection $\pi\colon \partial N(X) \to X$. The longitude $\lambda$ is the fiber of the (unique) Seifert fibration of $M_v$, that is indeed horizontal here. The block $M_v$ is oriented, so $T$ also is, and we orient the pair $(\mu, \lambda)$ positively.

The meridian of the solid torus $M_{S_v}$ is attached to a section of the fibration, that is to a curve $\mu + q\lambda$ for some $q\in \matZ$. The integer $q$ is the \emph{torsion} of the branch. 

\begin{figure}
\begin{center}
\includegraphics[width = 8 cm]{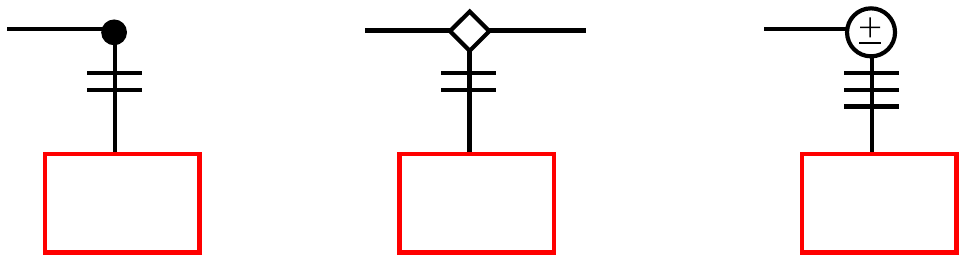}
\nota{Three particular types of branches.}
\label{branch_torsion:fig}
\end{center}
\end{figure}

\begin{figure}
\begin{center}
\includegraphics[width = 7 cm]{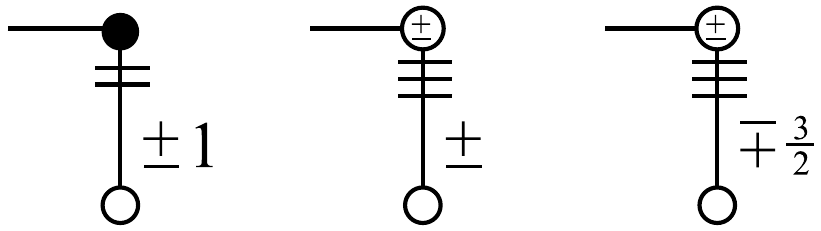}
\nota{Three particular types of branches with torsion $|q|=1$. We have $q=1$ if and only if the sign of the gleam $\pm 1, \pm, \pm \frac 32$ is positive (regardless of the sign of the small $\pm$ contained in the white vertex).}
\label{torsion:fig}
\end{center}
\end{figure}

\begin{prop} \label{branch:prop}
At every branch as in Figure \ref{branch_torsion:fig} we may suppose that:
\begin{itemize}
\item If $v$ is \includegraphics[width = 1 cm]{8.pdf}, then $|q| \geq 2$.
\item If $v$ is \includegraphics[width = 0.8 cm]{5.pdf}
or \includegraphics[width = 1.2 cm]{9.pdf}, either $|q| \geq 2$, or $|q|=1$ and the branch is as in Figure \ref{torsion:fig}.
\end{itemize}
\end{prop}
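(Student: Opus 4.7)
The plan is to argue by induction on the number of vertices of $G'$ (or, equivalently, on $c^*(X)$ plus the number of flat vertices), showing that whenever a branch of one of the three types in Figure \ref{branch_torsion:fig} has torsion $q$ with $|q|\le 1$ and is not of the form permitted in Figure \ref{torsion:fig}, one can perform a move from Sections 8--9 that strictly reduces this complexity. After finitely many such moves we reach a shadow in which every branch satisfies the stated torsion condition, and the induction terminates. Throughout, I will use freely that the solid torus $M_{S_v}$ is attached to the Seifert piece $M_v$ along a curve $\mu+q\lambda$ in the torus $T$ endowed with its canonical $(\mu,\lambda)$ basis, so that the value $q$ is \emph{visible} in the gleam data of the edge separating $v$ from the branch together with the gleams of the discs inside the branch.

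The case $q=0$ is uniform: the meridian of $M_{S_v}$ is a section of the Seifert fibration of $M_v$, which means that the torus $T$ admits a horizontal compressing disc in $\partial N(X)\cong\#_h(S^2\times S^1)$. Thus we may add this disc to $X$ via Figure \ref{hv:fig}-(2). The resulting portion is a disc of known gleam glued onto $v$, and one of our tabulated moves finishes the simplification: if $v$ is of type \includegraphics[width=0.6cm]{4.pdf} we use the move in Figure \ref{scoppia:fig}, if $v$ is of type \includegraphics[width=0.8cm]{5.pdf} we use Figure \ref{X10moves:fig}-(1) (possibly after Figure \ref{X10moves:fig}-(3) to reach the required gleam), and if $v$ is of type \includegraphics[width=1cm]{8.pdf} we use Figure \ref{X11moves:fig}-(1) or (2). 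In every case a vertex of type $X_{10}$, $X_{11}$, or $P\times S^1$ is removed, so the induction hypothesis strictly decreases.

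For $|q|=1$ the argument is more delicate and splits by the type of the base vertex. When $v$ is of type \includegraphics[width=1cm]{8.pdf}, the filling along $\mu\pm\lambda$ reduces $P\times S^1$ to a solid torus, so the entire piece $M_v\cup M_{S_v}$ is a solid torus; after rewriting the portion via Figure \ref{boundary_contribution:fig} or its proof, we bring it into a form to which Figure \ref{X11moves:fig}-(3) or one of the moves in Figure \ref{X10e11moves:fig} applies, eliminating the 3-valent $X_{11}$ vertex outright: this is why no $|q|=1$ branch is allowed at an \includegraphics[width=1cm]{8.pdf} vertex. When $v$ is of type \includegraphics[width=0.8cm]{5.pdf} or \includegraphics[width=1.2cm]{9.pdf}, the $|q|=1$ filling produces a genuinely new Seifert structure rather than a solid torus, but for most branch shapes the added disc can still be absorbed by one of Figures \ref{X10moves:fig}-(2,4,5,6), \ref{X11moves:fig}-(2,3), \ref{X10e11moves:fig}-(2,3,4), or \ref{X10e11moves2:fig}; a direct inspection of every possible branch against the list of moves shows that the only configurations that survive are precisely the three displayed in Figure \ref{torsion:fig}, and the sign of the gleam $\pm 1$, $\pm\tfrac12$, $\pm\tfrac32$ records the sign of $q$ via the orientation convention that fixes $(\mu,\lambda)$.

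The main obstacle is the last step: the case-by-case verification that every branch with $|q|=1$ not appearing in Figure \ref{torsion:fig} admits one of our tabulated moves. There are a considerable number of sub-cases, arising from the seven vertex types of $G'$, the possible gleams on the edge connecting $v$ to the branch, and the choice of which boundary component of $M_v$ carries the branch. The delicate point is to check that the gleam bookkeeping after the move is consistent and that no new branch of worse torsion is created; this requires tracing through the gleam changes prescribed in Figures \ref{moves1:fig}--\ref{moves3:fig} and their composites. The remaining residual configurations in Figure \ref{torsion:fig} are genuine obstructions because in each of them the corresponding 3-dimensional filling yields a lens-space-like piece that cannot be absorbed by any of the local moves available at $c^*\le 1$, and they will be treated separately in the concluding combinatorial analysis of the paper.
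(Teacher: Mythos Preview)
Your proposal has the two key cases inverted and then overcomplicates the second.

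For $q=0$, the meridian of the solid torus $M_{S_v}$ is $\mu+0\cdot\lambda=\mu$, and $\mu$ is by definition the \emph{vertical} fibre of $T\to\gamma$, not a horizontal curve. (The fact that $\mu$ happens to be a section of the Seifert fibration of $M_v$ is irrelevant here; that is true for every $\mu+q\lambda$ by condition (4) of the level structure.) So the correct move is Figure~\ref{hv:fig}-(1), producing a flat vertex, and then Proposition~\ref{no:flat:Z0:prop} disposes of it. The moves you invoke (Figure~\ref{scoppia:fig}, Figure~\ref{X10moves:fig}-(1), Figure~\ref{X11moves:fig}-(1,2)) all presuppose a horizontal disc and do not apply. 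Also, the vertex type \includegraphics[width=0.6cm]{4.pdf} does not appear among the three branch types in Figure~\ref{branch_torsion:fig}; the types are \includegraphics[width=0.6cm]{8.pdf}, \includegraphics[width=0.6cm]{5.pdf}, and \includegraphics[width=1cm]{9.pdf}.

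For $|q|=1$, the meridian $\mu\pm\lambda$ is a \emph{horizontal} curve, so Figure~\ref{hv:fig}-(2) applies and replaces the entire branch $S_v$ --- whatever its internal structure --- by a single disc vertex with a determined gleam. No case-by-case inspection of branch shapes is needed: after this one move the branch \emph{is} a disc. If $v$ is of type \includegraphics[width=0.6cm]{8.pdf} the resulting portion is exactly Figure~\ref{X11moves:fig}-(2) and simplifies; if $v$ is of type \includegraphics[width=0.6cm]{5.pdf} or \includegraphics[width=1cm]{9.pdf}, the resulting disc-plus-$v$ portion is literally one of the three configurations in Figure~\ref{torsion:fig}. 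Your ``considerable number of sub-cases'' and the worry about creating new branches of worse torsion are both unnecessary.
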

\begin{proof}
If $q=0$, there is a vertical compressing disc, so we conclude by Figure \ref{hv:fig} and Proposition \ref{no:flat:Z0:prop}. 

If $q= \pm 1$ there is a horizontal compressing disc, so after applying Figure \ref{hv:fig} the branch $S_v$ becomes a single vertex \includegraphics[width = 0.6 cm]{1.pdf} with the appropriate gleam that depends on the sign of $q$. 
If $v$ is \includegraphics[width = 0.6 cm]{8.pdf}, we get a portion as in Figure \ref{X11moves:fig}-(2) that may be simplified. In the other cases we get a branch as in Figure \ref{torsion:fig}.
\end{proof}

We will henceforth suppose that the conclusion of Proposition \ref{branch:prop} holds. If the torsion is $q = \pm 2$, there is yet something that we can do.

\begin{figure}
\begin{center}
\includegraphics[width = 12 cm]{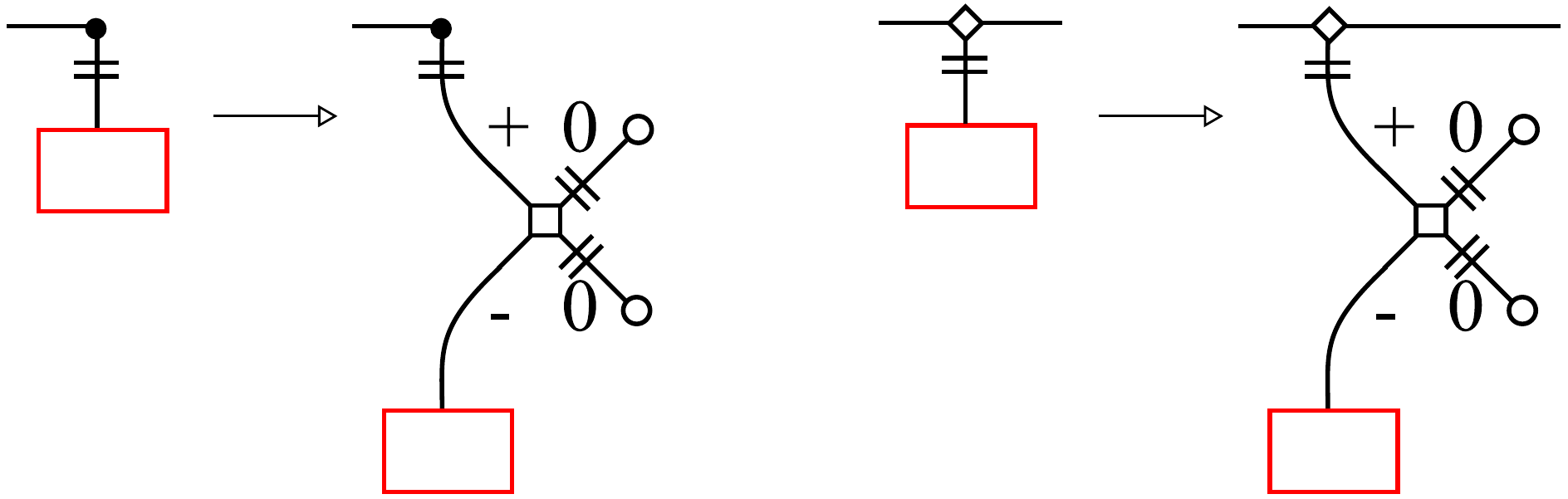}
\nota{If $q=2$ this move transforms $X$ into another shadow $X'$ of the same block.}
\label{q2:fig}
\end{center}
\end{figure}

\begin{prop}
If $q=2$ the move in Figure \ref{q2:fig} transforms the shadow $X$ into another shadow $X'$ of the same block $M$.
\end{prop}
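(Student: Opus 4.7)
The plan is to reduce the $q=2$ move to the basic and intermediate moves already established in Section \ref{moves:section}. The assumption $q=2$ means that the meridian of the solid torus $M_{S_v}$ is attached to the slope $\mu + 2\lambda$ on the boundary torus $T$ of $M_v$, i.e. to a horizontal curve that winds twice along the boundary circle of $X$ corresponding to $v$. This is precisely the situation of the $p=2$ case of Figure \ref{add_disc:fig}, which introduces a new $X_{11}$-piece equipped with a $0$-gleamed disc attached to its length-$1$ boundary component.

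\textbf{Step 1.} I would first apply the $p=2$ variant of Figure \ref{add_disc:fig} to attach this compressing disc inside $\partial N(X)\cong \#_h(S^2\times S^1)$. By the argument of Section \ref{discs:subsection}, the resulting shadow still represents the same block $M$: attaching the disc is equivalent to adding a trivial $2$-handle that cancels against one of the $3$-handles used to cap off $\partial N(X)$. After this step the decorated graph contains a fresh $X_{11}$-vertex glued to $v$ along a length-$2$ edge, with a $0$-gleamed disc ornament on its length-$1$ curve.

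\textbf{Step 2.} Depending on which of the three base vertices of Figure \ref{branch_torsion:fig} plays the role of $v$, I would fuse the newly created $X_{11}$ with $v$ using the pertinent moves from Figures \ref{X11moves:fig}, \ref{X10moves:fig}, \ref{X10e11moves:fig}, and \ref{X10e11moves2:fig}. When $v$ is the $X_{11}$-vertex, Figure \ref{X11moves:fig}-(3) together with Figure \ref{X10e11moves:fig}-(2) suffice. When $v$ is the $X_{10}$-vertex, one applies Figure \ref{X10e11moves:fig}-(2) and then Figure \ref{X10moves:fig}-(2). In the remaining mixed case a combination of Figure \ref{X10e11moves:fig}-(3) with Figure \ref{X10e11moves2:fig} does the job. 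In each of the three cases the branch of torsion $2$ is transformed into the configuration displayed on the right-hand side of Figure \ref{q2:fig}.

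\textbf{Main obstacle.} The real difficulty is the bookkeeping of gleams. Each sliding or collapsing substep shifts the half-integer gleams on the affected regions by prescribed integer or half-integer amounts, and one must verify that the cumulative shift matches exactly the gleams of Figure \ref{q2:fig}, including the parities that encode whether adjacent regions are even or odd (the red-dot decoration of Figure \ref{branch_torsion:fig}). Once the gleams are checked to agree in each of the three sub-cases, the fact that $X'$ represents the same block $M$ follows immediately, since each intermediate move does.
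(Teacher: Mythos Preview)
Your Step 1 is exactly the paper's proof, and it is the \emph{entire} proof. The right-hand side of Figure \ref{q2:fig} is nothing more than the decorated-graph picture obtained by applying the $p=2$ case of Figure \ref{add_disc:fig} to the branch edge: a new $X_{11}$-vertex appears with a $0$-gleamed disc on one of its length-$1$ boundaries, the gleam $n$ splits across the two new horizontal edges, and that is all. No further manipulation is required.

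Your Step 2 and the ``Main obstacle'' therefore rest on a misreading of what Figure \ref{q2:fig} actually depicts. There is no fusion of the new $X_{11}$ with the base vertex $v$, no case distinction according to the type of $v$, and no delicate gleam bookkeeping beyond what Figure \ref{add_disc:fig} already prescribes. The moves you invoke from Figures \ref{X11moves:fig}, \ref{X10moves:fig}, \ref{X10e11moves:fig}, and \ref{X10e11moves2:fig} are not used here; they come into play only later, in the case-by-case analysis of Section \ref{proof:section}, where the move of Figure \ref{q2:fig} is applied as a preparatory step (see for instance cases (fi) and (Ei)). In short: delete Step 2 and the obstacle paragraph, and your proof coincides with the paper's one-line argument.
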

\begin{proof}
The compressing disc winds twice and we add it as in Figure \ref{add_disc:fig}.
\end{proof}

\subsection{Plumbing lines}
The core of our arguments is a strengthened version of a theorem of Neumann and Weintraub \cite{Neu}, which deals with plumbings of spheres.

Recall that a plumbing line of spheres is determined by a sequence $(e_1,\ldots, e_n)$ of Euler numbers $e_i\in \matZ$. The following lemma is proved in \cite[Lemma 10.2]{Ma:zero}.

\begin{lemma} \label{plumbing:lemma}
Let $(e_1,\ldots,e_n)$ be a plumbing line, whose boundary is homeomorphic to either $S^3$ or $S^2\times S^1$. Up to reversing the sequence and/or changing all signs, one of the following holds.
\begin{itemize}
\item $e_1 = 0$,
\item $e_1=1$ and $n=1$,
\item $e_1=1$ and $e_2\in \{0,1,2,3\}$,
\item $e_i=0$ for some $i\not\in \{1,n\}$ and $e_{i-1}e_{i+1}\leqslant 0$,
\item $e_i= 1$ for some $i\not\in \{1,n\}$ and $e_{i-1} \in\{0,1,2,3\}, e_{i+1}\geqslant 0$.
\end{itemize}
\end{lemma}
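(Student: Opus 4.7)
The plan is to prove the lemma by induction on $n$, using the classical correspondence between plumbing lines and Hirzebruch--Jung continued fractions. Recall that the boundary of the plumbing line $(e_1, \ldots, e_n)$ is the lens space $L(p, q)$, where $p/q$ is computed from the continued fraction expansion in the $e_i$; the boundary is $S^3$ precisely when $p = \pm 1$ and $S^2 \times S^1$ precisely when $p = 0$. The two symmetries in the statement (reversing the sequence and flipping all signs) correspond to orientation reversal and to swapping $q$ with its inverse modulo $p$, both of which preserve the diffeomorphism type of the boundary.

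The base case $n = 1$ is immediate: we have $p = e_1$, so $p \in \{0, \pm 1\}$ forces $e_1 \in \{0, \pm 1\}$, falling into case 1 or case 2 after sign normalisation. For the inductive step I would first establish the key arithmetic fact that if $|e_i| \geq 2$ for all $i$, then $|p| \geq 2$; this is standard for negative-definite chains (where $|p| \geq n+1$), and the mixed-sign case reduces to it by the signed continued fraction recursion $p_k/q_k = e_k - q_{k-1}/p_{k-1}$, which prevents $|p_k|$ from ever dropping to $0$ or $1$ once it has grown past $2$. Consequently, if the boundary is $S^3$ or $S^2 \times S^1$, some entry $e_i$ must belong to $\{-1, 0, 1\}$.

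I would then split according to the location and value of such an entry. If an endpoint entry equals $0$ we are in case 1; if an endpoint equals $\pm 1$ and $n \geq 2$, the standard blow-down $(\pm 1, e_2, e_3, \ldots, e_n) \rightsquigarrow (e_2 \mp 1, e_3, \ldots, e_n)$ preserves the boundary and produces a shorter line to which the induction applies; tracking the inductive conclusion back through this blow-down, and using that any entry $\geq 4$ in the new line cannot be in a simplifying position without the original line already sitting in one of the five cases, forces $e_2 \in \{0,1,2,3\}$ and hence case 3. For an interior $e_i = 0$ the plumbing move $(\ldots, e_{i-1}, 0, e_{i+1}, \ldots) \rightsquigarrow (\ldots, e_{i-1} + e_{i+1}, \ldots)$ with $-1$-framed change of basis preserves the boundary; if one had $e_{i-1} e_{i+1} > 0$ with both of absolute value $\geq 2$, the combined weight would be $\geq 4$ in absolute value and the inductive hypothesis on the shorter line would give a configuration incompatible with our assumptions, so $e_{i-1} e_{i+1} \leq 0$ and we land in case 4. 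Finally, for an interior $e_i = \pm 1$ the blow-down shifts both neighbours by $\mp 1$; comparing the inductive conclusions on the resulting line of length $n - 1$ with the constraint that neither neighbour of $e_i$ in the original already puts us in an earlier case, one extracts exactly the bounds $e_{i-1} \in \{0,1,2,3\}$ and $e_{i+1} \geq 0$ of case 5 (after possibly reversing the line).

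The main obstacle will be the bookkeeping of signs and the exhaustive case split when the exceptional entry is interior: one must verify that every ``borderline'' configuration pulled back through the blow-down either already satisfies one of the five conclusions, or propagates the simplifying feature to a previously handled location. Once the continued-fraction growth estimate is in place, however, this is a finite combinatorial analysis, and the symmetries of reversal and sign-flip can be invoked at every step to halve the number of cases one has to write out.
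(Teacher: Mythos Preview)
The paper does not prove this lemma in the text; it is quoted verbatim from \cite[Lemma~10.2]{Ma:zero}. So there is no argument in the present paper to compare your proposal against.

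On its own merits, your outline is sound. The crucial arithmetic fact---that $|e_i|\geq 2$ for all $i$ forces $|p|\geq 2$---follows cleanly from the recursion $p_k=e_kp_{k-1}-p_{k-2}$: the reverse triangle inequality gives $|p_k|\geq 2|p_{k-1}|-|p_{k-2}|>|p_{k-1}|$ inductively, so $|p_n|\geq 2$. This guarantees a small entry and sets up the induction on $n$ via blow-downs.

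The genuine gap is the one you flag yourself, but it is larger than you suggest. When you write, for an interior $e_i=0$ with $e_{i-1}e_{i+1}>0$, that you may assume both neighbours have absolute value at least $2$, you are silently passing to a different small entry; you need a termination argument to avoid cycling among several small entries. More substantially, the claim that the inductive conclusion on the blown-down line is ``incompatible with our assumptions'' is not justified: the inductive hypothesis only says that \emph{some} configuration from the list appears in the shorter line, possibly at a position adjacent to the merged entry, and you must check case by case that any such configuration either lifts to one of the five cases in the original line or was already excluded by hypothesis. For example, if after blowing down an interior $0$ the merged entry $e_{i-1}+e_{i+1}\geq 4$ becomes the neighbour of a pre-existing interior $1$, case~5 may hold in the short line with this large merged entry playing the ``$e_{i+1}\geq 0$'' role, while in the original line the corresponding neighbour $e_{i+1}$ alone need not have made case~5 hold there. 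All of these boundary checks are doable and finite, but none of them are actually carried out in your sketch.
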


The following moves modify the plumbing line while preserving its boundary:
\begin{eqnarray*}
(\ldots,e_{i-1},0,e_{i+1},\ldots) & \longrightarrow & (\ldots,e_{i-1}+e_{i+1},\ldots), \\
(0,e_2,e_3,\ldots ) & \longrightarrow & (e_3,\ldots), \\
(\ldots,e_{i-1},1,e_{i+1},\ldots) & \longrightarrow & (\ldots,e_{i-1}-1,e_{i+1}-1,\ldots), \\
(1,e_2,\ldots) & \longrightarrow & (e_2-1,\ldots). 
\end{eqnarray*}
Analogous moves hold with all the signs reversed.

\subsection{A particular portion}
We are approaching the conclusion of the proof of Theorem \ref{main:block:teo}. By the results proved in the last pages, we may suppose that each $G_i'$ is a decorated tree with levels, that every flat leaf there is nice, and also that Proposition \ref{branch:prop} holds.

Our goal is to show that $X$ simplifies somewhere, and to this purpose we now prove the existence of a particular portion $H$ of $G_i'$ wherein this simplification will be guaranteed to take place.

\begin{figure}
\begin{center}
\includegraphics[width =16 cm]{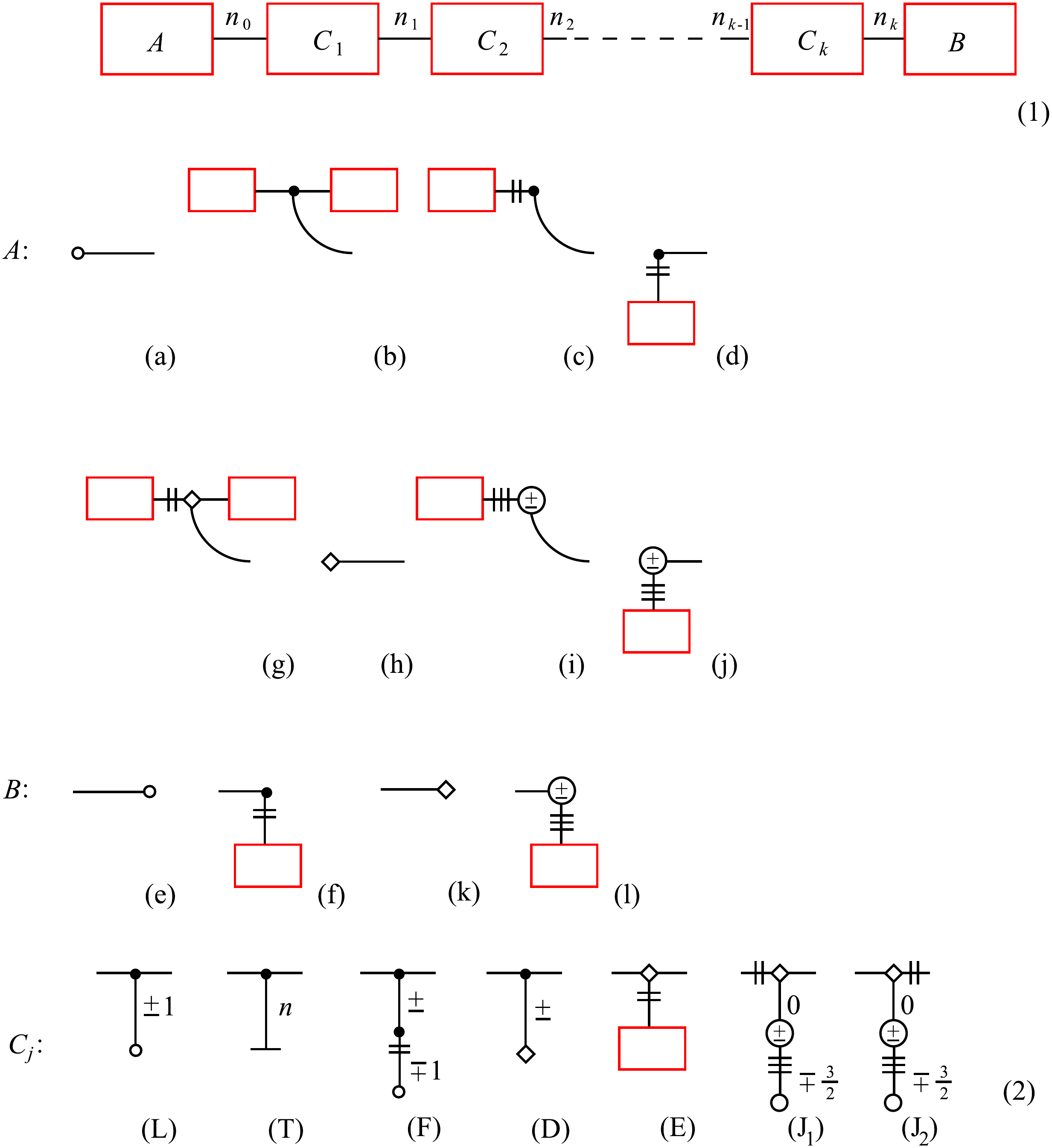}
\nota{The tree $G_i'$ contains a portion $H$ as in (1) for some $k\geq 0$, where $A, B$, and each $C_i$ is of one of the types shown in (2). (If A is of type (a), (d), or (h) the portion $H$ is actually the whole tree $H=G_i'$.) The (half-)integers $n_i$ decorate the horizontal edges and are arbitrary. There are two possible instances $J_1$ and $J_2$ of the same portion $J$ depending on left-right orientation.}
\label{branch:fig}
\end{center}
\end{figure}

\begin{prop}
The graph $G_i'$ contains a portion $H$ as in Figure \ref{branch:fig}-(1) for some $k\geq 0$, where the various possibilities for the subportions $A$, $B$, $C_i$ are shown in Figure \ref{branch:fig}-(2). Moreover, the portion $H$ is not equal to $(L), (T), (F), (D)$ nor $(J)$.
\end{prop}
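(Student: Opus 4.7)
The plan is to identify the portion $H$ by descending from a highest-level vertex in $G_i'$ toward the root. Since $G_i'$ is a finite decorated tree with levels, I would pick a vertex $A$ of maximal level that is the base of a branch (or, if all vertices are on the root, a root vertex); by condition (3) of the level function such a vertex exists, and by Proposition \ref{branch:prop} together with the niceness hypothesis of Proposition \ref{nice:prop}, the subtree $S_A$ above $A$ has only one of the short list of shapes encoded in Figure \ref{branch:fig}-(2) as option $A$. The types here reflect exactly those combinations of $X_{10}$, $X_{11}$, and $Y_{12}$-branches whose torsion bound $|q|\geq 2$ (or $|q|=1$ in the special configurations of Figure \ref{torsion:fig}) has not already triggered a simplification, and whose flat leaves are nice.

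Next I would follow the unique edge from $A$ to the adjacent vertex of strictly lower level, and iterate: at each intermediate vertex the level strictly drops, and by condition (2) each such vertex is 2- or 3-valent, the side branch (if any) being constrained as in option $C_i$. This is where the enumeration of allowed $C_i$ enters: a side branch hanging off one of these intermediate vertices is itself a subtree to which the same analysis of Propositions \ref{branch:prop} and \ref{nice:prop} applies, and the resulting finite list matches precisely (2). The path ends when we reach either the root or a vertex that forces the closure $B$ of the chain; again the admissible types of $B$ are dictated by what can legally terminate the descent in a tree with levels, so they are exactly those listed in (2). The (half-)integer labels $n_i$ are unconstrained because they only encode the gleam contributions on the horizontal edges of the chain.

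To verify the exclusion clause that $H\neq (L),(T),(F),(D),(J_1),(J_2)$, I would observe that each of these degenerate shapes appears as the left-hand side of one of the moves already proved in Section \ref{moves:section}. Specifically, the portions $(L),(T),(F),(D)$ are simplified by direct applications of the moves in Figures \ref{X11moves:fig}, \ref{X10moves:fig}, and the boundary-contribution identifications of Figures \ref{boundary_contribution:fig}--\ref{Z00_boundary:fig}; the portion $(J)$ in either orientation $(J_1),(J_2)$ is simplified by the combined moves of Figures \ref{X10e11moves:fig} and \ref{X10e11moves2:fig}. Any such simplification strictly reduces a suitable measure of complexity of $X$ (number of 3-valent vertices, then vertex count), so we may assume at the outset that none of the forbidden portions occurs in $G_i'$, as desired.

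The main obstacle I anticipate is not conceptual but bookkeeping: producing the exhaustive case check that every admissible local configuration of $A$, of the iterated intermediate vertices with their $C_i$-branches, and of the terminal $B$, falls into the tabulated list of (2), \emph{and} that nothing outside the exceptional list $(L),(T),(F),(D),(J)$ has already been eliminated. Getting the orientation of $(J_1)$ vs $(J_2)$ right, and distinguishing the subcases where $A$ itself already forces $H = G_i'$ (options (a), (d), (h)), will require careful parsing of the tree-with-levels conventions from \cite{Ma:zero}; this is where a mistake would either over-count or miss one of the forbidden shapes.
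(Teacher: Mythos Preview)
Your handling of the exclusion clause contains a genuine gap. The proposition does \emph{not} claim that the shapes $(L),(T),(F),(D),(J)$ are absent from $G_i'$; these are precisely the admissible shapes for the pieces $C_i$, and they pervade the subsequent case analysis (Figures~\ref{AC:fig} and \ref{CC:fig}). What is asserted is only that $H$ taken as a whole does not coincide with a single such piece. Your argument that each of them ``appears as the left-hand side of one of the moves already proved in Section~\ref{moves:section}'' and can therefore be simplified away a priori is also incorrect: those moves require specific gleams, whereas the pieces $(L),(T),(F),(D),(J)$ carry an arbitrary half-integer on the horizontal edge. Pinning down that gleam so that a move becomes available is the entire purpose of the plumbing-line computation that follows the proposition.

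The paper's proof avoids this by a single stroke: choose $v$ of \emph{highest} level among the vertices whose rooted branch $\{v\}\cup S_v$ is neither one of the basic branches of Figure~\ref{branch_torsion:fig} nor one of $(L),(T),(F),(D),(J)$. Maximality then forces every rooted branch based at a vertex of strictly higher level to be of one of these simple types, which is exactly why $H=\{v\}\cup S_v$ decomposes as in Figure~\ref{branch:fig}-(1); the exclusion clause holds by construction. If no such $v$ exists one takes $H=G_i'$. Note also that your construction starts at the wrong end of the tree: your $A$ is a vertex of maximal level with non-empty branch, so $S_A$ is a single leaf and cannot realise the options for $A$ (such as (d),(f),(j),(l)) that carry a torsion parameter $q$; moreover condition~(2) guarantees a unique neighbour of strictly \emph{higher} level, not a unique lower one, so the ``unique edge from $A$ to the adjacent vertex of strictly lower level'' along which you descend need not be well defined at a 3-valent vertex.
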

\begin{proof}
Let $v$ be a vertex in $G_i'$ with the highest level among those such that the branch $S_v$ is non-empty, and such that:
\begin{enumerate}
\item the rooted branch $\{v\} \cup S_v$ is not as in Figure \ref{branch_torsion:fig}, and
\item the rooted branch $\{v\} \cup S_v$ is not equal to $(L), (T), (F), (D)$, nor $(J)$.
\end{enumerate}
If such a vertex $v$ exists, the rooted branch $\{v\} \cup S_v$ is our $H$ and we are done. If $v$ does not exist, the whole $H=G_i'$ is as required. 
\end{proof}

\begin{figure}
\begin{center}
\includegraphics[width = 8 cm]{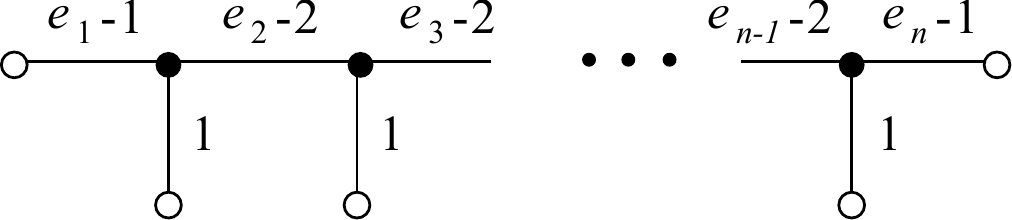}
\nota{A tree $H'$ with one level, that may be obtained as the perturbation of a plumbing line with Euler numbers $(e_1,\ldots, e_n)$.}
\label{plumbing_line2:fig}
\end{center}
\end{figure}

\begin{figure}
\begin{center}
\includegraphics[width = 16 cm]{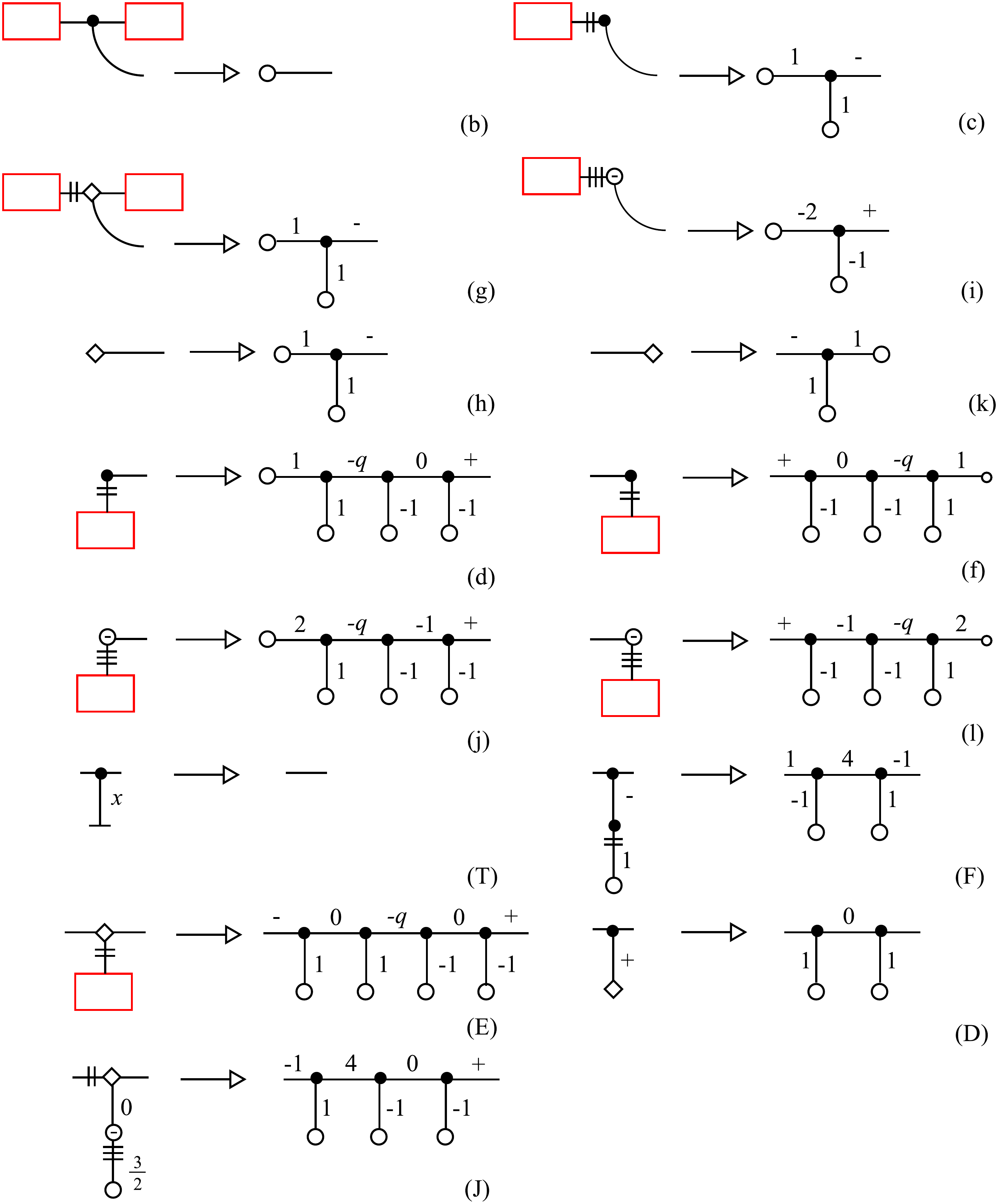}
\nota{These moves produce a graph for $\#_h(S^2 \times S^1)$. Here $q$ is the torsion of the branch. Everything holds also with all signs reversed.}
\label{ripeto:fig}
\end{center}
\end{figure}

We now construct from $H$ a tree $H'$ as in Figure \ref{plumbing_line2:fig}. We do this by substituting each piece $A$, $B$, and $C_i$ as prescribed by Figure \ref{ripeto:fig}. The resulting graph still describes a decomposition of $S^3$ or $S^2 \times S^1$ because of the following.

\begin{prop}
The moves in Figure \ref{ripeto:fig} transform $G_i'$ into another decorated tree with levels, that still encode a manifold $S^3$ or $S^2 \times S^1$. If one of the moves (b), (c), (g), or (i) is performed, the manifold is necessarily $S^3$.
\end{prop}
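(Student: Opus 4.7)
The plan is to verify each of moves (a)--(i) separately, in each case checking (i) that the tree-with-levels structure of the new graph satisfies axioms (1)--(4), and (ii) that the 3-manifold $\partial N(X)$ encoded by the new tree is still $\#_{h'}(S^2 \times S^1)$ for some $h' \geq 0$, with $h'=0$ (i.e.\ $S^3$) in cases (b), (c), (g), (i).

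For the first point, each move replaces a top-level subtree consisting of the base vertex $v$ (of type \includegraphics[width=0.6cm]{4.pdf}, \includegraphics[width=0.6cm]{5.pdf}, \includegraphics[width=0.8cm]{9.pdf}, or \includegraphics[width=0.6cm]{8.pdf}) together with its branch $S_v$ (a solid torus attached along a section of the Seifert fibration of $M_v$) by a new subtree of strictly smaller total size. The portion of the tree below $v$ (toward the root) is untouched, so axioms (1) and (3) persist. For axiom (2), I would check case by case that every 2-- or 3--valent vertex introduced by a move is still adjacent to a unique vertex of strictly higher level, with levels assigned in the obvious way (propagating the levels of the untouched subtree and restarting at $v$ with a suitable offset). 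Axiom (4), which requires the new solid torus fillings to glue along meridians matching sections of fibrations, follows from the fact that each move is essentially a Seifert Dehn filling followed by a re-interpretation of the result.

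For the 3-manifold verification, I would compute the effect of the Dehn filling along the slope $\mu + q\lambda$ (where $q$ is the torsion of the branch) on each Seifert piece $M_v$: on $(A,2)$ with parameters $\bigl(A,(2,1)\bigr)$, the filling gives a Seifert manifold over a disc with two exceptional fibers, which is a lens space whose $H_1$ depends on $q$; on $(A,3) = \bigl(A,(3,1)\bigr)$, similarly; on $P\times S^1$, the filling caps off one boundary to produce $(D,n_i)\times S^1$-type pieces. In each case I would show that the combinatorial rearrangement prescribed by Figure \ref{ripeto:fig} reproduces the \emph{same} lens space or torus bundle summand via the simpler subgraph. Because the complement of $M_v \cup M_{S_v}$ in $\#_h(S^2\times S^1)$ is unchanged, replacing these local pieces by diffeomorphic models leaves the overall 3-manifold invariant.

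To isolate cases (b), (c), (g), (i): these are precisely the situations where Proposition \ref{branch:prop} forces $|q|=1$ (the branches of Figure \ref{torsion:fig}). The Dehn filling computation then gives $H_1 = 0$ for the filled Seifert piece (a trivial lens space $S^3$), and this nullity propagates: since the decomposition globally describes some $\#_{h'}(S^2 \times S^1)$ and one of the fibered pieces has been capped off to a ball-like contribution with trivial homology, the only possibility compatible with Lemma \ref{peculiar:lemma} applied to the remaining structure is $h'=0$, i.e.\ $S^3$. The main obstacle is the bookkeeping: one must carefully track the Seifert invariants, the orientations and signs of the gleams appearing in Figure \ref{torsion:fig}, and the normal forms of the resulting decorated plumbing lines, to distinguish $S^3$ from $S^2 \times S^1$ in each of the nine subcases.
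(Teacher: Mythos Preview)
Your overall strategy---verify each move preserves the tree-with-levels axioms and the encoded 3-manifold---is reasonable, but there are two genuine problems.

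First, your identification of cases (b), (c), (g), (i) as ``precisely the situations where Proposition~\ref{branch:prop} forces $|q|=1$'' is incorrect. Look at Table~\ref{contributions:table}: moves (c), (g), and (i) contribute $(2,\ldots)$ or $(-3,\ldots)$ to the plumbing line and do not involve the torsion $q$ at all; the moves that \emph{do} involve $q$ are (d), (f), (j), (l), (E), and for those Proposition~\ref{branch:prop} allows $|q|\geq 2$ as well. So your proposed mechanism for forcing $S^3$---a trivial lens-space filling when $|q|=1$---does not apply to (b), (c), (g), (i). The actual reason these four cases yield $S^3$ is different and is inherited from the complexity-zero analysis in \cite{Ma:zero}: moves (b) and (c) were already treated there, and the claim for (g) and (i) follows because Figure~\ref{boundary_contribution:fig} shows that the new $X_{10}$/$X_{11}$-with-disc portions contribute to $\partial N(X)$ exactly like the old $Y_{12}$-type portions, reducing (g) and (i) to the already-known case (b).

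Second, and relatedly, you miss the paper's main tool. Rather than recomputing Seifert invariants and Dehn fillings from scratch, the paper systematically uses the boundary-contribution equivalences of Figures~\ref{boundary_contribution:fig} and~\ref{Z00_boundary:fig} to reduce each new move (those involving the complexity-one vertices) to a move already established in \cite{Ma:zero}. This is much shorter than a direct case-by-case Seifert computation and avoids the vague ``nullity propagates'' step in your homology argument, which as stated is not a valid inference (the first homology of a torus decomposition does not split additively over the pieces). Your approach could perhaps be made to work, but you would need to replace the $|q|=1$ claim with the correct characterization and give an honest Mayer--Vietoris or linking-matrix computation for each case.
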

\begin{proof}
Concerning (b), (c), (d), (f), (T), and (F), this was already proved in \cite[Figure 67]{Ma:zero}. Moves (g) and (i) follow from Figure \ref{boundary_contribution:fig} and (b).
Moves (h), and (k) follow from Figure \ref{Z00_boundary:fig}. Moves (j), (l), (E), and (J) are proved like in \cite[Proposition 9.10]{Ma:zero}, using Figure \ref{boundary_contribution:fig}. Move (D) is obtained from Figure \ref{Z00_boundary:fig} by sliding an edge as in Figure \ref{thickening:fig}-(1).
\end{proof}

We note that $H'$ is just the perturbation of a plumbing line of spheres with labels $(e_1,\ldots, e_n)$. The encoded 3-manifold is either $S^3$ or $S^2\times S^1$ and Lemma \ref{plumbing:lemma} applies.

\subsection{Conclusion of the proof}
The proof ends as in \cite{Ma:zero}. Lemma \ref{plumbing:lemma} furnishes a list of possibilities, and we prove that each determines a portion in $G$ that may be simplified. Unfortunately, there are around 100 configurations to analyze by hand.

To preserve clarity, we first suppose that $Z$ does not contain any flat leaves. The portions $A$, $B$, $C_i$ contribute to the plumbing line $(e_1,\ldots, e_n)$ as shown in Table \ref{contributions:table}, following Figure \ref{ripeto:fig}.

\begin{table}
\begin{center}
\begin{tabular}{ccccccc}
\phantom{\Big|} (c) , (g), (h) & (k) & (d) & (f) & (j) & (l) \\
 \hline 
$(2,\ldots)$ & \!$(\ldots, 2)$\! & \!$(2,-q,-2, \ldots)$\! & \!$(\ldots, -2, -q, 2)$\! & \!$(3,-q,-3, \ldots)$\! & \!$(\ldots, -3, -q, 3)$\!
\phantom{\Big|} 
\end{tabular}

\begin{tabular}{ccccccc}
\phantom{\Big|} (i) & (F) & (D) & (E) & (${\rm J}_1$) & (${\rm J}_2$) \\
 \hline 
\!$(-3, \ldots)$\! & \!$(\ldots, 4, \ldots)$\! & \!$(\ldots, 2, \ldots)$\! & \!$(\ldots, 2,-q,-2,\ldots)$\! & \!$(\ldots,4,-2,\ldots)$\! & \!$(\ldots,-2,4,\ldots)$\!
\phantom{\Big|} 
\end{tabular}

\caption{Contribution of each piece to the plumbing line $(e_1,\ldots, e_n)$.}
\label{contributions:table}
\end{center}
\end{table}

We recall from Proposition \ref{branch:prop} that in (d), (f), (j), and (l) we have $|q|\geq 1$ and in (E) we have $|q|\geq 2$. Moreover, if $|q|=1$ then the portion is as in Figure \ref{torsion:fig}.

\subsection{The case $k=0$}
We consider first the case $k=0$, \emph{i.e.}~there is no $C_i$. The portion $Z$ thus consists of the pieces $A$ and $B$ glued together. The various possibilities, considered only up to reversing all signs, are shown in Figure \ref{AB:fig}. The cases in the grey box were already covered in \cite{Ma:zero}: in each case either the shadow $X$ can be simplified, or we can do some move that decreases the level on some vertex in $G_i'$ and we conclude by induction on the sum of the levels of the vertices, see the end of \cite[Theorem 11.3]{Ma:zero}. 

\begin{figure}
\begin{center}
\includegraphics[width = 16 cm]{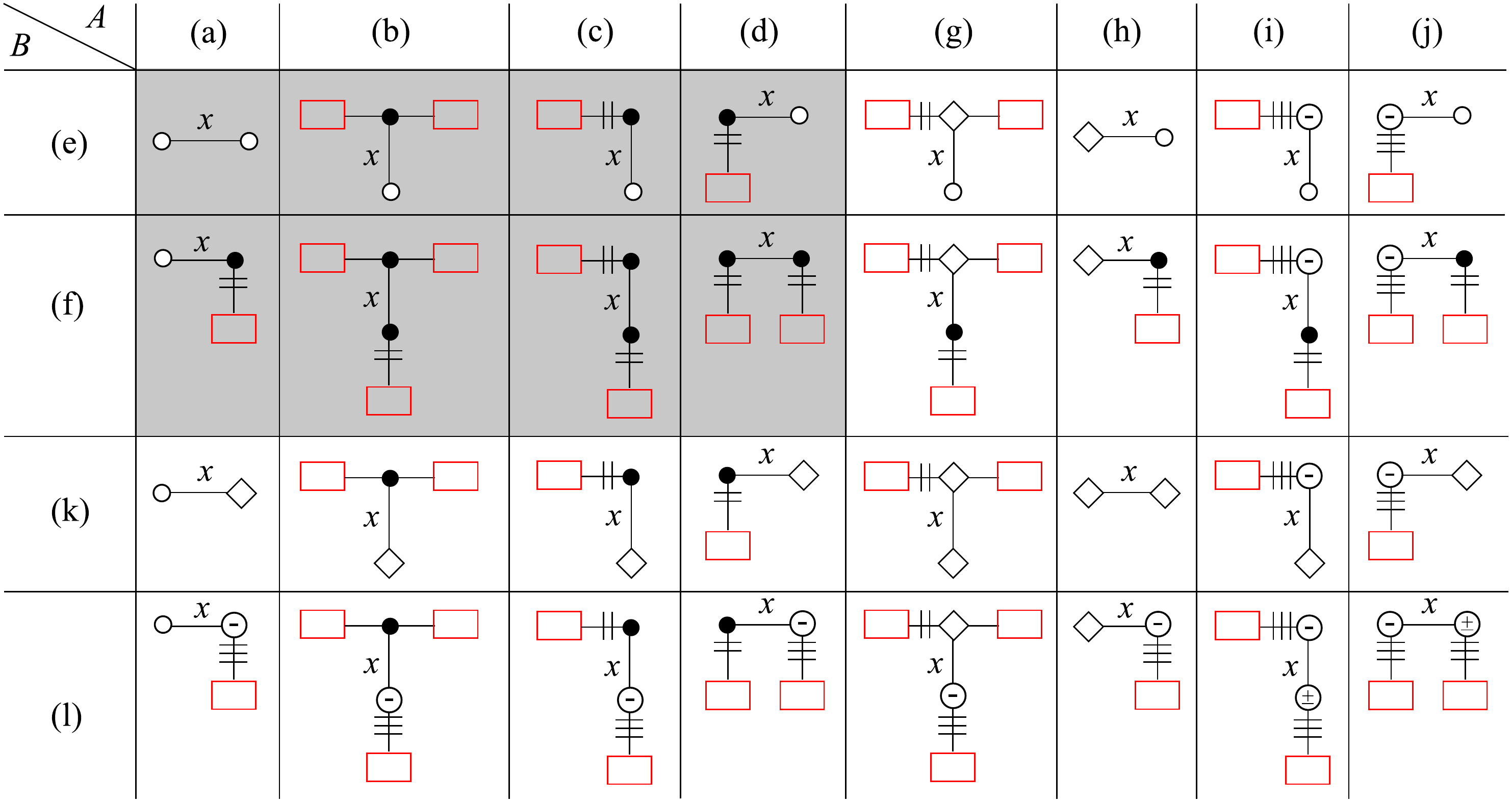}
\nota{When $k=0$, the portion $Z$ consists of $A$ and $B$ glued together and thus looks like one of the pictures listed here (up to changing simultaneously all signs).}
\label{AB:fig}
\end{center}
\end{figure}

We analyze each of the remaining cases separately. We apply Lemma \ref{plumbing:lemma} throughout the discussion.

\begin{itemize}
\item[(eg)] The sequence is $(2,x+\frac 12)$, so $x = \pm \frac 12$. The graph $G$ hence contains a portion as in Figure \ref{X11moves:fig}-(1) and can be simplified.
\item[(eh)] Same as above.
\item[(ei)] The sequence is $(-3, x-\frac 12)$, so $x=\frac 12$. The graph contains a portion as in Figure \ref{X10moves:fig}-(1) and can be simplified.
\item[(ej)] The sequence is $(3,-q,-3,x-\frac 12)$ with $|q| \geq 1$, and it never gives $S^3$ nor $S^2 \times S^1$. 
\item[(fg)] The sequence is $(2, x, -2, -q, 2)$ with $|q|\geq 1$, and it never gives $S^3$ nor $S^2 \times S^1$.
\item[(fh)] Same as above.
\item[(fi)] The sequence is $(-3,x-1,-2,-q,2)$ with $|q|\geq 1$. This gives $x=0$ and $q\in \{1,2\}$. If $q=1$ we get a portion as in Figure \ref{torsion:fig} and everything simplifies using Figure \ref{X10moves:fig}-(4). If $q=2$ we perform the move in Figure \ref{q2:fig}, then the inverse of Figure \ref{X10moves:fig}-(3) and finally Figure \ref{X10e11moves:fig}-(2). As a result we have substituted a vertex $X_{10}$ with a vertex $X_{11}$ and we conclude by induction on the number of vertices $X_{10}$.
\item[(fj)] The sequence is $(3, -q, -3, x-1, -2, -q', 2)$ with $|q|,|q'|\geq 1$. We get $x=0$ and the sequence is equivalent to $(3, -q+1, -q'+2, 2)$. We get either $q=1$, or $q'=1$, or $q'=2$. In the first case we simplify the shadow using Figure \ref{X10moves:fig}-(5). In the two subsequent cases we conclude as above. 
\item[(ka)] = (eh).
\item[(kb)] The sequence is $(x+\frac 12,2)$, so $x = \pm \frac 12$
However, this is excluded since $H \neq (D)$.
\item[(kc)] The sequence is $(2, x+1, 2)$ and it never gives $S^3$. 
\item[(kd)] = (fh).
\item[(kg)] We conclude as in (kc).
\item[(kh)] The sequence is $(2,x+1,2)$ and hence $x=0$. If the two vertices represent the same $X_{11}$ then $G$ is as in Figure \ref{examples_shadows:fig}-(right), so $M=\matRP^3\times S^1$ and we are done. If they represent two different $X_{11}$ then we simplify using Figure \ref{X11moves:fig}-(5).
\item[(ki)] The sequence is $(-3, x, 2)$. Therefore $x=0$ and Figure \ref{X10e11moves:fig}-(2) applies.
\item[(kj)] The sequence is $(3, -q, -3, x, 2)$. Therefore $x=0$ and we conclude as above.
\item[(la)] = (ej).
\item[(lb)] The sequence is $(x-\frac 12, -3, -q, 3)$ with $|q| \geq 1$, and it never gives $S^3$ nor $S^2 \times S^1$. 
\item[(lc)] The sequence is $(2, x, -3, -q, 3)$, so $x=0$ and $q=1$. We use Figure \ref{X10moves:fig}-(5).
\item[(ld)] = (fj).
\item[(lg)] The sequence is $(2, x, -3, -q, 3)$, so $x=0$ and $q=1$. This is excluded since $H \neq (J)$.
\item[(lh)] = (ej)
\item[(li)] There are two cases corresponding to the sign $\pm$. If positive, we get the sequence $(-3, x, 3, q, -3)$ that never gives $S^3$. If negative, we get $(-3, x-1, -3, -q, 3)$ that never gives $S^3$ neither.
\item[(lj)] There are two cases corresponding to the sign $\pm$. If positive, we get the sequence $(3, -q, -3, x, 3, q', -3)$, so $x=0$ and Figure \ref{X10moves:fig}-(7) applies. If negative, we get $(3, -q, -3, x-1, -3, q', 3)$ that never gives $S^3$ nor $S^2 \times S^1$.
\end{itemize}

\begin{figure}
\begin{center}
\includegraphics[width = 16 cm]{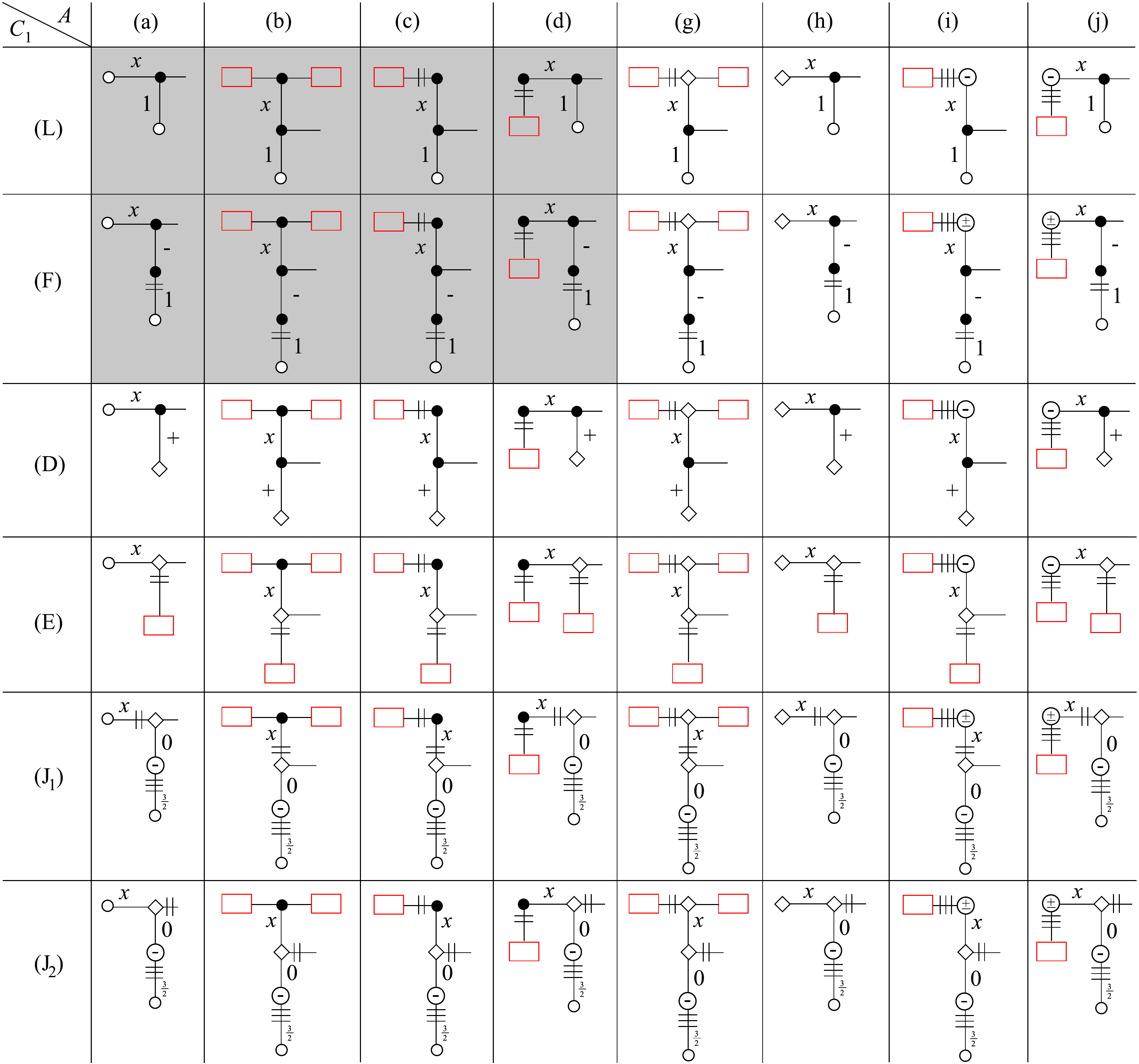}
\nota{Portions obtained as the union of $A$ and $C_1$ (up to changing simultaneously all signs).}
\label{AC:fig}
\end{center}
\end{figure}

\subsection{The case $k\geq 1$}
We now turn to the case $k\geq 1$. We first consider a portion formed by A and $C_1$ as in Figure \ref{AC:fig}. We use implicitly Figure \ref{move_leaf:fig}-(1) at various points.

The cases in the grey box were already covered in \cite{Ma:zero}, and we analyze each of the remaining cases separately. 
\begin{itemize}
\item[(Lg)] The sequence starts as $(2,x +\frac 32, \ldots)$. If $x+\frac 32 \in \{0,1\}$ then $x \in \big\{-\frac 32, - \frac 12 \big\}$ and the shadow simplifies as in Figure \ref{X11moves:fig}-(3).
\item[(Lh)] Same as above.
\item[(Li)] The sequence starts as $(-3, x+\frac 12, \ldots)$. If $x+\frac 12=0$ then $x=-\frac 12$ and we use Figure \ref{X10moves:fig}-(2). If $x+\frac 12 = -1$ the sequence must simplify also somewhere else.
\item[(Lj)] The sequence starts as $(3, -q, -3, x+\frac 12, \ldots)$ with $|q|\geq 1$ and we conclude as above.
\item[(Fg)] The sequence starts as $(2, x+\frac 12, 4, \ldots)$. If $x+\frac 12=1$ then $x=\frac 12$ and the shadow simplifies as in Figure \ref{X11moves:fig}-(8).
\item[(Fh)] Same as above.
\item[(Fi)] There are two cases to consider depending on the sign $\pm$. In the positive case, the sequence starts as $(3, x+\frac 12, 4, \ldots)$. If $x+\frac 12=1$ the sequence must simplify somewhere else. In the negative case, it starts as $(-3, x-\frac 12, 4, \ldots)$. If $x-\frac 12=0$ then $x=\frac 12$ and we use Figure \ref{X10e11moves:fig}-(4).
\item[(Fj)] Similar as above.
\item[(Da)] The sequence starts as $(x+1,2, \ldots,)$. If $x+1 \in \{0,1\}$ then $x\in \{-1,0\}$. If $x=0$ we simplify as in Figure \ref{sum:fig}. If $x=-1$ we simplify as in Figure \ref{X11moves:fig}-(3).
\item[(Db)] As above we conclude that $x\in \{-1,0\}$. If $x=0$ we may slide the edge as in \cite[Figure 71-(5)]{Ma:zero} and we conclude by induction on the sum of the levels of all vertices. If $x=-1$ we use Figure \ref{X11moves:fig}-(4) to get $x=0$.
\item[(Dc)] The sequence starts as $(2, x+\frac 32, 2, \ldots)$. If $x+\frac 32=1$ then $x=-\frac 12$ and we simplify $X$ as in Figure \ref{X11moves:fig}-(10).
\item[(Dd)] The sequence starts as $(2,-q,-2,x+\frac 12, 2, \ldots)$. If $x+\frac 12 = 0$ we conclude as above.
\item[(Dg)] The sequence starts as $(2, x+\frac 32, 2, \ldots)$. If $x+\frac 32 = 1$ then $x=-\frac 12$ and we simplify $X$ as in Figure \ref{X11moves:fig}-(9).
\item[(Dh)] Same as above.
\item[(Di)] The sequence starts as $(-3, x+\frac 12, 2, \ldots)$. If $x+\frac 12 = 0$ then $x=-\frac 12$ and we simplify as in Figure \ref{X10e11moves:fig}-(1) after applying Figure \ref{X11moves:fig}-(4).
\item[(Dj)] The sequence starts as $(3, -q, -3, x+\frac 12, 2, \ldots)$ and we conclude as above.
\item[(Ea)] The sequence starts as $(x+\frac 12, 2, -q, -2, \ldots)$ with $|q|\geq 2$. If $x= \frac 12$ we use Figure \ref{X11moves:fig}-(1). 
\item[(Eb)] The sequence starts as $(x+\frac 12, 2, -q, -2, \ldots)$ with $|q| \geq 2$. If $x= \frac 12$, we may replace this initial sequence with $(-q-1,-2, \ldots)$. Now $-q-1 \neq -1,0$ and hence the sequence must simplify somewhere else by Lemma \ref{plumbing:lemma}.
\item[(Ec)] The sequence starts as $(2, x+1, 2, -q, -2, \ldots)$ with $|q|\geq 2$. If $x=0$, 
we use Figure \ref{X11moves:fig}-(6).
\item[(Ed)] The sequence starts as $(2,-q,-2,x, 2, -q', -2, \ldots)$ with $|q|\geq 1$ and $|q'| \geq 2$. If $x=0$ we simplify the graph as in Figure \ref{X11moves:fig}-(6).
\item[(Eg)] The sequence starts as $(2, x+1, 2, -q, -2, \ldots)$ with $|q|\geq 2$. If $x=0$
we use Figure \ref{X11moves:fig}-(5).
\item[(Eh)] We conclude as above.
\item[(Ei)] The sequence starts as $(-3, x, 2, -q, -2, \ldots)$ with $|q|\geq 2$. If $x=0$ this is eqivalent to $(-q+1,-2, \ldots)$. If $-q+1 = -1$ then $q=2$ and we can apply Figure \ref{q2:fig}. Now we use Figure \ref{X10e11moves:fig}-(5).
\item[(Ej)] The sequence starts as $(3, -q, -3, x, 2, -q', -2, \ldots)$ with $|q|\geq 1$ and $|q'|\geq 2$. If $x=0$ the sequence is equivalent to $(3,-q+1, -q'+1, -2, \ldots)$. There are two cases to consider:
\begin{itemize}
\item If $-q+1 = 0$ the sequence is equivalent to $(4-q',-2, \ldots)$. Then, if $q'=4$ or $q'=5$ we deduce that there is a horizontal disc as in Figure \ref{Ej:fig} and then we simplify using Figure \ref{hv:fig} and \ref{X11moves:fig}-(1). 
\item If $-q'+1=-1$ then $q'=2$ and we conclude as in (Ei).
\end{itemize}
\item[(${\rm J}_1$a)] The sequence starts as $(x, 4, -2, \ldots)$ and it must simplify somewhere else.
\item[(${\rm J}_1$b)] Same as above.
\item[(${\rm J}_1$c)] The sequence starts as $(2, x+\frac 12, 4, -2, \ldots)$ and it must simplify somewhere else.
\item[(${\rm J}_1$d)] The sequence starts as $(2, -q, -2, x-\frac 12, 4, -2, \ldots)$ with $|q| \geq 1$. If $x = \frac 12$ this is equivalent to $(2,-q,2,-2,\ldots)$. If $q=-1$ we use Figure \ref{X10e11moves:fig}-(8).
\item[(${\rm J}_1$g)] The sequence starts as $(2, x+\frac 12, 4, -2, \ldots)$ and it must simplify somewhere else.
\item[(${\rm J}_1$h)] Same as above.
\item[(${\rm J}_1$i)] There are two cases to consider depending on the sign $\pm$. If positive, the sequence starts as $(3,x+\frac 12, 4, -2, \ldots)$.
If negative, it starts as $(-3, x-\frac 12, 4, -2, \ldots)$. In both cases it must simplify somewhere else.
\item[(${\rm J}_1$j)] There are two cases to consider depending on the sign $\pm$. 
\begin{itemize}
\item If positive, the sequence starts as $(-3, -q, 3, x+\frac 12, 4, -2, \ldots)$ with $|q|\geq 1$ and it must simplify somewhere else.
\item If negative, it starts as $(3, -q, -3, x-\frac 12, 4, -2, \ldots)$ with $|q|\geq 1$. If $x-\frac 12 = 0$ it is equivalent to $(3, -q, 1, -2, \ldots)$ and hence $(3,-q-1, -3, \ldots)$. If $q=-1$ we get $(0,\ldots)$. We deduce that there is a vertical disc and after Figure \ref{hv:fig} we get a portion as in Figure \ref{J1j:fig}. We conclude as in Proposition \ref{no:flat:Z0:prop}.
\end{itemize}
\item[(${\rm J}_2$a)] The sequence starts as $(x-\frac 12, -2, 4, \ldots)$ and it must simplify somewhere else.
\item[(${\rm J}_2$b)] Same as above.
\item[(${\rm J}_2$c)] The sequence starts as $(2, x, -2, 4, \ldots)$. If $x=0$ we use Figure \ref{X11moves:fig}-(6).
\item[(${\rm J}_2$d)] Similar as above.
\item[(${\rm J}_2$g)] The sequence starts as $(2, x, -2, 4, \ldots)$. If $x=0$ we use Figure \ref{X11moves:fig}-(5).
\item[(${\rm J}_2$h)] Same as above.
\item[(${\rm J}_2$i)] There are two cases to consider depending on the sign $\pm$. If positive, the sequence starts as $(3,x, -2, 4, \ldots)$. If negative, it starts as $(-3, x-1, -2, 4, \ldots)$. In both cases it must simplify somewhere else.
\item[(${\rm J}_2$j)] There are two cases to consider depending on the sign $\pm$. 
\begin{itemize}
\item If positive, the sequence starts as $(-3, -q, 3, x, -2, 4, \ldots)$ with $|q|\geq 1$.
If $x=0$ we get $(-3,-q-1,3,\ldots)$. If $q=-1$ we get $(0,\ldots)$ and a vertical disc, so
we conclude as in (${\rm J}_1$j).
\item If negative, it starts as $(3, -q, -3, x- 1, -2, 4, \ldots)$ with $|q|\geq 1$. If $x=0$ we get $(3,-q,-2,-1,4,\ldots)$ and hence $(3,-q+1,6,\ldots)$ that simplifies somewhere else.
\end{itemize}
\end{itemize}

\begin{figure}
\begin{center}
\includegraphics[width = 3.5 cm]{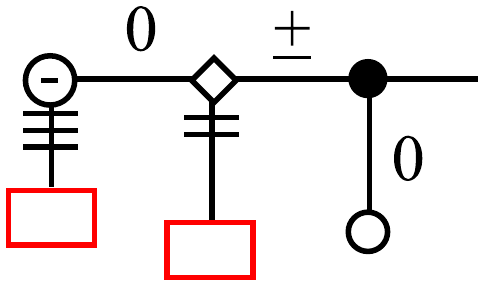}
\nota{In (Ej) if $(q,q') = (1,4)$ or $(1,5)$ we can attach a horizontal disc as shown here.}
\label{Ej:fig}
\end{center}
\end{figure}

\begin{figure}
\begin{center}
\includegraphics[width = 2.5 cm]{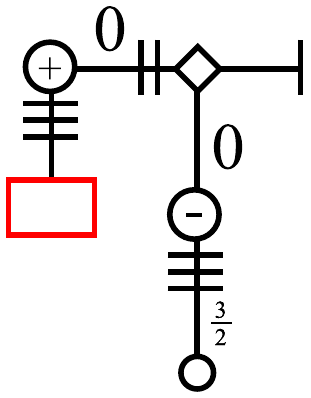}
\nota{In (Ej) if $q=-1$ there is a vertical disc.}
\label{J1j:fig}
\end{center}
\end{figure}

\begin{figure}
\begin{center}
\includegraphics[width = 16 cm]{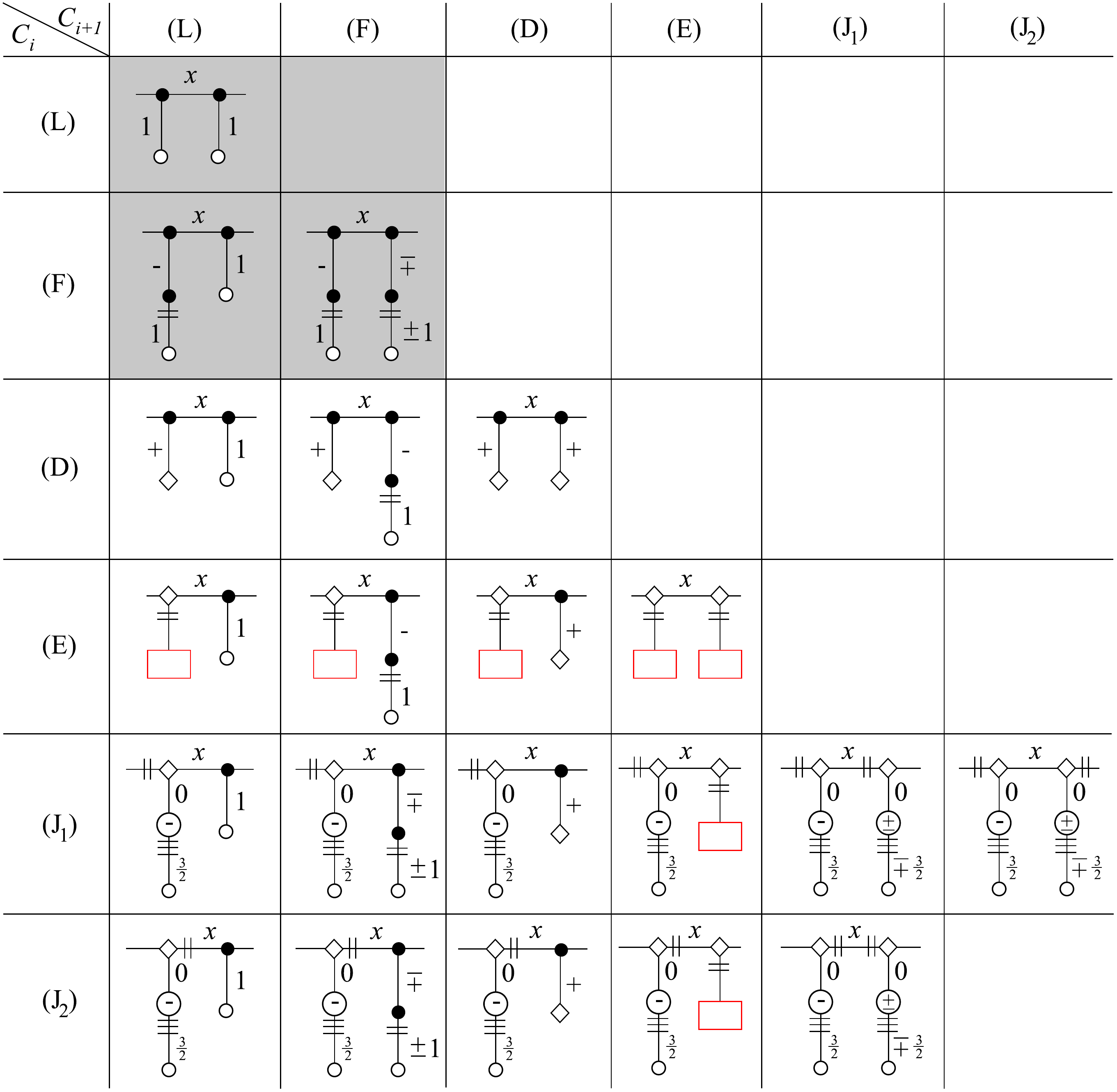}
\nota{Portions obtained as the union of $C_i$ and $C_{i+1}$ (up to changing simultaneously all signs).}
\label{CC:fig}
\end{center}
\end{figure}

The portion comprising $C_k$ and $B$ is treated analogously. We now investigate a portion involving $C_i$ and $C_{i+1}$ as in Figure \ref{CC:fig}. We use Figures \ref{move_leaf:fig}-(1) and \ref{X11moves:fig}-(4) at various points (sometimes implicitly) to modify the sign of the portions (L) and (D).
The cases in the grey box were already covered in \cite{Ma:zero}, and we analyse each of the remaining cases separately.

\begin{itemize}
\item[(DL)] The sequence contains $(\ldots, 2, x+2,\ldots)$. If $x+2 \in \{0,1\}$ then $x\in \{-2,-1\}$. In both cases we can use Figures \ref{move_leaf:fig}-(1) and \ref{X11moves:fig}-(4) to get a configuration where actually $x=0$, so that we can slide (L) above (D) and then simplify using Figure \ref{X11moves:fig}-(3).
\item[(DF)] The sequence contains $(\ldots, 2, x+1, 4, \ldots)$. If $x+1=1$, then $x=0$ and we can slide (D) above (F), and then simplify using Figure \ref{X11moves:fig}-(8). 
\item[(DD)] The sequence contains $(\ldots, 2,x+2,2, \ldots)$. If $x+2 = 1$ then $x=-1$ and after Figure \ref{X11moves:fig}-(4) we can slide one (D) above the other and apply Figure \ref{X11moves:fig}-(9).
\item[(EL)] The sequence contains $(\ldots, 2, -q, -2, x + \frac 12, \ldots)$ with $|q| \geq 2$. If $x+\frac 12 \in \{-1,0\}$, then $x \in \big\{ - \frac 32, - \frac 12 \big\}$ and we simplify using Figure \ref{X11moves:fig}-(3).
\item[(EF)] The sequence contains $(\ldots, 2, -q, -2, x - \frac 12, 4, \ldots )$ with $|q|\geq 2$. If $x - \frac 12 = 0$, then $x=\frac 12$ and we simplify using Figure \ref{X11moves:fig}-(8).
\item[(ED)] The sequence contains $(\ldots, 2, -q, -2, x+\frac 12, 2, \ldots)$ with $|q|\geq 2$. If $x+\frac 12=0$, then $x=-\frac 12$ and we simplify as in Figure \ref{X11moves:fig}-(9).
\item[(EE)] The sequence contains $(\ldots, 2, -q, -2, x-1, -2, -q', 2, \ldots)$ with $|q|, |q'| \geq 2$. If $x-1 = -1$, then $x=0$ and we simplify via Figure \ref{X11moves:fig}-(5).
\item[(${\rm J}_1$L)] The sequence contains $(\ldots, 4, -2, x+\frac 12,\ldots )$. If $x+\frac 12 \in \{-1,0\}$ then $x\in \{-\frac 32, -\frac 12\}$ and we simplify using Figure \ref{X11moves:fig}-(3).
\item[(${\rm J}_1$F)] There are two cases depending on the sign $\pm 1$. 
\begin{itemize}
\item If positive, the sequence contains $(\ldots, 4, -2, x-\frac 12, 4, \ldots)$. If $x-\frac 12 = 0$ we simplify using Figure \ref{X11moves:fig}-(8).
\item If negative, the sequence contains $(\ldots, 4, -2, x-\frac 12, -4, \ldots)$. If $x-\frac 12 = -1$ we simplify using Figure \ref{X11moves:fig}-(8) again.
\end{itemize}
\item[(${\rm J}_1$D)] The sequence contains $(\ldots, 4, -2, x+\frac 12, 2, \ldots)$. If $x+\frac 12 = 0$ we use Figure \ref{X11moves:fig}-(9).
\item[(${\rm J}_1$E)] The sequence contains $(\ldots, 4, -2, x, 2, -q, -2, \ldots)$ with $|q|\geq 2$. If $x=0$ we use Figure \ref{X11moves:fig}-(5).
\item[(${\rm J}_1{\rm J}_1$)] There are two cases depending on the sign $\pm$. 
If positive, the sequence contains $(\ldots, 4, -2, x-\frac 12, -4, 2, \ldots)$, and if negative it contains $(\ldots, 4, -2, x-\frac 12, 4, -2, \ldots)$. In both cases it simplifies somewhere else.
\item[(${\rm J}_1{\rm J}_2$)] There are two cases depending on the sign $\pm$. 
\begin{itemize}
\item If positive, the sequence contains $(\ldots, 4, -2, x, 2, -4, \ldots)$. If $x=0$ we simplify using Figure \ref{X11moves:fig}-(5). 
\item If negative, it contains $(\ldots, 4, -2, x- 1, -2, 4, \ldots)$. If $x-1 =-1$ it is equivalent to $(\ldots, 4, -1, -1, 4, \ldots)$ and hence to $(\ldots,9,\ldots)$. The sequence simplifies somewhere else.
\end{itemize}
\item[(${\rm J}_2$L)] The sequence contains $(\ldots, -2, 4, x+1, \ldots)$. If $x=-1$ we apply Figure \ref{X10e11moves:fig}-(7).
\item[(${\rm J}_2$F)] There are two cases depending on the sign of $\pm 1$. 
\begin{itemize}
\item If positive, the sequence contains $(\ldots, -2, 4, x, 4, \ldots)$ and the sequence simplifies somewhere else. 
\item If negative, the sequence contains $(\ldots, -2, 4, x, -4, \ldots)$. If $x=0$ we simplify using Figure \ref{X10e11moves:fig}-(8).
\end{itemize}
\item[(${\rm J}_2$D)] The sequence contains $(\ldots, -2, 4, x+1, 2,\ldots)$. If $x=0$ we simplify using Figure \ref{X10e11moves:fig}-(6).
\item[(${\rm J}_2$E)] The sequence contains $(\ldots, -2, 4, x+\frac 12, 2, -q, -2, \ldots)$ with $|q|\geq 2$. If $x=\frac 12$ this is equivalent to $(\ldots, -2, 3, 1, -q, -2, \ldots)$ and hence $(\ldots, -2, 2, -q-1, -2, \ldots)$. The sequence simplifies somewhere else. 
\item[(${\rm J}_2{\rm J}_1$)] There are two cases depending on the sign $\pm$. 
If positive, the sequence contains $(\ldots, -2, 4, x, -4, 2, \ldots)$, and if negative it contains $(\ldots, -2, 4, x, 4, -2, \ldots)$. In the positive case, if $x=0$ we simplify using Figure \ref{X10e11moves2:fig}. In the negative case it simplifies somewhere else. 
\end{itemize}

We are left to consider the presence of flat vertices. A flat vertex is just a drilling along some simple closed curve $\gamma$ and does not affect the sequence $(e_1,\ldots, e_n)$, so each simplification move $X\to X'$ as those described above holds, provided that we priorly remove the flat vertex. In all the cases one can verify that we can afterwards regenerate a flat vertex in $X'$ that drill a curve $\gamma'$ homotopic (and hence isotopic) to $\gamma$ and disjoint from $SX'$, so we are done. This holds for all the moves in Figures \ref{X11moves:fig}, \ref{X10moves:fig}, \ref{X10e11moves:fig}, and \ref{X10e11moves2:fig}.
The only exception is the case (kh) where with an additional flat vertex we get the shadow $X_{12}$ shown in Figure \ref{additional:fig}.

The proof of Theorem \ref{main:teo} is now complete.

\end{document}